\theoremstyle{plain}
\newtheorem{theorem}{Theorem}[section]
\newtheorem{lemma}[theorem]{Lemma}
\newtheorem{proposition}[theorem]{Proposition}
\newtheorem{corollary}[theorem]{Corollary}
\newtheorem*{definition}{Definition}
\newtheorem{example}[theorem]{Example}
\newtheorem*{question}{Question}
\newtheorem*{remark}{Remark}
\newtheorem*{convention}{Convention}
\newcommand{\nc}{\newcommand}
\nc\bB{\mathbb{B}}
\nc\bC{\mathbb{C}}
\nc\bD{\mathbb{D}}
\nc\bE{\mathbb{E}}
\nc\bF{\mathbb{F}}
\nc\bG{\mathbb{G}}
\nc\bH{\mathbb{H}}
\nc\bI{\mathbb{I}}
\nc{\bJ}{\mathbb{J}}
\nc\bK{\mathbb{K}}
\nc\bL{\mathbb{L}}
\nc\bM{\mathbb{M}}
\nc\bN{\mathbb{N}}
\nc\bO{\mathbb{O}}
\nc\bP{\mathbb{P}}
\nc\bQ{\mathbb{Q}}
\nc\bR{\mathbb{R}}
\nc\bS{\mathbb{S}}
\nc\bT{\mathbb{T}}
\nc\bU{\mathbb{U}}
\nc\bV{\mathbb{V}}
\nc\bW{\mathbb{W}}
\nc\bY{\mathbb{Y}}
\nc\bX{\mathbb{X}}
\nc\bZ{\mathbb{Z}}
\nc\cA{\mathcal{A}}
\nc\cB{\mathcal{B}}
\nc\cC{\mathcal{C}}
\nc\cD{\mathcal{D}}
\nc\cE{\mathcal{E}}
\nc\cF{\mathcal{F}}
\nc\cG{\mathcal{G}}
\nc\cH{\mathcal{H}}
\nc\cI{\mathcal{I}}
\nc{\cJ}{\mathcal{J}}
\nc\cK{\mathcal{K}}
\nc\cL{\mathcal{L}}
\nc\cM{\mathcal{M}}
\nc\cN{\mathcal{N}}
\nc\cO{\mathcal{O}}
\nc\cP{\mathcal{P}}
\nc\cQ{\mathcal{Q}}
\nc\cR{\mathcal{R}}
\nc\cS{\mathcal{S}}
\nc\cT{\mathcal{T}}
\nc\cU{\mathcal{U}}
\nc\cV{\mathcal{V}}
\nc\cW{\mathcal{W}}
\nc\cY{\mathcal{Y}}
\nc\cX{\mathcal{X}}
\nc\cZ{\mathcal{Z}}
\nc\bl{\bold{l}}
\nc\bv{\bold{v}}
\nc\bj{\bold{j}}
\nc\blambda{\mathbf{\lambda}}
\newcommand{\Cyl}{\operatorname{Cyl}}
\nc{\red}[1]{\textcolor{red}{#1}}
\nc{\blue}[1]{\textcolor{blue}{#1}}
\nc{\ann}[1]{\marginpar{\small{\red{#1}}}}
\title{Counting Saddle Connections on Hyperelliptic Translation Surfaces with a Slit}
\author{David Aulicino, Howard Masur, Huiping Pan, and Weixu Su\footnote{D.A. was partially supported by the National Science Foundation under Award No. DMS - 1738381, the Simons Foundation: Collaboration Grant under Award No. 853471, and several PSC-CUNY Grants.  H. P. is supported by National Natural Science Foundation of China NSFC 12371073 and Scientific Research Innovation Capability Support Project for Young Faculty SRICSPYF-BS2025135. W. S. is partially supported by National Natural Science Foundation of China NSFC No. 12371076.}}
\date{}
\begin{document}

\maketitle

\noindent

\begin{abstract}
We consider saddle connections on a translation surface in a hyperelliptic connected component of a stratum that do not intersect the interior of a distinguished saddle connection.  For this restricted set of saddle connections, we show that it satisfies an $L (\log L)^{d-2}$ growth rate, where $d$ is the complex dimension of the hyperelliptic stratum.  The upper bound holds for all translation surfaces in the hyperelliptic stratum while the lower bound holds for almost every surface in the hyperelliptic stratum.  The proof of the lower bound uses horocycle renormalization.
\end{abstract}

\setcounter{tocdepth}{2}
\tableofcontents
\section{Introduction}

The growth rates of saddle connections and cylinders on translation surfaces were first established by the second named author in \cite{MasurGrowthRateTrajs, MasurGrowthRateTrajsLowerBd} and found to grow like $L^2$, where $L$ represents their length.  Remarkably, in analogy with the homogeneous setting where exact asymptotics exist, \cite{VeechSiegelMeasures, EskinMasurAsymptForms} used ergodic theory to prove that they also exist for almost every translation surface, i.e., after dividing the number of saddle connections or cylinders by $L^2$, the limit exists as $L$ tends to infinity.  Moreover, a framework for computing these numbers explicitly was established in \cite{EskinMasurZorich}.

The present work focuses on the growth rate in the following setting.  We fix a hyperelliptic stratum  $\cH^{hyp}(\kappa)$ and denote by $\cH^{hyp}_1(\kappa)$ the subset of translation surfaces in $\cH^{hyp}(\kappa)$ of area one. 
Let $(X, \omega)$ be a translation surface in $\cH_1^{hyp}(\kappa)$.  
Throughout, we let $\tau$ denote the hyperelliptic involution. 
Let $\beta$ be a saddle connection on $(X, \omega)$ that is invariant under $\tau$.  We say that two saddle connections are \emph{interiorly disjoint} if their interiors do not intersect. 
 We consider $(X, \omega) \setminus \beta$, where we implicitly excise only the interior of the saddle connection $\beta$.  In this way, saddle connections on $(X, \omega) \setminus \beta$ cannot cross $\beta$, and this is exactly the set of saddle connections of which we study growth rates.

\begin{theorem}
\label{MainSummaryThm}
Suppose $(X, \omega) \in \cH^{hyp}_1(\kappa)$ and $\beta$ is a marked saddle connection on $(X, \omega)$ satisfying $\tau(\beta) = \beta$.  Let $d = \dim_\bC \cH^{hyp}(\kappa)$.
Let $A((X, \omega) \setminus \beta,L)$ be the set of saddle connections interiorly disjoint from $\beta$ of length at most $L$.
Then there exist $c$ and $c'$ depending only on the stratum such that 
\begin{enumerate}
\item[(1)] 	For  every $(X,\omega)\in \cH^{hyp}_1(\kappa)$, there exists  $L_0$ depending on
	 $(X, \omega)$  such that for $L\geq L_0$ 
$$ \# A((X, \omega) \setminus \beta,L) \leq c' \frac{L}{|\beta|} \left(\log \frac{L}{|\beta|}\right)^{d-2}. 
	$$ 
\item[(2)] For almost every $(X,\omega)\in \cH^{hyp}_1(\kappa)$, there exists  $L_0$ depending on $(X,\omega)$ and 
	 $\beta$  such that for $L\geq L_0$ 
\begin{equation*}
  \# A((X, \omega) \setminus \beta,L) \geq c \frac{L}{|\beta|} \left(\log \frac{L}{|\beta|}\right)^{d-2}.
\end{equation*}
\end{enumerate}
\end{theorem}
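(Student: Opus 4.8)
The plan is to normalize $\beta$, replace the topological ``no crossing'' condition by a metric one on the hyperelliptic quotient, obtain the upper bound by a direct geometric count, and the lower bound by renormalization along the horocycle that fixes $\beta$.

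\smallskip
\noindent\textbf{Normalization and reduction to the quotient.} Rotating $(X,\omega)$ we may assume $\beta$ is horizontal, and rescaling we may assume $|\beta|=1$: rescaling by $\lambda$ multiplies every saddle‑connection length by $\lambda$, so $\#A((X,\omega)\setminus\beta,L)=\#A\big(\tfrac1{|\beta|}(X,\omega)\setminus\beta,\,L/|\beta|\big)$, and both bounds reduce to the case $|\beta|=1$ with $L/|\beta|$ in place of $L$ (the area is then no longer $1$, but the estimates do not use that). To convert ``$\gamma$ does not cross $\beta$'' into a metric statement, pass to $(\bP^1,q):=(X,\omega)/\tau$, where $\pi^*q=\omega^2$ for the degree‑two cover $\pi\colon X\to\bP^1$. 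Since $\tau\beta=\beta$ and $\tau$ reverses $\beta$ about its midpoint (necessarily a Weierstrass point), $\bar\beta:=\pi(\beta)$ is a horizontal saddle connection of $(\bP^1,q)$ joining the image of an endpoint of $\beta$ to that Weierstrass point, and $\pi$ carries $(X,\omega)\setminus\beta$ onto the flat disk $D:=\bP^1\setminus\bar\beta$. One checks that a saddle connection $\gamma$ of $(X,\omega)$ is interiorly disjoint from $\beta$ iff $\pi(\gamma)$ is a saddle connection of $(\bP^1,q)$ contained in $D$, and each saddle connection of $D$ lifts to a bounded number of these; hence, up to a bounded factor,
\[
\#A((X,\omega)\setminus\beta,L)=N_D(L):=\#\{\text{saddle connections of }D\text{ of length}\le L\}.
\]
Counting the singularities of $q$ (there are $d+2$, exactly two of which lie on $\bar\beta$ and become the boundary vertices of $D$) shows $D$ has precisely $d$ interior cone points; this number is the source of the exponent $d-2$, and I would track it through both bounds.

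\smallskip
\noindent\textbf{Upper bound (every surface).} Here I would estimate $N_D(L)$ directly and deterministically. Up to bounded ambiguity a saddle connection of the disk $D$ is determined by its homotopy class rel endpoints in $D$ minus its $d$ cone points, which I encode by a winding datum relative to a fixed triangulation. Winding around one distinguished cylinder‑like family of loops costs length additively, so ``length $\le L$'' permits $O(L)$ such windings; winding around the remaining cone‑point loops costs length multiplicatively, since these loops are nested, so ``length $\le L$'' permits total winding only $O(\log L)$, distributed over $d-2$ effective parameters, giving $\asymp(\log L)^{d-2}$ admissible data. Each datum is realized by $O(1)$ saddle connections, so multiplying gives $N_D(L)\le c'L(\log L)^{d-2}$ for $L$ past a threshold depending on $(X,\omega)$. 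Equivalently, this is a cusp‑volume estimate: such saddle connections correspond to lattice points at depth $\sim\log L$ in the cusp of the quotient stratum along which $\bar\beta$ collapses, a region of volume $\asymp L(\log L)^{d-2}$. This proves (1).

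\smallskip
\noindent\textbf{Lower bound (almost every surface) via horocycle renormalization.} The deterministic count does not show that the $\asymp(\log L)^{d-2}$ winding data are realized by \emph{short} saddle connections, which is what a renormalization argument supplies. Since $\bar\beta$ is horizontal it is fixed by the horocycle flow $u_s=\left(\begin{smallmatrix}1&s\\0&1\end{smallmatrix}\right)$, so the natural renormalization keeps $\bar\beta$ in place: express $N_D(L)$ (up to the standard approximations) as an integral over $s\in[0,S]$, $S\asymp\log L$, of an instantaneous counting function along $\{g_{-t}u_s(\bP^1,q)\}$ (with $g_t$ the geodesic flow), which drifts into the cusp where $\bar\beta$ degenerates, so that the instantaneous count grows polynomially in $s$. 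Equidistribution of pieces of horocycle orbits in the hyperelliptic stratum (viewed as a stratum of quadratic differentials on $\bP^1$), together with a Siegel--Veech‑type identity for the $D$‑restricted counting function, then shows that for almost every surface the time‑average converges to its expected value, giving $N_D(L)\ge cL(\log L)^{d-2}$ for $L\ge L_0(X,\omega,\beta)$; the dependence on $\beta$ enters through the normalization and the averaging. With the reduction above this gives (2).

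\smallskip
\noindent\textbf{Expected main obstacle.} I expect the crux to be twofold. First, in the upper bound, proving that ``length $\le L$'' really forces total winding $O(\log L)$ over \emph{exactly} $d-2$ effective parameters --- so the exponent is $d-2$, not $d-1$ or $d$ --- requires a careful analysis of how saddle connections of $D$ can be nested around its cone points. Second, for the lower bound, one needs the horocycle equidistribution / Siegel--Veech input with error terms sharp enough to detect the \emph{logarithmic} (not merely polynomial) part of the asymptotics, and uniform enough over the relevant part of the cusp. Treating the degenerate positions of $\beta$ (a loop based at a single zero, or $\beta$ through a Weierstrass point with unequal corner angles) uniformly is a further technical point.
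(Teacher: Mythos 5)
Your reframing on the quotient sphere is a reasonable alternative to the paper's setup, but the paper works directly on the double cover, leaning heavily on the fact (Lemma~\ref{TwoBdSC:Lemma}) that for a non-invariant saddle connection $\rho$, the pair $\rho\cup\tau(\rho)$ disconnects the surface into two smaller hyperelliptic pieces. This lets the paper run an induction on the dimension of the stratum by excising and regluing; it is essentially equivalent to your cone-point bookkeeping on $D$, but the translation is not free and you would need to justify it.

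For the upper bound you offer the ``winding costs length multiplicatively over $d-2$ nested parameters'' heuristic, which is in fact the heuristic given in the paper's introduction; but it is not a proof, and the paper's actual argument is quite different. The heuristic ignores the main difficulty: for a fixed $(X,\omega)$ many saddle connections of a given length can be nearly parallel and cluster inside small-area subsurfaces (the non-$\epsilon$-isolated case). A naive ``lattice point at cusp depth $\log L$'' volume count would then overcount, or at least fails to manifestly give $(\log L)^{d-2}$ rather than $(\log L)^{d-1}$. The paper deals with this by applying the Teichm\"uller flow, forming $\epsilon$-complexes, building a filtration of nested small-area subsurfaces $Y_1\subset\cdots\subset Y_m$ bounded by $\tau$-symmetric pairs (Lemma~\ref{SeparatingSCSetForGammaBeta:Lemma}), and then inducting on the stratum dimension using the notion of $C$-separated saddle connections (Theorems~\ref{CountAlphaSC:Theorem3} and~\ref{CountTriangles}). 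That is the content you would need to supply to upgrade your heuristic; the ``homotopy-class rel endpoints plus bounded realization'' claim does not by itself control the number of distinct saddle connections of a given combinatorial type once the geometry degenerates along the filtration.

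For the lower bound your plan (equidistribution of horocycle orbit segments plus a Siegel--Veech identity) is a genuinely different route from the paper, and it has a structural problem: the horocycle flow is known to be uniquely ergodic on strata, so if your argument closed it would give the lower bound for \emph{every} surface. The paper explicitly does not know how to do this, and its a.e.\ restriction is forced by a Borel--Cantelli argument (Propositions~\ref{prop:smallsize:MeasureZero}, \ref{lem:badtime}, Corollary~\ref{coro:bad-smallmeasure}) controlling the measure of surfaces with ``bad'' saddle connections of small vertical height. The obstruction to your approach is that the relevant counting functional is unbounded and highly discontinuous near the part of the cusp where $\beta$ degenerates, so plain equidistribution does not apply, and proving the needed Siegel--Veech-type integrability with error terms sharp enough to resolve a $(\log L)^{d-2}$ term (rather than an $L^2$ term) is precisely the hard part. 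What the paper actually does is quantify the Nguyen--Pan--Su algorithm: it applies $u_s$ over a time window $\left[\tfrac{T|\beta|}{2A},\tfrac{T|\beta|}{A}\right]$, shows that good saddle connections produce a simple cylinder at linear rate (Lemma~\ref{lem:covering}), bounds the Lebesgue measure of bad times (Propositions~\ref{lem:bad-isolated}, \ref{BadSaddleConnBd:Prop}), and then bootstraps by induction on a succession of cylinders (Section~\ref{Sect:LowerBdProof}) to get the $(\log)^{d-2}$ factor. Your plan names the right renormalization but does not supply the mechanism that controls the bad times, which is where all the work lies.
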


Roughly speaking, Theorem \ref{MainSummaryThm} asserts that the growth rate
of saddle connections on every $(X,\omega)$ that are interiorly disjoint from a given invariant saddle connection $\beta$ is bounded above by $L \left(\log L\right)^{d-2}$, and for almost every $(X,\omega)$ the growth rate is exactly $L \left(\log L\right)^{d-2}$. Compared with the $L^2$ growth of all saddle connections, Theorem \ref{MainSummaryThm} shows that most saddle connections must intersect the interior of $\beta$.  Heuristically, this makes sense because the translation flow is ergodic in almost every direction and most  long saddle connections will track an ergodic direction, which will necessarily intersect $\beta$.

\begin{remark}
The assumption that $\beta$ is invariant is in fact not at all restrictive.  The reader will see below that from Lemma~\ref{TwoBdSC:Lemma}, the case where $\beta$ is non-invariant can be reduced to the case where $\beta$ is invariant, but on a hyperelliptic surface in a smaller dimensional stratum.  The details are left to the reader.
\end{remark}

There are two particular cases worth noting here.  The stratum $\cH(0,0)$ with a ``saddle connection'' between the marked points can be viewed as a slit hyperelliptic translation surface.  Indeed, our results even in that case are non-trivial, and we refer to Section~\ref{SlitTorusUB:subsect} for details.

Additionally, if $(X, \omega)$ is a Veech surface, the almost every condition in the lower bound can be removed and our result holds for every Veech surface.  See the remark at the beginning of Section~\ref{sec:bad-measure}.

\paragraph{A Heuristic} There are several heuristics that explain the $d-2$ in the exponent.  For example, to prove the lower bound, we begin by applying the horocycle flow to find collections of  simple cylinders containing $\beta$ of a given length.  We excise each cylinder, identify the resulting boundaries to obtain a collection of translation surfaces each with a slit $\beta_1$ in a hyperelliptic stratum with dimension one less, and then one finds cylinders containing this new slit $\beta_1$.  Pulled back to the original surface under the horocycle flow, this gives families of saddle connections, again of a given length.   This can be done exactly $d-2$ times, i.e., one less than the number of parallelograms a hyperelliptic surface can be decomposed into.  (See Figure~\ref{fig:H4hypParallelogram} for a decomposition of a translation surface in $\cH^{hyp}(4)$ into $\dim_\bC \cH^{hyp}(4) - 1 = 5$ parallelograms.)

Then the count goes as follows.  As each cylinder with boundary  $\beta_1$ that contains $\beta$ winds around the surface, its length is roughly given by successive multiples of $\beta$.  For each sufficiently large $L_1$ and range of lengths between $L_1/2$ and $L_1$, we show that for almost all $(X,\omega)$, there are linear in  $L_1/|\beta|$ many such $\beta_1$ with lengths in that range.   Now let $L_2\geq L_1$ again be sufficiently large.  We show again that there are linear in $L_2/L_1$ cylinders $\beta_2$ containing the slit $\beta_1$ with lengths in the range between $L_2/2$ and $L_2$.  This gives  a total of $L_2/|\beta|$ many $\beta_2$  with the lengths of $\beta_1$ and $\beta_2$ in the given ranges.   
We continue this process until we reach the final cylinder $\beta_{d-2}$ at step $d-2$ of length $L_{d-2}$ satisfying $L/2\leq L_{d-2}\leq L$.
For a collection $\beta_1, \beta_2,\ldots, \beta_{d-2}$ with each $\beta_{i}$ having lengths in a given range, the number of $\beta_{d-2}$ is linear in $L/|\beta|$.
For each $1\leq i\leq d-3$, we partition the interval of lengths, which we call ranges, in the form $I_{i,j} = \left[\frac{L_i}{2^{j+1}}, \frac{L_i}{2^{j}}\right]$, so that $|\beta_{i}| \in I_{i,j}$ and $|\beta|\leq \frac{L_i}{2^{j}} \leq L$.  Then there are essentially $\log \frac{L}{|\beta|}$ such intervals in the partition.  Since there are $d-3$ of these $\beta_{i}$, the total number of cylinders $\beta_{d-2}$ will be of the order 
$$\frac{L}{|\beta|} \left(\log \frac{L}{|\beta|}\right)^{d-3}.$$
To count  all saddle connections,  we can take each cylinder $\beta_{d-2}$ and take  saddle connections that join opposite sides.  A similar analysis to the one above then raises the exponent of the logarithm to $d-2$.

\begin{figure}[htb]
\begin{center}
 \begin{tikzpicture}[scale=0.9]
		\draw (0,0) -- (-1.5,0) -- (-2.5,-3) -- (-1,-3) -- cycle;
	\node at (-1,-0.3) {$\beta$};
\draw[->, >= triangle 90] (0,0) -- (-0.8,0);
\draw[->, >= triangle 90] (-1,-3) -- (-1.8,-3);
\draw[->>] (-2.5,-3) -- (-2,-1.5);
\draw[->>] (-0,-4) -- (0.5,-2.5);
\node at (-1.7,-3.4) {$\beta$};
		\node at (-0.3,-1.6) {$\beta_1$};
        \node at (1.2,-2.6) {$\tau(\beta_1)$};
          \node at (0.8,-0.3) {$\beta_2$};
           \node at (-1,-3.8) {$\tau(\beta_2)$};
           \node at (-2.5,-1.2) {$\tau(\beta_1)$};
            \node at (3, 0.8) {$\tau(\beta_2)$};
		\draw (0,0) -- (-1,-3) -- (0,-4) -- (1,-1) -- cycle;
\draw[-latex] (-1,-3) -- (-0.5,-3.5);
\draw[-latex] (2,1) -- (2.5,0.5);
      \draw (0,0) -- (1,-1) -- (3,0) -- (2,1) -- cycle;
      \draw[-latex] (2,1) -- (2.5,0.5);
            \draw[->|] (1,-1) -- (2,-0.5);
            \draw[->|] (-0.5,2) -- (0.5,2.5);
      \draw (0,0) -- (2,1) -- (1.5,3) -- (-0.5,2) -- cycle;
       \draw[->] (2,1) -- (1.75,2);
       \draw[->] (-2,2) -- (-2.25,3);
      \draw (0,0) -- (-0.5,2) -- (-2.5, 4) -- (-2,2) -- cycle;
       \draw[-{> >[sep] >}] (0,0) -- (-1,1);
        \draw[-{> >[sep] >}] (-0.5, 2) -- (-1.5,3);
	\end{tikzpicture}
\end{center}
	\caption{A decomposition of a translation surface in $\cH^{hyp}(4)$ into five parallelograms cf. Lemma~\ref{Parallelogramulation:Lemma}.}
\label{fig:H4hypParallelogram}
\end{figure}
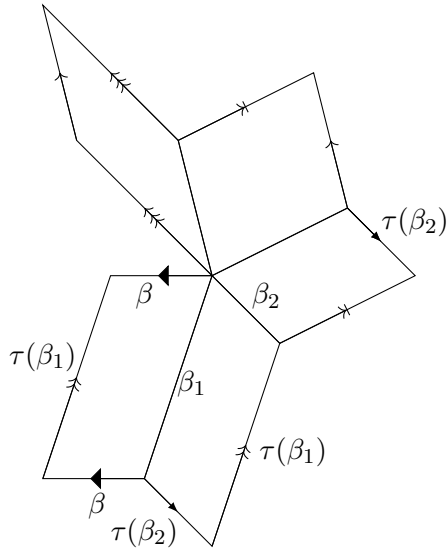

\begin{remark}
We can only prove the lower bound for $\mu$-almost every $(X,\omega)$ in  $\cH^{hyp}_1(\kappa)$, where $\mu$ denotes the Masur-Smillie-Veech measure. 
The problem of whether the lower bound holds for every $(X,\omega)$ in  $\cH^{hyp}_1(\kappa)$ or not remains open. Moreover, the proof of  Theorem~\ref{MainSummaryThm} relies on the assumption that $(X,\omega)$ is hyperelliptic.  It is possible that many ideas in this work can be applied to the non-hyperelliptic case, but the fundamental setting will need to be completely reworked to avoid the hyperelliptic assumption.
\end{remark}

We actually prove a stronger theorem which gives the upper bound in Theorem~\ref{MainSummaryThm} as a corollary.

We first recall \cite[Lem.~6.5]{NguyenPanSu2020}, which will be explicitly restated in Lemma~\ref{TwoBdSC:Lemma} below: If $\rho$ is a saddle connection that is not invariant under the hyperelliptic involution $\tau$, then $\rho\cup\tau(\rho)$ divides the surface into two connected components.

\begin{definition}
Let $C>1$.  Given saddle connections $\alpha$ and $\beta$, we say the pair $\rho\cup\tau(\rho) $ \emph{$C$-separates $\alpha$ from $\beta$} if any path from $\alpha$ to $\beta$ crosses $\rho\cup\tau(\rho)$ and  $|\rho|<|\beta|/C$.  Alternatively, we will use the phrase \emph{$\alpha$ is $C$-separated from $\beta$ by $\rho$}.
\end{definition}

\begin{example}
One example immediate from the definition  is that every saddle connection that does not intersect the saddle connection systole (i.e. the shortest saddle connection on a translation surface) is not $C$-separated from the systole by any $\rho$.
\end{example}

The upper bound in Theorem~\ref{MainSummaryThm} is based on the following. 
Suppose $(X, \omega) \in \cH^{hyp}_1(\kappa)$ and $\beta$ is a marked saddle connection on $(X, \omega)$ satisfying $\tau(\beta) = \beta$. 
For each $C>1$, let $A((X, \omega) \setminus \beta,L, C)$ be the set of saddle connections $\alpha\in A((X, \omega) \setminus \beta,L)$ such that $\alpha$ is not $C$-separated from $\beta$.

\begin{theorem}
\label{CountAlphaSC:Theorem}
For every  $C > 1$, there exist $c$ and $L_0$ depending on $C$ and the stratum $\cH^{hyp}(\kappa)$ such that for every $(X,\omega) \in \cH^{hyp}(\kappa)$ and  $L\geq L_0$, 
$$\# A((X, \omega) \setminus \beta,L, C) \leq c\frac{L}{|\beta|}\left(\log \frac{L}{|\beta|} \right)^{d-2}.$$
\end{theorem}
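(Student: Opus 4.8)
The plan is to prove Theorem~\ref{CountAlphaSC:Theorem} by induction on $d=\dim_\bC\cH^{hyp}(\kappa)$, peeling off one cylinder at a time from the slit. Since replacing $(X,\omega)$ by $\lambda(X,\omega)$ scales all lengths by $\lambda$, leaves the $C$-separation relation unchanged, and leaves both $\tfrac{L}{|\beta|}$ and $\log\tfrac{L}{|\beta|}$ unchanged when $L$ is scaled by $\lambda$, I may first normalize $|\beta|=1$ and prove $\#A((X,\omega)\setminus\beta,L,C)\le cL(\log L)^{d-2}$ for all $L\ge L_0(C,\kappa)$. The base case is $d=2$: then $(X,\omega)$ is a flat torus with a marked Weierstrass point and $\beta$ is a primitive geodesic loop, so $(X,\omega)\setminus\beta$ is a single flat cylinder of circumference $1$; a saddle connection interiorly disjoint from $\beta$ runs from one boundary circle to the other, hence is pinned down up to bounded multiplicity by its winding number $k\lesssim L$, giving $\#A\lesssim L=L(\log L)^{0}$. (The first genuinely non-trivial case is $\cH(0,0)$, with $d=3$ and bound $L\log L$; this slit-torus estimate is carried out directly in Section~\ref{SlitTorusUB:subsect} and may equally be taken as the base of the induction.)

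For the inductive step I would use the rigidity of hyperelliptic surfaces relative to an invariant slit, via the parallelogram decomposition of Lemma~\ref{Parallelogramulation:Lemma}: such a surface is a ``chain'' of $d-1$ parallelograms meeting at the fixed point of $\tau$, with $\beta$ a saddle connection of the first one, and excising the cylinder $\mathcal{C}$ of which $\beta$ is a saddle connection and regluing its boundary produces $(X',\omega')$ in a hyperelliptic stratum of dimension $d-1$ with a new invariant slit $\beta'$. The content is a \emph{transfer lemma}: every saddle connection $\alpha$ on $(X,\omega)$ interiorly disjoint from $\beta$ either (i) lies entirely outside $\mathcal{C}$, in which case it descends to a saddle connection $\alpha'$ on $(X',\omega')$ interiorly disjoint from $\beta'$ with $|\alpha'|=|\alpha|$, and moreover — because $\beta$ itself belongs to $\mathcal{C}$, so $\mathcal C$ lies between $\alpha$ and $\beta$ — every $\rho\cup\tau(\rho)$ separating $\alpha'$ from $\beta'$ in $X'$ also separates $\alpha$ from $\beta$ in $X$, so the ``no barrier of length $<1/C$'' property is inherited; or (ii) meets $\mathcal{C}$, in which case $\alpha$ is trapped in a bounded neighbourhood of $\mathcal{C}$, is determined by a winding number, and there are $\lesssim L$ such $\alpha$. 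Invoking Lemma~\ref{TwoBdSC:Lemma} together with the hypothesis that $\alpha$ is \emph{not} $C$-separated from $\beta$, each separating pair has length $\ge|\beta|/C=1/C$; in particular $|\beta'|$, and the lengths of all the intermediate slits produced by iterating the construction, stay in the window $[1/C,\,c_\kappa L]$.

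The counting is then bookkeeping over dyadic scales. Writing $\beta=\beta_0,\beta_1,\dots,\beta_{d-2}$ for the successive slits produced by the peeling (whose lengths may be taken essentially non-decreasing), each $\beta_{i+1}$ arises as the closed-up far boundary of a cylinder lying against $\beta_i$, so $|\beta_{i+1}|$ is comparable to that cylinder's circumference and, as the cylinder can wind around, the number of admissible $\beta_{i+1}$ with $|\beta_{i+1}|$ of a given dyadic size $M_{i+1}$ is $\lesssim M_{i+1}/|\beta_i|$; likewise the final base-case count of $\alpha$ with $|\alpha|\le L$ inside the last cylinder is $\lesssim L/|\beta_{d-2}|$. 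Multiplying the per-step counts telescopes,
\[
\frac{M_1}{|\beta_0|}\cdot\frac{M_2}{M_1}\cdots\frac{M_{d-2}}{M_{d-3}}\cdot\frac{L}{M_{d-2}}\ =\ \frac{L}{|\beta_0|}\ =\ L,
\]
and since each of the $d-2$ dyadic scales $M_1,\dots,M_{d-2}$ ranges over $[1/C,\,c_\kappa L]$, contributing $\lesssim_C\log L$ choices, we obtain
\[
\#A((X,\omega)\setminus\beta,L,C)\ \lesssim\ \sum_{M_1,\dots,M_{d-2}\ \mathrm{dyadic}}L\ \lesssim_C\ L(\log L)^{d-2},
\]
which after undoing the rescaling is the assertion of the theorem, with constants depending only on $C$ and the combinatorics of the stratum.

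The main obstacle is the transfer/peeling lemma and the associated per-step winding count. One must show that, relative to an invariant slit, a hyperelliptic translation surface peels off cylinders one at a time with the ambient dimension dropping by exactly one; that saddle connections interiorly disjoint from the slit are tracked faithfully through each excision, including the degenerate cases (when $\alpha$ meets a peeled cylinder, when $\alpha$ already abuts $\beta$ with no intervening barrier, and the low-dimensional boundary behaviour); and that at each step the next slit is constrained to a one-parameter winding family with multiplicity bounded uniformly over the stratum. It is exactly here, and only here, that hyperellipticity is used, and obtaining uniformity of all constants over the whole stratum — so that $L_0$ and $c$ depend on nothing but $C$ and $\kappa$ — is the delicate part.
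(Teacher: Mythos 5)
There is a genuine gap in your transfer lemma, and it sits exactly where you placed the ``main obstacle.'' In case~(ii) you assert that a saddle connection $\alpha$ which meets the simple cylinder $\mathcal{C}$ containing $\beta$ is ``trapped in a bounded neighbourhood of $\mathcal{C}$'' and hence pinned down by a winding number. This is false. Since $\mathcal{C}\setminus\beta$ consists of two triangles with no interior singularities, a saddle connection $\alpha$ interiorly disjoint from $\beta$ can enter $\mathcal{C}$ through one boundary saddle connection, cross to the other, exit, wander over the rest of the surface, and re-enter $\mathcal{C}$ arbitrarily many times. Such $\alpha$ descend neither to a saddle connection on $(X',\omega')$ (so case~(i) fails) nor lie in a bounded neighbourhood of $\mathcal{C}$; they are the generic long saddle connections and carry most of the count. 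Your dichotomy simply does not cover them, and this is precisely why the paper does \emph{not} prove the upper bound by excising cylinders: it instead builds, for an arbitrary given $\alpha$, a chain of non-invariant separating pairs $\sigma_1,\ldots,\sigma_m$ running \emph{from $\alpha$ to $\beta$} via Lemma~\ref{lem:triangulation}, which handles crossing saddle connections automatically because the chain is constructed relative to $\alpha$, not relative to a fixed cylinder around $\beta$.

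The second issue is that the per-step count ``$\lesssim M_{i+1}/|\beta_i|$ admissible $\beta_{i+1}$ in a dyadic window'' is not a winding-number bookkeeping exercise; it is Theorem~\ref{CountTriangles} (equivalently Corollary~\ref{coro:cylinder-upper}), which is the technical heart of the paper. The problem is that boundary saddle connections of distinct cylinders containing $\beta$ can have arbitrarily small cross product, so their directions cluster in a sector of width $\asymp 1/(2^{j+\ell})$ with no a~priori angular separation. Establishing the linear bound requires the $\epsilon$-isolated/non-isolated dichotomy, Teichm\"uller-flow renormalization, the $\epsilon$-complex of Lemma~\ref{EMEpsCplx:Lemma}, the small-angle triangle estimate of Lemma~\ref{lem:smallangle}, and a delicate joint induction of Theorems~\ref{CountAlphaSC:Theorem3} and~\ref{CountTriangles} on the dimension of strata. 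Your ``peel one cylinder at a time'' scheme is in fact the architecture of the paper's \emph{lower} bound (the succession of cylinders in Section~\ref{Sect:LowerBdProof}), where one only needs to \emph{exhibit} enough saddle connections; for the upper bound one must account for every $\alpha$, and the cylinder-peeling scheme leaves the crossing $\alpha$ unaccounted for and the per-step multiplicity unproved.
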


\medskip

As indicated above, in the hyperelliptic setting, there are two varieties of saddle connections - invariant and non-invariant - and both are natural for counting problems.  Moreover, cylinders are natural objects to count.  In fact, in the course of proving Theorem~\ref{MainSummaryThm}, we easily obtain growth rates for these other objects.  The following theorem summarizes Theorems~\ref{CountAlphaSC:Theorem3cyl} and \ref{thm:lower-general-cyls-noninv} below.

\begin{theorem}
\label{MainSummaryThm:Cyls:NonInvSC}
Following the notation of Theorem~\ref{MainSummaryThm}, suppose that $(X, \omega) \in \cH^{hyp}_1(\kappa)$ and $\beta$ is a marked saddle connection on $(X, \omega)$ satisfying $\tau(\beta) = \beta$. Let $A^{cyl}((X, \omega) \setminus \beta,L) $ and $A^{non-inv}((X, \omega) \setminus \beta, L)$ be the subsets of $A((X, \omega) \setminus \beta,L)$ consisting of cylinders and non-invariant saddle connections, respectively.  
Then for each of these two sets, the bounds in Theorem~\ref{MainSummaryThm} hold with the exponent $d-2$ replaced by $d-3$.
\end{theorem}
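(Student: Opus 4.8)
The two bounds will be proved by rerunning the arguments behind Theorem~\ref{CountAlphaSC:Theorem}/Theorem~\ref{MainSummaryThm}(1) and Theorem~\ref{MainSummaryThm}(2) with the recursion truncated one stage earlier; in the body these become Theorems~\ref{CountAlphaSC:Theorem3cyl} and \ref{thm:lower-general-cyls-noninv}.

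\emph{Upper bound.} First I would prove the cylinder/non-invariant analogue of Theorem~\ref{CountAlphaSC:Theorem}: for every $C>1$ there are $c,L_0$ depending on $C$ and $\cH^{hyp}(\kappa)$ such that for all $(X,\omega)\in\cH^{hyp}(\kappa)$ and $L\ge L_0$, the number of cylinders (resp.\ non-invariant saddle connections) that are interiorly disjoint from $\beta$, have length $\le L$, and are not $C$-separated from $\beta$, is at most $c\frac{L}{|\beta|}\left(\log\frac{L}{|\beta|}\right)^{d-3}$. The proof of Theorem~\ref{CountAlphaSC:Theorem} decomposes $(X,\omega)$ into $d-1$ parallelograms (Lemma~\ref{Parallelogramulation:Lemma}) and encodes a non-$C$-separated saddle connection $\alpha$ of length $\le L$ by the dyadic range $\left[L_i/2^{j+1},L_i/2^{j}\right]$ containing its combinatorial length at each of $d-3$ nested stages (yielding $\left(\log\frac{L}{|\beta|}\right)^{d-3}$ once the last stage is pinned to $[L/2,L]$), followed by a final choice of $\alpha$ crossing the last simple cylinder, which costs one more $\log$. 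A cylinder is specified by the same nested-range data with \emph{no} terminal crossing curve to select, and, by Lemma~\ref{TwoBdSC:Lemma}, a non-invariant $\rho$ is determined up to bounded ambiguity by the simple cylinder bounded by $\rho\cup\tau(\rho)$; in both cases the terminal $\log$ factor is absent. Granting this, the full upper bound follows by the very same induction on $\dim_\bC\cH^{hyp}(\kappa)$ that passes from Theorem~\ref{CountAlphaSC:Theorem} to Theorem~\ref{MainSummaryThm}(1): a cylinder or non-invariant saddle connection that \emph{is} $C$-separated from $\beta$ by some $\rho$ lands, after cutting along $\rho\cup\tau(\rho)$, on a hyperelliptic surface-with-slit of strictly smaller dimension, where it is again interiorly disjoint from the new slit, and summing the inductive estimates over the admissible $\rho$ completes the bound.

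\emph{Lower bound.} I would rerun the horocycle-renormalization construction of Theorem~\ref{MainSummaryThm}(2), stopping at stage $d-3$: for $\mu$-almost every $(X,\omega)$ and all large $L$ it produces at least $c\frac{L}{|\beta|}\left(\log\frac{L}{|\beta|}\right)^{d-3}$ pairwise distinct simple cylinders $\beta_{d-2}$ that are interiorly disjoint from $\beta$ and satisfy $L/2\le|\beta_{d-2}|\le L$. These are exactly the objects counted by $A^{cyl}((X,\omega)\setminus\beta,L)$, so that bound is immediate. For $A^{non-inv}$, each such simple cylinder has a boundary saddle connection $\rho$ with $\rho\cup\tau(\rho)$ bounding the cylinder; one checks from the construction that $\rho$ is non-invariant, interiorly disjoint from $\beta$, of length comparable to $|\beta_{d-2}|$, and that distinct cylinders give distinct $\rho$, so after shrinking $c$ the same count holds. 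As in Section~\ref{sec:bad-measure}, the ``almost every'' assumption is removable for Veech surfaces.

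\emph{Main obstacle.} The delicate part is the upper bound bookkeeping: one must verify that imposing the closed-curve condition (for a cylinder) or the $\tau$-invariance of the dividing set $\rho\cup\tau(\rho)$ (for a non-invariant saddle connection) genuinely removes one summation variable from the parallelogram count of Theorem~\ref{CountAlphaSC:Theorem}, uniformly through the $C$-separation induction, rather than merely constraining it. A secondary point, needed for the $A^{non-inv}$ lower bound, is to confirm from the explicit construction that the distinguished boundary saddle connection of the constructed simple cylinder is non-invariant and interiorly disjoint from $\beta$ at every stage of the nesting.
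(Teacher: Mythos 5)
Your lower-bound argument matches the paper's proof of Theorem~\ref{thm:lower-general-cyls-noninv}: Step~V of the induction in Section~\ref{Sect:LowerBdProof} produces, for almost every surface, at least $c\frac{L}{|\beta|}\left(\log\frac{L}{|\beta|}\right)^{d-3}$ pairwise-distinct terminal cylinders $\gamma_{d-2}$ in a succession, and the boundaries $\gamma_{d-2}\cup\tau(\gamma_{d-2})$ are non-invariant pairs, which gives both $A^{cyl}$ and $A^{non\text{-}inv}$ simultaneously. Your observation about Veech surfaces is also consistent with the remark at the top of Section~\ref{sec:bad-measure}.

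The upper bound, however, is where you go off-track. You describe the proof of Theorem~\ref{CountAlphaSC:Theorem} as encoding $\alpha$ by dyadic ranges for a nested succession of cylinders $\beta_1,\ldots,\beta_{d-2}$ followed by a final crossing curve; that is the \emph{lower-bound heuristic} from the introduction and from Section~\ref{Sect:LowerBdProof}, not the upper-bound argument. The actual proof of Theorem~\ref{CountAlphaSC:Theorem3} (and hence \ref{CountAlphaSC:Theorem3cyl}) does not use the parallelogram decomposition of Lemma~\ref{Parallelogramulation:Lemma} at all. Instead, it applies Lemma~\ref{lem:triangulation} iteratively to build a chain $(\sigma_0=\alpha,\sigma_1,\ldots,\sigma_m)$ of separating non-invariant saddle connections, where at each stage $\sigma_{i+1}\cup\tau(\sigma_{i+1})$ cuts off a piece $X'_{i+1}$ containing $\beta$ of strictly lower complex dimension; the $(\log)^m$ factor comes from summing over the $m$ free dyadic size choices $j_1,\ldots,j_m$, with $m\le d-2$ in general. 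There is no ``terminal crossing curve'' in this bookkeeping, so your mechanism for dropping one $\log$ — ``a cylinder is specified by the same nested-range data with no terminal crossing curve to select'' — does not attach to anything in the proof. The reason the exponent drops is a dimension count you do not mention: if $\alpha$ is non-invariant (in particular if $\alpha$ is a boundary saddle connection of a simple cylinder in $A^{cyl}$), then the component $X_1$ of $X\setminus\{\sigma_1\cup\tau(\sigma_1)\}$ that contains $\alpha$ cannot itself be a simple cylinder — the interior saddle connections of a simple cylinder in a hyperelliptic stratum are the $\tau$-invariant diagonals — so $\dim_\bC\cH(\kappa_1')\le d-2$ already, and the chain terminates after $m\le d-3$ steps. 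Also, your statement that ``a non-invariant $\rho$ is determined up to bounded ambiguity by the simple cylinder bounded by $\rho\cup\tau(\rho)$'' misreads Lemma~\ref{TwoBdSC:Lemma}: that lemma says $\rho\cup\tau(\rho)$ disconnects the surface into two pieces, but neither piece need be a cylinder.
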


The question of the existence of exact asymptotics is currently wide open, and the proof of Theorem~\ref{thm:uniformtostart} indicates that there may be a connection to the horocycle flow on the moduli space of translation surfaces.  We do not address it in this work.

\subsubsection*{Motivation} 

There are several motivations for the present work.  Riemann surfaces with boundary are commonly studied alongside those without boundary.  From the hyperbolic perspective, \cite{MirzakhaniGrowthSCGeods} provides a count of simple closed curves in both settings demonstrating that one is not necessarily more difficult than the other.  
In the present work, we effectively consider a translation surface with a boundary component, corresponding to an excised saddle connection.  In the translation surface setting, the boundary changes the growth rate, and the situation is far more subtle than the setting without boundary.

As a consequence of this, Theorem~\ref{MainSummaryThm} can be used to study rational billiards with pockets.  A rational billiard unfolds to a translation surface \cite{FoxKershner36}.  If the billiard contains a pocket, then it can be viewed as follows.  Unfold the billiard while marking the pocket.  The pocket will unfold to a collection of line segments (possibly one) with the property that if a trajectory crosses it, it terminates.  Therefore, counting closed trajectories on a rational billiard table with pockets is equivalent to counting closed trajectories on a translation surface with a set of marked segments disjoint from the closed trajectories.  The present work considers exactly the case with a single segment on a hyperelliptic translation surface.

The other motivation for the present work is from the study of meromorphic Abelian (or infinite area quadratic) differentials on Riemann surfaces, which arise naturally in the boundary of the moduli space of translation and half-translation surfaces and more recently in mathematical physics \cite{GMNWallHitchinWKB, BridgelandSmithStabConds}.  In \cite{AulicinoCBRank}, the first named author considered the set of directions on a translation surface with a meromorphic differential and following \cite{GMNWallHitchinWKB, BridgelandSmithStabConds} showed that if the surface has infinite area, the set of directions is always a closed countable set in contrast to the holomorphic case where it is a countable dense subset of the circle of directions.

The meromorphic setting was transformed to a translation surface with marked segments by taking the convex core of the finite area portion of the surface \cite{HaidenKatzarkovKontsevichFlatSurfsStabStrcts}.  The marked segments in this setting are called \emph{slits} in \cite{AulicinoCBRank}, and $(X, \omega) \setminus \beta$ was called a \emph{slit translation surface}.  We use the terms slit and marked or distinguished saddle connection interchangeably going forward.  Finally, the setting was simplified to a translation surface with a single marked segment (slit) as in Theorem~\ref{MainSummaryThm} above.

The Cantor-Bendixson rank (CB-rank) of this resulting set of saddle connection directions was considered in \cite{AulicinoCBRank} and an upper bound was given in terms of the dimension of the stratum containing the surface after forgetting the slit.  For hyperelliptic translation surfaces, the CB-rank and the complex dimension of the stratum align (see Appendix~\ref{HypCBRank:Appendix}).  However, work of \cite{TaharCBRank} indicates they do not align in general.  Consequentially, we believe that the quantity $d$ representing the complex dimension in Theorem~\ref{MainSummaryThm} should actually be replaced with the CB-rank in the general setting.

We also mention that in the setting of meromorphic differentials, the counting considered in the present work is complementary to that of \cite{TaharCountSCHighOrd}, which considers meromorphic differentials with a \emph{finite} number of saddle connections and proceeds to give a count for the number of such saddle connections.  Here we consider growth rates as the natural question in the setting of an infinite number of saddle connections.

We also remark that there has been an increased interest in meromorphic differentials from mathematical physics due to their relation to Bridgeland-Smith stability conditions \cite{BridgelandSmithStabConds} and the spectral networks appearing in \cite{GMNWallHitchinWKB}.  We refer the reader to these references.

\subsubsection*{The Method of Proof} 
The lower bound provides a quantitative version of the (qualitative) algorithm from work of the third and fourth named authors with Nguyen \cite{NguyenPanSu2020} for producing cylinders containing an invariant saddle connection.  Moreover, it accomplishes this through the use of horocycle renormalization.  The time parameter in the horocycle flow combined with the length parameter of the saddle connections under consideration provide an interplay between the two parameters that facilitate many elementary estimates that allow us to get a grasp on the estimates required e.g. Section~\ref{LowerBoundSetting:Section} and Lemma~\ref{lem:badtimes}.  We hope that this perspective may be of value to others studying the horocycle flow.

The upper bound techniques are now standard, such as Teichm\"uller renormalization and the focus on the concept of $C$-separated defined above.  Nevertheless, great care was taken to obtain exactly the desired power on the $\log L$ term, which was not required in previous works \cite{MasurGrowthRateTrajs, EskinMasurAsymptForms}.  We believe this further highlights the scope of their power.

\subsubsection*{Organization of the Paper} 
In Section~\ref{Background:Section}, we provide all of the necessary background on translation surfaces, including some elementary results, which are new to the best of our knowledge.  In Section~\ref{AuxSC:Section}, we prove a lemma that is fundamental to the upper bound results.  It organizes the collections of saddle connections we wish to count and produces sets of saddle connections that allow us to count the saddle connections in our original collection.  In Section~\ref{UpperBound:Section}, we prove the upper bound in its strongest form with the $C$-separated assumption, and in Section~\ref{LowerBound:Section}, we prove the lower bound. 
 Finally, we conclude with Appendix~\ref{HypCBRank:Appendix}, which establishes the CB-rank of the set of saddle connection directions for all hyperelliptic translation surfaces.

\subsubsection*{Acknowledgements} The authors are grateful to Pat Hooper for discussions and suggestions during the course of this project.  They would also like to thank Jayadev Athreya, Jon Chaika, and Anton Zorich for helpful conversations.

\section{Translation Surfaces and Strata}
\label{Background:Section}

In this section, we establish the setting, notation, definitions and concepts required in what follows below.  In addition to recalling basic definitions, as well as concepts that were introduced in other works, we also prove Lemma~\ref{lem:smallangle}, which is an elementary result about a translation surface, but we were unable to find in the literature.  Finally, in Section~\ref{HypTransSurf:Section}, we recall and prove several results concerning hyperelliptic translation surfaces.  Indeed, the present work will focus exclusively on hyperelliptic translation surfaces, so these results are essential for everything that follows.

\subsection{Translation Surfaces}

A \emph{translation surface} is a Riemann surface $X$ carrying a non-zero Abelian differential $\omega$, and will be denoted $(X, \omega)$.  Translation surfaces are equipped with natural flat metrics with conical singularities.  Any geodesic without a singularity in its interior is a straight line with respect to natural coordinates of $\omega$. A  geodesic joining a pair of singularities of $\omega$ (which need not be distinct) without singularities in its interior is called a \emph{saddle connection}.  A closed geodesic which does not pass through any singularities determines a maximal family of homotopic parallel closed trajectories that form a \emph{cylinder}.  The boundary of a cylinder is necessarily a union of saddle connections.  If each boundary of a cylinder consists of a single saddle connection, then it is called a \emph{simple cylinder}.

An Abelian differential induces a natural flat length on a translation surface given by integrating along $|\omega|$, and a natural area form given by $\omega \wedge \bar \omega$.  Given a saddle connection $\gamma$, we will write $|\gamma|$ for the flat length of $\gamma$ and $\text{Area}(X, \omega)$ for the area of $(X, \omega)$.  Furthermore, to each saddle connection $\gamma$, it is natural to associate its holonomy vector $\text{hol}(\gamma) \in \bR^2$ given by integrating $\omega$ along $\gamma$ and splitting it into its real and imaginary parts.  By adding a third coordinate zero to the holonomy vector, we can take cross products.  All of this will be done implicitly and so that given two saddle connections $\gamma$ and $\sigma$, we will simply write $|\gamma \times \sigma|$ for the absolute value of the cross product of the two saddle connections taken as described above.  The \emph{diameter} of a translation surface is the maximum flat distance between two points on the surface.

A notational concept that will be useful in what follows is the ``size'' of a saddle connection.  Indeed, it will be more useful to use the log of the length rather than the length itself.

\begin{definition}
Let $(X, \omega)$ be a translation surface with saddle connection $\gamma$.  We say that $\gamma$ has \emph{size $j\in \mathbb{Z}$} if $\gamma$ satisfies
$$2^j \leq |\gamma| < 2^{j+1}.$$
If $\Gamma$ is a collection of saddle connections, we say that \emph{$\Gamma$ is a collection of saddle connections of size $j$} if each of the saddle connections in $\Gamma$ has size $j$.
\end{definition}

This work focuses on the problem of counting saddle connections up to a given length.  To accomplish this, we will consider collections of saddle connections and the techniques we use to count them will depend on various properties they obey.  We define all of the necessary properties here.

\begin{definition}
Fix a small $\epsilon > 0$.  Let $(X,\omega)$ have area $A$.  
Two saddle connections $\gamma$ and $\gamma'$ on $(X,\omega)$ are \emph{$\epsilon$-isolated} if the angle between them is at least 
$\frac{\epsilon^2A}{|\gamma||\gamma'|}$. 
Let $\Gamma$ be a collection of saddle connections on $(X,\omega)$ of size $j$ containing at least two non-parallel saddle connections.  Then the collection $\Gamma$ of saddle connections is \emph{$\epsilon$-isolated} if for any pair of non-parallel $\gamma$ and $\gamma'$ in $\Gamma$, the angle between them is at least $\frac{\epsilon^2A}{2^{2j}}$. 
\end{definition}

\begin{convention}
Let $\Gamma$ be a collection of saddle connections on $(X, \omega)$.  Any time we consider $\Gamma$, we assume it has the property that for all $\gamma \in \Gamma$, $\gamma' \in \Gamma$ if $\gamma'$ is parallel to $\gamma$.
\end{convention}

In order to address non-$\epsilon$-isolated saddle connections, the concept of non-$C$-separated saddle connections from the introduction will be essential.

\begin{remark}
Following \cite{MasurGrowthRateTrajs}, we permit $\alpha$ to $C$-separate $\alpha$ from $\beta$.  Consequentially, if $\alpha$ is not $C$-separated from $\beta$, then there does \emph{not} exist $\rho$ such that $|\rho| \leq |\beta|/C$, and in particular, we necessarily have $|\alpha| > |\beta|/C$.
\end{remark}

Both the concept of $C$-separated defined above and the classical concept of a separating curve occur in this work.  Since a separating curve is a purely topological concept while $C$-separation relies on the geometric flat structure on the surface, we write the admittedly redundant term \emph{topologically separating} for a separating curve to emphasize for the reader which concept is being used at which point.

We recall a result from \cite{KMS} and \cite[Cor.~6.3]{EskinMasurAsymptForms}.

\begin{lemma}[Eskin-Masur]
\label{EMEpsCplx:Lemma}
There are constants $c_1$ and $c_2$ such that for $\epsilon$ sufficiently small,  given a collection of saddle connections $\Gamma$  on $(X,\omega)$, each of length at most $\epsilon$ such that each $\gamma\in \Gamma$ interiorly intersects some other $\gamma'\in\Gamma$, there is a closed subsurface $Y$ generated by $\Gamma $ called an $\epsilon$-complex such that $Y$ contains each $\gamma\in\Gamma$, $Y$ is triangulated by saddle connections of length at most $c_2\epsilon$ and $Y$ has area at most $c_1\epsilon^2$.
$Y$ is also minimal in the sense that any closed curve in the interior of $Y$ intersects some $\gamma\in\Gamma$.
\end{lemma}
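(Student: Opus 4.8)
The statement is \cite[Cor.~6.3]{EskinMasurAsymptForms}, going back to \cite{KMS}, so the plan is to recall the construction of the $\epsilon$-complex and indicate where the real work lies. Form the graph $G=\bigcup_{\gamma\in\Gamma}\gamma\subset X$, whose vertices are the singularities of $\omega$ together with the interior intersection points of the $\gamma$'s and whose edges are the subarcs into which these points cut the $\gamma$'s; every edge has length $\le\epsilon$. Take a closed regular neighborhood $N(G)$, adjoin to it the closures of all complementary components that are disks, and let $Y_0$ be the result. Then $Y_0$ is a compact subsurface containing every $\gamma\in\Gamma$, its complement has no disk components, and $G$ \emph{fills} $Y_0$: any essential closed curve in $\operatorname{int} Y_0$ must meet $G$, for otherwise it lies in $Y_0\setminus G$, which after the capping is a disjoint union of bands and supports no essential curve. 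This already yields the minimality clause. Finally, replace each component of $\partial Y_0$ by the geodesic representative of its free homotopy class; standard surface topology (the components are disjoint, embedded, non-peripheral, and run parallel to $G$) shows the result is a subsurface $Y$ whose boundary is a union of saddle connections, with $G\subset Y$ and $G$ still filling $Y$.

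The substantive step is to control the lengths appearing in a geodesic triangulation of $Y$. Since $Y$ is a subsurface of a surface in a fixed stratum, its genus is at most that of $X$ and it carries at most $|\kappa|$ cone points, so any geodesic triangulation of $Y$ by saddle connections (vertices at cone points, boundary edges allowed) has at most $N_0=N_0(\kappa)$ triangles. To see that such a triangulation only uses saddle connections of length $\le c_2\epsilon$, first pass to a subcollection $\Gamma_0\subseteq\Gamma$ that is minimal for the property of filling $Y$ and chain-connecting it; a minimal-by-inclusion such system on a surface of bounded complexity has bounded cardinality $N_1=N_1(\kappa)$, so $G_0:=\bigcup\Gamma_0$ has total length $\le N_1\epsilon$. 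Because $Y$ retracts onto $G_0$ together with complementary disks of boundary length $\lesssim N_1\epsilon$, one gets $\operatorname{diam}(Y)\le c_2\epsilon$; since any saddle connection contained in $Y$ has length at most $\operatorname{diam}(Y)$, every edge of every geodesic triangulation of $Y$ has length $\le c_2\epsilon$.

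The area bound is then immediate: fix such a triangulation; each triangle is Euclidean with all three sides $\le c_2\epsilon$, hence has area $\le\tfrac{\sqrt3}{4}(c_2\epsilon)^2$, and summing over the $\le N_0$ triangles gives $\area(Y)\le c_1\epsilon^2$ with $c_1:=\tfrac{\sqrt3}{4}N_0c_2^2$.

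I expect the main obstacle to be exactly the length/complexity control in the second paragraph. One cannot bound $\#\Gamma$ itself, so the bound on $\operatorname{diam}(Y)$ must be routed through a minimal filling subcollection $\Gamma_0$ together with the claim that such a subcollection has cardinality bounded purely in terms of the stratum, and likewise that the boundary curves of $Y$ have combinatorial length bounded in those terms. This is where the hypothesis ``$\epsilon$ sufficiently small'' is used: it ensures $Y\neq X$ and that no complementary region of $G$ is a non-disk of area too small for the capping and retraction estimates, so that the bounded-complexity statements hold uniformly over the stratum. Everything else — the regular-neighborhood-and-cap construction, straightening the boundary, and the Euclidean triangle estimate — is routine.
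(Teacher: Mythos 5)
The paper does not prove Lemma~\ref{EMEpsCplx:Lemma}; it is quoted from \cite{KMS} and \cite[Cor.~6.3]{EskinMasurAsymptForms}, so there is no in-paper argument to compare against, only the Remark that two interiorly intersecting saddle connections fill a locally convex subsurface of diameter at most $|\alpha_1|+|\alpha_2|$.

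On its own, your sketch identifies the right targets --- a filling subsurface, a diameter bound that converts to the $c_2\epsilon$ triangulation bound, and the area estimate from a bounded triangle count --- and you are right to flag the diameter/complexity control as the crux. But the step you lean on, that a minimal-by-inclusion filling subcollection $\Gamma_0\subset\Gamma$ of $Y$ has cardinality bounded purely in terms of $\kappa$, is asserted rather than argued, and it is not a standard fact: saddle connections are arcs, not simple closed curves, they can intersect one another repeatedly (especially as $Y$ can carry several cone points), and ``minimal filling'' does not by itself cap the number of arcs the way Euler characteristic caps disjoint essential curves. Without that cardinality bound, the $\operatorname{diam}(Y)\lesssim N_1\epsilon$ estimate and hence the $c_2\epsilon$ bound on triangulation edges do not follow from your construction. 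There is also a secondary issue with geodesically straightening $\partial Y_0$: the straightened boundary is isotopic to $\partial Y_0$ so the filling clause survives, but it may sweep outward and enlarge $Y$, and you would need to estimate the added annuli to preserve the diameter bound.

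The argument in the cited references avoids the filling-cardinality question entirely by being iterative and convexity-based: starting from two intersecting $\gamma,\gamma'\in\Gamma$ (which fill a locally convex subsurface of diameter $\le 2\epsilon$, as the paper's Remark records), one adjoins the remaining elements of $\Gamma$ one at a time, passing to the ``convex hull'' at each stage; the key observation is that each nontrivial step strictly increases the topological complexity of the complex (genus plus boundary components), which is bounded in terms of $\kappa$, so the number of steps is bounded and the diameter grows by $O(\epsilon)$ per step. That is what produces $c_2=c_2(\kappa)$, and your area estimate then goes through unchanged. If you want a self-contained proof, that iterative increasing-complexity argument is what needs to replace your minimal-subcollection paragraph; the regular-neighborhood-plus-cap and band-decomposition observations in your first paragraph remain correct and useful for the minimality clause.
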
 

\begin{definition}
A collection $\Gamma$ of two or more saddle connections on a translation surface $(X, \omega)$ \emph{fill a subsurface} $Y$ if any closed curve $\eta \subset Y$ that is not isotopic to a boundary component of $Y$ intersects some element of $\Gamma$.
\end{definition}

\begin{remark}
Part of the proof of the above lemma includes the statement that a pair of interiorly intersecting saddle connections $\alpha_1$ and $\alpha_2$ \emph{fill a locally convex surface} with diameter at most $|\alpha_1|+|\alpha_2|$.
\end{remark}
We conclude with a fundamental lemma about triangles on a translation surface.  Throughout, a \emph{triangle} will always mean an \emph{embedded} Euclidean triangle in a surface.   (Vertices of the triangle are singularities of the flat metric, and edges are saddle connections.)  While the following result is elementary, it is essential to what follows below (see the proof of Theorem~\ref{CountTriangles}).

 \begin{lemma}\label{lem:smallangle}
Let $(X,\omega)$ be a translation surface with a horizontal slit $\beta$.  Let $\Delta$ and $\Delta'$ be two distinct triangles on $(X,\omega)$, each of which contain $\beta$ as a side.  Suppose further that $\Delta$ and $\Delta'$ are both above $\beta$. Let $\gamma$ (resp. $\gamma'$) be the longer of the two sides of $\Delta$ (resp. $\Delta'$) excluding $\beta$.  Finally, assume that $\gamma$ and $\gamma'$ interiorly intersect and that  $|\gamma'|/2\leq |\gamma|\leq 2|\gamma'|$.  Then we have
 $$|\gamma\times \beta|\leq 2|\gamma\times\gamma'|. $$ 
 \end{lemma}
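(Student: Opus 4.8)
The plan is to rotate so that $\beta$ is horizontal and work in a development. Place the two endpoints of $\beta$ at $P=(0,0)$ and $Q=(b,0)$ with $b:=|\beta|$, and develop $\Delta$ and $\Delta'$—each of which has $\beta$ as a side and lies above $\beta$—so that their apexes are $V=(v_1,v_2)$ and $V'=(v_1',v_2')$ with $v_2,v_2'>0$. Since directions and holonomy are globally defined on a translation surface and both triangles meet the standard chart around the interior of $\beta$, we may read off $\text{hol}(\overrightarrow{PV})=(v_1,v_2)$, $\text{hol}(\overrightarrow{QV})=(v_1-b,v_2)$, and likewise for $V'$, all inside one copy of $\R^2$. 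The first observation is the identity $|\gamma\times\beta|=b\,v_2=2\area(\Delta)$, valid whether $\gamma$ is $PV$ or $QV$, because $\gamma$ and $\beta$ share a vertex of $\Delta$. The second observation records the meaning of the ``longer side'' hypothesis: $\gamma$ issues from whichever endpoint of $\beta$ is farther from the foot of the altitude of $\Delta$ from $V$, so the horizontal component of $\text{hol}(\gamma)$ has absolute value at least $b/2$; the same holds for $\gamma'$.

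The argument then splits according to whether $\gamma$ and $\gamma'$ issue from the same endpoint of $\beta$. If they issue from \emph{different} endpoints, the horizontal components of $\text{hol}(\gamma)$ and $\text{hol}(\gamma')$ have opposite signs; choosing the vertical components positive, $|\gamma\times\gamma'|$ becomes a sum of two nonnegative terms, one of which is $v_2$ times the absolute value of the horizontal component of $\text{hol}(\gamma')$, hence at least $(b/2)v_2$. Thus $2|\gamma\times\gamma'|\ge b v_2=|\gamma\times\beta|$, and here the length-ratio hypothesis is not needed. If $\gamma$ and $\gamma'$ issue from the \emph{same} endpoint, say $P$, then the horizontal components have the same sign, each of modulus at least $b/2$. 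Comparing the angles $\theta,\theta'$ that $\gamma,\gamma'$ make with $\beta$ at $P$, the two triangles cannot make equal angles there (else $\gamma,\gamma'$ would agree near $P$, forcing a singularity into the interior of one of them), so after possibly interchanging $\Delta$ and $\Delta'$ the side $\gamma$ enters the interior of $\Delta'$ at $P$. Because $\gamma$ and $\gamma'$ interiorly intersect and two segments issuing from $P$ in distinct directions meet only at $P$, $\gamma$ cannot remain in $\overline{\Delta'}$ (its interior would then miss the side $\gamma'$), so $\gamma$ must leave $\Delta'$ across the side opposite $P$ (it cannot cross $\beta$, since it is moving away from it). This produces an explicit linear relation for the crossing point; combining that relation with the position along $\gamma$ of the interior intersection point $w\in\gamma\cap\gamma'$ and feeding in $|\gamma|\le 2|\gamma'|$ (so that the lengths, hence the heights, of the two triangles are comparable) yields $|\gamma\times\gamma'|\ge\tfrac12 b\,v_2$, which is the assertion.

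The same-endpoint case is the crux, and it is exactly where the hypothesis $|\gamma'|/2\le|\gamma|\le 2|\gamma'|$ enters essentially: the qualitative statement that $\gamma$ must pierce the opposite side of $\Delta'$ is not by itself quantitative, so one genuinely has to play the geometry of the intersection point $w$ against the comparability of the two triangles' sizes—without a length comparison nothing prevents $\gamma$ and $\gamma'$ from being nearly parallel while $|\gamma\times\beta|$ stays bounded below. The only real bookkeeping is to keep straight which endpoint of $\beta$ plays the role of $P$, and, in the same-endpoint case, which of $\gamma,\gamma'$ is the one that enters the other triangle; since the roles in the conclusion are not symmetric, it is cleanest to establish the symmetric inequality $b\max(v_2,v_2')\le 2|\gamma\times\gamma'|$ before performing the interchange.
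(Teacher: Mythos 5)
Your setup, the identity $|\gamma\times\beta|=b v_2$, and the observation that the longer non-$\beta$ side of each triangle issues from the endpoint farther from the foot of the altitude (so its horizontal holonomy has modulus $\geq b/2$) are all fine. The ``different endpoints'' case is also correct and is a clean version of the paper's Case~(b): the two signed terms of $|\gamma\times\gamma'|$ add, one of them is already $\geq (b/2)v_2$, and the length-ratio hypothesis is unused.

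The gap is the same-endpoint case, which is the heart of the lemma (the paper's Cases~(a)/(c)). You do not actually prove it. The paragraph asserts that developing $\Delta'$, following $\gamma$ from $P$ until it pierces the side $\rho'$ opposite $P$, and then ``combining that relation with the position along $\gamma$ of the interior intersection point $w\in\gamma\cap\gamma'$'' will yield $|\gamma\times\gamma'|\geq \tfrac12 bv_2$, but no computation is given, and the sketch does not close. Tracing $\gamma$ through the $\Delta'$-chart gives only the exit parameter $t=\dfrac{bv_2'}{|\gamma\times\gamma'|+bv_2}\leq 1$, i.e.\ $|\gamma\times\gamma'|\geq b(v_2'-v_2)$, which is vacuous when $v_2'\leq v_2$; meanwhile the crossing point $w$ is \emph{not visible} in that chart, because in the $\Delta'$-development $\gamma$ and $\gamma'$ are two straight segments issuing from $P$ in distinct directions and meet only at $P$, so $w$ lies strictly beyond the exit point and outside the developed region. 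There is no ``explicit linear relation'' in that chart to combine with the position of $w$. The parenthetical ``(so that the lengths, hence the heights, of the two triangles are comparable)'' is also false as stated: comparable $|\gamma|,|\gamma'|$ do not give comparable $v_2,v_2'$, since $v_2=|\gamma|\sin\theta$ and $v_2'=|\gamma'|\sin\theta'$ and the angles can be wildly different; the whole difficulty of the lemma is precisely to control $v_2$ in terms of $|\gamma\times\gamma'|$ when the angles are far apart.

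The paper's resolution is to work in the development of $\Delta$ (not $\Delta'$), after shearing so that $\gamma'$ is vertical. There the intersection $w$ \emph{is} visible as a point on the side $\gamma$, the contradiction hypothesis $|\gamma'\times\beta|>|\gamma\times\gamma'|$ forces $\Delta$ into the strip of width $b$ bounded by lines parallel to $\gamma'$, and the vertical piece of $\gamma'$ entering $\Delta$ at $w$ then has nowhere to go but through the interior of $\beta$, a contradiction. That establishes $|\gamma'\times\beta|\leq |\gamma\times\gamma'|$, and the length-ratio together with an angle comparison at the shared endpoint of $\gamma,\gamma'$ converts it into $|\gamma\times\beta|\leq 2|\gamma\times\gamma'|$. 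Your sketch never uses the intersection point in a chart where it is actually present, so as written there is a genuine gap; switching to the $\Delta$-chart and shearing, as in the paper, is the missing idea.
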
   
 \begin{proof}
 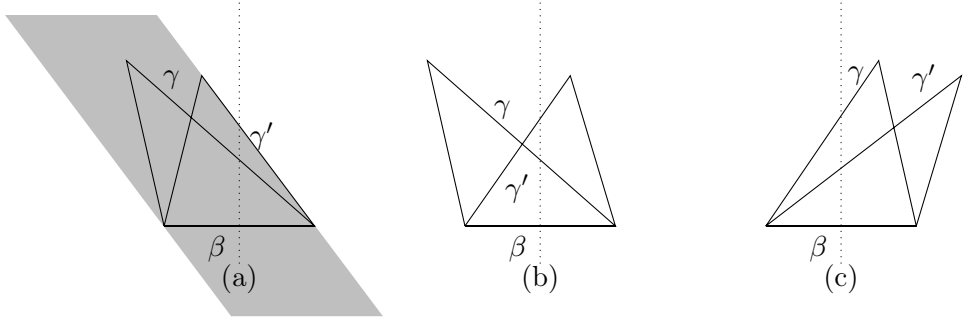
\begin{figure}
 	\begin{tikzpicture}
 	\draw[fill, lightgray](2.9-4,-1.2) -- (2-4,0)-- (-0.1-4,2.8)--(-2.1-4,2.8)-- (0.9-4,-1.2);
 		\draw(0-4,0) -- (2-4,0)-- (0.5-4,2)--(0-4,0) (0-4,0)--(2-4,0)--(-0.5-4,2.2)--(0-4,0); 
 		\draw[dotted](1-4,-0.5)--(1-4,3);
 		\draw(0.7-4,0)node[below]{$\beta$} 
 		(0.1-4,1.7)node[above]{$\gamma$}(1.3-4,1.2)node{$\gamma'$} (1-4,-0.7)node{(a)};
 		
 		\draw(0,0) -- (2,0)-- (1.4,2)--(0,0) (0,0)--(2,0)--(-0.5,2.2)--(0,0); 
 		\draw[dotted](1,-0.5)--(1,3);
 		 		\draw(0.7,0)node[below]{$\beta$} 
 		(0.5,1.3)node[above]{$\gamma$}(0.4,0.5)node[right]{$\gamma'$} (1,-0.7)node{(b)};

 		\draw(0+4,0) -- (2+4,0)-- (0.6+4+2,2)--(0+4,0)(0+4,0)--(2+4,0)--(-0.5+4+2,2.2)--(0+4,0); 
 		\draw[dotted](1+4,-0.5)--(1+4,3);
 		\draw(0.7+4,0)node[below]{$\beta$} 
 		(1.2+4,1.75)node[above]{$\gamma$}(2.1+4,1.65)node[above]{$\gamma'$} (1+4,-0.7)node{(c)};
\end{tikzpicture}
\caption{Relative positions of triangles determined by $\gamma$ and $\gamma'$.}
\label{Fig:small:triangle}
\end{figure}
 	
Without loss of generality, we may assume that
the third vertex of $\Delta'$ that is above $\beta$ sits to the right of the third vertex of $\Delta$ (see Figure \ref{Fig:small:triangle}). Consider the position of the third vertex of $\Delta$ relative to the perpendicular bisector of $\beta$. There are three possibilities as indicated in Figure~\ref{Fig:small:triangle} (a)-(c). After reflecting $\gamma$ and $\gamma'$, we see that Case~(c) is equivalent to Case~(a).  We now consider Case~(a) and Case~(b).   
For Case~(b), we claim that $$|\gamma\times\beta|\leq |\gamma\times\gamma'|.$$  Indeed, consider the portion of $\gamma'$ interior to the triangle bounded by $\gamma$ and $\beta$.  Then the cross product of $\gamma$ with the vector corresponding to that portion of $\gamma'$ is equal to $|\gamma \times \beta|$ because they bound triangles of equal area.  However, $\gamma'$ is longer than the portion interior to the triangle because it ends at a singularity and $\Delta$ does not contain a singularity in its interior, so the inequality follows.
 
 For Case~(a), if $|\gamma'\times\beta|\leq |\gamma\times\gamma'|$, since the interior angle of $\Delta$ bounded by  $\beta$ and $\gamma$ is less than the interior angle of $\Delta'$ bounded by $\gamma'$ and $\beta$, and $|\gamma'|/2\leq |\gamma|\leq 2|\gamma'|$, then 
 $$|\gamma\times \beta|\leq 2|\gamma'\times\beta|\leq 2|\gamma'\times\gamma|.$$
 This proves the lemma for this case. 

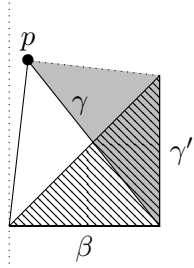
\begin{figure}
\centering
 	\begin{tikzpicture}
	 	\draw[fill, lightgray](2,0)--(0.25,2.2)--(2,2);
 		\draw[fill, pattern={north west lines}](0,0) -- (2,0)-- (2,2); 		
 		\draw(0,0) -- (2,0)-- (2,2)--(0,0) (0,0)--(2,0)--(0.25,2.2)--(0,0); 
 		\draw[dotted](0,-0.5)--(0,3);
 		\draw[dotted](2,2)--(0.25,2.2);
		\draw(0.25,2.2)node{\textbullet}(0.25,2.2)node[above]{$p$};
 		\draw(1,0)node[below]{$\beta$} 
 		(0.7,1.6)node[right]{$\gamma$}(2,1)node[right]{$\gamma'$};
	\end{tikzpicture}
\caption{Case (a) with $\gamma'$ sheared to a vertical saddle connection.}
\label{Fig:triangle:area:arg}
\end{figure}

Next we claim that it is impossible for $|\gamma'\times\beta| > |\gamma\times\gamma'|$ and proceed by contradiction to prove it.  Figure~\ref{Fig:triangle:area:arg} shows that if $|\gamma'\times\beta| > |\gamma\times\gamma'|$, then the point $p$ has to lie to the right of the dashed line in Figure~\ref{Fig:triangle:area:arg}.  Indeed, after shearing $\gamma'$ to vertical, $\beta$ becomes the altitude of the striped triangle and bounds the altitude of the shaded triangle, where both triangles share base $\gamma'$.  Therefore, the cross product condition bounds the area of the triangle with side $\gamma$.  In Figure~\ref{Fig:small:triangle}(a), the triangle with side $\gamma$ corresponds to the triangle $\Delta$ determined by $ \gamma$ and $\beta$ and this proves it is contained in the bi-infinite strip that contains $\beta$ and whose boundary sides are parallel to $\gamma'$.  

Recall that, by assumption,   the saddle connections $\gamma$ and $\gamma'$ interiorly intersect. Therefore, if we focus on the intersection of $\gamma$ and $\gamma'$ and travel along $\gamma'$, which necessarily lies in the interior of $\Delta$, which does not contain any singularities because it is an embedded triangle, we conclude that $\gamma'$ must exit $\Delta$.  However, since the slope of the third side of $\Delta$ is bigger than the slope of $\gamma'$ because it lies in the strip bounded by $\gamma'$, $\gamma'$ has to intersect the interior of $\beta$, which is a contradiction.  Thus, this case does not occur.
\end{proof}

\subsection{Moduli Space of Translation Surfaces}
\label{ModSpTransSurfs:Section}

Translation surfaces have natural moduli spaces known as strata.  Consider translation surfaces of genus $g$.  Let $\kappa$ be a partition of $2g-2$.  A \emph{stratum} $\cH(\kappa)$ of translation surfaces is the set of all pairs $(X, \omega)$, where the zeros of $\omega$ are specified by $\kappa$ and two pairs are equivalent if they differ by an element of the mapping class group.  The stratum $\cH(\kappa)$ admits a natural action by the positive reals, which stratifies it further by area.  Denote by $\cH_A(\kappa)$ be the level set of area $A$ translation surfaces in $\cH(\kappa)$.  If $\kappa$ has length $n$, then the complex dimension of $\cH(\kappa)$ is $2g + n - 1$.

It was proven in \cite{MasFinMeasErg, VeechFinMeasErg, MasurSmillieHausdDim} that $\cH_A(\kappa)$ admits a finite measure defined by a pullback of a natural coordinate system known as period coordinates, which is known as the \emph{Masur-Smillie-Veech measure}.  In this work, we will only consider this measure on strata of unit area surfaces.  We denote by $\mu$ the Masur-Smillie-Veech measure on $\cH_1(\kappa)$. 

There is a natural action on $\cH(\kappa)$ by $\text{GL}_2^+(\bR)$ given by splitting the real and imaginary parts of the differential.  It follows that $\cH_A(\kappa)$ admits an action by $\text{SL}_2(\bR)$.  
Furthermore, the measure $\mu$ is invariant under this action.  Two important $1$-parameter subgroups of $\text{SL}_2(\bR)$ are known as the \emph{Teichm\"uller flow}
$$g_t =\left( \begin{array}{cc} e^t & 0 \\ 0 & e^{-t} \end{array} \right),$$ 
and the \emph{horocycle flow}
$$u_s=\left( \begin{array}{cc} 1 & s \\ 0 & 1 \end{array} \right).$$ 
Both of these actions will play crucial roles in this work because they will serve as renormalization flows.  The Teichm\"uller flow will be used to contract long vertical trajectories so that they can be studied as short trajectories on deformed surfaces, and the horocycle flow will be used to uncover saddle connections with desirable properties while fixing a horizontal slit.

\subsection{Hyperelliptic Translation Surfaces}
\label{HypTransSurf:Section}

In this section we collect all of the necessary results concerning the structure of hyperelliptic translation surfaces that will be used below.

\subsubsection{Definitions}

A translation surface $(X, \omega)$ is called \emph{hyperelliptic} if there is an involution $\tau:X\to X$ satisfying $\tau^{2}=Id$ and $\tau^*(\omega)=-\omega$.  Throughout this work, $\tau$ will denote the hyperelliptic involution.
Moreover $(X, \omega)/\tau$ is the Riemann sphere 
and the Abelian differential $\omega$  is the holonomy double cover of a quadratic differential on the  sphere.  

With respect to the flat structure, $\tau$ can be viewed as rotation by angle $\pi$.  Consequently, the hyperelliptic property is invariant under the action by $\text{GL}_2^+(\bR)$.  Another key consequence of this rotation, that will be used throughout this work is that given an embedded triangle with a side $\beta$ that is invariant under $\tau$, we will apply the hyperelliptic involution to this triangle to obtain a parallelogram containing $\beta$ as a diagonal.

The strata $\cH(\kappa)$ are not necessarily connected, but the connected components have been classified by \cite{KontsevichZorichConnComps}. If $\kappa = (2g-2)$ or $(g-1,g-1)$, then $\cH(\kappa)$ contains a special component called a
\emph{hyperelliptic component}, which will be denoted by $\cH^{hyp}(\kappa)$. 
 We shall call $\cH^{hyp}(\kappa)$ a hyperelliptic stratum for short.
In the case of $\cH^{hyp}(g-1,g-1)$, the map $\tau$ interchanges the zeroes.

Since hyperelliptic strata will be the focus of this work, we observe that the hyperelliptic involution splits saddle connections into two types.  Let $\sigma$ be a saddle connection.  Then either $\sigma$ is an \emph{invariant saddle connection} or otherwise, it is a \emph{non-invariant saddle connection} with respect to $\tau$.

\subsubsection{Results}

The existence of the hyperelliptic involution imposes very strong restrictions on the structure of the translation surface.  We recall some properties here and prove others that will be essential to our analysis below.  The following lemma \cite[Lem.~6.5]{NguyenPanSu2020} will be fundamental to this paper.

\begin{lemma}[Nguyen-Pan-Su]
\label{TwoBdSC:Lemma}
Let $(X, \omega)$ be a hyperelliptic translation surface.  If $\sigma$ is a non-invariant saddle connection, then $(X, \omega) \setminus \{\sigma, \tau(\sigma)\}$ is disconnected.
Furthermore, if we cut along $\{\sigma, \tau(\sigma)\}$ to obtain two surfaces and glue them along their respective boundaries to obtain two closed surfaces, then they each lie in hyperelliptic strata with dimensions strictly less than the one containing $(X, \omega)$, and $\sigma$ and $\tau(\sigma)$ are identified to form an invariant saddle connection.
\end{lemma}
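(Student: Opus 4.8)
The plan is to prove Lemma~\ref{TwoBdSC:Lemma} in two stages: first the topological statement that $(X,\omega)\setminus\{\sigma,\tau(\sigma)\}$ is disconnected, and then the structural statement about the pieces after cutting and regluing. Since the lemma is cited verbatim from \cite{NguyenPanSu2020}, the ``proof'' here really amounts to recalling the argument and pointing to the reference, but I would want to at least sketch why it holds.

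\medskip

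\noindent\textbf{Step 1: Disconnectedness.} The key observation is that $\tau$ acts on $X$ as a conformal (in fact, flat-metric) involution with $2g+2$ fixed points (the Weierstrass points), and the quotient $X/\tau$ is the sphere. Since $\sigma$ is non-invariant, $\sigma$ and $\tau(\sigma)$ are distinct saddle connections, and $\tau$ interchanges them, so the union $\sigma\cup\tau(\sigma)$ is a $\tau$-invariant subset of $X$. Its image $\bar\sigma$ in $X/\tau=\mathbb{S}^2$ is a single arc (or possibly a simple closed curve through branch points, depending on the endpoints), but in any case $\sigma\cup\tau(\sigma)$ is the full preimage of $\bar\sigma$. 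I would argue that $\bar\sigma$ together with the branch locus behaves so that its complement in $\mathbb{S}^2$ has components whose preimages are separated; concretely, one shows directly that $\sigma\cup\tau(\sigma)$ is a \emph{topologically separating} multicurve by a Euler characteristic / homology count: the class $[\sigma]+[\tau(\sigma)]$ (with appropriate orientations) is anti-invariant-made-invariant and, because $\tau$ acts on $H_1(X;\mathbb{Q})$ as $-\mathrm{Id}$, it must be trivial in homology, forcing $\sigma\cup\tau(\sigma)$ to bound. The cleanest route is to pass to $X/\tau$: there $\bar\sigma$ is an embedded arc between two points of the sphere, and removing an embedded arc from $\mathbb{S}^2$ leaves it connected but removing its regular neighborhood together with observing how the double cover is built over the two resulting sides gives two pieces upstairs — this is exactly the argument in \cite{NguyenPanSu2020} and I would reproduce it rather than reinvent it.

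\medskip

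\noindent\textbf{Step 2: The two pieces after cutting.} Cut $X$ along $\sigma$ and along $\tau(\sigma)$ to get two surfaces with boundary, say $Y_1$ and $Y_2$, each with two boundary saddle connections (one copy of $\sigma$ and one copy of $\tau(\sigma)$, since $\tau$ swaps the two and restricts to a homeomorphism $Y_1\to Y_2$... more precisely $\tau$ interchanges the two boundary components within each $Y_i$ if $\sigma,\tau(\sigma)$ lie on the same side, which is the generic configuration). The involution $\tau$ descends to an involution on each $Y_i$ (or conjugates $Y_1$ to $Y_2$); in the separating case relevant here, $\tau$ preserves each $Y_i$ setwise and swaps its two boundary arcs. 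Now glue the two boundary arcs of $Y_i$ to each other by the translation identification — this is possible precisely because $\sigma$ and $\tau(\sigma)$ are parallel of equal length (they are $\tau$-images of each other, hence related by rotation by $\pi$, hence anti-parallel and equal length, so they glue). The resulting closed surface $\widehat Y_i$ inherits the flat structure and inherits the involution $\tau|_{Y_i}$, which now has the glued-up arc as an \emph{invariant} saddle connection; this shows $\widehat Y_i$ is hyperelliptic and $\sigma$ becomes invariant there. The dimension drop is a count: each $\widehat Y_i$ has strictly smaller genus (or the same genus but fewer marked points / a smaller $\kappa$) than $X$ because the separation is nontrivial, so $\dim_{\mathbb C}\cH^{hyp}(\kappa_i) < \dim_{\mathbb C}\cH^{hyp}(\kappa)$; one verifies this from the formula $\dim_{\mathbb C}\cH(\kappa)=2g+n-1$ together with the additivity of Euler characteristic under the cut-and-glue.

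\medskip

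\noindent\textbf{Main obstacle.} The genuinely delicate point is bookkeeping the zeros of $\omega$ and the fixed points of $\tau$ across the surgery: one must check that $\widehat Y_i$ really lands in a \emph{hyperelliptic} stratum (not just carries some involution), which requires confirming that the descended involution still has the right number of fixed points to be a hyperelliptic involution in the Kontsevich–Zorich sense, and that the order data $\kappa_i$ are of the form $(2g_i-2)$ or $(g_i-1,g_i-1)$. This is where the structure of hyperelliptic components genuinely enters, and where I would lean most heavily on \cite{NguyenPanSu2020} rather than redo the case analysis. The dimension strict inequality then follows essentially for free once the strata types are pinned down. Since this lemma is quoted rather than proved in the present paper, I would simply cite \cite[Lem.~6.5]{NguyenPanSu2020} for the full argument and record the statement, as the authors have done.
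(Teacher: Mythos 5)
The paper does not give a proof of Lemma~\ref{TwoBdSC:Lemma}; it states it and cites \cite[Lem.~6.5]{NguyenPanSu2020}, which is precisely what you propose doing. Your supplementary sketch is broadly sound---in particular the homology argument (the cycle $[\sigma]+[\tau(\sigma)]$ is $\tau_*$-invariant while $\tau_*=-\mathrm{Id}$ on $H_1(X;\mathbb{Q})$, so it vanishes and the pair separates)---and you rightly defer the delicate zero/Weierstrass-point bookkeeping in the second half to the cited reference, exactly as the paper does.
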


\begin{remark}
It is immediate from Lemma~\ref{TwoBdSC:Lemma} that the lengths of saddle connections that are interiorly disjoint from $\sigma\cup \tau(\sigma)$ and the angles between two such saddle connections do not change under this operation.
\end{remark}

We now observe a fundamental decomposition for translation surfaces in hyperelliptic strata.  In $\cH^{hyp}(4)$, it appears as \cite[Thm.~1.1]{NguyenParallelogramH4}.

\begin{lemma}
\label{Parallelogramulation:Lemma}
Let $(X, \omega) \in \cH^{hyp}(\kappa)$ be a hyperelliptic translation surface with hyperelliptic involution $\tau$, and let $d = \dim_\bC \cH^{hyp}(\kappa)$.  Then $(X, \omega)$ admits a decomposition into $d-1$ parallelograms such that the vertices of each parallelogram lie at the singularities of $(X, \omega)$ and each parallelogram is invariant under $\tau$.
\end{lemma}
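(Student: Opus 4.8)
\medskip

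The plan is to induct on $d = \dim_\bC \cH^{hyp}(\kappa)$, using Lemma~\ref{TwoBdSC:Lemma} to peel off one parallelogram at a time. The base cases are $\cH^{hyp}(0) = \cH(0)$ (the torus, $d=1$, which is a single parallelogram—tautologically invariant under $\tau$ up to the choice of base point) and $\cH^{hyp}(0,0)$ or $\cH^{hyp}(2)$ with $d=2$, where one checks directly that the surface is two parallelograms glued along an invariant diagonal (this is essentially the ``$L$-shaped'' or slit-torus picture, cf. Section~\ref{SlitTorusUB:subsect}). For the inductive step, suppose the result holds for all hyperelliptic strata of dimension $< d$. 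Given $(X,\omega) \in \cH^{hyp}(\kappa)$ with $d \geq 3$, the first step is to produce a \emph{simple cylinder} $\euC$ on $(X,\omega)$ whose core curve is invariant (equivalently, whose two boundary saddle connections are swapped by $\tau$, or a single invariant boundary saddle connection). Such cylinders always exist: pick any saddle connection $\sigma$ of minimal length; standard arguments (or the structure of quadratic differentials on the sphere downstairs) produce a simple cylinder, and on a hyperelliptic surface its geometry is controlled by $\tau$. Concretely, the core curve $c$ of any cylinder on $(X,\omega)$ is either fixed by $\tau$ or $\tau(c)$ is a disjoint parallel core curve; in the simple case one can arrange the former. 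Excising the interior of $\euC$ and regluing the two boundary saddle connections $\rho, \tau(\rho)$ to each other produces a closed hyperelliptic surface $(X',\omega')$ with a distinguished invariant saddle connection $\beta' = \rho = \tau(\rho)$, lying in a hyperelliptic stratum of dimension exactly $d-1$ (Lemma~\ref{TwoBdSC:Lemma} and the dimension bookkeeping there—removing a simple cylinder drops the complex dimension by one).

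\medskip

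The second step is to apply the induction hypothesis to $(X',\omega')$: it decomposes into $d-2$ parallelograms, each invariant under the hyperelliptic involution $\tau'$ of $(X',\omega')$. The third step is to reverse the surgery and reassemble. The cylinder $\euC$ we excised is a single parallelogram: a simple cylinder with boundary saddle connections $\rho$ and $\tau(\rho)$, cut open along a saddle connection joining the two boundary components, is a parallelogram with sides $\rho$ (two copies) and that transversal; and since $\tau$ acts on $\euC$ as rotation by $\pi$ fixing its center, this parallelogram is $\tau$-invariant with $\beta' $ (or rather the transversal/the core) as the relevant invariant feature—here I should be careful to set up $\euC$ so that the rotation-by-$\pi$ picture identifies $\euC$ with a parallelogram having the invariant saddle connection as a \emph{diagonal}, exactly as in the ``embedded triangle $\to$ parallelogram'' construction highlighted after the definition of hyperelliptic translation surfaces. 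Gluing $\euC$ back to $(X',\omega')$ along $\beta'$: the $d-2$ parallelograms of $X'$ together with the one parallelogram $\euC$ give $d-1$ parallelograms on $(X,\omega)$, with vertices at singularities and each invariant under $\tau$. The one point requiring care is that $\tau$ on $(X,\omega)$ restricts to $\tau'$ on $X' = (X,\omega)\setminus\euC$ and to the order-two rotation on $\euC$—this follows because the hyperelliptic involution is unique and the surgery is $\tau$-equivariant (we cut along $\{\rho,\tau(\rho)\}$, a $\tau$-invariant set).

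\medskip

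I expect the main obstacle to be the \textbf{existence and the correct $\tau$-equivariant setup of the simple invariant cylinder} used to start the induction, together with verifying that excising it lands in a hyperelliptic stratum of dimension exactly $d-1$ (rather than merely a smaller hyperelliptic stratum). Lemma~\ref{TwoBdSC:Lemma} gives the disconnection and the hyperellipticity and a strict dimension drop, but pinning the drop to exactly one requires choosing $\euC$ to be a simple cylinder—so that only one ``parallelogram's worth'' of data is removed—and one must argue such a simple cylinder with invariant core always exists on any $(X,\omega)\in\cH^{hyp}(\kappa)$. This can be done by descending to the quadratic differential $q$ on $\bP^1 = (X,\omega)/\tau$ and finding an appropriate cylinder or half-cylinder there whose preimage is a simple cylinder with $\tau$-invariant core, or alternatively by a direct surgery/induction on a triangulation of $(X,\omega)$ by invariant triangles and their $\tau$-images. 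The remaining steps—cutting, applying induction, regluing, and counting parallelograms—are then bookkeeping, with the ``rotation by $\pi$'' interpretation of $\tau$ making the invariance of each parallelogram immediate.
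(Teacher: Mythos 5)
Your overall strategy --- induct on dimension by peeling off a $\tau$-invariant simple cylinder and invoking Lemma~\ref{TwoBdSC:Lemma} --- is the same as the paper's. But there is a genuine gap in the reassembly step, and it is in fact the step you treat as mere ``bookkeeping.'' After excising the cylinder $\euC$ and forming $(X',\omega')$ with slit $\beta'$, you apply the lemma \emph{as stated} to $(X',\omega')$, obtaining some parallelogram decomposition of $X'$. But nothing in the lemma statement controls where $\beta'$ sits relative to that decomposition: $\beta'$ could cut transversally through several of the parallelograms of $X'$. In that case, cutting $X'$ open along $\beta'$ to re-insert $\euC$ destroys those parallelograms, and ``the $d-2$ parallelograms of $X'$ together with $\euC$'' is not a decomposition of $X$ at all. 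The paper circumvents this by proving a strictly stronger statement by induction: given an invariant saddle connection $\sigma$, the parallelogram decomposition may be chosen so that $\sigma$ is \emph{internal} (a diagonal) to one of the parallelograms. That added control on the position of $\sigma'$ (your $\beta'$) in the decomposition of $(X',\omega')$ is what makes reversing the surgery viable, and it is also what lets the paper \emph{start} the inductive step: the cylinder is found not ``by standard arguments on the sphere downstairs'' but by applying \cite[Prop.~6.1]{NguyenPanSu2020}, which requires as input a distinguished invariant saddle connection --- namely the $\sigma$ carried through the strengthened induction --- and produces a simple cylinder containing it; invariance of that cylinder is then \cite[Lem.~2.1]{LindseyInvCompsHyp}.

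Two smaller remarks. First, you single out ``pinning the dimension drop to exactly one'' as an obstacle, but the paper sidesteps this entirely: it deduces the count $d-1$ at the end from a separate dimension-counting argument (two complex dimensions for the first parallelogram, one for each subsequent one), so the inductive step only needs the dimension to \emph{decrease}, which is what Lemma~\ref{TwoBdSC:Lemma} already gives. Second, your base-case dimensions are off: $\dim_\bC\cH(0,0)=3$ and $\dim_\bC\cH^{hyp}(2)=4$, not $2$; the paper's base case is the once-punctured torus, formulated in the strengthened form (realize it as a parallelogram with $\sigma$ as a side, then cut along a diagonal and reglue so that $\sigma$ becomes the diagonal).
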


\begin{proof}
In fact, we will prove a stronger statement and proceed by induction to do so.  Let $\sigma$ be a saddle connection on $(X, \omega)$ that is invariant under $\tau$.  We will prove that there is a decomposition of $(X, \omega)$ into parallelograms such that $\sigma$ is internal to one of the parallelograms.  For the base case, if $(X, \omega)$ is a once punctured torus with a distinguished saddle connection $\sigma$ (from the puncture to itself), then it can be realized as a single parallelogram with $\sigma$ as one side.  By cutting along a diagonal of this parallelogram and gluing $\sigma$ to its copy on the other side of the parallelogram, we see that the claim holds.

For the induction assumption, assume that the statement holds for all strata of dimension less than that of $\cH^{hyp}(\kappa)$.  Let $(X, \omega)$ have a distinguished saddle connection $\sigma$.  By \cite[Prop.~6.1]{NguyenPanSu2020}, there is a simple cylinder $C$ on $(X, \omega)$ containing $\sigma$ in its interior.  Then $C$ is invariant under the hyperelliptic involution by \cite[Lem.~2.1]{LindseyInvCompsHyp}.  Therefore, it contains a boundary saddle connection $\sigma'$ that is not invariant under the hyperelliptic involution and by Lemma~\ref{TwoBdSC:Lemma}, after cutting along $\sigma' \cup \tau(\sigma')$ we get $C$ and a hyperelliptic translation surface $(X', \omega')$, with a distinguished saddle connection $\sigma'$ by abuse of notation, which \emph{is} invariant under the hyperelliptic involution because it was glued from the two boundary components that were non-invariant on $(X, \omega)$, and $(X', \omega')$ lies in a stratum with dimension strictly less than that of $\cH^{hyp}(\kappa)$.  Therefore, we apply the inductive assumption to $(X', \omega')$ with distinguished invariant saddle connection $\sigma'$, observe that $C$ is a parallelogram with $\sigma$ interior to it, and by reversing the surgery, obtain a decomposition of $(X, \omega)$ into parallelograms such that $\sigma$ is interior to one of them.

Finally, we note that a parallelogram has two complex dimensions.  If we identify two parallelograms, the second parallelogram adds one complex dimensions.  Hence, every surface can be decomposed into at most $d-1$ parallelograms.
\end{proof}

\begin{lemma}
\label{TauInvSubsurfBd:Lemma}
Let $(Y, \eta)$ be a subsurface of $(X, \omega)$ that is invariant under $\tau$.  Then there are saddle connections $\sigma_i \subset (X, \omega)$ such that $\sigma_i \not= \tau(\sigma_i)$ and $\partial Y = \bigcup\limits_i \left( \sigma_i \cup \tau(\sigma_i)\right)$.  In particular, the boundary of $Y$ consists of an even number of saddle connections.   
\end{lemma}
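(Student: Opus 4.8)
The plan is to exploit the fact that on a hyperelliptic translation surface $\tau$ acts as rotation through angle $\pi$, and to rule out an invariant saddle connection lying on $\partial Y$ by a purely local argument at its midpoint; the parity statement is then formal.

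First I would record the formal reductions. Since $\tau$ is an isometric homeomorphism of $(X,\omega)$ with $\tau(Y)=Y$, it satisfies $\tau(\partial Y)=\partial(\tau(Y))=\partial Y$, and since an isometry carries saddle connections to saddle connections, $\tau$ restricts to an involution of the finite set $S$ of saddle connections comprising $\partial Y$. If this involution had no fixed point, then $S$ would decompose into two-element orbits $\{\sigma_i,\tau(\sigma_i)\}$ with $\sigma_i\ne\tau(\sigma_i)$, and we would immediately obtain $\partial Y=\bigcup_i(\sigma_i\cup\tau(\sigma_i))$ together with $|S|$ even. So the entire content of the lemma is the claim that no $\sigma\in S$ is invariant under $\tau$.

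To prove that claim I would argue by contradiction. Suppose $\sigma\in S$ satisfies $\tau(\sigma)=\sigma$. Orienting $\sigma$ and using $\tau^*\omega=-\omega$ gives $\mathrm{hol}(\tau_*\sigma)=-\mathrm{hol}(\sigma)\ne 0$, so $\tau$ must reverse the orientation of $\sigma$; hence $\tau$ exchanges the two endpoints of $\sigma$, and being an isometric involution of the segment $\sigma$ it fixes its midpoint $p$. Since $p$ lies in the interior of a saddle connection, $p$ is a regular point of the flat metric, so near $p$ the map $\tau$ is an orientation-preserving isometry of a small flat disk $D$ centered at $p$, fixing $p$, with $\tau^2=\mathrm{id}$; as $\tau$ cannot be the identity on any open set (it is a nontrivial isometric involution of the connected surface $X$), it must be rotation by $\pi$ about $p$. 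Then $\sigma\cap D$ is a diameter of $D$ dividing it into half-disks $D^+$ and $D^-$ with $\tau(D^+)=D^-$. Because $Y$ is a subsurface with boundary and $p$ is an interior point of the boundary saddle connection $\sigma$, for $D$ small enough exactly one of these half-disks, say $D^+$, satisfies $\mathrm{int}(D^+)\subset\mathrm{int}(Y)$, while $\mathrm{int}(D^-)\cap Y=\emptyset$. Applying $\tau$ and using $\tau(\mathrm{int}(Y))=\mathrm{int}(\tau(Y))=\mathrm{int}(Y)$ gives $\mathrm{int}(D^-)=\tau(\mathrm{int}(D^+))\subset\mathrm{int}(Y)$, contradicting $\mathrm{int}(D^-)\cap Y=\emptyset$. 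This contradiction shows $S$ contains no invariant saddle connection, and the lemma follows.

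The one genuinely delicate point is the last local picture: one needs that the midpoint of a hypothetical invariant boundary saddle connection is a regular point — so that $\tau$ is literally a $\pi$-rotation of a round flat disk there and genuinely interchanges the two sides of $\sigma$ — which relies on saddle connections having no singularities in their interiors, and one needs to know precisely how $Y$ sits on a single side of $\sigma$ near an interior point of $\sigma$. Everything else (the orbit count, the holonomy sign computation, and the classification of orientation-preserving isometric involutions of a disk) is routine.
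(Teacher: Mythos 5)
Your proof is correct and takes essentially the same approach as the paper's: the only nontrivial content is that an invariant boundary saddle connection is impossible, and you establish this by the local rotation-by-$\pi$ picture at the fixed midpoint, which forces both sides of $\sigma$ to lie in (or out of) $Y$. The paper's proof is much terser, simply asserting that such a $\sigma$ would be interior to $Y$, but it is gesturing at exactly this argument; your write-up supplies the justification that the paper leaves implicit.
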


\begin{proof}
Let $\sigma$ be a saddle connection in the boundary of $(Y, \eta)$.  Then $\sigma$ cannot be invariant under $\tau$ because $(Y, \eta)$ is closed, and so $\sigma$ would lie in the interior of $(Y, \eta)$.  Therefore, the invariance of $(Y, \eta)$ under $\tau$ implies that $\tau(\sigma)$ is a boundary saddle connection as well.  Since this holds for all boundary saddle connections, the lemma follows.
\end{proof}

\begin{lemma}\label{lem:triangulation}
Let $(X,\omega) \in \cH^{hyp}(\kappa)$ have two interiorly disjoint saddle connections $\alpha$ and $\beta$ such that $\tau(\beta)=\beta$.  Let $D$ be the diameter of $(X,\omega)$.  Then there exists a constant $c$ depending on $\kappa$ and a pair $(\sigma,\tau(\sigma))$ of saddle connections with $\sigma \not= \tau(\sigma)$ such that
	\begin{enumerate}[label=\arabic*)]
	\item \label{lem:triangulation:triangleBd} either the triple  $\{\alpha,\beta,\sigma\}$ bounds a triangle, or $\sigma\cup\tau(\sigma)$ topologically separates $\alpha$ from  $\beta$ and $\{\alpha,\sigma\}$ are two sides of a triangle,
	\item \label{lem:triangulation:sigmaBd} 
	$|\sigma|\leq c\cdot  (D+|\alpha|+|\beta|)$, and
 \item \label{lem:triangulation:crossprod} $|\sigma\times\alpha|\leq 2A$, where $A$ is the area of the subsurface bounded by $\sigma\cup\tau(\sigma)$ that contains  $\alpha$.
\end{enumerate}
\end{lemma}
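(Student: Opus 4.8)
The plan is to build the saddle connection $\sigma$ geometrically by triangulating a suitable region of $(X,\omega)$. First I would invoke Lemma~\ref{TwoBdSC:Lemma} (together with the remark following it): if $\alpha$ is non-invariant, then $\alpha\cup\tau(\alpha)$ splits $(X,\omega)$ into two pieces, and I claim $\beta$ lies in one of them — indeed $\beta$ is interiorly disjoint from $\alpha$, hence from $\alpha\cup\tau(\alpha)$ by applying $\tau$. So we may pass to the closed-up hyperelliptic subsurface containing $\beta$ (in a lower-dimensional stratum), with $\alpha$ now identified to an invariant saddle connection; lengths and angles of everything disjoint from $\alpha\cup\tau(\alpha)$ are unchanged. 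Thus it suffices to handle the case where both $\alpha$ and $\beta$ are invariant under $\tau$. In that reduced situation, either $\alpha$ and $\beta$ already share a singularity and one looks for a single saddle connection $\sigma$ completing them to a triangle, or they do not, and one wants $\sigma\cup\tau(\sigma)$ to topologically separate them.

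Next I would produce $\sigma$ itself. The natural move: geodesically connect an endpoint of $\alpha$ to an endpoint of $\beta$ within the surface. Concretely, take the shortest path $\sigma_0$ in $(X,\omega)$ from an endpoint of $\alpha$ to an endpoint of $\beta$; being length-minimizing it is a concatenation of saddle connections, and a standard argument (triangle inequality against the diameter, plus the fact that the endpoints are singularities) lets one replace it, or rather extract from the triangulation it induces, an honest saddle connection $\sigma$ with $|\sigma|\le c(D+|\alpha|+|\beta|)$, giving conclusion~\ref{lem:triangulation:sigmaBd}. One then triangulates the region cut off and chooses $\sigma$ so that $\{\alpha,\sigma\}$ form two sides of an embedded triangle; whether the third side is $\beta$ or not is exactly the dichotomy in~\ref{lem:triangulation:triangleBd}. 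When $\sigma$ is non-invariant, $\tau(\sigma)$ is automatically a saddle connection and $\sigma\cup\tau(\sigma)$ is the topologically separating curve one wants (it separates $\alpha$ from $\beta$ by Lemma~\ref{TwoBdSC:Lemma}); when $\sigma$ turns out to be invariant, one instead arranges the triangle to have $\beta$ as its third side directly (this is where invariance of $\beta$ is used — $\alpha$, $\beta$, $\sigma$ all meet at singularities compatibly). A small case analysis handles which endpoints of $\alpha$ and $\beta$ to join so that the resulting triangle is embedded; the area bound $c_1\epsilon^2$–type estimates are not needed here, only the metric diameter comparison.

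For conclusion~\ref{lem:triangulation:crossprod}, the point is that $|\sigma\times\alpha|$ is (twice) the area of the Euclidean triangle with sides $\sigma$ and $\alpha$. Since that triangle is embedded and lies inside the subsurface cut off by $\sigma\cup\tau(\sigma)$ on the side containing $\alpha$, its area is at most $A$, the area of that subsurface, so $|\sigma\times\alpha|\le 2A$. (In the case where the triple $\{\alpha,\beta,\sigma\}$ bounds a triangle, that triangle is embedded and lies in the relevant piece, and the same bound holds — here one may take $A$ to be the area of the whole side, which certainly dominates.)

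The main obstacle I anticipate is not the length or area estimates, which are soft, but the topological bookkeeping: ensuring that the triangle one extracts is genuinely \emph{embedded} (the hypothesis of Lemma~\ref{lem:smallangle} and the convention throughout the paper), and that when $\sigma$ is non-invariant the curve $\sigma\cup\tau(\sigma)$ really does separate $\alpha$ from $\beta$ rather than, say, separating $\alpha$ from $\tau$ of something. This requires carefully tracking which endpoints lie on which component after cutting, and using the hyperelliptic structure (the rotation-by-$\pi$ picture of $\tau$, and the fact that doubling a triangle across an invariant side gives a $\tau$-invariant parallelogram) to control the combinatorics. Getting the constant $c$ to depend only on $\kappa$ is then routine, since after the reduction to the invariant case the number of saddle connections appearing in any minimal triangulation of the cut-off region is bounded in terms of the genus, hence in terms of $\kappa$.
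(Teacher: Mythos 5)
Your length bound, cross‑product bound, and the initial reduction to invariant $\alpha$ via Lemma~\ref{TwoBdSC:Lemma} all track the paper's proof and are fine. The gap is in Item~\ref{lem:triangulation:triangleBd}, and you correctly flag it as ``the main obstacle'' --- but you do not close it, and it is where the entire content of the lemma lives. Your plan extracts a saddle connection $\sigma$ so that $\{\alpha,\sigma\}$ are two sides of a triangle and asserts that ``whether the third side is $\beta$ or not is exactly the dichotomy.'' It is not. When the third side is not $\beta$, Lemma~\ref{TwoBdSC:Lemma} only tells you that a non-invariant $\sigma$ has $\sigma\cup\tau(\sigma)$ separating the surface into two pieces; it says nothing about whether $\alpha$ and $\beta$ land on opposite sides, and in general they need not. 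You even allow $\sigma$ to come out invariant and then ``arrange the triangle to have $\beta$ as its third side,'' but the lemma requires $\sigma\neq\tau(\sigma)$, so an invariant $\sigma$ is simply not an admissible output; the maneuver you describe is not available.

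The paper resolves both problems at once by building a $\tau$-invariant triangulation of controlled length, taking a triangle $T$ with $\alpha$ as a side, and analysing \emph{both} remaining sides $\sigma$ and $\rho$. They cannot both be invariant: doubling $T$ by $\tau$ across $\alpha$ would produce a parallelogram with opposite sides identified, hence an embedded closed torus, which is impossible. Taking $\sigma$ non-invariant, if $\rho$ is invariant then $T\cup\tau(T)$ is a simple cylinder bounded by $\sigma\cup\tau(\sigma)$ containing $\alpha$ and $\rho$, so either $\beta=\rho$ (triangle alternative) or $\beta$ lies outside the cylinder (separation by $\sigma$). If $\rho$ is also non-invariant, $T\cup\tau(T)$ is a parallelogram with boundary $\{\sigma,\tau(\sigma),\rho,\tau(\rho)\}$; cutting along all four gives three components, and whichever pair $(\sigma,\tau(\sigma))$ or $(\rho,\tau(\rho))$ faces the component containing $\beta$ is the one you output. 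This either-$\sigma$-or-$\rho$ choice --- you may have to take the \emph{other} side of the triangle --- is the step missing from your writeup, and a discussion of $\sigma$ alone cannot be completed without it.
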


\begin{proof} 
We begin by  constructing a $\tau$-invariant triangulation such that every saddle connection in it satisfies Item~\ref{lem:triangulation:sigmaBd}.  In fact,  the diameter of each component of $(X,\omega)\setminus \left\{ \alpha\cup\tau(\alpha)\cup\beta\right\}$ is at most $4|\alpha|+4|\beta|+D$, which is true regardless of whether $\alpha$ is invariant or not under the involution.
We claim we can find a saddle connection $\gamma_1$ on $(X,\omega)\backslash \{\alpha\cup\tau(\alpha)\cup\beta\}$ such that 
\begin{equation*}
	|\gamma_1|\leq 2|\alpha|+2|\beta|+D.
\end{equation*}
The claim follows directly for simply connected components of $(X,\omega)\setminus \left\{ \alpha\cup\tau(\alpha)\cup\beta\right\}$. For a non-simply connected component $Y$, consider a shortest non-contractible geodesic arc $\gamma$ in that component.
 Of course, this geodesic has length at most the diameter of $Y$. Either it contains an interior saddle connection, or it is parallel to a maximal family of geodesic arcs realizing that minimal distance.
In the later case, the union of this family of geodesic arcs is a rectangle.  Consider the pair of boundary sides of this rectangle that are parallel to $\gamma$. Since $Y$ is not simply connected, at least one side of this pair is contained in the interior of $Y$, giving an interior saddle connection of length at most the diameter, while the other side consists of $\alpha$ or $\beta$ and their involutions.  In either case, we found an interior saddle connection of length at most the diameter of $Y$. 
 
Since $(X,\omega)\setminus \left\{ \alpha\cup\tau(\alpha)\cup\beta\right\}$ is $\tau$ invariant, by Lemma~\ref{TwoBdSC:Lemma}, it also contains $\tau(\gamma_1)$.  Similarly, there exists a saddle connection $\gamma_2$ on $(X,\omega)\setminus \left\{ \alpha\cup\tau(\alpha)\cup\beta\cup\gamma_1\cup\tau(\gamma_1)\right\}$ such that
	\begin{eqnarray*}
	  |\gamma_2| &\leq& 2|\gamma_1|+2|\alpha|+2|\beta|+D \\
	   &\leq& 6|\alpha|+6|\beta|+3D.
	\end{eqnarray*}
Inductively, this gives a triangulation of $(X,\omega)$.  Since the number of triangles in the triangulation is bounded in terms of the genus, as we iterate this procedure, we observe that the constant $c$ in Item~\ref{lem:triangulation:sigmaBd} is bounded in terms of the genus as well.  

Next, we use this triangulation to show that Item~\ref{lem:triangulation:triangleBd} is satisfied.  Assume first that $\alpha$ is invariant. Now take a triangle with $\alpha$ as an edge. Denote the other two edges by $\sigma$ and $\rho$.  They cannot both be invariant for then by applying $\tau$ to the triangle, we build a parallelogram with opposite sides identified.  This would give a closed torus embedded in $(X,\omega)$, which is impossible.

Suppose without loss of generality that $\sigma$ is not invariant.  If $\rho$ is invariant, then $\sigma$ defines a simple cylinder containing $\alpha$ which separates $\alpha$ from $\beta$ if $\beta\neq \rho$.  If $\beta=\rho$, then the simple cylinder contains both $\alpha$ and $\beta$.  In the latter case,  $\{\alpha,\beta,\sigma\}$ bounds a triangle. 

If $\rho$ is not invariant, then the collection $\{\sigma$, $\tau(\sigma)$, $\rho,\tau(\rho)\}$  cuts the surface into three components, one of which contains $\beta$ and one of which is a parallelogram containing $\alpha$ and bounded by the union of the four saddle connections.  Since one of the two components in the complement of the parallelogram contains $\beta$, one of the pairs $(\sigma,\tau(\sigma))$ or $(\rho,\tau(\rho))$ separates $\alpha$ from $\beta$. 

Finally, if $\alpha$ is not invariant, then we excise the pair $\alpha \cup \tau(\alpha)$.  By Lemma~\ref{TwoBdSC:Lemma}, this splits the surface into two components - one of which contains $\beta$.  We consider the component that contains $\beta$ and identify the two boundary components that corresponded to $\alpha$ and $\tau(\alpha)$.  The resulting saddle connection is invariant under $\tau$ by construction, and we apply the argument above.

We observe that by forming a cylinder or parallelogram containing $\alpha$ with $\sigma$, the cross product  claim in Item~\ref{lem:triangulation:crossprod} follows.
\end{proof}

The final lemma of this section is an important result for the counting below.  It says that there are not many saddle connections forming a triangle with the slit that are much shorter than the slit.

Let $\gamma$ be a saddle connection forming a triangle with $\beta$.  We denote by $P_\gamma$ the parallelogram determined by $\gamma$ and $\beta$, i.e., the union of the triangles formed by $\{\beta,\gamma\}$ and $\{\beta,\tau(\gamma)\}$. 

\begin{lemma}\label{lem:shortL}
Let $(X,\omega)\in\cH^{hyp}(\kappa)$ and let $\beta$ be an invariant saddle connection on $(X,\omega)$ of size $\ell$.  Then there exists at most one saddle connection $\gamma$ of length $|\gamma|\leq |\beta|/2$ such that $P_\gamma$ is a simple cylinder on $(X,\omega)$.
\end{lemma}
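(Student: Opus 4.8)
The plan is to argue by contradiction. Suppose there are two distinct saddle connections $\gamma_1$ and $\gamma_2$, each of length at most $|\beta|/2$, such that both $P_{\gamma_1}$ and $P_{\gamma_2}$ are simple cylinders on $(X,\omega)$ containing $\beta$ as a diagonal. Since $\beta$ is invariant, the hyperelliptic involution $\tau$ carries the triangle $\{\beta,\gamma_i\}$ to the triangle $\{\beta,\tau(\gamma_i)\}$, so $P_{\gamma_i}$ is genuinely a parallelogram with $\beta$ as its diagonal and is $\tau$-invariant, with $\tau$ fixing the midpoint of $\beta$. First I would record the geometric picture: placing $\beta$ horizontal, each $\gamma_i$ together with $\beta$ bounds an embedded triangle; call $\gamma_i$ the one above $\beta$ (after relabelling with $\tau$), so the cylinder $P_{\gamma_i}$ has core curves parallel to $\gamma_i$, boundary consisting of $\beta$ on one side (wait — in a simple cylinder each boundary is a single saddle connection, so one boundary is the saddle connection forming the "long side" and $\beta$ is the diagonal). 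The key quantitative input is that $|\gamma_i| \le |\beta|/2 < |\beta|$, which forces the triangle $\{\beta,\gamma_i\}$ to be "long and thin" in the direction of $\beta$: the height of $P_{\gamma_i}$ is small compared to its circumference.

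The main step is to show the two cylinders $P_{\gamma_1}$ and $P_{\gamma_2}$ must interact in a way that is impossible. I expect to split into cases according to whether $\gamma_1$ and $\gamma_2$ are parallel or not. If they are parallel, then $P_{\gamma_1}$ and $P_{\gamma_2}$ are cylinders in the same direction both containing $\beta$; two parallel cylinders sharing an interior point (a point of the open slit $\beta$) must coincide, and then the two simple cylinders are equal, forcing $\gamma_1 = \gamma_2$, a contradiction. If $\gamma_1$ and $\gamma_2$ are not parallel, I would invoke Lemma~\ref{lem:smallangle}: both triangles lie above $\beta$ (after applying $\tau$), both have $\beta$ as a side, and if $\gamma_1$ and $\gamma_2$ interiorly intersect with comparable lengths, then $|\gamma_1 \times \beta| \le 2|\gamma_1 \times \gamma_2|$. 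The point is to derive a contradiction with the fact that $P_{\gamma_1}$ is a \emph{simple} cylinder: since $\gamma_2$ is short (length $\le |\beta|/2 < |\gamma_1| \cdot 2$-ish) and enters the interior of $\Delta = \{\beta,\gamma_1\}$ — which is embedded and contains no singularity in its interior — $\gamma_2$ must exit $\Delta$ through one of its sides, and the geometry (exactly as in the last paragraph of the proof of Lemma~\ref{lem:smallangle}) shows it is forced to cross $\beta$, contradicting that $P_{\gamma_2}$ is an \emph{embedded} parallelogram/simple cylinder with $\beta$ as diagonal (the interior of $\beta$ meets no saddle connection of the cylinder boundary). Alternatively, if $\gamma_1$ and $\gamma_2$ do \emph{not} interiorly intersect, then the triangles $\{\beta,\gamma_1\}$ and $\{\beta,\gamma_2\}$, sharing the side $\beta$ and both lying above it, are nested or disjoint above $\beta$; the thin one sits inside the fat one, and its apex singularity lies in the interior of the other triangle, again contradicting embeddedness.

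The main obstacle I anticipate is organizing the case analysis cleanly and handling the normalization by $\tau$: one must be careful that "above $\beta$" is well-defined only after choosing a side, and that replacing $\gamma_i$ by $\tau(\gamma_i)$ to put both triangles on the same side does not change whether $P_{\gamma_i}$ is a simple cylinder (it does not, since $P_{\gamma_i}$ is $\tau$-invariant). The other delicate point is the comparability hypothesis $|\gamma_2|/2 \le |\gamma_1| \le 2|\gamma_2|$ needed to apply Lemma~\ref{lem:smallangle} directly — but here \emph{both} $\gamma_i$ have length at most $|\beta|/2$ and, since each spans a triangle with the base $\beta$ of length more than $2|\gamma_i|$, a triangle inequality forces $|\gamma_i| > |\beta|/2 - \text{(other side)}$; more simply, if the lengths are \emph{not} comparable one of the triangles is so thin that its apex must lie strictly inside the other (the disjoint/nested case above), so no loss of generality occurs. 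Once the picture is set up, each case is a short planar-Euclidean-geometry argument about embedded triangles sharing a base, and the contradiction is always with embeddedness of one of the two simple cylinders.
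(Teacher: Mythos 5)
The paper's proof of this lemma is a one-step triangle-inequality argument, much shorter than what you propose. Since both $P_{\gamma_1}$ and $P_{\gamma_2}$ are simple cylinders whose interiors contain the slit $\beta$, they overlap, and on one side of $\beta$ (after applying $\tau$ if needed) the overlap contains an embedded triangle $\Delta$ whose sides are $\beta$, a subsegment of $\gamma_1$, and a subsegment of $\gamma_2$. The triangle inequality then gives $|\beta| \leq |\gamma_1| + |\gamma_2| \leq |\beta|/2 + |\beta|/2$, forcing at least one $|\gamma_i|$ to be $\geq |\beta|/2$. No case analysis and no appeal to Lemma~\ref{lem:smallangle} is needed.

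Your proposal takes a genuinely different route, but it has real gaps. First, Lemma~\ref{lem:smallangle} is about the \emph{longer} of the two non-$\beta$ sides of each triangle (that is what the $\gamma$ in its statement denotes). In Lemma~\ref{lem:shortL}, however, a saddle connection with $|\gamma|\le|\beta|/2$ is necessarily the \emph{shorter} side of its triangle, since the third side has length at least $|\beta|-|\gamma|\ge|\beta|/2$. So you cannot feed $\gamma_1,\gamma_2$ into Lemma~\ref{lem:smallangle} in the roles of $\gamma,\gamma'$; the lemma simply does not address the short sides. Second, even granting a cross-product bound $|\gamma_1\times\beta|\le 2|\gamma_1\times\gamma_2|$, you do not derive a contradiction from it. You appeal to ``the last paragraph of the proof of Lemma~\ref{lem:smallangle},'' but that paragraph is a reductio showing a \emph{specific} reversed cross-product inequality is impossible; it is not a freestanding principle that any short saddle connection entering an embedded triangle must exit through $\beta$, and its hypotheses (in particular that the entire triangle lies in the strip bounded by lines parallel to $\gamma'$) are not established in your situation. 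Third, the case split is not exhaustive: the two short sides $\gamma_1,\gamma_2$ can fail to intersect each other while the triangles $T_{\gamma_1},T_{\gamma_2}$ are nevertheless neither nested nor disjoint, because the \emph{other} sides $\delta_1,\delta_2$ (or a $\gamma_i$ with a $\delta_j$) may cross. Your ``parallel cylinders coincide'' observation and your ``apex of one embedded triangle cannot lie in the interior of the other'' observation are both sound, but the argument does not close without filling the missing case and without a correct replacement for the misapplied lemma. The paper's overlap-triangle-inequality argument handles all configurations at once.
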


\begin{proof}
If zero or one such $\gamma$ exists, we are done.  Assume that two such saddle connections $\gamma$ and $\gamma'$ exist and form the parallelograms $P_{\gamma}$ and $P_{\gamma'}$ as described above.  By contradiction, assume that both $P_{\gamma}$ and $P_{\gamma'}$ are simple cylinders.  Then $P_\gamma$ and $P_{\gamma'}$ have to intersect. Hence, there is a triangle $\Delta$ in $P_\gamma\cap P_{\gamma'}$ such that the sides of $\Delta$ are given by $\beta$, a subsegment of $\gamma$, and a subsegment of $\gamma'$. Therefore, $|\beta|\leq |\gamma|+|\gamma'|$, which implies that at least one of $\gamma$ or $\gamma'$ has length at least half of the length of $\beta$.  This contradiction implies that at most one of them is a simple cylinder satisfying the length inequality.
\end{proof}

\subsection{Notation and Conventions}

\begin{itemize}
\item Throughout the paper, we will use the letters $\gamma, \rho, \beta$ or $\sigma$ 
to denote saddle connections on translation surfaces. 

\item The notation $\kappa$ means $(g-1, g-1)$ or $(2g-2)$, and the corresponding hyperelliptic component will be denoted by $\cH^{hyp}(\kappa)$. Let $d$ be the complex dimension of $\mathcal{H}^{hyp}(\kappa)$.  For $\kappa=(2g-2)$, we have $d=2g$, and for $\kappa=(g-1,g-1)$, we have $d=2g+1$. The hyperelliptic involution of hyperelliptic translation surfaces will be denoted by $\tau$. 

\item The symbols $c$
(or $ c'$, etc.) will denote positive constants that may depend on the stratum,
while independent of the translation surfaces in the stratum. Sometimes the precise values of $c$ may change in the presentation of results as they absorb other constants. 

\item In the counting of saddle connections, if two saddle connections are parallel or symmetric under the hyperelliptic involution, i.e., $\gamma'=\tau(\gamma)$, then we only consider one of them because there are at most finitely many such parallel saddle connections bounded in terms of genus.

\item In general, we will think of an angle between two saddle connections that is close to $\pi$ as being close to zero, i.e., we will consider the angle in terms of its sine.

\item A \emph{subsurface} $(Y, \eta)$  of a translation surface $(X, \omega)$ is defined to be a compact subsurface $Y\subset X$, where $\eta$ is the restriction of $\omega$ on $Y$.
We always assume that $\partial Y$ is a union of 
some saddle connections of $(X,\omega)$.

\item We take $\log x := \log_2 x$.  
\end{itemize}

\section{Auxiliary Sets of Saddle Connections}
\label{AuxSC:Section}

To derive the upper bound, we will consider a collection of saddle connections of size $j$.  We will organize this collection into subsets and construct additional collections of saddle connections, which may or may not be in the original collection of saddle connections.  These additional saddle connections will allow us to bound the number of saddle connections in the original collection.  The idea behind the lemma in this section is as follows.  

We fix a small constant $\epsilon > 0$.  
Recall that two saddle connections $\gamma$ and 
$\gamma'$ on a translation surface of area $A$ are $\epsilon$-isolated if the angle between them is at least $$\frac{\epsilon^2A}{|\gamma||\gamma'|}.$$ 
The upper bound is easy to prove if the saddle connections are $\epsilon$-isolated.  On the other hand, if they are not $\epsilon$-isolated, an application of the Teichm\"uller flow shows that they must be contained in a small area subsurface.  However, any such subsurface is bounded by saddle connections, so we can consider the set of all such boundary saddle connections for all such clusters of saddle connections in our original collection of saddle connections.  These new boundary saddle connections may be tightly clustered together themselves, so we continue this process.  We will prove that the process of forming such increasingly larger small subsurfaces, and considering their boundary saddle connections will terminate in a finite number of steps.  In what follows we will use these specially constructed collections to establish the required upper bounds.

We begin with a definition that explains how we will partition our collections of saddle connections and all of the sets of auxiliary saddle connections associated to it.  In Lemma~\ref{SeparatingSCSetForGammaBeta:Lemma}, we will carefully explain how we construct the sequences of saddle connections $(\sigma_0, \sigma_1, \ldots, \sigma_\ell)$ in the definition below to satisfy a list of necessary properties.

\begin{definition}
For fixed $j$, let $\Gamma$ be a finite collection of saddle connections of size $j$ on a translation surface $(X, \omega)$ of area $A$.  For each $\gamma$, let $(\sigma_0 = \gamma, \sigma_1, \ldots, \sigma_\ell)$ be a sequence of pairwise interiorly disjoint saddle connections.  Let $m$ be the maximum over all values of $\ell$.  We define a \emph{partition of $\Gamma$ and its auxiliary saddle connections with depth $m$} by
\begin{enumerate}[label=(\Roman*)]
\item \label{SeparatingSCSetForGammaBeta:Lemma:sigmIso}
$$\Gamma=  \bigsqcup_{\ell=0}^m \Gamma^{(\ell)},$$ 
 such that for $\gamma\in \Gamma^{(\ell)}$ the sequence associated to $\gamma$ terminates at some $\sigma_\ell$. 
    If $\{\gamma, \gamma'\} \subset \Gamma^{(\ell)}$ and their
    corresponding  $\sigma_{\ell}$ and  $\sigma'_\ell$ in the sequences have the same size, then $\sigma_{\ell}$ and  $\sigma'_\ell$ are $\epsilon$-isolated.  
    In particular, the saddle connections in $\Gamma^{(0)}$ are $\epsilon$-isolated. 

\item \label{SeparatingSCSetForGammaBeta:Lemma:GammaPart} 
    Furthermore, for each $0< \ell \leq m$, define a partition 
    $$\Gamma^{(\ell)}=  \bigsqcup_{i} \Gamma_i^{(\ell)}$$ 
     such that if $\gamma$ and $\gamma'$ are in the same subset $\Gamma_i^{(\ell)}$, then their angle apart is at most $\epsilon^2A/2^{2j}$ and they determine the same $\sigma_1$ in their associated sequences in the sense that the tuples of saddle connections begin $(\sigma_0 = \gamma, \sigma_1, \ldots, \sigma_\ell)$ and $(\sigma'_0 = \gamma', \sigma_1, \ldots, \sigma_\ell)$.

\item \label{SeparatingSCSetForGammaBeta:Lemma:SigmaPart} Define a collection of sets of saddle connections $\{\Sigma_i \ | \ 1 \leq i \leq m\}$ each containing the set of all $\sigma_i$ in the sequences.  For $1\leq i \leq m$, the  $\Sigma_i$ are partitioned themselves into maximal subsets by $$\Sigma_i = \bigsqcup_k \Sigma_{i,k}$$  so that
\begin{itemize}
\item all saddle connections in $\Sigma_{i,k}$ have the same size, and,
\item  either the saddle connections in $\Sigma_{i,k}$ are $\epsilon$-isolated, or every pair of saddle connections in $\Sigma_{i,k}$ is not $\epsilon$-isolated and they determine the same saddle connection in $\Sigma_{i+1}$ (in the sense above).
\end{itemize}
\end{enumerate}
\end{definition}

We make a couple remarks.  First, the fact that $m$ is finite is clear because a set of pairwise interiorly disjoint saddle connections on a translation surface can only have a finite number of elements bounded in terms of genus, and the maximum is achieved by triangulating it.  Secondly, this partition will only be used in the context of Lemma~\ref{SeparatingSCSetForGammaBeta:Lemma} where strong and natural restrictions will be placed on the sequences of saddle connections $(\sigma_0, \ldots, \sigma_\ell)$.

\begin{lemma}
\label{SeparatingSCSetForGammaBeta:Lemma}
Let $(X, \omega)$ be a translation surface in $\cH_A^{hyp}(\kappa)$ of area $A$, and let $\beta$ be an invariant saddle connection on $(X, \omega)$. 
Let $c_0$ be a constant depending only on the genus and let $\epsilon > 0$ be sufficiently small. 
Given any $j$, 
assume that $\Gamma$ is a collection of saddle connections on $(X,\omega)$
such that all $\gamma \in \Gamma$ 
satisfy
\begin{enumerate}
\item \label{SeparatingSCSetForGammaBeta:Lemma:Assump:gammaL} $\gamma$ has size $j$.
\item \label{SeparatingSCSetForGammaBeta:Lemma:Assump:TauInvGamma} $\gamma \in \Gamma$ if and only if $\tau(\gamma) \in \Gamma$.\footnote{Item~\ref{SeparatingSCSetForGammaBeta:Lemma:Assump:TauInvGamma} is actually subsumed by our convention that collections of saddle connections always contain all parallel saddle connections.}
\item  \label{SeparatingSCSetForGammaBeta:Lemma:Assump:gammabetaTri} Each $\gamma\in \Gamma$ together with $\beta$ form two sides of a triangle of area at most $c_0 \epsilon^2 A$.
\end{enumerate}
Then there exist positive constants $c_1, c_2$ and $C$ depending only on the genus and $m \leq d-2$ depending on $\Gamma$, where $d = \dim_{\bC} \cH^{hyp}(\kappa)$, such that for each $\gamma \in \Gamma$, there exists  $\ell \leq m$ and a sequence of saddle connections $(\gamma=\sigma_0, \sigma_1, \ldots, \sigma_\ell)$, such that:
\begin{enumerate}[label=\Alph*)]
\item \label{SeparatingSCSetForGammaBeta:Lemma:IntDisjBeta} The set of saddle connections $\{\beta, \sigma_0, \ldots, \sigma_\ell\}$ are pairwise interiorly disjoint.

\item \label{SeparatingSCSetForGammaBeta:Lemma:sigiCSepsigip1} $\sigma_i$ is not $C$-separated from $\sigma_{i+1}$.
\item \label{SeparatingSCSetForGammaBeta:Lemma:sigiAngsigip1} The angle between $\sigma_i$ and $\sigma_{i+1}$ on $(X,\omega)$ is at most $$c_2 \frac{\epsilon^2A}{|\sigma_i||\sigma_{i+1}|}.$$
\item \label{SeparatingSCSetForGammaBeta:Lemma:sigiLengthsigip1} $|\sigma_{i+1}| \leq  c_2|\sigma_i|$.
\item \label{SeparatingSCSetForGammaBeta:Lemma:sigmIso2} $\Gamma$ admits a partition of it and its auxiliary saddle connections with depth $m$.
\item \label{SeparatingSCSetForGammaBeta:Lemma:subsurfs} Either $\gamma$ is $\epsilon$-isolated in $\Gamma$ and $\ell=0$, or, $\ell>0$ and there exists a filtration of subsurfaces $Y_i$ invariant under $\tau$ associated to the sequence $(\sigma_0, \sigma_1, \ldots, \sigma_\ell)$ such that
$$Y_1 \subset \cdots \subset Y_\ell,$$
 where $\beta\subset Y_1$, for all $i > 0$, $\sigma_{i-1}$ is in the interior of $Y_i$, $\sigma_i$ is in the boundary of $Y_i$, and $Y_i$ has area at most $c_1\epsilon^2A$.
\end{enumerate}
\end{lemma}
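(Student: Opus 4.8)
The plan is to construct, for each $\gamma$, its sequence $(\sigma_0=\gamma,\sigma_1,\dots,\sigma_\ell)$ by an iterative clustering procedure: at each level either the current saddle connection is $\epsilon$-isolated in the collection currently under consideration (and the sequence stops there), or a cluster of mutually non-$\epsilon$-isolated saddle connections is collapsed into a $\tau$-invariant subsurface of area $O(\epsilon^2A)$, one of whose boundary saddle connections becomes the next term. The partition data in \ref{SeparatingSCSetForGammaBeta:Lemma:sigmIso}--\ref{SeparatingSCSetForGammaBeta:Lemma:SigmaPart} simply records which cluster, and which boundary family, each saddle connection belongs to at each level, so \ref{SeparatingSCSetForGammaBeta:Lemma:sigmIso2} is bookkeeping once the construction is in place; the bound $m\le d-2$ comes at the very end from the parallelogram decomposition of Lemma~\ref{Parallelogramulation:Lemma}.

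For the base level, partition $\Gamma$ into maximal clusters so that within a cluster all members have size $j$ and any two are non-$\epsilon$-isolated, while members of different clusters are $\epsilon$-isolated; then refine so that all members of one cluster will be collapsed into the same subsurface. Because $\Gamma$ is $\tau$-invariant and $\tau$ acts by rotation by $\pi$, each cluster is $\tau$-invariant. Clusters whose members are $\epsilon$-isolated from every element of $\Gamma$ form $\Gamma^{(0)}$; for these $\gamma$ put $\ell=0$, so \ref{SeparatingSCSetForGammaBeta:Lemma:IntDisjBeta}--\ref{SeparatingSCSetForGammaBeta:Lemma:sigiLengthsigip1} and \ref{SeparatingSCSetForGammaBeta:Lemma:subsurfs} are vacuous. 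For a nontrivial cluster $\mathcal C$, apply an element of $\mathrm{SL}_2(\mathbb R)$ --- a rotation placing $\mathcal C$ near the horizontal followed by a Teichm\"uller contraction --- so that every member of $\mathcal C$ becomes short on the renormalized surface; this is exactly the range permitted by the definition of ``non-$\epsilon$-isolated of size $j$'', and such a map preserves area and cross products. Run the Eskin--Masur $\epsilon$-complex lemma (Lemma~\ref{EMEpsCplx:Lemma}) on the interiorly intersecting sub-collections of $\mathcal C$, adjoin the thin sliver regions spanned by interiorly disjoint pairs in $\mathcal C$ and the triangles each member forms with $\beta$ (of area $\le c_0\epsilon^2A$, each of which together with its $\tau$-image is the parallelogram $P_{\gamma'}$ with diagonal $\beta$), close up under $\tau$, and enlarge by a thin collar so that the generating saddle connections become interior. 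This produces a $\tau$-invariant subsurface $Y_1$ with $\mathrm{Area}(Y_1)\le c_1\epsilon^2A$, containing $\beta$ and every member of $\mathcal C$ in its interior. By Lemma~\ref{TauInvSubsurfBd:Lemma}, $\partial Y_1=\bigcup_k(\rho_k\cup\tau(\rho_k))$ with $\rho_k\ne\tau(\rho_k)$; let $\sigma_1$ be one of the $\rho_k$ that is short on the renormalized surface. Pulling the flowed length bound back to $(X,\omega)$ gives \ref{SeparatingSCSetForGammaBeta:Lemma:sigiLengthsigip1}; invariance of cross products under the renormalization, together with $\gamma$ and $\sigma_1$ both being short there, gives \ref{SeparatingSCSetForGammaBeta:Lemma:sigiAngsigip1}; \ref{SeparatingSCSetForGammaBeta:Lemma:IntDisjBeta} holds by construction; and \ref{SeparatingSCSetForGammaBeta:Lemma:sigiCSepsigip1} follows once $C$ is fixed large in terms of the genus, $c_1$ and $c_2$, since any $\rho$ with $|\rho|<|\sigma_1|/C$ separating $\sigma_0$ from $\sigma_1$ would have to traverse the region of $Y_1$ joining them, whose geometry the renormalization controls.

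I would then iterate. Let $\Sigma_1$ be the collection of all the $\sigma_1$ (over $\gamma\in\Gamma\setminus\Gamma^{(0)}$); partition $\Sigma_1$ first by size, then within each size exactly as $\Gamma$ was treated, but when building the level-two subsurface thicken the new complexes, slivers and parallelograms by the subsurfaces $Y_1$ already attached along the $\sigma_1$, so that the output $Y_2$ strictly contains $Y_1$, stays $\tau$-invariant, still has area $\le c_1\epsilon^2A$, has $\sigma_1$ interior, and yields $\sigma_2\in\partial Y_2$ obeying \ref{SeparatingSCSetForGammaBeta:Lemma:sigiCSepsigip1}--\ref{SeparatingSCSetForGammaBeta:Lemma:sigiLengthsigip1} relative to $\sigma_1$. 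Repeating gives, for every $\gamma$, its full sequence together with the filtration $Y_1\subset\dots\subset Y_\ell$ of \ref{SeparatingSCSetForGammaBeta:Lemma:subsurfs}, and assembling the level-by-level cluster data yields the partition asserted in \ref{SeparatingSCSetForGammaBeta:Lemma:sigmIso2}. For termination, each $Y_i$ is a \emph{proper} ($\mathrm{Area}\le c_1\epsilon^2A<A$) $\tau$-invariant subsurface containing $\beta$, with $Y_i\subsetneq Y_{i+1}$; decomposing $(X,\omega)$ into $d-1$ $\tau$-invariant parallelograms via Lemma~\ref{Parallelogramulation:Lemma} and comparing with the induced decompositions of the $Y_i$, each strict inclusion accounts for at least one more complex dimension of the boundary-marked stratum of the subsurface, while $Y_1$ already carries $\beta$ inside a full parallelogram and the chain never reaches the whole surface; hence $\ell\le d-2$ and so $m\le d-2$.

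The main obstacle is the inductive step: producing the level-$(i+1)$ subsurface that is simultaneously nested above $Y_i$, $\tau$-invariant, and of area $O(\epsilon^2A)$, while the promoted boundary saddle connection $\sigma_{i+1}$ still satisfies all of \ref{SeparatingSCSetForGammaBeta:Lemma:sigiCSepsigip1}--\ref{SeparatingSCSetForGammaBeta:Lemma:sigiLengthsigip1} relative to $\sigma_i$. Two points inside this are especially delicate: cluster members that are interiorly \emph{disjoint} from each other, for which Lemma~\ref{EMEpsCplx:Lemma} does not apply directly and one genuinely needs every member to be anchored to $\beta$ through the small triangles $P_{\gamma'}$ so that they are all trapped in a single small-area neighbourhood of $\beta$ (this is also what keeps $\beta$ itself inside $Y_1$, which requires care when $|\beta|$ is much larger than $2^j$); and making the termination count sharp --- that each strict inclusion truly costs one complex dimension --- which depends on applying Lemma~\ref{Parallelogramulation:Lemma} to the subsurfaces $Y_i$ themselves rather than only to $(X,\omega)$.
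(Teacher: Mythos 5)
Your overall architecture --- cluster by $\epsilon$-isolation, renormalize via $\mathrm{SL}_2(\mathbb R)$ so the cluster becomes short, form a $\tau$-invariant $\epsilon$-complex by Lemma~\ref{EMEpsCplx:Lemma}, promote a boundary saddle connection to $\sigma_1$, and iterate --- matches the paper's strategy. But there is a genuine gap in how you dispose of Item~\ref{SeparatingSCSetForGammaBeta:Lemma:sigiCSepsigip1}, and it is not a detail that can be waved away. You claim that once $C$ is large (depending on genus, $c_1$, $c_2$) the non-$C$-separation of $\sigma_0$ from $\sigma_1$ is automatic because ``the geometry of $Y_1$ is controlled by the renormalization.'' That is false: the $\epsilon$-complex has small diameter, but nothing prevents it from containing a short separating pair $\rho\cup\tau(\rho)$ with $|\rho|\ll|\sigma_1|$ cutting $\gamma$ off from $\sigma_1$. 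The definition of $C$-separated makes no reference to diameter; it is a length condition on $\rho$, and there is no lower bound on the lengths of interior saddle connections of $Y_1$ relative to its boundary. The paper's proof treats this head-on: having picked a provisional $\sigma_1$ facing the large complementary component, it tests whether $\gamma$ is $C$-separated from it, and if some $\rho$ does $C$-separate them, it \emph{rechooses} $\sigma_1$ to be that $\rho$ (or, when the cross-product bound of Item~\ref{SeparatingSCSetForGammaBeta:Lemma:sigiAngsigip1} would then fail, a replacement $\rho'$ produced by Lemma~\ref{lem:triangulation} applied inside $Y_1^{(i)}$). This rechoosing is precisely what makes Items~\ref{SeparatingSCSetForGammaBeta:Lemma:sigiCSepsigip1}--\ref{SeparatingSCSetForGammaBeta:Lemma:sigiLengthsigip1} hold simultaneously, and Lemma~\ref{lem:triangulation} is the missing ingredient in your argument.

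Two secondary points. First, you do not specify that $\sigma_1$ should be the boundary saddle connection facing the \emph{largest-area} complementary component $W_1$. The paper needs this choice: it is what keeps the complement's area bounded below by $A/(d-1)^k$ after $k$ steps (Inequality~\eqref{eq:area:W}), which in turn guarantees that the next $\epsilon$-complex $Z_{k+1}$ lands inside $W_k$ and the induction can continue. Second, your termination argument --- decomposing $(X,\omega)$ by Lemma~\ref{Parallelogramulation:Lemma} and matching parallelograms against the $Y_i$ --- is not obviously correct: Lemma~\ref{Parallelogramulation:Lemma} is stated for closed hyperelliptic translation surfaces, not for the bounded subsurfaces $Y_i$, and there is no reason the induced cell structure on $Y_i$ is a subcollection of one global parallelogram decomposition. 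The paper's termination is cleaner: each successive $W_k$ is a hyperelliptic translation surface in a stratum of strictly decreasing dimension (by Lemma~\ref{TwoBdSC:Lemma}), and since a nonempty such surface has dimension at least $2$, the chain length is at most $d-2$. Your ``thin sliver regions'' and ``thin collar'' enlargements also have no counterpart in the paper; the paper instead arranges the subcollections $\Gamma_i$ so the union of their interiors is connected, which is what Lemma~\ref{EMEpsCplx:Lemma} requires, and handles $\beta\subset Y_1$ separately by adjoining the single parallelogram $P_\gamma=T_\gamma\cup\tau(T_\gamma)$ at the end.
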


\begin{definition}
Given $\gamma\in\Gamma$ satisfying the assumptions of Lemma~\ref{SeparatingSCSetForGammaBeta:Lemma}, we call the sequence of saddle connections $(\sigma_0, \sigma_1,\ldots, \sigma_\ell)$ guaranteed by the lemma, a \emph{sequence of saddle connections associated to $\gamma$}.
We say that the sequence has length $\ell$. 
\end{definition}

\begin{remark}
Two important properties of the sequence are Item \ref{SeparatingSCSetForGammaBeta:Lemma:sigiCSepsigip1} and \ref{SeparatingSCSetForGammaBeta:Lemma:sigiLengthsigip1},
which assert that $\sigma_i$ is not separated from $\sigma_{i+1}$ 
and their lengths are ``almost'' decreasing.
 	As we shall see in the proof, for a given $\gamma$ the choice of $\sigma_i$ in each step may not be unique, so the sequence $(\sigma_1,\dots,\sigma_\ell)$ may be also not uniquely determined. In our construction, each $\Sigma_i$ is invariant under $\tau$, i.e., $\sigma_i\in \Sigma_i$ if and only if $\tau(\sigma_i) \in \Sigma_i$. So if $(\sigma_1,\ldots, \sigma_\ell)$ is a sequence associated to $\gamma$, then we shall let $(\tau(\sigma_1),\ldots, \tau(\sigma_\ell))$ be the sequence associated to $\tau(\gamma)$.
 \end{remark}

\begin{proof}
We outline the argument first.  We will form the sequences of saddle connections $(\sigma_1, \ldots, \sigma_\ell)$ iteratively, possibly rechoosing saddle connections in the sequence at each step, so that the partition of $\Gamma$ and its auxiliary saddle connections will adjust as we proceed through the construction.
We will first partition $\Gamma$ into subsets,
in terms of whether they are $\epsilon$-isolated or not.  In other words, $\gamma$ and $\gamma'$ are in the same subset if their angle apart is at most $\epsilon^2 A/2^{2j}$. Then we will use these to construct collections of saddle connections $\Sigma_1$.  We will then partition these collections of saddle connections, examine their properties and explain how to either partition them further if they lack all of the desired properties or discard the elements that lack the desired properties in order to rechoose elements that do possess them.  We then explain why the sequence of collections of saddle connections $\Sigma_i$ terminates at a collection $\Sigma_m$ for some $m \leq d-2$.  This will require an area argument and use the Teichm\"uller flow.  Consequently, we will consider a sequence of subsurfaces that occur from these collections and we will need to record their area to achieve the desired conclusion.

\paragraph{Partitioning $\Gamma$ -- $\epsilon$-isolated:} 

First, partition $\Gamma$ into those collections of saddle connections that are $\epsilon$-isolated, denoted by $\Gamma^{(0)}$, and those that are not.  For the saddle connections in $\Gamma^{(0)}$, we let each $\gamma$  be its own sequence so that $\ell=0$, and each claim either holds vacuously or follows from the assumptions.

\paragraph{Partitioning $\Gamma$ -- $\epsilon$-non-isolated:} 

For the saddle connections in $\Gamma$ that are not $\epsilon$-isolated, we form a preliminary partition into collections $\Gamma_i$ as follows.  First partition them so that for each $i$, for any pair of saddle connections $\Gamma_i$, the angle between them is bounded above by $\epsilon^2A/2^{2j}$ and the union of the interior of each saddle connection $\gamma\in\Gamma_i$ is a connected subset of $(X,\omega)$.  In particular, for any $\gamma\in\Gamma_i$, there is $\gamma'\in \Gamma_i$ that interiorly intersects $\gamma$.  Furthermore, for each $\gamma \in \Gamma_i$, we assume $\tau(\gamma) \in \Gamma_i$, which can be done because $\tau(\gamma) \in \Gamma$ by assumption.
Next we may have to partition the $\Gamma_i$ further.

Fix a particular $\Gamma_i$, we apply the following argument.  Rotate the longest saddle connection in $\Gamma_i$ denoted $\gamma$ to be vertical and apply the Teichm\"uller flow $g_t$ for $$e^t=\frac{|\gamma|}{\sqrt {A}\epsilon}$$ to contract it to length $\sqrt A\epsilon$. Let $(X_t,\omega_t)=g_t \cdot (X,\omega)$ denote the deformed surface, and we will refer to $(X, \omega)$ as the \emph{base surface}.  On $(X_t, \omega_t)$, the vertical component of any other saddle connection in $\Gamma_i$ is at most $\sqrt A\epsilon$ and the angle condition says the horizontal component is also at most $ \sqrt A\epsilon$. 

By Lemma~\ref{EMEpsCplx:Lemma}, let $Y_1^{(i)} \subset (X_t,\omega_t)$ be the $\epsilon$-complex, which contains all $\gamma\in\Gamma_i$, i.e., $\Gamma_i$ generates $Y_1^{(i)}$, and has the property that for a constant $c_2$ depending only  on the genus of the surface,  the boundary $\partial Y_1^{(i)}$ consists of saddle connections of length at most $c_2\sqrt A\epsilon$.  Since each $\gamma\in\Gamma_i$ is disjoint from $\beta$, we have that $\beta$ does not cross $\partial Y_1^{(i)}$.  For each boundary saddle connection $\sigma_1$ of $Y_1^{(i)}$, this justifies its disjointness from $\beta$ in Item~\ref{SeparatingSCSetForGammaBeta:Lemma:IntDisjBeta}.  The area of $Y_1^{(i)}$ is at most $c_1A\epsilon^2$ as required in Item~\ref{SeparatingSCSetForGammaBeta:Lemma:subsurfs}.  
Furthermore, since the union of the interior of each $\gamma\in\Gamma_i$  is a connected subset of $(X,\omega)$,  the resulting subsurface $Y_1^{(i)}$ is connected. The proof of \cite[Prop.~6.2]{EskinMasurAsymptForms} then implies that the diameter of $Y_1^{(i)}$ is at most $c_3\sqrt{A}\epsilon$ for some constant $c_3$ depending on $\cH^{hyp}(\kappa)$.  Pulling back to the base surface  $(X,\omega)$, we see that  each boundary saddle connection  $\sigma_1$  of $Y_1^{(i)}$ satisfies $ |\sigma_1|\leq c_2|\gamma|$, for each $\gamma$ in the subcollection, as required by Item~\ref{SeparatingSCSetForGammaBeta:Lemma:sigiLengthsigip1}; the diameter of $Y_1^{(i)}$ is at most $c_3|\gamma|$.

Because the collection $\Gamma_i$ is invariant under the hyperelliptic involution $\tau$ by construction, we have that $Y_1^{(i)}$ is invariant under $\tau$.  Hence, by Lemma~\ref{TauInvSubsurfBd:Lemma}, every boundary saddle connection of $Y_1^{(i)}$ occurs with its image under the hyperelliptic involution.  Moreover, by Lemma~\ref{TwoBdSC:Lemma}, excising any boundary saddle connection $\sigma$ of $Y_1^{(i)}$ along with $\tau(\sigma)$ will disconnect the surface.  Hence, excising every boundary saddle connection along with their involutions under $\tau$ will result in a collection of connected components.  Since the area of the complement of $Y_1^{(i)}$ has area at least $A-c_1A\epsilon^2$, there must be some connected component of the complement with area at least $$\frac {A-c_1A\epsilon^2}{d-2},$$ where $d-2$ is the upper bound of the number of complementary components.  To see the upper bound of $d-2$, observe that $Y_1^{(i)}$ has at least two complex dimensions and each complementary component corresponds to at least one complex dimension of the dimension of the stratum.  We choose  $\sigma_{1}^{(i)} \cup \tau(\sigma_{1}^{(i)})$ in the boundary of $Y_1^{(i)}$ such that the complementary component bounded by $\sigma_{1}^{(i)} \cup \tau(\sigma_{1}^{(i)})$, say $W_1^{(i)}$,  has largest area
\begin{equation}\label{eq:area:W1}
	\mathrm{Area}(W_1^{(i)})\geq \frac {A-c_1A\epsilon^2}{d-2}\geq \frac{A}{d-1},
\end{equation}
where the last inequality holds for $\epsilon$ sufficiently small.

For each $i$, we take the corresponding boundary saddle connection of $Y_1^{(i)}$ as explained above and denote it by $\sigma_{1}^{(i)}$.  With this, $\Sigma_1$ from the partition of $\Gamma$ and its auxiliary saddle connections is the union of all such boundary saddle connections $\sigma_{1}^{(i)}$.

However, we may have to rechoose some of the saddle connections in $\Sigma_1$.  Fix a large constant $C$. For each $i$, let $\Gamma'_i\subset \Gamma_i$ be the possibly empty subcollection of $\gamma$ that are not $C$-separated from $\sigma_1^{(i)} \cup \tau(\sigma_1^{(i)})$. For each $\gamma\in \Gamma'_i$, we let $\sigma_1^{(i)}$ be the corresponding saddle connection.  If  $\Gamma_i' =\Gamma_i$, we have found a $\sigma_1^{(i)}$ associated to every $\gamma\in \Gamma_i$.  This $\sigma_1^{(i)}$ satisfies Item~\ref{SeparatingSCSetForGammaBeta:Lemma:sigiCSepsigip1}, the fact that $\sigma_1^{(i)}$ has length at most $c_2\sqrt A \epsilon$ after applying the Teichm\"uller flow justifies Item~\ref{SeparatingSCSetForGammaBeta:Lemma:sigiAngsigip1}, and Item~\ref{SeparatingSCSetForGammaBeta:Lemma:sigiLengthsigip1} was also justified above.

Thus, suppose $\Gamma_i' \not=\Gamma_i$ so that for  each $\gamma\in \Gamma_i\setminus \Gamma_i'$, there is some $\rho$ such that $\rho\cup\tau(\rho)$ $C$-separates  $\gamma$ from   $\sigma_1\cup\tau(\sigma_1)$.  We choose some such $\rho$ so that $\gamma$ is not $C$-separated from $\rho$.  In order to satisfy the angle condition in Item~\ref{SeparatingSCSetForGammaBeta:Lemma:sigiAngsigip1}, we require
$$|\rho \times \gamma| < c_2A\epsilon^2.$$
If this is not the case, we apply Lemma~\ref{lem:triangulation} to $Y_1^{(i)}$  to obtain $\rho'$ satisfying the cross product condition with possibly a larger $c_2$.     

It remains to check that Item~\ref{SeparatingSCSetForGammaBeta:Lemma:sigiLengthsigip1} is also satisfied for $\rho'$. Note that $\gamma$ is  $C$-separated  from $\sigma_1^{(i)}$  by $\rho$, so $|\rho|\leq |\sigma_1^{(i)}|/C\leq c_2|\gamma|/C$. Furthermore, as we mentioned earlier, the diameter of $Y_1^{(i)}$ is at most $c_3|\gamma|$, so by Lemma \ref{lem:triangulation} Item (2), we see that $\rho'$ satisfies Item~\ref{SeparatingSCSetForGammaBeta:Lemma:sigiLengthsigip1} for again a possibly larger $c_2$.  We choose $\sigma_1^{(i)}$ to be $\rho'$ and rename $Y_1^{(i)}$ and $W_1^{(i)}$ accordingly.

We now make one final modification to $Y_1^{(i)}$ to satisfy $\beta \subset Y_1^{(i)}$ in Item~\ref{SeparatingSCSetForGammaBeta:Lemma:subsurfs}.  If $\beta \subset Y_1^{(i)}$, we conclude.  If $\beta \not\subset Y_1^{(i)}$, choose $\gamma \subset Y_1^{(i)}$.  By Assumption~\ref{SeparatingSCSetForGammaBeta:Lemma:Assump:gammabetaTri}, there exists a triangle $T_\gamma$ with sides $\gamma$ and $\beta$ with area at most $c_0\epsilon^2 A$.  Since $\beta$ is invariant under $\tau$, consider the parallelogram $P_\gamma = T_\gamma \cup \tau(T_\gamma)$ with area at most $2c_0\epsilon^2 A$.  Redefine $Y_1^{(i)}$ as $Y_1^{(i)} \cup P_{\gamma}$ and observe that all conclusions above are satisfied.

We have shown that to every $\gamma\in\Gamma$, either $\gamma\in \Gamma^{(0)}$ or there exists an associated saddle connection $\sigma_1$, and the saddle connections in $\Gamma \setminus \Gamma^{(0)}$ can be  partitioned into families $\Gamma_i$ so that for each $i$, there is a $\sigma_1^{(i)}$ associated to every $\gamma\in \Gamma_i$.

\paragraph{Forming $\Sigma_k$ and its Partition:}
We inductively define $\Sigma_k$ as follows.  For convenience, we take $\Sigma_0 = \Gamma$.  We assume that we have constructed all sets $\Sigma_{i}$ for $0 \leq i \leq k-1$. Now we focus on the collection $\Sigma_{k-1}$ and form a collection $\Sigma_k$ similar to the way we formed $\Sigma_1$ from the collection $\Gamma$ above.  The saddle connections in $\Gamma$ were all assumed to have the same size.
 However, this is not necessarily the case for the saddle connections in $\Sigma_{k-1}$ when $k > 1$.  Therefore, we initially partition $\Sigma_{k-1}$ into $\Sigma_{k-1,i}$ by the sizes of the saddle connections in it.  Then we further partition $\Sigma_{k-1,i}$ into maximal subsets that satisfy the same properties as we did for $\Gamma_i$, which we recall here: 
\begin{itemize}
\item the saddle connections in $\Sigma_{k-1,i}$  all have  the same size;  
\item either  $\Sigma_{k-1,i}$ is a subset of saddle connections that are $\epsilon$-isolated, or, 
    \item for all $\{\sigma_{k-1}, \sigma'_{k-1}\} \subset \Sigma_{k-1,i}$, $\sigma_{k-1}$ and $\sigma'_{k-1}$ have angle at most $\epsilon^2 A/|\sigma_{k-1}|^2$, and
 the union of the interiors of every saddle connection in $\Sigma_{k-1,i}$ is a connected subset of $(X,\omega)$. In particular, for every $\sigma_{k-1}$ there is  $\sigma'_{k-1}$ which interiorly intersects it.
\end{itemize}
We recall that for those $\sigma_k$ that are $\epsilon$-isolated, there is nothing further to do and we take $\ell=k$.   We denote the subset of saddle connections 
$\gamma\in \Gamma$ that determine $\epsilon$-isolated $\sigma_k$ by $\Gamma^{(k)}$.

We now follow the same procedure using the Teichm\"uller flow to obtain subsurfaces from each collection.  
For each $\Sigma_{k-1,i}$ that consists of non-isolated saddle connections, we first form  the $\epsilon$-complex denoted by $Z_k^{(i)}$.  Note that $Z_k^{(i)}$ intersects the interior of $W_{k-1}^{(i)}$ because $Y_{k-1}^{(i)}$ and $W_{k-1}^{(i)}$ share a common pair of saddle connections in $\Sigma_{k-1,i}$.
As above, Lemma~\ref{EMEpsCplx:Lemma} implies the area of $Z_k^{(i)}$  is at most $c_1 A\epsilon^2$, and the boundary after flowing will have saddle connections of length at most $c_2\sqrt A \epsilon$. 
Define $Y_k^{(i)} = Y_{k-1}^{(i)} \cup (Z_k^{(i)}\cap W^{(i)}_{k-1})$.  Since $Z_k^{(i)}$  has area at most $c_1 A\epsilon^2$, for $\epsilon$ sufficiently small, one of the complementary components of $Y_{k}^{(i)}$ in $W_{k-1}^{(i)}$, which we denote by $W_k^{(i)}$, has area
$$\mathrm{Area}(W_{k}^{(i)}) \geq \frac{\mathrm{Area}(W_{k-1}^{(i)})-c_1A\epsilon^2}{d-2} \geq \frac{\mathrm{Area}(W_{k-1}^{(i)})}{d-1}.$$
(That is, we have some small area subsurface $Y_{k}^{(i)}\cap W^{(i)}_{k-1}=Z_{k}^{(i)}\cap W_{k-1}^{(i)}$ that we are removing from $W_{k-1}^{(i)}$, so some component left has area bounded from below.)
Inductively, we see that $$\mathrm{Area}(W_{k}^{(i)})\geq \frac{\mathrm{Area}(W_{1}^{(i)})}{(d-1)^{k-1}} .$$ 
Together with Inequality~\eqref{eq:area:W1}, this implies that 
\begin{equation}\label{eq:area:W}
	\mathrm{Area}(W_{k}^{(i)})\geq  \frac{A}{(d-1)^k}.
\end{equation}

Let $\sigma_k^{(i)}$ be a boundary component of $W_k^{(i)}$. 
By further partitioning $\Sigma_{k-1,i}$, we may assume that all $\sigma_{k-1}$ in $\Sigma_{k-1,i}$ determine the same $\sigma_k^{(i)}$ and $Y_k^{(i)}$.  We then repeat the arguments above within $W^{(i)}_{k-1}$  to satisfy all of the remaining conditions.   The key in all of this is that since the partition of $\Gamma$ and its auxiliary saddle connections exists by definition, as we adjust the sequences consisting of $\sigma_i$ in the course of this construction, the partitions themselves change to satisfy the assumptions in their definition.

\paragraph{Termination of the Construction:}
Therefore, this process can be continued each time we have a subcollection of saddle connections that are not $\epsilon$-isolated.  We claim that this process must stop in at most $m$ steps, where $m$ is bounded by $d-2$.  Indeed, the fact that the complement of each $Y_m$ necessarily lies in a stratum of strictly decreasing dimension implies that it necessarily terminates.  Since we lose at least one dimension for each successive $i$ and the complement of $Y_m$ must have dimension at least two, we conclude that $m\leq d-2$.  The area condition and Inequality~\eqref{eq:area:W} imply that when it does terminate,  for $\epsilon$ sufficiently small,  the complement of $Y_m$ in $(X, \omega)$ has area at least 
$$\mathrm{Area}(W_{m}^{(i)})\geq \frac{A}{(d-1)^m}.$$
Because the process has terminated, any two distinct saddle connections in $\Sigma_m$ with the same size are necessarily $\epsilon$-isolated, which
justifies Item~\ref{SeparatingSCSetForGammaBeta:Lemma:sigmIso} in the definition of the partition of $\Gamma$ and its auxiliary saddle connections when $\ell = m$.  Thus, Item~\ref{SeparatingSCSetForGammaBeta:Lemma:sigmIso2} is satisfied as well.
\end{proof}

\section{Upper Bound Theorems}
\label{UpperBound:Section}

In this section, we will prove Theorem~\ref{CountAlphaSC:Theorem} and use it to prove the upper bound in Theorem~\ref{MainSummaryThm}. We set notation.  Let $\beta$ be a marked saddle connection on $(X,\omega) \in \cH^{hyp}(\kappa)$ satisfying $\tau(\beta)=\beta$.  In order to have control of the saddle connections, the concept of $C$-separated is essential.  For this reason, we will fix a constant $C$ and restrict ourselves to counting saddle connections that are \emph{not} $C$-separated from our slit $\beta$.  For $C>1$, let $A((X, \omega) \setminus \beta,j,C)$ be the set of saddle connections on $(X, \omega)$ of size $j$ that are interiorly disjoint from $\beta$ and not $C$-separated from $\beta$.

In the introduction, we defined the set $A( \cdot)$ with \emph{length} as the argument, and here we define it with \emph{size}.  Indeed, size is more naturally adapted to the proof than length, so we only use size throughout this section.  Theorem~\ref{CountAlphaSC:Theorem} can be restated as follows.

\begin{theorem}
\label{CountAlphaSC:Theorem3}
Given $C>1$, there exists $c$ depending only on $\cH^{hyp}(\kappa)$ and $C$ such that if $(X, \omega) \in \cH^{hyp}(\kappa)$, $\beta$ is an invariant saddle connection on $(X, \omega)$ of size $\ell$, and $j - \ell \geq 1$, then
$$\# A((X, \omega) \setminus \beta,j,C) \leq c({j-\ell})^{d-2}2^{j-\ell},$$
where $d = dim_\bC\,\cH^{hyp}(\kappa)$.
\end{theorem}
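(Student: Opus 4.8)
The plan is to induct on $d=\dim_\bC\cH^{hyp}(\kappa)$, reducing the count to one of embedded triangles having $\beta$ as a side and then organizing those triangles with Lemma~\ref{SeparatingSCSetForGammaBeta:Lemma}, using Lemmas~\ref{lem:smallangle} and~\ref{lem:shortL} to handle the short and clustered saddle connections.

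\emph{Reduction to triangles with $\beta$.} Given $\alpha\in A((X,\omega)\setminus\beta,j,C)$, I would apply Lemma~\ref{lem:triangulation} to the pair $\alpha,\beta$, choosing among the available separating pairs one whose short element $\sigma$ has minimal length. In the first alternative, $\alpha$ is one of the (at most two) non-$\beta$ sides of an embedded triangle with side $\beta$, so, discarding parallels and $\tau$-symmetric copies, it suffices to bound the number $T_j$ of size-$j$ saddle connections that form two sides of an embedded triangle with $\beta$ and are not $C$-separated from $\beta$. In the second alternative, $\sigma\cup\tau(\sigma)$ topologically separates $\alpha$ from $\beta$; since $\alpha$ is not $C$-separated from $\beta$ one must have $|\sigma|\ge|\beta|/C$, and the minimality of $|\sigma|$ forces $\sigma$ itself to be interiorly disjoint from and not $C$-separated from $\beta$. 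Cutting along $\sigma\cup\tau(\sigma)$ and regluing the piece containing $\alpha$ (Lemma~\ref{TwoBdSC:Lemma}) yields a hyperelliptic surface of complex dimension $d'<d$ with an invariant slit $\sigma'$ of the same length, on which $\alpha$ is interiorly disjoint from and, again by minimality, not $C$-separated from $\sigma'$; the inductive bound there is $c'(j-s')^{d'-2}2^{j-s'}$ with $2^{s'}\le|\sigma'|$, and since $\ell-\log_2C-1\le s'$ and $d'-2\le d-3$ this is absorbed into $c(j-\ell)^{d-2}2^{j-\ell}$ once $j-\ell$ is large. A combinatorial bookkeeping (the $\sigma$'s that arise this way are themselves counted by the quantity one level down) then reduces the whole theorem to estimating $T_j$.

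\emph{Bounding $T_j$.} Since embedded triangles have area at most $A=\area(X,\omega)$, every such $\gamma$ satisfies $|\gamma\times\beta|\le 2A$, so its direction lies within angle $\lesssim A\,2^{-j-\ell}$ of that of $\beta$. Call $\gamma$ \emph{fat} if its triangle with $\beta$ has area $\ge c_0\epsilon^2A$ and \emph{thin} otherwise. For fat $\gamma$, Lemma~\ref{lem:smallangle} gives, for two of comparable length whose longer non-$\beta$ sides interiorly intersect, $|\gamma\times\gamma'|\ge\tfrac12|\gamma\times\beta|\ge\tfrac12c_0\epsilon^2A$, so such $\gamma$ are $\epsilon$-isolated; with Lemma~\ref{lem:shortL} disposing of the $O(1)$ exceptionally short ones and parallel families contributing $O(1)$ each, the number of fat size-$j$ saddle connections is at most (cone width)$/$($\epsilon$-isolation gap) $\lesssim_\epsilon A\,2^{-j-\ell}/(\epsilon^2A\,2^{-2j})=\epsilon^{-2}2^{j-\ell}$. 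For thin $\gamma$, the collection $\Gamma_j$ of all thin size-$j$ saddle connections forming a triangle with $\beta$ and not $C$-separated from $\beta$ satisfies the hypotheses of Lemma~\ref{SeparatingSCSetForGammaBeta:Lemma}, which attaches to each $\gamma$ a sequence $(\gamma=\sigma_0,\dots,\sigma_{\ell'})$, $\ell'\le d-2$, with nested $\tau$-invariant subsurfaces of area $\le c_1\epsilon^2A$ containing $\beta$, satisfying $|\sigma_{i+1}|\le c_2|\sigma_i|$, $\angle(\sigma_i,\sigma_{i+1})\le c_2\epsilon^2A/(|\sigma_i||\sigma_{i+1}|)$, and $\sigma_i$ not $C$-separated from $\sigma_{i+1}$. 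I would stratify the count of $\Gamma_j$ by $\ell'$, by the size-tuple $j=j_0,j_1,\dots,j_{\ell'}$, and by the saddle connections themselves: since $|\gamma|>|\beta|/C$ (the remark after the definition of $C$-separated) and, away from $O(1)$ exceptions per level dispatched by Lemma~\ref{lem:shortL}, the $|\sigma_i|$ are $\gtrsim|\beta|$, each $j_i$ ranges over an interval of length $O(j-\ell)$, giving $\lesssim(j-\ell)^{\ell'}\le(j-\ell)^{d-2}$ size-tuples. For a fixed size-tuple, the number of size-$j$ saddle connections $\gamma$ not $C$-separated from a fixed $\sigma_1$ and making the angle with it permitted by Lemma~\ref{SeparatingSCSetForGammaBeta:Lemma}\ref{SeparatingSCSetForGammaBeta:Lemma:sigiAngsigip1} is $\lesssim_{C,\epsilon}2^{j-j_1}$ (this is exactly where $C$-separation enters, removing the a priori dependence on the systole as in~\cite{MasurGrowthRateTrajs}), and likewise for each further step; multiplying these and summing the resulting geometric series in $j_1,\dots,j_{\ell'}$ costs only bounded constants, leaving the factor $2^{j-\ell}$ from the terminal $\epsilon$-isolated saddle connection, again a cone count. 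Hence $T_j\lesssim_{C,\epsilon}(j-\ell)^{d-2}2^{j-\ell}$, and, summed over $\ell<j\le\log_2(L/|\beta|)+O(1)$, this recovers the length form of Theorem~\ref{CountAlphaSC:Theorem} and the upper bound in Theorem~\ref{MainSummaryThm}.

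\emph{Main obstacle.} The crux is producing exactly the exponent $d-2$: every estimate must be arranged so that the single logarithmic loss incurred per unit of stratum dimension lost comes solely from the choice of the \emph{size} $j_i$ of the auxiliary saddle connection, while the choice of the saddle connection of that size — and the geometric sum over the remaining sizes — costs only a constant, so that the power of $2$ stays at $j-\ell$ rather than $(d-2)(j-\ell)$. This rests on the interplay of $C$-separation (giving the local counts uniformly over the stratum) with the thin/fat dichotomy furnished by Lemma~\ref{lem:smallangle} and the near-uniqueness of very short cylinders through $\beta$ from Lemma~\ref{lem:shortL}; making the reduction in the first paragraph bookkeep correctly — the multiplicities of the separating $\sigma$ and the uniformity in the diameter — is the other point requiring care.
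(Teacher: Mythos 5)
Your high-level plan — induct on $d$, reduce to counting embedded triangles having $\beta$ as a side via Lemma~\ref{lem:triangulation}, split into thin/fat cases using Lemma~\ref{lem:smallangle}, and organize the thin cluster via Lemma~\ref{SeparatingSCSetForGammaBeta:Lemma} — matches the paper's strategy in outline.  But the quantitative estimate you obtain for the triangle count is weaker than what the argument actually needs, and this is fatal.

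The crux is that the count of size-$j$ saddle connections forming two sides of a triangle with $\beta$ is \emph{linear} in $2^{j-\ell}$, not $(j-\ell)^{d-2}2^{j-\ell}$; this is precisely the content of Theorem~\ref{CountTriangles}, which the paper proves as a separate statement in a joint induction with Theorem~\ref{CountAlphaSC:Theorem3}.  You assert instead $T_j \lesssim (j-\ell)^{d-2}2^{j-\ell}$, and this mis-estimate breaks the bookkeeping.  When Lemma~\ref{lem:triangulation} is iterated, the per-step count of $\sigma_i$'s given $\sigma_{i+1}$ must be $O\bigl(2^{j_i - j_{i+1}}\bigr)$ (supplied by Theorem~\ref{CountTriangles} at lower dimension) so that the product over a fixed size-tuple telescopes to $O\bigl(2^{j-\ell}\bigr)$; the polynomial factor $(j-\ell)^{d-2}$ then comes \emph{solely} from the number of admissible size-tuples, one logarithm per dimension lost.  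If the per-step count already carries a $(j-\ell)^{d'-2}$ factor, iterating multiplies it up and the exponent overshoots.  Equivalently, in your own ``combinatorial bookkeeping'' the sum over boundary saddle connections $\sigma$ only closes to $(j-\ell)^{d-2}2^{j-\ell}$ if the number of $\sigma$'s of each size is linear; with your weaker estimate the induction does not close.

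The reason the triangle count is genuinely linear — which you do not capture — is that in Lemma~\ref{Lemma:Triangles:BigSigmam} (Case I) the terminal cone count of $\epsilon$-isolated $\sigma_m$ of size $j_m$ is $\approx 2^{2 j_m}/(\epsilon^2 2^{j+\ell})$, which \emph{decays geometrically in $j_m$}; this is exactly what makes the sum over size-tuples (Inequality~\eqref{GeomBoundSum:Ineq}) collapse to a constant.  In your write-up you treat the terminal count as ``$2^{j-\ell}$ from the terminal $\epsilon$-isolated saddle connection, again a cone count,'' independent of $j_m$, and simultaneously charge a factor of $(j-\ell)^{\ell'}$ for the choice of size-tuple \emph{and} a constant for a geometric sum over those same sizes — these are the same sum and cannot both be charged.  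Moreover, Case II in the paper's proof of Theorem~\ref{CountTriangles} (when $2 j_m < j+\ell$, so there is essentially a unique $\sigma_m$) requires a separate small-subsurface argument (Lemma~\ref{lem:onesubsurface}), which your proposal omits.

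Two further points.  First, the paper explicitly does \emph{not} use Lemma~\ref{SeparatingSCSetForGammaBeta:Lemma} in the proof of Theorem~\ref{CountAlphaSC:Theorem3}; that lemma is used inside the proof of the linear triangle count (Theorem~\ref{CountTriangles}), whereas the proof of Theorem~\ref{CountAlphaSC:Theorem3} iterates Lemma~\ref{lem:triangulation} directly, bounding each step by Theorem~\ref{CountTriangles} for the lower-dimensional slit surface.  Second, your claim that minimality of $|\sigma|$ forces $\alpha$ to be not $C$-separated from the new slit $\sigma'$ after cutting is not immediate: $C$-separation on the cut-down surface is tested against all topologically separating non-invariant pairs there, not just the ones visible in the triangulation of Lemma~\ref{lem:triangulation}, and the paper handles the propagation of the not-$C$-separated hypothesis across the iteration carefully rather than by a single minimality choice.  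So the reduction is the right idea but must be set up as the iterated alternative of Lemma~\ref{lem:triangulation}, with the per-step control coming from the \emph{linear} triangle theorem, in order to produce the stated $(j-\ell)^{d-2}$ exponent.
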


We recall from the introduction that $A^{cyl}((X, \omega) \setminus \beta,j) $ and $A^{non-inv}((X, \omega) \setminus \beta, j)$ are the subsets of $A((X, \omega) \setminus \beta,j)$ consisting of cylinders and non-invariant saddle connections that are interiorly disjoint from $\beta$, respectively.  With a small modification, the proof of Theorem~\ref{CountAlphaSC:Theorem3} can also be used to prove the following theorem.  

\begin{theorem}
\label{CountAlphaSC:Theorem3cyl}
Given $C>1$, there exists $c$ depending only on $\cH^{hyp}(\kappa)$ and $C$ such that if $(X, \omega) \in \cH^{hyp}(\kappa)$, $\beta$ is an invariant saddle connection on $(X, \omega)$ of size $\ell$, and $j - \ell \geq 1$, then
$$\# A^{cyl}((X, \omega) \setminus \beta,j) \leq c({j-\ell})^{d-3}2^{j-\ell}$$
and
$$\# A^{non-inv}((X, \omega) \setminus \beta,j) \leq c({j-\ell})^{d-3}2^{j-\ell},$$
where $d = dim_\bC\,\cH^{hyp}(\kappa)$.
\end{theorem}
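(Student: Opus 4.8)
The plan is to re-run the proof of Theorem~\ref{CountAlphaSC:Theorem3} with one additional observation in each case: a cylinder and (via Lemma~\ref{TwoBdSC:Lemma}) a non-invariant saddle connection each ``use up'' one complex dimension of the stratum, so that the sequence of auxiliary saddle connections of Lemma~\ref{SeparatingSCSetForGammaBeta:Lemma} attached to the object is forced to have length at most $d-3$ rather than $d-2$. Since, in the counting step, every \emph{intermediate} link of such a sequence is precisely what produces one factor of the form ``sum over a dyadic range of sizes'', i.e.\ one factor of $j-\ell$, while the single linear factor $2^{j-\ell}$ comes from the innermost winding step (present for both kinds of object), shortening the sequence by one turns $(j-\ell)^{d-2}2^{j-\ell}$ into $(j-\ell)^{d-3}2^{j-\ell}$.

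\paragraph{Non-invariant saddle connections.}
Let $\alpha$ be non-invariant of size $j$ and interiorly disjoint from $\beta$. By Lemma~\ref{TwoBdSC:Lemma}, $\alpha\cup\tau(\alpha)$ disconnects $(X,\omega)$; the component $(X_1,\omega_1)$ containing $\beta$ (reglued along its boundary) is a hyperelliptic surface in a stratum of complex dimension $d'\le d-1$ in which $\alpha$ and $\tau(\alpha)$ are identified to an invariant saddle connection $\alpha'$ of the same size, $\beta$ remains an invariant saddle connection, and all relevant lengths and angles are unchanged (Remark after Lemma~\ref{TwoBdSC:Lemma}). This is the same ``pass to a smaller hyperelliptic stratum'' reduction sketched in the Remark following Theorem~\ref{MainSummaryThm}: it is length-preserving, hence costs no range-factor, and reduces the problem to counting invariant size-$j$ saddle connections interiorly disjoint from $\beta$ in a stratum of dimension $\le d-1$. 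Applying Theorem~\ref{CountAlphaSC:Theorem3} there (with $d$ replaced by $d'\le d-1$, and absorbing the $C$-separated $\alpha'$ by the iterated-separation device used below) bounds the count by $c\,(j-\ell)^{d'-2}2^{j-\ell}\le c\,(j-\ell)^{d-3}2^{j-\ell}$; since the topological type of the splitting $\{\alpha,\tau(\alpha)\}$ ranges over a set bounded in terms of the genus, summing over types only affects the constant.

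\paragraph{Cylinders.}
For a cylinder $\mathcal{C}$ of size $j$ interiorly disjoint from $\beta$, take a boundary saddle connection $\gamma$ of $\mathcal{C}$ (of size $j$) and run the construction of Lemma~\ref{SeparatingSCSetForGammaBeta:Lemma} from $\sigma_0=\gamma$, keeping track of the fact that $\gamma$ bounds $\mathcal{C}$. A cylinder carries one complex parameter in addition to its boundary saddle connections --- exactly the ``adds one complex dimension'' count appearing in the proof of Lemma~\ref{Parallelogramulation:Lemma}, where a simple cylinder through an invariant saddle connection \emph{is} one of the $d-1$ parallelograms of the decomposition --- so the first $\tau$-invariant subsurface $Y_1$ together with the cylinder data accounts for at least three complex dimensions, rather than the two used by a generic saddle connection. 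Consequently the dimension bookkeeping in the ``Termination of the Construction'' step improves by one: the complement of $Y_1$ starts in a stratum of dimension at most $d-3$, loses at least one dimension at each subsequent step, and must stay at least two-dimensional, so the sequence has length $m\le d-3$; feeding $m\le d-3$ through the counting argument of Theorem~\ref{CountAlphaSC:Theorem3} yields the stated bound. The $C$-separated cylinders are then folded in exactly as when passing from the $C$-restricted estimate of Theorem~\ref{CountAlphaSC:Theorem3} to the full upper bound of Theorem~\ref{MainSummaryThm}: a separating pair $\rho\cup\tau(\rho)$ with $|\rho|<|\beta|/C$ is excised, reducing to a strictly shorter slit on a strictly lower-dimensional hyperelliptic surface, and the sum of the resulting estimates over these levels converges.

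\paragraph{Main obstacle.}
The delicate point is the cylinder case. Since a cylinder is not a small-area subsurface it cannot literally be absorbed into the $\epsilon$-complex $Y_1$, so one must show that its \emph{transverse} structure nonetheless forces the extra dimension drop --- i.e.\ that seeding the machinery of Lemma~\ref{SeparatingSCSetForGammaBeta:Lemma} with a cylinder boundary genuinely loses two complex dimensions at the first step, uniformly and compatibly with the $\epsilon$-isolation and Teichm\"uller-flow steps that occur there. I expect this is cleanest if one first treats simple cylinders, where Lemma~\ref{lem:shortL} and the fact that a simple cylinder through an invariant saddle connection is a $\tau$-invariant parallelogram are directly available, and then reduces a general cylinder to the simple case. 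The non-invariant case, by contrast, is essentially the bookkeeping of Lemma~\ref{TwoBdSC:Lemma} already used elsewhere in the paper.
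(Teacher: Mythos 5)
Your high-level intuition --- that a cylinder or a non-invariant saddle connection should cost one extra dimension, truncating the iteration one step earlier and dropping the exponent from $d-2$ to $d-3$ --- is correct, but both of your concrete reductions fail, and neither is what the paper does. The non-invariant argument is circular: cutting along $\alpha\cup\tau(\alpha)$ produces a quotient surface $(X_1,\omega_1)$ that \emph{depends on} $\alpha$, so applying Theorem~\ref{CountAlphaSC:Theorem3} on $X_1$ counts saddle connections on one auxiliary surface, not the $\alpha$'s themselves on $(X,\omega)$; the Remark after Theorem~\ref{MainSummaryThm} about passing to a lower stratum concerns one \emph{fixed} non-invariant slit $\beta$, not a varying family of $\alpha$'s that are themselves the objects being counted. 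For cylinders you invoke Lemma~\ref{SeparatingSCSetForGammaBeta:Lemma}, but the paper states explicitly, in the paragraph preceding the proof of Theorem~\ref{CountAlphaSC:Theorem3}, that Lemma~\ref{SeparatingSCSetForGammaBeta:Lemma} is \emph{not} used there: the $\sigma_i$'s in that proof come from iterated applications of Lemma~\ref{lem:triangulation}, producing separating pairs $\sigma_i\cup\tau(\sigma_i)$ between $\alpha$ and $\beta$. The ``main obstacle'' you flag --- that a cylinder is not a small-area subsurface and cannot be seeded into the $\epsilon$-complex machinery --- is real, and you do not resolve it.

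What the paper actually uses is a one-line observation inserted into the existing proof of Theorem~\ref{CountAlphaSC:Theorem3}. There, each cut $\sigma_i\cup\tau(\sigma_i)$ splits off a piece $X_i$ containing $\alpha$ from a piece $X_i'$ containing $\beta$, and the iteration terminates when $X_m'$ is a simple cylinder, giving $m\le d-2$. The extra observation is that when $\alpha$ is a cylinder boundary or, more generally, non-invariant, the \emph{first} piece $X_1$ cannot itself be a simple cylinder, because every saddle connection interior to a $\tau$-invariant simple cylinder is itself $\tau$-invariant (this fact is recorded in Step~V of the proof of Theorem~\ref{thm:lower-general}). Hence the first dimension drop $\dim\cH(\kappa)-\dim\cH(\kappa_1')$ is at least two rather than one, forcing $m\le d-3$, and the remaining count goes through verbatim. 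Your proposal introduces new machinery without ever supplying this observation, which is the entire content of the proof.
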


Theorem~\ref{CountAlphaSC:Theorem3cyl} will be proven in the course of proving Theorem~\ref{CountAlphaSC:Theorem3} because the proofs are identical up to the final step.  The key to the proof is counting triangles.  More precisely, let $T((X, \omega) \setminus \beta, j)$ be the set of non-invariant saddle connections $\gamma$ of size $j$ such that $\beta$ and $\gamma$ are two sides of a triangle.

\begin{theorem}
\label{CountTriangles}
There exists $c$ depending only on $\cH^{hyp}(\kappa)$ such that if $(X, \omega) \in \cH^{hyp}(\kappa)$ and $\beta$ is an invariant saddle connection on $(X, \omega)$ of size $\ell$, then
$$\# T((X, \omega) \setminus \beta, j) \leq \max(c\cdot  2^{j-\ell}, 1).$$
\end{theorem}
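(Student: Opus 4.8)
The plan is to count triangles with side $\beta$ by showing that any two such triangles whose third sides have comparable length must have those third sides nearly parallel, so that only boundedly many can occur per ``size class'' of the third side, and there are only $O(j - \ell)$ relevant size classes. First I would reduce to triangles lying on one side of $\beta$: since $\beta$ is horizontal (after rotating), every triangle with side $\beta$ lies either above or below $\beta$, and the hyperelliptic involution $\tau$ interchanges the two sides up to relabeling, so it suffices to bound the triangles above $\beta$ and double the count. Fix such a triangle $\Delta$ with side $\beta$, and let $\gamma$ be the longer of its other two sides, so $|\beta|/2 < |\gamma|$ (the triangle inequality gives $|\beta| < |\gamma| + |\text{other side}| \le 2|\gamma|$, using that $\gamma$ is the longer one), hence $|\gamma| > |\beta|/2 = 2^{\ell - 1}$. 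Thus every $\gamma \in T((X,\omega)\setminus\beta, j)$ that is realized as the long side of a triangle has size $j \ge \ell$; but $\gamma$ itself ranges over saddle connections of size exactly $j$, so I should instead think of it this way: each element $\gamma$ of $T((X,\omega)\setminus\beta,j)$ has size $j$, and if $j \le \ell$ there can be at most boundedly many by a separate short argument (and indeed $j - \ell \ge 1$ is the interesting range). Wait — $T$ is defined as \emph{all} non-invariant $\gamma$ of size $j$ forming a triangle with $\beta$, so $\gamma$ could be the \emph{shorter} side of its triangle.

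So the real argument is a packing estimate. Partition $T((X,\omega)\setminus\beta, j)$ according to whether $\gamma$ is the longer or the shorter of the two non-$\beta$ sides of \emph{its} triangle; in the latter case, the other side $\gamma'$ has $|\gamma'| \ge |\gamma| = 2^j$ while $|\beta| < |\gamma| + |\gamma'|$ forces $|\gamma'| \ge |\beta| - 2^{j+1}$, but more usefully $|\gamma'|$ and $|\beta|$ are comparable, and one can rerun the whole argument with the roles swapped. So without loss of generality assume $\gamma$ is the longer side, so $|\beta|/2 \le |\gamma|$, i.e. $2^{j+1} > |\gamma| \ge 2^{j-1}$ is automatic once $j \ge \ell$; when $j < \ell$ this forces $|\gamma| < |\beta|/2$, and Lemma~\ref{lem:shortL} handles the simple-cylinder case — one needs to observe that a non-invariant $\gamma$ forming a triangle with invariant $\beta$ yields a parallelogram $P_\gamma$ which, if embedded, is a simple cylinder, giving at most one such $\gamma$ of each relevant type, hence the $\max(\cdot, 1)$. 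For the main range $j \ge \ell$: group the $\gamma$'s of size $j$ by which side they are; then apply Lemma~\ref{lem:smallangle} to any two triangles $\Delta, \Delta'$ above $\beta$ with long sides $\gamma, \gamma'$ of the same size (so $|\gamma'|/2 \le |\gamma| \le 2|\gamma'|$ holds automatically) whose long sides interiorly intersect: the lemma gives $|\gamma \times \beta| \le 2|\gamma \times \gamma'|$. Since $\gamma$ and $\gamma'$ both bound triangles with $\beta$ of area $\le$ (total area), we have $|\gamma \times \beta| \le 2 A$, so $|\gamma \times \gamma'| \ge \tfrac12 |\gamma\times\beta|$ is \emph{not} what I want — I want an \emph{upper} bound on how clustered they are. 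Let me reconsider: the point of Lemma~\ref{lem:smallangle} must be that \emph{if} $\gamma, \gamma'$ are both long sides of triangles above $\beta$ with comparable length and they interiorly intersect, then the \emph{area} $|\gamma\times\gamma'|$ is at least $\tfrac12|\gamma\times\beta|$, and $|\gamma\times\beta| = 2\,\text{Area}(\Delta)$ which is bounded below by... no. The correct reading: $|\gamma \times \beta|$ is twice the area of $\Delta$, and embedded triangles based at $\beta$ and lying above it are essentially disjoint or nested in a controlled way, so at most $O(A / \text{(min area)})$ of them — but areas can be tiny.

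The mechanism must therefore be: sort triangles above $\beta$ by the \emph{angle} their long side makes with $\beta$. If two long sides $\gamma, \gamma'$ of size $j$ do \emph{not} interiorly intersect, the corresponding triangles are essentially disjoint above $\beta$ and their angular sectors at an endpoint of $\beta$ are disjoint, so at most $O(\pi / \theta_{\min})$ of them — but combined with the area bound $\text{Area}(\Delta) \approx |\gamma||\beta|\sin\theta \ge$ (nothing). So the genuine content is: among size-$j$ long sides, consider the graph where $\gamma \sim \gamma'$ if they interiorly intersect; within a cluster, Lemma~\ref{lem:smallangle} plus an area bound ($\sum$ of triangle areas $\le cA$, since a bounded-genus surface only contains boundedly many disjoint triangles, or the triangles interleave controllably) caps the cluster size by a constant, and the number of clusters is $O(2^{-j}\cdot\text{something})$ — here is where I expect $2^{j-\ell}$ to enter: the total angular width available is $O(1)$, each triangle of size $j$ with long side $\gamma$ and base $\beta$ of size $\ell$ and \emph{embedded} subtends angle $\gtrsim 2^{\ell - j}$ if its area is bounded below, but if area is small the triangle is thin and we use instead that thin triangles with a common base $\beta$ and comparable long-side length have nearly-parallel long sides and hence can be linearly ordered by a transverse coordinate spanning an interval of length $O(|\gamma|) = O(2^j)$ with consecutive gaps $\gtrsim$ (integrality/embeddedness) $\gtrsim 2^\ell/2^j \cdot 2^j$... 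The \textbf{main obstacle} is precisely this last counting step: converting ``interiorly disjoint embedded triangles sharing the base $\beta$, with comparable long sides'' into a clean estimate of the form ``at most $c\,2^{j-\ell}$ of them,'' which I expect to go through by shearing the long side to vertical (as in Figure~\ref{Fig:triangle:area:arg}) so the triangles become left/right-ordered slivers attached to $\beta$, with the far vertices lying in a strip of width $O(2^j)$ and successive far vertices separated by $\gtrsim |\beta| = 2^\ell$ because each is a singularity and the triangles are embedded and essentially disjoint — giving at most $O(2^{j-\ell})$ of them, times the constant cluster size from Lemma~\ref{lem:smallangle}, times $2$ for above/below, and finally summing a geometric-type bound is unnecessary since $j$ is fixed. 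The $\max(\cdot,1)$ absorbs the degenerate case $2^{j-\ell} < 1$, handled by Lemma~\ref{lem:shortL}.

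Concretely, the steps in order: (1) rotate so $\beta$ is horizontal; split $T$ into triangles above and below $\beta$, bound one and double. (2) For $\gamma \in T$, distinguish the case $|\gamma| \ge |\beta|/2$ (main) from $|\gamma| < |\beta|/2$; in the latter, $P_\gamma$ embedded makes it a simple cylinder, so Lemma~\ref{lem:shortL} gives at most one, contributing to the ``$1$''. (3) In the main case, fix the side of $\beta$ and shear the candidate long direction to vertical; show the far vertices of the (embedded, essentially pairwise-disjoint) triangles lie in a horizontal strip of width comparable to $2^j$ and are separated horizontally by at least a definite multiple of $|\beta| \asymp 2^\ell$, yielding $\le c\, 2^{j-\ell}$ triangles whose long sides are pairwise non-crossing. (4) Use Lemma~\ref{lem:smallangle} together with the area bound $\text{Area}(X,\omega) = A$ to show that within any crossing-cluster there are at most $c$ triangles, since each contributes area $\asymp |\gamma \times \beta|$ comparable across the cluster and these nested-or-crossing triangles have total area $\le cA$. (5) Combine: $\#T \le 2 \cdot (c\,2^{j-\ell}) \cdot c + 1 \le \max(c'\,2^{j-\ell}, 1)$.
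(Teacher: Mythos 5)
The crux of your proposal is steps (3)--(4), and neither one goes through.

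Your step (3) asserts that after shearing, the far vertices of embedded triangles above $\beta$ with size-$j$ long sides are separated horizontally by a definite multiple of $|\beta|\asymp 2^\ell$, because they are singularities. This is exactly the mechanism that works on the torus (Proposition~\ref{CrossProductBdCountTorus:Theorem}), where any two non-homologous loops satisfy $|\gamma_1\times\gamma_2|\geq 1$ by integrality of the homological intersection form. In a higher-genus stratum there is no such integrality: two saddle connections of size $j$ can have arbitrarily small cross product, so the thin, nearly-parallel triangles you are trying to separate can be packed with no lower bound on their angular or horizontal spacing. You flag this yourself (``areas can be tiny''), but the shearing argument you substitute does not repair it --- embeddedness of each triangle separately and the fact that each apex is a singularity does not force the apexes apart, since the same singularity can be reached by many short homotopically distinct paths. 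Note also that any two embedded triangles with $\beta$ as an edge lying on the \emph{same} side of $\beta$ necessarily have overlapping interiors (both contain a half-neighborhood of the interior of $\beta$), so there is no genuinely ``pairwise non-crossing'' regime for step (3) to dispose of; everything lands in your step (4).

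Your step (4) claims a crossing cluster has at most $c$ elements because ``each contributes area $\asymp|\gamma\times\beta|$ comparable across the cluster and these triangles have total area $\leq cA$.'' But the areas $|\gamma\times\beta|$ are not bounded below, and overlapping triangles do not obey a total-area bound. Lemma~\ref{lem:smallangle} gives $|\gamma\times\beta|\leq 2|\gamma\times\gamma'|$, which is only useful if $|\gamma\times\beta|$ is bounded \emph{below} (this is exactly how the paper uses it in the ``large area triangles'' step, to reduce to $\epsilon$-isolated saddle connections). When $|\gamma\times\beta|$ is small, the inequality is vacuous.

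The missing idea is the entire non-isolated machinery of Section~\ref{AuxSC:Section}. The paper splits $T((X,\omega)\setminus\beta,j)$ into $\epsilon$-isolated saddle connections (counted by the sector argument you have), large-area triangles (reduced to the isolated case via Lemma~\ref{lem:smallangle}), and small-area triangles. For the last, Lemma~\ref{SeparatingSCSetForGammaBeta:Lemma} uses the Teichm\"uller flow to contract a cluster into an $\epsilon$-complex $Y$, uses hyperellipticity (Lemma~\ref{TwoBdSC:Lemma}) to see that a boundary pair $\sigma\cup\tau(\sigma)$ of $Y$ separates the surface, and iterates; one then counts by induction on the dimension of the stratum, with Theorems~\ref{CountTriangles} and \ref{CountAlphaSC:Theorem3} proved in tandem and Propositions~\ref{CrossProductBdCountTorus:Theorem}--\ref{CountAlphaSCTorus:Theorem} as the base case. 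Your proposal uses neither the Teichm\"uller flow, nor the separating-pair structure, nor induction on dimension; without some replacement for all three there is no way to control the non-isolated case, which is where the content of the theorem lies.

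Your steps (1), (2) (the use of Lemma~\ref{lem:shortL} for short $\gamma$) and (5) are fine and also appear in the paper's argument.
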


The proof of Theorems~\ref{CountAlphaSC:Theorem3} and \ref{CountTriangles}  is by induction on the dimension of strata and will work as 
follows. Given a stratum $\mathcal{H}(\kappa)$, we will assume by induction that Theorems~\ref{CountAlphaSC:Theorem3} and \ref{CountTriangles} are true for strata $\mathcal{H}(\kappa')$ of smaller dimension than that of $\mathcal{H}(\kappa)$.  We will then use each Theorem  to prove the other for $\mathcal{H}(\kappa)$.  We emphasize that for the induction, Theorem~\ref{CountTriangles} requires assuming Theorem~\ref{CountAlphaSC:Theorem3} for strata of dimension \emph{strictly less than} the dimension of the stratum in question, while Theorem~\ref{CountAlphaSC:Theorem3} requires assuming Theorem~\ref{CountTriangles} for strata of dimension less than or equal to the dimension of the stratum in question.  Thus, this is logically valid.
Furthermore, since the proof is by induction on the exponent of $j- \ell$, only at the very end of the induction does the exponent $d-2$ or $d-3$ appear.  This is how we will be able to prove Theorems~\ref{CountAlphaSC:Theorem3} and \ref{CountAlphaSC:Theorem3cyl} simultaneously.

It should be noted that to prove the above theorems, it suffices to assume that 
the area of $(X,\omega)$ is one. In fact, scaling $(X,\omega)$ by the factor $\frac{1}{\sqrt{A}}$ does not change the ratio $2^{j-\ell}$.

We conclude with a corollary of Theorem~\ref{CountTriangles}, which may be of independent interest.  If a simple cylinder contains the saddle connection $\beta$, then the cylinder can be decomposed into two triangles because cylinders are invariant under $\tau$ by \cite[Lem.~2.2]{LindseyInvCompsHyp}.  Therefore, Theorem~\ref{CountTriangles} provides an upper bound for the number of cylinders that contain a slit $\beta$.  In fact, this corollary gives the growth rate for the number of simple cylinders containing $\beta$ on almost every surface because the lower bound will be proven in Theorem~\ref{thm:uniformtostart}.

\begin{corollary}\label{coro:cylinder-upper}
Let $(X, \omega) \in \cH^{hyp}(\kappa)$  and let $\beta$ be an invariant saddle connection on  $(X, \omega)$ of size $\ell$.
There exists $c$ depending only on $\cH^{hyp}(\kappa)$ such that 
for any $j \geq \ell$
the number of simple cylinders of size at most $j$ containing $\ell$ is at most 
$c\cdot  2^{j - \ell}.$
\end{corollary}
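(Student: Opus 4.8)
The plan is to reduce the corollary to Theorem~\ref{CountTriangles} via the correspondence between simple cylinders containing $\beta$ and triangles having $\beta$ as a side, disposing of the ``short'' cylinders separately with Lemma~\ref{lem:shortL}.

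The first step sets up the correspondence. Let $C$ be a simple cylinder on $(X,\omega)$ containing $\beta$ in its interior, of size $n$. By \cite[Lem.~2.2]{LindseyInvCompsHyp}, $C$ is invariant under $\tau$, so $\tau$ interchanges its two boundary saddle connections; each of them, say $\gamma_C$ and $\tau(\gamma_C)$, is non-invariant, and since $C$ is simple, $\gamma_C$ is freely homotopic to the core of $C$, so $|\gamma_C|$ equals the circumference of $C$ and $\gamma_C$ has size $n$. As noted in the paragraph preceding this corollary, cutting $C$ along the invariant arc $\beta$ realizes $C$ as the union of two triangles $T$ and $\tau(T)$, each having $\beta$ and a boundary saddle connection of $C$ as two of its sides; hence $\gamma_C$ and $\beta$ are two sides of a triangle, i.e.\ $\gamma_C \in T((X,\omega)\setminus\beta, n)$, and $C = P_{\gamma_C}$ in the notation preceding Lemma~\ref{lem:shortL}. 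Since $C$ is recovered from $\gamma_C$ as $P_{\gamma_C}$, the assignment $C \mapsto \gamma_C$ is injective onto the set of $\gamma$ for which $P_\gamma$ is a simple cylinder.

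The second step is the count. Every simple cylinder through $\beta$ of size at most $j$ produces a distinct element $\gamma_C$ of $\bigsqcup_{k\le j} T((X,\omega)\setminus\beta,k)$ with $P_{\gamma_C}$ a simple cylinder. If $k \le \ell-2$, then $|\gamma_C| < 2^{k+1} \le 2^{\ell-1} \le |\beta|/2$, so by Lemma~\ref{lem:shortL} there is at most \emph{one} such $\gamma_C$ over all of these sizes combined. For $\ell-1 \le k \le j$, Theorem~\ref{CountTriangles} bounds $\# T((X,\omega)\setminus\beta,k)$ by $\max(c'2^{k-\ell},1)$ with $c'$ depending only on $\cH^{hyp}(\kappa)$, and summing this geometric series over $k$ yields a bound of the form $c''\, 2^{j-\ell}$. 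Adding the single short cylinder and using $2^{j-\ell}\ge 1$ (valid since $j\ge\ell$), the total is at most $c\cdot 2^{j-\ell}$ with $c$ depending only on the stratum.

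I do not anticipate a real obstacle here: granted Theorem~\ref{CountTriangles} and Lemma~\ref{lem:shortL}, the argument is bijective bookkeeping. The points requiring the most care are matching the size of the cylinder (its circumference) with the size of $\gamma_C$ and verifying injectivity of $C\mapsto\gamma_C$, both of which rest on the $\tau$-invariance of simple cylinders in hyperelliptic components and on the fact that the $\tau$-invariant slit $\beta$ is a diagonal of the parallelogram $P_{\gamma_C}$. The other point to keep in mind is that Lemma~\ref{lem:shortL} must be invoked \emph{before} summing, because the bound of Theorem~\ref{CountTriangles} degenerates to the constant $1$ as $k\to-\infty$ and the unrestricted sum diverges.
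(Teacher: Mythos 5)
Your proof is correct and follows the same route the paper sketches: identify each simple cylinder $C$ through $\beta$ with its non-invariant boundary saddle connection $\gamma_C$ (using $\tau$-invariance of $C$, so $C=P_{\gamma_C}$ decomposes into two triangles), and then bound by Theorem~\ref{CountTriangles}. You also correctly supply a detail the paper's remark elides, namely that Lemma~\ref{lem:shortL} must absorb the potentially many sizes $k\le\ell-2$ into a single exceptional cylinder before summing the geometric bound from $k=\ell-1$ up to $j$.
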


\subsection{The Slit Torus $\cH(0,0)$}
\label{SlitTorusUB:subsect}

In order to prove Theorems~\ref{CountAlphaSC:Theorem3} and \ref{CountTriangles} by induction, we will need to establish the base case for each of them -- the slit torus.  Let $\cH(0,0)$ denote the stratum of tori with two marked points.  We observe that every torus in $\cH(0,0)$ is hyperelliptic in the sense that it  can be decomposed into two parallelograms,  each parallelogram admits an involution by $\pi$, and performing both involutions simultaneously yields a well-defined involution on the torus such that quotienting by it results in a sphere.  For this reason, $\cH(0,0)$ will serve as the base case for our induction and $\tau$ will denote the involution as usual.  In this case, the slit $\beta$ will be a marked line segment between the two marked points.  Indeed, this implies $\tau(\beta) = \beta$ because $\tau$ interchanges the two marked points and we will refer to $\beta$ as a saddle connection.

\begin{lemma}\label{lem:H00:half}
Given a torus in $\cH(0,0)$ with a slit $\beta$ of size $\ell$ invariant under the hyperelliptic involution, there is at most one simple cylinder containing $\beta$ of size less than $\ell-1$.
\end{lemma}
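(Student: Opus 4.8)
The plan is to argue by contradiction, essentially mimicking the structure of Lemma~\ref{lem:shortL} but taking advantage of the very rigid geometry of the flat torus. Suppose there are two distinct simple cylinders $C$ and $C'$, both containing the slit $\beta$ in their interior, each of size at most $\ell-2$ (so that each has circumference strictly less than $|\beta|/2$; here I use the convention that the circumference of a simple cylinder containing $\beta$ equals the length of the single boundary saddle connection, or equivalently the length of the short ``core'' saddle connection $\gamma$ with $P_\gamma$ the cylinder). Let $\gamma$ and $\gamma'$ be the corresponding core curves, so $|\gamma|, |\gamma'| < |\beta|/2$. Since $\beta$ lies in the interior of both cylinders and the torus is connected, the two cylinders must overlap, and in the overlap region there is an embedded triangle $\Delta$ whose three sides are $\beta$, a subsegment of $\gamma$, and a subsegment of $\gamma'$. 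The triangle inequality then forces $|\beta| \le |\gamma| + |\gamma'| < |\beta|/2 + |\beta|/2 = |\beta|$, a contradiction. Hence at most one such cylinder can exist.

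First I would pin down precisely what ``size less than $\ell-1$'' buys us: if the cylinder has size $\le \ell-2$ then its core curve $\gamma$ satisfies $|\gamma| < 2^{\ell-1} \le |\beta|/\ldots$ — more carefully, $\beta$ has size $\ell$ means $2^\ell \le |\beta| < 2^{\ell+1}$, and $\gamma$ of size $\le \ell-2$ means $|\gamma| < 2^{\ell-1} \le |\beta|/2$. So both core curves are strictly shorter than $|\beta|/2$, which is exactly the hypothesis needed for the Lemma~\ref{lem:shortL}-style contradiction. Next I would justify that two distinct simple cylinders on a torus, both containing $\beta$ in their interior, must genuinely intersect: on the torus a simple cylinder is an annular neighborhood of a core geodesic, and since $\beta$ is in the interior of each, the point set $\beta$ lies in $C \cap C'$, so the intersection is nonempty; because the cylinders are distinct their core directions differ, so in a neighborhood of $\beta$ the two cylinders cross transversally and one obtains the desired embedded triangle with side $\beta$. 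This is the geometric heart of the argument and is essentially identical to the overlap argument in the proof of Lemma~\ref{lem:shortL}, specialized to genus one.

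The step I expect to require the most care is extracting the \emph{embedded} triangle $\Delta$ with sides $\beta$, a subsegment of $\gamma$, and a subsegment of $\gamma'$, and verifying that its sides really do satisfy the triangle inequality $|\beta| \le |\gamma| + |\gamma'|$ rather than something weaker. One has to be sure the subsegments of $\gamma$ and $\gamma'$ used are at most as long as $\gamma$ and $\gamma'$ respectively (immediate, since they are subsegments), and that the three segments genuinely bound a nondegenerate embedded triangle — this uses that both cylinders are \emph{simple} (so their boundaries consist of a single saddle connection, forcing the core curves to hit the singularities in a controlled way) together with the fact that $\beta$ joins the two marked points which are interchanged by $\tau$. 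Once the embedded triangle is in hand, the contradiction is the one-line triangle inequality above. I would cite the overlap-triangle construction from the proof of Lemma~\ref{lem:shortL} to keep this short, noting that the torus case is strictly simpler because there is no ambient topology to worry about.
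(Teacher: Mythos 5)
Your approach differs from the paper's: you adapt the triangle--overlap argument of Lemma~\ref{lem:shortL} to the slit torus, whereas the paper's own proof of Lemma~\ref{lem:H00:half} is an angle estimate. The paper bounds the angle each boundary saddle connection $\gamma_i$ makes with $\beta$ using the area constraint $|\gamma_i\times\beta|\leq 1$, and then contrasts this with the lower bound $|\gamma_1\times\gamma_2|\geq 1$, valid for any two non-parallel loops on a unit-area torus. Your reduction of ``size less than $\ell-1$'' to $|\gamma|,|\gamma'|<2^{\ell-1}\leq|\beta|/2$ is correct.

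However, the step ``in the overlap region there is an embedded triangle $\Delta$ whose three sides are $\beta$, a subsegment of $\gamma$, and a subsegment of $\gamma'$'' glosses over which sides of the two parallelograms actually bound the overlap. Write $P_\gamma=T\cup\tau(T)$ and let $\gamma$ and $\delta$ denote the short and long non-$\beta$ sides of $T$, and similarly $\gamma',\delta'$ for $T'$. The region $T\cap T'$ above $\beta$ is bounded, near each endpoint of $\beta$, by the inner of the two sides emanating from that endpoint. If $\gamma$ and $\gamma'$ emanate from the \emph{same} endpoint of $\beta$ --- which happens whenever both apexes lie on the same half of $\beta$ --- then one bounding subsegment is a piece of a short side and the other is a piece of a \emph{long} side ($\delta$ or $\delta'$), and the triangle inequality only gives $|\beta|\leq(\text{piece of short})+(\text{piece of long})$, which is no contradiction. (If $\gamma,\gamma'$ emanate from different endpoints, one can similarly land on the two long sides.) One can check that the bad configuration is in fact impossible on a unit-area torus, but the reason is precisely the pair of bounds $|\gamma_i\times\beta|\leq 1$ and $|\gamma\times\gamma'|\geq 1$ --- that is, one is forced back to the paper's argument. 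So the triangle inequality alone is not enough here, and you should prove the lemma the way the paper does, directly from the cross-product estimates, rather than by citing Lemma~\ref{lem:shortL}.
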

\begin{proof}
Since lengths scale under renormalization, it is suffices to prove this for unit area tori.  Suppose by contradiction, there were a pair of saddle connections $\gamma_1$ and $\gamma_2$ bounding simple cylinders containing $\beta$ with sizes $j_1 < \ell-1$ and $j_2 < \ell-1$, respectively.  The angle each $\gamma_i$ makes with $\beta$ is at most 
$$\frac{1}{2^{j_i + \ell-1}}< \frac{1}{2^{j_1 + j_2 + 1}},$$
so the angle they make with each other is less than $\frac{1}{2^{j_1 + j_2}}$.  This contradicts the fact that on a torus of area one, the angle between $\gamma_1$ and $\gamma_2$ is at least $\frac{1}{2^{j_1 + j_2}}$ because their cross product is at least one.
\end{proof}

The following is Theorem~\ref{CountTriangles} for slit tori. 

\begin{proposition}
\label{CrossProductBdCountTorus:Theorem}
Let $(X, \omega) \in \cH_1(0,0)$, and let $\beta$ be a marked saddle connection on $(X, \omega)$ of size $\ell$ connecting the distinct marked points.
Let $\Gamma$  be a collection of saddle connections of size $j$ that are closed loops  joining a marked point to itself that are interiorly disjoint from $\beta$. Then
$$\#\Gamma = \# T((X, \omega) \setminus \beta, j) \leq \max \left\{ 2^{j-\ell}, 1\right\}.$$
\end{proposition}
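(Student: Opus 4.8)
The plan is to work on the torus (equivalently, $\mathbb{R}^2/\Lambda$ for a lattice $\Lambda$ of covolume one) and to think of the two marked points as $0$ and the endpoint $\mathbf{b}$ of the slit $\beta$, where $\mathrm{hol}(\beta) = \mathbf{b}$. A saddle connection $\gamma \in \Gamma$ is a loop from a marked point to itself, so its holonomy vector is a primitive lattice vector $\mathbf{v} \in \Lambda$; since $\{\beta, \gamma\}$ bound a triangle we must have the third side be a segment from $0$ to $\mathbf{b}$ as well, i.e.\ the triangle has vertices $0$, $\mathbf{b}$, $\mathbf{b}+\mathbf{v}$ (or $0$, $\mathbf{v}$, $\mathbf{b}$, up to relabeling), and embeddedness forces this triangle to contain no lattice point in its interior or on the interior of its edges. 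The constraint is then: (i) $|\mathbf{v}| \in [2^j, 2^{j+1})$ (size $j$), (ii) $\mathbf{v}$ primitive, (iii) the triangle $0,\mathbf{b},\mathbf{b}+\mathbf{v}$ (suitably interpreted) is embedded. First I would set up this dictionary carefully, reducing the count of $\Gamma$ to a count of such vectors $\mathbf{v}$.

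The key geometric observation is that the area of the triangle spanned by $\beta$ and $\gamma$ is $\frac{1}{2}|\beta \times \gamma|$, and an embedded triangle on a unit-area torus has area at most $\frac{1}{2}$ (in fact its closure can contain lattice points only at the three vertices), so $|\beta \times \gamma| \leq 1$. Thus every $\gamma \in \Gamma$ satisfies the cross-product bound $|\mathbf{b} \times \mathbf{v}| \leq 1$, which says $\mathbf{v}$ lies in a bi-infinite strip of width $1/|\mathbf{b}|$ around the line through $\mathbf{b}$. Combined with $|\mathbf{v}| < 2^{j+1}$, the candidate vectors lie in a rectangle-like region of area roughly $\frac{2^{j+1}}{|\mathbf{b}|} \approx 2^{j+1-\ell+1}$; since they are lattice points (and one expects at most a bounded number per unit area, with the primitivity and direction constraints helping), this already gives a bound of the shape $c\, 2^{j-\ell}$. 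To get the clean constant $1$ (rather than some universal $c$), I would argue more sharply: any two distinct $\gamma, \gamma' \in \Gamma$ have holonomy vectors $\mathbf{v}, \mathbf{v}'$ with $|\mathbf{v} \times \mathbf{v}'| \geq 1$ (nonzero integer, since $\det$ of two lattice vectors in a unit-covolume lattice is an integer, and they are non-parallel as distinct primitive classes — or if parallel, handled separately by the convention that we count one per parallel class). If both lie in the strip of width $1/|\mathbf{b}|$ around $\mathbf{b}$-direction and have length $< 2^{j+1}$, then $|\mathbf{v} \times \mathbf{v}'| \leq |\mathbf{v}|\cdot(\text{transverse distance between them})$, and the transverse distances are bounded by $2/|\mathbf{b}| \leq 2^{2-\ell}$ while $|\mathbf{v}| < 2^{j+1}$; so consecutive ones (ordered by transverse coordinate) are forced apart, yielding at most about $2^{j+1} \cdot 2^{2-\ell} = 2^{j-\ell+3}$ — still a constant times $2^{j-\ell}$, not exactly $2^{j-\ell}$.

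To nail the exact bound $\max\{2^{j-\ell},1\}$ I would instead exploit that all the triangles in question share the \emph{fixed} side $\beta$ and are embedded (hence pairwise have disjoint interiors or are nested in a controlled way), and invoke \textbf{Lemma~\ref{lem:smallangle}} and \textbf{Lemma~\ref{lem:shortL}} directly. Lemma~\ref{lem:smallangle} gives, for two such $\gamma, \gamma'$ of comparable length whose long sides interiorly intersect, the sharpened inequality $|\gamma \times \beta| \leq 2|\gamma \times \gamma'|$, which translates into a true doubling of the transverse-coordinate gaps; bucketing $\Gamma$ by size and summing the geometric series in each comparable-length band, together with the fact that on a torus each "direction class" contributes at most one $\gamma$ of a given size, should collapse the count to at most $2^{j-\ell}$ (with the $\max\{\cdot,1\}$ handling the degenerate case $j < \ell$ or a single short cylinder, cf.\ Lemma~\ref{lem:H00:half}). \textbf{The main obstacle} I anticipate is precisely this last bookkeeping: translating the embeddedness/disjointness of the triangles into a clean ordering of the vectors $\mathbf{v}$ along the transverse direction and verifying that the gaps genuinely double (rather than merely being bounded below), so that the sum over a size band is $\leq 2^{j-\ell}$ on the nose and not $c\cdot 2^{j-\ell}$. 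Handling the boundary between "triangles on the same side of $\beta$" versus "on opposite sides" (the $P_\gamma$ parallelogram picture) is where Lemma~\ref{lem:smallangle}'s case analysis will have to be invoked with care.
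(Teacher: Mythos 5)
Your core mechanism is exactly the paper's: embeddedness of the triangle on a unit-area torus gives $|\gamma\times\beta|\leq 1$, hence an angular sector of width about $2^{-(j+\ell)}$ around $\beta$; the lattice determinant gives $|\gamma_1\times\gamma_2|\geq 1$ for distinct non-parallel loops, hence angular separation about $2^{-2j}$; and dividing gives the count. The paper's proof is just this sector-division together with Lemma~\ref{lem:H00:half} to dispose of the at-most-one $\gamma$ with $j<\ell-1$. So the approach matches.

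Where you diverge is in the final bookkeeping, and your instinct is actually sharper than what the paper writes. You correctly observe that, tracked honestly with $2^{j}\leq|\gamma|<2^{j+1}$, the separation is $\geq 2^{-(2j+2)}$ while the sector is $\leq 2^{-(j+\ell)}$, so the division gives roughly $2^{j-\ell+2}$, not the clean $2^{j-\ell}$ in the statement. The paper's proof has the identical looseness --- it states the separation as $1/2^{2j}$ rather than $1/2^{2j+2}$ and writes the quotient as $2^j/2^\ell$ --- so the stated constant is not literally justified. You then propose to repair this by invoking Lemma~\ref{lem:smallangle} and a bucketing argument; the paper does \emph{not} do that here (Lemma~\ref{lem:smallangle} enters only later, in the ``Large Area Triangles'' step of Theorem~\ref{CountTriangles}), and it is unnecessary. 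Every downstream use of this proposition tolerates a multiplicative constant: Theorem~\ref{CountTriangles} itself is stated with $\max(c\cdot 2^{j-\ell},1)$, and the one place Proposition~\ref{CrossProductBdCountTorus:Theorem} is cited directly (inside the proof of Proposition~\ref{CountAlphaSCTorus:Theorem}) it is invoked with the weaker bound $2^{k-\ell+1}$ anyway. So your extra machinery, while it might indeed tighten the constant, is overkill; the honest takeaway is that the proposition should be read as asserting $\#\Gamma\leq c\cdot 2^{j-\ell}$ for a universal $c$, which both your straightforward argument and the paper's deliver.

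One small omission on your side: the statement includes the equality $\#\Gamma=\#T((X,\omega)\setminus\beta,j)$, which the paper justifies in one line (every loop $\gamma$ interiorly disjoint from $\beta$ bounds a simple cylinder containing $\beta$, whose two triangles have $\beta$ and $\gamma$ as sides). You take the triangle condition as part of the hypothesis, which is fine for the counting but leaves the first equality unaddressed.
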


\begin{proof}
We first observe that every such saddle connection $\gamma$ determines a simple cylinder on the slit torus containing $\beta$.  Therefore, $\gamma$ and $\beta$ form two sides of a triangle and we have the first equality.

By Lemma~\ref{lem:H00:half}, there is at most one saddle connection in $\Gamma$ of size less than $\ell-1$.  Therefore, it suffices to assume that $j \geq \ell$. 
As in the proof of Lemma~\ref{lem:H00:half} the angle between $\gamma$ and $\beta$ is bounded above by $\frac{1}{2^{j + \ell}}$. 
z  On a torus of unit area, we have  $|\gamma_1\times\gamma_2|\geq 1$ for any pair of loops. Thus, the saddle connections in $\Gamma$ are angle at least $\frac{1}{2^{2j}}$ apart.  Then in a sector of angle $1/2^{j+\ell}$, there are at most $\left(1/2^{j+\ell}\right)/\left(1/2^{2j}\right) = \frac{2^j}{2^{\ell}}$ saddle connections in $\Gamma$.
\end{proof}

The following is Theorem~\ref{CountAlphaSC:Theorem3} for slit tori.  

\begin{proposition}
\label{CountAlphaSCTorus:Theorem}  Given $C > 1$, let $(X, \omega) \in \cH_1(0,0)$, and let $\beta$ be an invariant saddle connection of size $\ell$ on $(X, \omega)$.  If $j \geq \ell+1$, then there exists a positive constant $c$ depending only on $C$ such that
$$\# A((X, \omega) \setminus \beta, j, C) \leq c({j-\ell})2^{j-\ell}.$$
\end{proposition}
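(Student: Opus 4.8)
The plan is to reduce counting an arbitrary saddle connection $\alpha \in A((X,\omega)\setminus\beta, j, C)$ on the slit torus to counting triangles, which is already controlled by Proposition~\ref{CrossProductBdCountTorus:Theorem}. First I would observe that since $(X,\omega)$ is a unit-area torus with two marked points, the flat structure is essentially a lattice $\R^2/\Lambda$ with $\Lambda$ of covolume $1$, and $\beta$ is a segment between the two marked points. A saddle connection $\alpha$ interiorly disjoint from $\beta$ and not $C$-separated from $\beta$ must (by the Remark following the definition of $C$-separation) satisfy $|\alpha| > |\beta|/C$, hence $2^j > 2^\ell/C$, i.e. $j \geq \ell - \log C$; combined with the hypothesis $j \geq \ell+1$ this is automatic, but the real content is geometric: on the slit torus, $\alpha$ together with $\beta$ should cut out a controlled region. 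The key step is to show that $\alpha$ lies "close to" some triangle having $\beta$ as a side — more precisely, that $\alpha$ can be completed, using at most boundedly many additional saddle connections, into a chain that forms a triangle with $\beta$, or that $\alpha$ itself bounds (with a subarc of a cylinder containing $\beta$) a triangle on $\beta$.

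The main steps, in order: (1) Enumerate the triangles $\Delta_1,\dots,\Delta_N$ on $(X,\omega)$ having $\beta$ as a side; by Proposition~\ref{CrossProductBdCountTorus:Theorem} applied across all relevant sizes $k$ with $\ell \leq k \leq j + O(1)$, the total number of such triangles of size at most $j+O(1)$ is $\sum_{k} \max(2^{k-\ell},1) = O(2^{j-\ell})$, and there is at most one of size $< \ell - 1$ by Lemma~\ref{lem:H00:half}. (2) Show that any $\alpha \in A((X,\omega)\setminus\beta,j,C)$ determines, via the two simple cylinders adjacent to $\beta$ (or via developing $\alpha$ in the complement of $\beta$ in the plane), a triangle $\Delta$ with side $\beta$ whose "long side excluding $\beta$" has size within a bounded additive constant of $j$, together with a bounded "combinatorial offset." Here Lemma~\ref{lem:smallangle} is the crucial tool: it says that if $\gamma,\gamma'$ are the longer sides of two triangles on $\beta$ that interiorly intersect and have comparable length, then $|\gamma \times \beta| \leq 2|\gamma\times\gamma'|$, which forces such near-parallel long sides to be angularly spread apart in proportion to their cross products with each other — giving the $\epsilon$-isolation needed to count them. (3) For each such triangle $\Delta = \Delta(\alpha)$, bound the number of $\alpha$ mapping to it: here one uses that not being $C$-separated from $\beta$ forces $|\alpha| \geq |\beta|/C$, so $\alpha$ lies in a bounded "sector" and, by the unit-area cross-product lower bound $|\alpha \times \alpha'| \geq 1$ for distinct parallel classes, the number of $\alpha$ of size $j$ in such a sector associated to a fixed $\Delta$ is $O(2^{j - \ell})$. (4) Multiply: $O(2^{j-\ell})$ triangles, each accounting for $O(2^{j-\ell})$... — no, this overcounts; instead one stratifies by the size $k$ of the triangle's long side, getting $O(2^{k-\ell})$ triangles of size $k$ each contributing $O(2^{j-k})$ values of $\alpha$, and summing $\sum_{\ell \leq k \leq j} 2^{k-\ell}\cdot 2^{j-k} = \sum_{k} 2^{j-\ell} = O((j-\ell)2^{j-\ell})$, which is exactly the claimed bound with the linear factor $(j-\ell)$ coming from the number of scales $k$.

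The step I expect to be the main obstacle is step (2): rigorously producing, from an arbitrary $\alpha$ that merely avoids the interior of $\beta$ and is not $C$-separated from it, a triangle on $\beta$ to which $\alpha$ is "associated" with bounded loss. On the torus, $(X,\omega)\setminus\beta$ develops into the plane minus a discrete set of slits (translates of $\beta$), and $\alpha$ is a straight segment in this slit plane; I would argue that the cyclic sequence of slit-endpoints that $\alpha$ "sees" on either side determines a triangulation of a bounded region, of which one triangle has $\beta$ as an edge, and that the long side of that triangle is comparable to $|\alpha|$ precisely because $\alpha$ cannot escape too far without being $C$-separated from $\beta$. The bookkeeping to make "bounded loss" precise — controlling how many $\alpha$ share the same associated triangle — is where Lemma~\ref{lem:smallangle} and the lattice cross-product bound must be combined carefully, and getting the power of $(j-\ell)$ to come out as exactly $1$ (rather than $2$) requires that the triangle's scale and $\alpha$'s scale be linked so that only one extra summation over scales appears.
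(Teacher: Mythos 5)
Your proposal follows essentially the same strategy as the paper's: associate to each $\alpha$ an auxiliary saddle connection $\gamma$ that forms a triangle with $\beta$, stratify by the size $k$ of $\gamma$, bound the number of such $\gamma$ of size $k$ by $O(2^{k-\ell})$ via Proposition~\ref{CrossProductBdCountTorus:Theorem}, bound the number of $\alpha$ per $\gamma$ by $O(2^{j-k})$ via winding in the cylinder that $\gamma$ bounds, and sum over $\ell \leq k \leq j$ to pick up the single factor of $j-\ell$. Your Step~4 arithmetic is exactly the paper's.

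The genuine gap is Step~(2), and you correctly flag it. Your proposal gestures at a developing-map argument to ``produce a triangle to which $\alpha$ is associated,'' but this is left completely open, and as stated in Step~(2) — that the triangle's long side has ``size within a bounded additive constant of $j$'' — it is also inconsistent with the later stratification over all $k \in [\ell, j]$. The paper fills this gap concretely with a dichotomy you do not state: (a) if $\alpha$ is $\epsilon$-isolated, count directly — in a sector of width $\frac{2}{|\alpha||\beta|}$ around $\beta$ there are at most $O(2^{j-\ell}/\epsilon^2)$ such $\alpha$, with no logarithmic factor; (b) if $\alpha$ is not $\epsilon$-isolated, it interiorly intersects another $\alpha'$ at tiny angle, and (via the $\epsilon$-complex construction, Lemma~\ref{EMEpsCplx:Lemma}, which on a torus can only produce a simple cylinder) the pair fills a cylinder with boundary $\gamma \cup \tau(\gamma)$; this $\gamma$, not a triangle read off from a developing map, is the auxiliary object, and it forms a triangle with $\beta$ because the cylinder $\gamma \cup \tau(\gamma)$ cuts off does. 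The not-$C$-separated hypothesis is then used exactly once, to force $|\gamma| \geq |\beta|/C$ so that only $O(j-\ell)$ scales $k$ occur. The paper also handles two further points you omit: Lemma~\ref{lem:H00:half} is invoked to dispose of the at most one $\gamma$ with $|\gamma| < |\beta|/2$, and the argument is split at the outset on $|\alpha \times \beta| \leq 2$ versus $>2$ (in the latter case $\alpha$ is forced to be $\tau$-invariant, and one passes directly to the filled cylinder). Your appeal to Lemma~\ref{lem:smallangle} is not needed here — the unit-area lattice bound $|\gamma \times \gamma'| \geq 1$ already gives the required isolation on the torus; Lemma~\ref{lem:smallangle} becomes necessary only in the higher-genus triangle count of Theorem~\ref{CountTriangles}.
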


\begin{proof}
Consider the set of $\alpha \in A((X, \omega) \setminus \beta, j, C)$ such that $|\alpha\times\beta|\leq 2$.  Inside this collection, the number of saddle connections that are $\epsilon$-isolated is at most $$\frac{2^{2j}}{\epsilon^2}\cdot \frac{1}{2^{j + \ell}} = \frac{2^{j - \ell}}{\epsilon^2}.$$  

For those that are not $\epsilon$-isolated, we build subsurfaces (which are necessarily simple cylinders in this setting) with boundaries formed by the collection (cf. Lemma~\ref{SeparatingSCSetForGammaBeta:Lemma}).  Each boundary is of the form $\gamma\cup \tau(\gamma)$, where $\gamma$ has size at most $j$ (up to an additive constant).  Let $\Gamma$ be the collection of all such $\gamma$.  By the assumption that $\alpha$ is not $C$-separated from $\beta$, we have $|\gamma|\geq |\beta|/C$.

By Lemma \ref{lem:H00:half}, there is at most one  $\gamma_0$ such that $|\gamma_0| < \frac{|\beta|}{2}$. The number of $\alpha$ determining $\gamma_0$ is at most 
$C \cdot 2^{j-\ell}$ because the saddle connections $\alpha$ of size $j$ wrap around a cylinder with core curve $\gamma_0$ and therefore pick up multiples of its length.

It remains to consider the case that $|\gamma| \geq |\beta|/2$. 
Partition $[2^{\ell - 1},2^{j+1}]$ into intervals $I_k=[2^{k}, 2^{k+1}]$, $\ell-1\leq k\leq j$.  
 For each $k$, there are at most $2^{k-\ell+1}$ possible $\gamma \in \Gamma$ with size $k$ by Proposition~\ref{CrossProductBdCountTorus:Theorem}. And the number of $\alpha$ determining each $\gamma$ of size $k$ is at most $2^j/2^{k}$ because the saddle connections $\alpha$ of size $j$ wrap around a cylinder with core curve $\gamma$ and therefore pick up multiples of its length. Multiplying we get a bound of  $2^{j-\ell+1}$ such  $\alpha$ for each $k$.  Summing over all values of $k$, we get the total bound to be 
 $$\sum_{k=\ell-1}^{j} \frac{2^{j+1}}{2^{\ell}} = \frac{2^{j+1}}{2^{\ell}} (j - \ell+1)\leq 2 (j - \ell) 2^{j-\ell}.$$ 

The proof is almost the same for those $\alpha$ such that $|\alpha\times\beta|\geq 2$. 
Since the number of pairwise interiorly disjoint saddle connections is at most five (excluding $\beta$), we may assume that each saddle connection in $A((X, \omega) \setminus \beta, j, C)$ intersects at least one other saddle connection.  Let $\alpha\in A((X, \omega) \setminus \beta, j, C)$ be a saddle connection that intersects $\alpha'\in A((X, \omega) \setminus \beta, j, C)$.  Moreover, $\alpha$ is necessarily invariant under $\tau$ because otherwise, $\alpha \cup \tau(\alpha)$ would bound a cylinder containing $\beta$, which would contradict the unit area assumption.  As stated in the remark following Lemma~\ref{EMEpsCplx:Lemma} there is a locally convex subsurface $Y$ generated by $\alpha$ and $\alpha'$ with diameter at most $2^{j+2}$.  In this case, since $\alpha$ and $\alpha'$ are intersecting invariant saddle connections, $Y$ is necessarily a simple cylinder containing them.  Let $\gamma$ be a boundary saddle connection of this cylinder, which necessarily satisfies $|\gamma| \leq 2^{j+1}$.
In particular, $\gamma\cup\tau(\gamma)$ cuts $(X,\omega)$ into two simple cylinders.  The fact that  $\alpha$ is not $C$-separated from $\beta$ implies that $|\gamma|\geq |\beta|/C$.
The rest of the proof follows as before.
\end{proof}

\subsection{Proofs of Theorem~\ref{CountAlphaSC:Theorem3} and \ref{CountTriangles}}

As stated above the proofs of Theorems~\ref{CountAlphaSC:Theorem3} and \ref{CountTriangles} are by induction.  Propositions~\ref{CrossProductBdCountTorus:Theorem} and \ref{CountAlphaSCTorus:Theorem} established the base cases of these theorems in $\cH(0,0)$.

\begin{proof}[Proof of Theorem~\ref{CountTriangles}]
The outline is as follows.  Fix $\epsilon > 0$ sufficiently small.  
We will first prove the linear upper bound for the set of $\gamma$ that are $\epsilon$-isolated.
For the more complicated case of non-isolated $\gamma$, we will apply Lemma~\ref{SeparatingSCSetForGammaBeta:Lemma}. The proof then will be divided into two cases depending on the size $j_m$ of the isolated  $\sigma_m$ in $\Sigma_m$ provided by the lemma.  In the case that $j_m$ is sufficiently large, we will use induction to give a linear upper bound on the number of $\gamma$. In the case that $j_m$ is small compared to $j$, we will show that all $\gamma$ in this case are contained in a proper subsurface and then apply the induction for strata of lower dimension.  

Up to scaling by a factor, we may assume that $(X,\omega)$ has unit area.

\paragraph{Isolated Saddle Connections:}
Since $\gamma$ and $\beta$ form a triangle, the unit area assumption implies that $|\gamma \times \beta| \leq 2$, so the angle between $\gamma$ and $\beta$ is bounded above by $\frac{4}{2^{j+ \ell}}$.  
Consider the $\epsilon$-isolated saddle connections   in $T((X, \omega) \setminus \beta, j)$. Their angles  are at least $\epsilon^2 2^{-2j}$ apart.  So in a sector of angle $4/2^{j+ \ell}$, there are at most 
$$\max\left\{ \frac{2^{j+2}}{2^{\ell}\epsilon^2},1\right\}$$ saddle connections in $T((X, \omega) \setminus \beta, j)$. This gives the desired linear bound for the isolated saddle connection $\gamma$, by taking  $c = 4/\epsilon^2$.

\paragraph{Large Area Triangles:} Now we fix $c_1 > 0$ and consider the subset $\Gamma_{c_1} \subset T((X, \omega) \setminus \beta, j)$ consisting of $\gamma$ that form triangles with $\beta$ of area at least $c_1 \epsilon^2$, i.e., $|\gamma \times \beta| \geq c_1 \epsilon^2$.  
Since the number of pairwise interiorly disjoint saddle connections is bounded from above by some constant depending only on $\cH^{hyp}(\kappa)$, we may assume that each $\gamma \in \Gamma_{c_1}$ intersects at least one other saddle connection in $\Gamma_{c_1}$.  By Lemma~\ref{lem:smallangle}, for all $\gamma' \in \Gamma_{c_1}$ such that $\gamma' \not= \gamma$, we have $|\gamma \times \beta| \leq 2 |\gamma \times \gamma'|$.  Thus, the saddle connections in $\Gamma_{c_1}$ are $\left(\sqrt{c_1/2}\right)\epsilon$-isolated, so this case is subsumed by the isolated saddle connection case above.

\begin{remark}
Recall that a translation surface $(X,\omega)$ is a \emph{Veech surface} if $\operatorname{SL}(X,\omega)$, the 
group of derivatives of affine automorphisms of $(X,\omega)$, 
is a lattice in $\operatorname{SL}(2,\mathbb{R})$. A Veech surface has dynamical properties similar to the square torus.  It was shown in \cite[Thm.~1.1]{SmillieWeissCharLattice} that $(X,\omega)$
is a Veech surface if and only if it has no
 small triangles, i.e., the areas of triangles in $(X,\omega)$
 is bounded from below by a positive number.
 So if $(X,\omega)\in \cH^{hyp}_1(\kappa)$ is a Veech surface,
 then it suffices to consider large area triangles and at this stage the proof of Theorem~\ref{CountTriangles} is complete for Veech surfaces.
\end{remark}

\paragraph{Non-Isolated Saddle Connections (Small Area Triangles):} Now assume there is a collection of saddle connections in $T((X, \omega) \setminus \beta, j)$ that are not $\epsilon$-isolated.  For a fixed $c_0$, we define a new set $$\Gamma = T((X, \omega) \setminus \beta, j, c_0\epsilon^2)$$ to be the subset of $T((X, \omega) \setminus \beta, j)$ such that the triangle formed by $\gamma$ and $\beta$ has area at most $c_0 \epsilon^2$.  In this way, we observe that all of the assumptions of Lemma~\ref{SeparatingSCSetForGammaBeta:Lemma} are satisfied for $\Gamma$.

In the following proof, for any $\gamma\in \Gamma$, we shall denote the associated sequence by $(\sigma_1,\ldots, \sigma_m)$. That is, the length of the sequence is $m$ for every $\gamma\in \Gamma$.  
 For otherwise, as we have done in Lemma~\ref{SeparatingSCSetForGammaBeta:Lemma}, we are able to decompose $\Gamma$ into at most $m$ subsets such that each $\gamma$ in the same subset determines a sequence of some fixed length. 
 Then we can count each subset independently. 
  
 Consider the filtration of subsurfaces $Y_{1}\subset Y_2 \subset \cdots \subset Y_m$ associated to this set from Lemma~\ref{SeparatingSCSetForGammaBeta:Lemma} Item~\ref{SeparatingSCSetForGammaBeta:Lemma:subsurfs}.  By Lemma~\ref{SeparatingSCSetForGammaBeta:Lemma}, $m\leq d-2$ and by Item~\ref{SeparatingSCSetForGammaBeta:Lemma:sigmIso} the saddle connections in $\Sigma_m$ are $\epsilon$-isolated.  We also consider the collections of saddle connections $\Sigma_i$ and their partitions into subcollections $\Sigma_{i,k}$ by Item~\ref{SeparatingSCSetForGammaBeta:Lemma:SigmaPart} in the definition of a partition of $\Gamma$ and its auxiliary saddle connections, for each $i$ and $k$.  Additionally, Item~\ref{SeparatingSCSetForGammaBeta:Lemma:SigmaPart} guarantees for each $i$ and $k$ each of the saddle connections in $\Sigma_{i,k}$ have size $j_{i,k}$ for some $j_{i,k}$.  

The key to the inductive argument is as follows.  We count saddle connections in $\Sigma_{i,k}$ by considering $\sigma_{i+1}$, which is determined by $\Sigma_{i,k}$.
Assume that $\sigma_{i+1}$ has size $j_{i+1}$.  By Lemma~\ref{TwoBdSC:Lemma}, cutting along $\sigma_{i+1} \cup \tau(\sigma_{i+1})$ results in two hyperelliptic translation surfaces, one of which contains the saddle connections in $\Sigma_{i,j}$.  Since this hyperelliptic translation surface lies in a smaller dimensional stratum, the induction hypothesis can be applied to it.

We fix some vector $\bj = (j_0,j_1,j_2,\cdots,j_m)$, with $j = j_0$, and count the subset of $\gamma\in \Gamma$ such that the sequences of saddle connections associated to $\gamma$ have sizes specified by $\bj$.  At the end, we will sum over all possible vectors $\bj$ to obtain the total count of all elements in $\Gamma$.

We claim that the sequence of saddle connections associated to $\gamma$ bounds the angle between $\sigma_m$ and $\beta$ by
\begin{equation}
\label{eq:smallangle}
  \frac{4m c_2^{2m}}{2^{2j_m}}+\frac{1}{2^{j + \ell}} \leq 2\max \left\{ \frac{4m c_2^{2m}}{2^{2j_m}},\frac{1}{2^{j + \ell}} \right\}. 
\end{equation}
Indeed, by Lemma~\ref{SeparatingSCSetForGammaBeta:Lemma} Item~\ref{SeparatingSCSetForGammaBeta:Lemma:sigiAngsigip1},   the angle between $\sigma_i$ and $\sigma_{i+1}$ is at most $$\frac{4}{2^{j_i + j_{i+1}}} < \frac{{4c_2^{2m}}}{2^{2j_m}}$$ because $2^{j_{i+1}}/c_2 < 2^{j_i}$, for all $i$ by Lemma~\ref{SeparatingSCSetForGammaBeta:Lemma} Item~\ref{SeparatingSCSetForGammaBeta:Lemma:sigiLengthsigip1}.  Therefore, adding up over $i\leq m$  and adding in the angle between $\gamma$ and $\beta$, we get the desired bound.

We consider the set of $\epsilon$-isolated saddle connections $\sigma_m\in \Sigma_m$, which exist by Lemma~\ref{SeparatingSCSetForGammaBeta:Lemma} Item~\ref{SeparatingSCSetForGammaBeta:Lemma:sigmIso2}.  There are two cases to consider: either
\begin{itemize}
\item \textbf{Case~I} : $2j_m\geq j+\ell$, or,
\item \textbf{Case~II} : $2j_m < j+\ell$.
\end{itemize}

\paragraph{Proof of Theorem~\ref{CountTriangles} in Case I: $2j_m\geq j+\ell$}
\begin{lemma}
\label{Lemma:Triangles:BigSigmam}
Given the setup of Theorem~\ref{CountTriangles} in the setting of Case I, there exists a constant $c$ depending only the stratum $\cH^{hyp}(\kappa)$ such that 
	\begin{equation*}
		\# \{\gamma\in \Gamma : 2j_m\geq j+\ell\}\leq c\cdot  2^{j-\ell}.
	\end{equation*}
\end{lemma}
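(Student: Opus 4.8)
The plan is to count the $\gamma\in\Gamma$ one size-vector $\bj=(j_0=j,j_1,\dots,j_m)$ at a time and then to sum the resulting estimates over the size-vectors compatible with Case~I; as in the proof of Lemma~\ref{SeparatingSCSetForGammaBeta:Lemma} one may assume that all associated sequences have the common length $m\le d-2$, splitting $\Gamma$ into finitely many subcollections otherwise. Fix such a $\bj$. I would first bound the number of admissible terminal saddle connections $\sigma_m$: by \eqref{eq:smallangle} the angle between $\sigma_m$ and $\beta$ is at most $2\max\{4mc_2^{2m}2^{-2j_m},\,2^{-(j+\ell)}\}$, and in Case~I the hypothesis $2j_m\ge j+\ell$ forces $2^{-2j_m}\le 2^{-(j+\ell)}$, so this angle is at most $c'\,2^{-(j+\ell)}$ with $c'$ depending only on the stratum. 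Since all these $\sigma_m$ lie in $\Sigma_m$ and share the size $j_m$, Item~\ref{SeparatingSCSetForGammaBeta:Lemma:sigmIso} of the partition furnished by Lemma~\ref{SeparatingSCSetForGammaBeta:Lemma} Item~\ref{SeparatingSCSetForGammaBeta:Lemma:sigmIso2} makes them $\epsilon$-isolated, hence pairwise at angle at least $\epsilon^2 2^{-2j_m}$; counting directions in the sector of angular width at most $c'\,2^{-(j+\ell)}$ about $\beta$ then gives at most $c''\,2^{2j_m-j-\ell}$ choices of $\sigma_m$ (the constraint $2j_m\ge j+\ell$ makes the additive $+1$ harmless).

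Next I would bound, for each fixed $\sigma_{i+1}$, the number of admissible $\sigma_i$ of size $j_i$. By the construction in Lemma~\ref{SeparatingSCSetForGammaBeta:Lemma} (through Lemma~\ref{lem:triangulation}), $\sigma_i$ and $\sigma_{i+1}$ are two sides of an embedded triangle, with $\sigma_i$ in the interior of the $\tau$-invariant subsurface $Y_{i+1}$ and $\sigma_{i+1}$ in its boundary. Cutting $(X,\omega)$ along $\sigma_{i+1}\cup\tau(\sigma_{i+1})$ and regluing as in Lemma~\ref{TwoBdSC:Lemma} produces a hyperelliptic translation surface in a strictly smaller stratum, in which $\sigma_{i+1}$ becomes an invariant saddle connection of size $j_{i+1}$ and each such $\sigma_i$ is a non-invariant saddle connection forming a triangle with it; the inductive hypothesis of Theorem~\ref{CountTriangles} then bounds their number by $\max(c\,2^{j_i-j_{i+1}},1)\le c\,2^{\max(j_i-j_{i+1},0)}$. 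Multiplying the bound on $\#\{\sigma_m\}$ by these over $i=0,\dots,m-1$ and using that, by telescoping together with the ``almost decreasing'' Item~\ref{SeparatingSCSetForGammaBeta:Lemma:sigiLengthsigip1},
\begin{equation*}
\sum_{i=0}^{m-1}\max(j_i-j_{i+1},0)\ \le\ (j_0-j_m)+O(1)\ =\ (j-j_m)+O(1),
\end{equation*}
I would obtain for this fixed $\bj$ a bound of the form $C_1\,2^{2j_m-j-\ell}\cdot 2^{(j-j_m)+O(1)}=C_1\,2^{j_m-\ell}$, with $C_1$ depending only on the stratum.

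Finally I would sum over the size-vectors compatible with Case~I. Items~\ref{SeparatingSCSetForGammaBeta:Lemma:sigiCSepsigip1} and \ref{SeparatingSCSetForGammaBeta:Lemma:sigiLengthsigip1}, together with the remark following the definition of $C$-separated (giving $|\sigma_i|>|\sigma_{i+1}|/C$), confine each $j_i$ with $1\le i\le m-1$ to an interval of length $O(j-j_m)$, so the number of size-vectors with a prescribed $j_m$ is at most $(j-j_m+O(1))^{m-1}$. Writing $v=j-j_m$, Case~I reads $v\le (j-\ell)/2$ while $j_m\le j+O(1)$ gives $v\ge -O(1)$, whence
\begin{equation*}
\#\{\gamma\in\Gamma:2j_m\ge j+\ell\}\ \le\ C_1\sum_{v\ge -O(1)}(v+O(1))^{m-1}\,2^{\,j-v-\ell}\ =\ C_1\,2^{\,j-\ell}\sum_{v\ge -O(1)}(v+O(1))^{m-1}2^{-v}.
\end{equation*}
Since $m-1\le d-3$ is bounded in terms of the stratum, the last sum converges to a constant, yielding the claimed bound $c\cdot 2^{j-\ell}$. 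The step I expect to be the main obstacle is the second one: verifying carefully that the surgery along $\sigma_{i+1}\cup\tau(\sigma_{i+1})$ genuinely produces a \emph{strictly smaller} hyperelliptic stratum in which $\sigma_i$ and the (now invariant) $\sigma_{i+1}$ really are two sides of a triangle, so that the induction is legitimately applicable (including dealing with the possibility that some $\sigma_i$ is invariant in the smaller surface). Granting this, the rest is the bookkeeping of a convergent polynomial-times-geometric series, and it is exactly that convergence --- forced by $2j_m\ge j+\ell$ --- that keeps the logarithmic factors from appearing here.
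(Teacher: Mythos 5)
Your outline reproduces the paper's overall structure (bound the number of terminal $\sigma_m$ by $\epsilon$-isolation, bound each $\sigma_i$ per $\sigma_{i+1}$ by cutting along $\sigma_{i+1}\cup\tau(\sigma_{i+1})$ and invoking the inductive hypothesis, then sum over size-vectors), and your first and last steps, as well as the convergence bookkeeping, are all sound. But the step you flag as the ``main obstacle'' is exactly where the argument breaks, and it is not just a verification issue: the claim that ``$\sigma_i$ and $\sigma_{i+1}$ are two sides of an embedded triangle'' is not something Lemma~\ref{SeparatingSCSetForGammaBeta:Lemma} gives you. In that construction, $\sigma_{i+1}$ is a boundary saddle connection of the $\epsilon$-complex $Y_{i+1}$ (Lemma~\ref{EMEpsCplx:Lemma}) built from the cluster $\Sigma_{i,k}$; nothing forces any $\sigma_i$ in the cluster to bound a triangle with $\sigma_{i+1}$. (Lemma~\ref{lem:triangulation} is invoked inside the proof of Lemma~\ref{SeparatingSCSetForGammaBeta:Lemma} only to replace $\sigma_{i+1}$ in a special sub-case where the cross-product condition fails, and even there the triangle it produces is with $\gamma=\sigma_0$ and $\beta$, not with $\sigma_i$.) Consequently you cannot apply Theorem~\ref{CountTriangles} inductively to the pair $(\sigma_i,\sigma_{i+1})$.

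The paper avoids this by invoking the inductive hypothesis for \emph{Theorem~\ref{CountAlphaSC:Theorem3}} on the lower-dimensional slit surface, not Theorem~\ref{CountTriangles}. That theorem requires only that $\sigma_i$ be interiorly disjoint from $\sigma_{i+1}$ and not $C$-separated from it, which are precisely Items~\ref{SeparatingSCSetForGammaBeta:Lemma:IntDisjBeta} and \ref{SeparatingSCSetForGammaBeta:Lemma:sigiCSepsigip1} of Lemma~\ref{SeparatingSCSetForGammaBeta:Lemma}. The price is a polylogarithmic factor $\bigl(j_i-j_{i+1}+1\bigr)^{n_i}$ per step (cf.\ \eqref{sigmaipl1persigmai:eqn}), which is then absorbed in the geometric sum via Inequality~\eqref{GeomBoundSum:Ineq}; your own telescoping-plus-convergent-series argument handles these polynomial factors too, since the exponents $n_i$ and the number of steps $m$ are bounded in terms of the stratum. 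So if you replace Theorem~\ref{CountTriangles} by Theorem~\ref{CountAlphaSC:Theorem3} in your second step and carry the extra polynomial factors through, you recover the paper's proof; as written, with the triangle assumption, the second step is unjustified.
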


\begin{proof}
In order to count the number of saddle connections in $\Gamma$, we follow the setup of Lemma~\ref{SeparatingSCSetForGammaBeta:Lemma}.  We form the associated sequence of saddle connections to each $\gamma \in \Gamma$ as guaranteed by Lemma~\ref{SeparatingSCSetForGammaBeta:Lemma} and then partition the resulting collection of saddle connections into the partition of $\Gamma$ and its auxiliary saddle connections.  Then we count the number of saddle connections in $\Sigma_{i,k}$ for each $i$ and $k$.

By Lemma~\ref{lem:shortL}, there is at most one saddle connection $\gamma \in \Gamma$ such that $|\gamma|\leq |\beta|/2$.  Therefore, up to multiplying $c$ by $4$, we can always assume that $j\geq \ell$.

We begin with the easiest case - counting the number of saddle connections in $\Sigma_m$.  In this case, the Case I assumption implies $$\frac{1}{2^{2j_m}} \leq \frac{1}{2^{j + \ell}},$$ so the $\epsilon$-isolated property on saddle connections  $\sigma_m$ together with Inequality~\eqref{eq:smallangle} implies that there are at most 
$$\frac{2^{2j_m}}{\epsilon^2 2^{j + \ell}}+\frac{4mc_2^{2m}}{\epsilon^2}\leq \frac{8m c_2^{2m}}{\epsilon^2}\cdot\frac{2^{2j_m}}{2^{j + \ell}}$$
saddle connections in $\Sigma_m$ of size $j_m$.

Next we explain how to count the number of saddle connections in $\Sigma_{i,k}$ for any $k$.  This will require the inductive assumption.  Given $\Sigma_{i,k}$, let $\sigma_{i+1}$ be the boundary saddle connection of $Y_{i+1}$ (the subsurface containing $\Sigma_{i,k}$).  Since $\sigma_{i+1} \cup \tau(\sigma_{i+1})$ bounds a subsurface disjoint from $Y_{i+1}$ by Lemma~\ref{TwoBdSC:Lemma}, we can excise the subsurface and identify $\sigma_{i+1}$ with $\tau(\sigma_{i+1})$.
The resulting subsurface  lies in a lower dimensional stratum $\cH^{hyp}(\kappa')$ and contains  $\sigma_{i+1}$ as an invariant slit.

We can apply Theorem~\ref{CountAlphaSC:Theorem3} (which holds for arbitrary area surfaces)
using the induction hypothesis that $\dim \cH^{hyp}(\kappa') < \dim \cH^{hyp}(\kappa)$ to conclude that there are constants $c$ depending on the stratum, and $n_i=\dim \cH^{hyp}(\kappa) \geq 0$ such that the number of saddle connections in all possible $\Sigma_{i,k}$ that determine $\sigma_{i+1}$ is at most
\begin{equation}
\label{sigmaipl1persigmai:eqn}
c\left(\frac{2^{j_i}}{2^{j_{i+1}}}\left( j_i - j_{i+1} + 1 \right)^{n_i}\right).
\end{equation}
If the subsurface has genus one, we apply Proposition~\ref{CountAlphaSCTorus:Theorem} instead.

With these preliminary counts, we combine them to count all saddle connections in $\Gamma$ as follows.  We first continue to fix $\bj$ and count the total number of saddle connections in $\Gamma$ with the property that the sizes of the saddle connections in their sequences associated to $\gamma$ are specified by $\bj$.  Then we sum over all possible tuples $\bj$ to get the final total.

Recall the convention $j_0 = j$.  Then by \eqref{sigmaipl1persigmai:eqn}, the number of saddle connections in $\Gamma_i^{(m)}$ with boundary saddle connection $\sigma_1$ is at most $$c \cdot 2^{j_0-j_1}(j_0-j_1+1)^{n_0}.$$  To count the number of saddle connections in $\Gamma$ with boundary saddle connections in $\Sigma_{1,k}$ of size $j_1$ with boundary saddle connection $\sigma_{2}$, we multiply the counts of each.  If we do this iteratively, and in the last step, multiply by the number of elements in $\Sigma_m$ of size $j_m$ computed above, this yields the upper bound,  for some new constant $c$, 
\begin{equation}\label{eq:combine-linear}
\frac{c}{2^{\ell}} 2^{j_0-j_1}(j_0-j_1+1)^{n_0} \cdots 2^{j_{m-1}-j_m}(j_{m-1}-j_m +1)^{n_m}\frac{2^{2j_m}}{2^{j_0}}.
\end{equation}

Next we would like to sum this over all possible partitions $\bj$.  Recall that, up to an additive constant, we may assume that $j_i \geq j_{i+1}$.  
From this we conclude that, by summing over all admissible  $\bj$, there is some constant $c$ (which incorporates all of the previous constants) such that
\begin{eqnarray*}
\label{ineq:BoundSum}
\#\Gamma   &\leq& \frac{c}{2^{\ell}}\sum_{\substack{0\leq i\leq m-1\\j_{i+1}\leq j_i}} 2^{j_0-j_1}(j_0-j_1+1)^{n_0} \cdots 2^{j_{m-1}-j_m}(j_{m-1}-j_m+1)^{n_m}\frac{2^{2j_m}}{2^{j_0}}.
\end{eqnarray*} 

Before proceeding, we observe the following inequality that will be used repeatedly.  It says that for any fixed $n$, there is a universal constant $C_1$ such that
\begin{equation}
\label{GeomBoundSum:Ineq}
\sum_{0\leq k\leq j+2} 2^{j-k} (j-k)^n 2^{2k-j}=2^j \sum_{0\leq k\leq j+2} 2^{-(j-k)} (j-k)^n \leq C_1 \cdot 2^j.
\end{equation}

We have shown 
\begin{eqnarray*}
\#\Gamma   &\leq& \frac{c}{2^{\ell}}\sum_{\substack{0\leq i\leq m-1\\j_{i+1}\leq j_i}} 2^{j_0-j_1}(j_0-j_1+1)^{n_0} \cdots 2^{j_{m-1}-j_m}(j_{m-1}-j_m+1)^{n_m}\frac{2^{2j_m}}{2^{j_{m-1}}}\frac{2^{j_{m-1}}}{2^{j_{0}}}.
\end{eqnarray*} 
Using Inequality~(\ref{GeomBoundSum:Ineq}) to eliminate the terms involving $j_m$ yields
\begin{eqnarray*}
 \#\Gamma  &\leq&  \frac{c\cdot C_1 }{2^{\ell}}\sum_{\substack{0\leq i\leq m-2\\j_{i+1}\leq j_i}} 2^{j_0-j_1}(j_0-j_1+1)^{n_0} \cdots 2^{j_{m-2}-j_{m-1}}(j_{m-2}-j_{m-1+1})^{n_{m}}\frac{2^{2j_{m-1}}}{2^{j_{0}}}.
   \end{eqnarray*}
Repeatedly applying Inequality~(\ref{GeomBoundSum:Ineq}) demonstrates that the above sum is bounded by
$$\#\Gamma \leq \frac{c \cdot C_1^{m-1} }{2^{\ell}}\sum_{j_{1}\leq j_0} 2^{j_0-j_1}(j_0-j_1+1)^{n_0} \frac{2^{2j_{1}}}{2^{j_{0}}}\leq c' \frac{2^{2j_0}}{2^{j_0}2^{\ell}} = c' \frac{2^{j_0}}{2^{\ell}}$$
for some constant $c'$.
Since this bound is  true for each $m\leq d-2$, if we sum over $m\leq d-2$, we get the desired result. 
This finishes the proof of the lemma.
\end{proof}

\paragraph{Proof of Theorem~\ref{CountTriangles} in Case II : $2j_m\leq {j+\ell}$}

\begin{lemma}
\label{aCaseIIExists:lemma}
Let $\Gamma$ be a collection of saddle connections in the setting of Case~II.  Let $\gamma \in \Gamma$ and let $(\gamma=\sigma_0, \sigma_1, \ldots, \sigma_m )$ be the sequence of saddle connections associated to $\gamma$ from Lemma~\ref{SeparatingSCSetForGammaBeta:Lemma}.  For each $i$, let $j_i$ be the size $\sigma_i$.  Then there exists $a \in \{1, \ldots, m-1\}$ such that 
\begin{equation}\label{eq:short}
2j_a\geq j+\ell-1 \quad \text{ and } \quad 2j_{a+1}\leq j+\ell.
\end{equation}
\end{lemma}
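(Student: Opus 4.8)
The plan is a discrete intermediate–value argument for the integer sequence $i\mapsto 2j_i$, $0\le i\le m$, relative to the level $j+\ell$, using only the structural properties of the sequence recorded in Lemma~\ref{SeparatingSCSetForGammaBeta:Lemma} together with the defining inequality of Case~II. First I would record the two endpoint values. Since $\sigma_0=\gamma$ has size $j_0=j$, and (as in the proof of Theorem~\ref{CountTriangles}, where a factor of $4$ is absorbed via Lemma~\ref{lem:shortL}) we may assume $j\ge\ell$, we get $2j_0=2j\ge j+\ell>j+\ell-1$. At the other end, the Case~II hypothesis reads $2j_m<j+\ell$, i.e.\ $2j_m\le j+\ell-1$. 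As in Lemma~\ref{SeparatingSCSetForGammaBeta:Lemma}, after partitioning $\Gamma$ into boundedly many subcollections we may assume every $\gamma\in\Gamma$ has an associated sequence of one common length $m$; and we may assume $m\ge 2$, the cases $m=0$ (that is, $\gamma$ is $\epsilon$-isolated) and $m=1$ being disposed of directly in the proof of Theorem~\ref{CountTriangles} without appeal to this lemma.

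\textbf{The intermediate–value step.} Let $b$ be the least index in $\{0,1,\dots,m\}$ with $2j_b\le j+\ell-1$. Such a $b$ exists because $b=m$ satisfies the inequality, and $b\ge 1$ because $2j_0\ge j+\ell>j+\ell-1$. Put $a:=b-1\in\{0,1,\dots,m-1\}$. By minimality of $b$ we have $2j_a=2j_{b-1}\ge j+\ell\ge j+\ell-1$, while by the defining property of $b$ we have $2j_{a+1}=2j_b\le j+\ell-1\le j+\ell$. Thus both inequalities claimed in the lemma hold for this $a$, and $a\le m-1$; the only thing not yet delivered is $a\ge 1$.

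\textbf{Ensuring $a\ge 1$ (the main obstacle).} It remains to rule out $b=1$, i.e.\ the degenerate possibility that the sequence already drops below $j+\ell$ at the very first step, from $\sigma_0=\gamma$ to $\sigma_1$. For this I would return to the construction of $\sigma_1$ in the proof of Lemma~\ref{SeparatingSCSetForGammaBeta:Lemma}: $\sigma_1$ is a boundary saddle connection of the subsurface $Y_1$, which after the last modification in that proof contains the parallelogram $P_\gamma=T_\gamma\cup\tau(T_\gamma)$, hence both $\gamma$ and $\beta$, and has diameter at most a constant times $|\gamma|$ and at least $|\gamma|$. Combining this with Items~B)--D) of Lemma~\ref{SeparatingSCSetForGammaBeta:Lemma} (which make the sizes $j_i$ almost decreasing and bound the angle between $\sigma_0$ and $\sigma_1$) and with Assumption~\ref{SeparatingSCSetForGammaBeta:Lemma:Assump:gammabetaTri} (so that $T_\gamma$, and hence the angle between $\gamma$ and $\beta$, is small), one should be able to show that $|\sigma_1|$ cannot be so small that $2j_1<j+\ell-1$; granting this, $b\ge 2$ and $a=b-1\ge 1$, completing the proof. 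I expect exactly this last point --- extracting the lower bound $2j_1\ge j+\ell-1$ from the geometry of $Y_1$ --- to be the main difficulty, everything else being routine bookkeeping with the inequalities of Lemma~\ref{SeparatingSCSetForGammaBeta:Lemma}.
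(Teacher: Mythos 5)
Your first two steps --- recording the endpoint values $2j_0\geq j+\ell-1$, $2j_m\leq j+\ell-1$, and running a discrete intermediate–value argument on $i\mapsto 2j_i$ --- match the paper's proof, which is essentially just those two lines. The place where you diverge is where you diagnose a ``main obstacle'': ensuring $a\geq 1$. That step is indeed not delivered by the intermediate–value argument, but your sketched fix does not close the gap, and I do not think it can. The proposed bound $2j_1\geq j+\ell-1$ simply is not true in general: in the construction of Lemma~\ref{SeparatingSCSetForGammaBeta:Lemma}, $\sigma_1$ is a boundary component of an $\epsilon$-complex $Y_1$ and can be arbitrarily short relative to $\gamma$ and $\beta$; the lemma only gives an \emph{upper} bound $|\sigma_1|\leq c_2|\gamma|$ (Item~D), and the not-$C$-separated condition (Item~B) gives $|\sigma_1|\gtrsim|\gamma|/C$ only indirectly after re-choosing, with $C$ large. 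Nothing forces $2^{j_1}$ up to roughly $\sqrt{2^{j+\ell}}$. A secondary issue: you invoke Lemma~\ref{lem:shortL} to ``assume $j\geq\ell$,'' but that lemma only excludes (at most one) $\gamma$ with $|\gamma|\leq|\beta|/2$, which in terms of sizes only rules out $j<\ell-1$; the paper correspondingly only asserts $j\geq\ell-1$. With $j=\ell-1$ your $b$ could be $0$ and $a=-1$ even before you reach the ``main obstacle.''

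The resolution is that $a\geq1$ should not be proved at all. The paper's own two-line argument does not establish it either, and the index set in the statement is almost certainly a slip for $\{0,\ldots,m-1\}$: the intermediate–value argument produces some $a\in\{0,\ldots,m-1\}$, and the downstream use in Lemma~\ref{lem:onesubsurface} (the telescoping angle sum $\frac{4\epsilon^2}{2^{j+\ell}}+\frac{4\epsilon^2}{2^{j+j_1}}+\cdots+\frac{4\epsilon^2}{2^{j_a+j_{a+1}}}$ and the resulting length bound on $\sigma_{a+1}$ after Teichm\"uller flow) works verbatim for $a=0$. So the correct move is: follow your Steps~1--2 with the weaker hypothesis $j\geq\ell-1$, conclude some $a\in\{0,\ldots,m-1\}$ satisfying \eqref{eq:short}, and stop; do not attempt a geometric lower bound on $|\sigma_1|$.
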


\begin{proof}
By Lemma~\ref{lem:shortL}, we may assume $j \geq \ell-1$ because there is at most one saddle connection such that $j < \ell-1$.  We have $2j \geq j + \ell - 1$ and  $2j_m \leq j + \ell$.  There must be a smallest index $a$ where Inequality~\eqref{eq:short} is satisfied.
\end{proof}

 For $\gamma \in \Gamma$ satisfying Case~II, let $a=a(\gamma)$ be the smallest index guaranteed by Lemma~\ref{aCaseIIExists:lemma}, and let $\sigma_{a+1}=\sigma_{a+1}(\gamma)$ be the saddle connection in the sequence of saddle connections associated to $\gamma$.  By Lemma~\ref{TwoBdSC:Lemma}, $(X,\omega) \setminus \left\{\sigma_{a+1} \cup \tau(\sigma_{a+1})\right\}$ consists of two connected components.  Let $Y_{\gamma}$ denote the component that contains $\beta$, and by Lemma~\ref{TwoBdSC:Lemma}, there exits $\kappa'$ such that after identifying $\sigma_{a+1}$ and $\tau(\sigma_{a+1})$ to an invariant saddle connection, $Y_{\gamma} \in \cH^{hyp}(\kappa')$.  With this, we define
$\Gamma_a(\kappa') $ to be the set of $\gamma \in \Gamma$ such that 
$a$ is the smallest index guaranteed by Lemma \ref{aCaseIIExists:lemma}
and $Y_{\gamma}$ is contained in $\cH^{hyp}(\kappa')$.
Then the sets $\Gamma_a(\kappa')$ form a partition of $\Gamma$.

The crucial result proven in the lemma below is that unlike elements in  the sets $\Sigma_{i,k}$ above that are partitioned by size, all of the saddle connections in $\Sigma_{a+1}(\kappa') = \{\sigma_{a+1}(\gamma) | \gamma \in \Gamma_a(\kappa')\}$ can be made simultaneously short via the Teichm\"uller flow.  The result will be a subsurface $Z$ that contains all of them and which will allow us to build a possibly bigger surface $X_\rho$ in a lower dimensional stratum to which we can apply induction.

\begin{lemma}
\label{lem:onesubsurface}
For any  $\kappa'$, consider the set $\Gamma(\kappa')$ of saddle connections  defined above.  Then there exists a pair of non-invariant saddle connections $\rho$ and $\tau(\rho)$ such that one subsurface $X_\rho$ of $(X,\omega)\backslash\{\rho\cup\tau(\rho)\}$ contains $\beta$ and all $\gamma \in \Gamma_a(\kappa')$.
\end{lemma}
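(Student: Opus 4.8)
The goal is to find a single pair $\rho \cup \tau(\rho)$ of non-invariant saddle connections that simultaneously separates \emph{all} of the $\gamma \in \Gamma_a(\kappa')$ (together with $\beta$) into one subsurface of controlled size. The key structural input is that, by construction, for each such $\gamma$ the associated saddle connection $\sigma_{a+1}(\gamma)$ is short: from Case~II we have $2j_{a+1} \le j + \ell$, so $|\sigma_{a+1}| \le 2^{j_{a+1}} \cdot (\text{const}) \le c \cdot 2^{(j+\ell)/2}$, which is small compared to $2^j$ when $j$ is large relative to $\ell$. Moreover, by Lemma~\ref{SeparatingSCSetForGammaBeta:Lemma} all the $\sigma_{a+1}(\gamma)$ have size within an additive constant of each other (they all satisfy $2j_{a+1} \ge j+\ell-1$ by minimality of $a$, hence $j_{a+1}$ lies in a bounded window), so the whole collection $\Sigma_{a+1}(\kappa') = \{\sigma_{a+1}(\gamma) : \gamma \in \Gamma_a(\kappa')\}$ consists of saddle connections of a fixed size.

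\textbf{Step 1: Make the whole family short via Teichm\"uller flow and extract an $\epsilon$-complex.} Apply $g_t$ to renormalize so that a longest element of $\Sigma_{a+1}(\kappa')$ has length $\sqrt{A}\epsilon$ (as in the proof of Lemma~\ref{SeparatingSCSetForGammaBeta:Lemma}); then every element of $\Sigma_{a+1}(\kappa')$ has length at most $\sqrt{A}\epsilon$ on the deformed surface. If two of them are interiorly disjoint, handle them separately (there are boundedly many pairwise interiorly disjoint saddle connections), so we may assume the collection is ``connected'' in the sense of interior intersections. Lemma~\ref{EMEpsCplx:Lemma} then produces an $\epsilon$-complex $Z$ containing all of $\Sigma_{a+1}(\kappa')$, with $\partial Z$ consisting of saddle connections of length at most $c_2\sqrt{A}\epsilon$ and $\mathrm{Area}(Z) \le c_1 A \epsilon^2$. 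Pulling back to the base surface, $Z$ has diameter at most $c_3 \cdot 2^{j_{a+1}}$ and its boundary saddle connections have comparable length. Make $Z$ invariant under $\tau$ by replacing it with $Z \cup \tau(Z)$, and enlarge it to contain $\beta$ by adjoining the parallelograms $P_\gamma = T_\gamma \cup \tau(T_\gamma)$ of bounded area guaranteed by the triangle assumption, exactly as in Lemma~\ref{SeparatingSCSetForGammaBeta:Lemma}.

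\textbf{Step 2: Choose $\rho$ among the boundary of $Z$.} Since $Z$ is $\tau$-invariant, Lemma~\ref{TauInvSubsurfBd:Lemma} says $\partial Z$ is a union of pairs $\sigma \cup \tau(\sigma)$; by Lemma~\ref{TwoBdSC:Lemma} each such pair disconnects $(X,\omega)$. Removing all of them leaves $Z$ together with its complementary pieces. As in Lemma~\ref{SeparatingSCSetForGammaBeta:Lemma}, since $\mathrm{Area}(Z)$ is tiny, one complementary component $X_\rho$ has area bounded below (at least $A/(d-1)$), and we let $\rho \cup \tau(\rho)$ be the boundary pair separating it. By construction $\beta \subset Z$, and each $\sigma_{a+1}(\gamma) \subset Z$; since $Y_\gamma$ (the component of $(X,\omega) \setminus \{\sigma_{a+1}(\gamma) \cup \tau(\sigma_{a+1}(\gamma))\}$ containing $\beta$) also contains $\gamma$, and the whole of $Z$ lies on the $\beta$-side of $\sigma_{a+1}(\gamma) \cup \tau(\sigma_{a+1}(\gamma))$, we get that $\gamma$, $\beta$, and $\rho \cup \tau(\rho)$ all sit on the same side — i.e. $X_\rho$ (the \emph{other} component, the one disjoint from the small complementary piece) contains $\beta$ and every $\gamma \in \Gamma_a(\kappa')$. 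This is the assertion of the lemma.

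\textbf{The main obstacle.} The delicate point is Step~2: verifying that a \emph{single} choice of $\rho$ works for \emph{all} $\gamma$ simultaneously, i.e. that no $\gamma \in \Gamma_a(\kappa')$ is accidentally pushed to the small complementary side of $\rho \cup \tau(\rho)$. The argument must exploit that $\gamma$ is not $C$-separated from $\beta$ by $\sigma_{a+1}(\gamma)$ (so $\gamma$ genuinely "hugs" the $\beta$-side and cannot be hidden behind a short boundary curve of $Z$) together with the fact that $Z$ was built to contain all the $\sigma_{a+1}(\gamma)$ at once. One must also make sure that enlarging $Z$ by the $P_\gamma$'s and by $\tau$-invariance does not destroy the area bound needed to extract $X_\rho$; this is the same $\epsilon$-small-area bookkeeping as in Lemma~\ref{SeparatingSCSetForGammaBeta:Lemma} and goes through for $\epsilon$ sufficiently small. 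Once $X_\rho$ is in hand, it lies in a strictly lower-dimensional hyperelliptic stratum (by Lemma~\ref{TwoBdSC:Lemma}), which is precisely what is needed to invoke the inductive hypothesis in the subsequent counting.
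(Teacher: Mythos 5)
Your proposal has the right skeleton (renormalize by Teichm\"uller flow to make the $\sigma_{a+1}(\gamma)$ simultaneously short, form an $\epsilon$-complex $Z$, excise the complementary component of large area), and Step~2 essentially matches the paper's endgame. But Step~1 contains a genuine gap, traceable to a mixed-up index in Lemma~\ref{aCaseIIExists:lemma}.

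You assert that all the $\sigma_{a+1}(\gamma)$ ``satisfy $2j_{a+1}\ge j+\ell-1$ by minimality of $a$, hence $j_{a+1}$ lies in a bounded window.'' That inequality is wrong: Inequality~\eqref{eq:short} says $2j_a\ge j+\ell-1$ (a lower bound on the size of $\sigma_a$, not of $\sigma_{a+1}$) and $2j_{a+1}\le j+\ell$. There is no lower bound on $j_{a+1}$ coming from minimality of $a$, so the sizes of the $\sigma_{a+1}(\gamma)$ can spread across a range of roughly $(j+\ell)/2$ bits. Consequently, your claim that $\Sigma_{a+1}(\kappa')$ ``consists of saddle connections of a fixed size'' is false, and your renormalization --- shrinking the \emph{longest} element of $\Sigma_{a+1}(\kappa')$ to $\sqrt{A}\epsilon$ ``as in Lemma~\ref{SeparatingSCSetForGammaBeta:Lemma}'' --- cannot appeal to that lemma, whose estimate depends on all the relevant saddle connections having the same size and angle at most $\epsilon^2A/2^{2j}$ apart. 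The heart of Lemma~\ref{lem:onesubsurface} is exactly the verification that one Teichm\"uller renormalization makes \emph{all} $\sigma_{a+1}(\gamma)$ simultaneously $O(\epsilon)$-short, and you have not supplied it.

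The paper's proof does supply it, by a different renormalization: it rotates $\beta$ vertical and applies $g_t$ with $e^t=2^{(j+\ell)/2}/\epsilon$, then uses the summed angle bound $\angle(\sigma_{a+1},\beta)\lesssim \epsilon^2\bigl(2^{-(j+\ell)}+2^{-(j_a+j_{a+1})}\bigr)$ \emph{together with} the two-sided inequalities $2j_a\ge j+\ell-1$ and $2j_{a+1}\le j+\ell$ to bound both components of $|\sigma_{a+1}|_t$ by $O(\epsilon)$ uniformly in $\gamma$. (Your renormalization by the longest $\sigma_{a+1}^{\max}$ could also be made to work, since the angle of each $\sigma_{a+1}(\gamma')$ to $\beta$ is of order $\epsilon^2/2^{(j+\ell)/2+j_{a+1}(\gamma')}$ and $j_{a+1}^{\max}\le(j+\ell)/2$, so both components stay $O(\epsilon)$ --- but this requires redoing the angle estimate from scratch rather than citing the wrong lemma, and must be justified.) In short: the idea is right, but the crucial uniform shortness of $\Sigma_{a+1}(\kappa')$ is asserted, not proved, on the basis of an incorrect reading of Inequality~\eqref{eq:short}.
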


\begin{proof}
We will use ideas similar to those in the proof of Lemma~\ref{SeparatingSCSetForGammaBeta:Lemma}.  First we consider a saddle connection $\sigma_{a+1}$ associated to any of the $\gamma$ in $\Gamma_a(\kappa')$ and bound its angle with $\beta$.  We flow in the stratum by the Teichm\"uller flow to contract $\beta$, and obtain a subsurface with small area as in the proof of Lemma~\ref{SeparatingSCSetForGammaBeta:Lemma}.  From this we conclude the uniqueness of a large area surface as we did in the proof of Lemma~\ref{SeparatingSCSetForGammaBeta:Lemma}.

Let $\gamma \in \Gamma_a(\kappa')$ and let $\sigma_{a+1} = \sigma_{a+1}(\gamma)$ be the saddle connection defined above from the sequence of saddle connections associated to $\gamma$. We observe that the assumptions of Lemma~\ref{SeparatingSCSetForGammaBeta:Lemma} are satisfied.  By Lemma~\ref{SeparatingSCSetForGammaBeta:Lemma} Item~\ref{SeparatingSCSetForGammaBeta:Lemma:sigiAngsigip1}, the angle between $\beta$ and $\sigma_{a+1}$ can be bounded above in terms of sizes by summing the angles of the successive saddle connections.  This yields
	\begin{eqnarray*}
		&&\frac{4\epsilon^2}{2^{j + \ell}}+\frac{4\epsilon^2}{2^{j + j_1}}+\frac{4\epsilon^2}{2^{j_1 + j_2}}+\cdots + \frac{4\epsilon^2}{2^{j_a + j_{a+1}}}\\
		&\leq & \frac{4\epsilon^2}{2^{j+\ell}}+\frac{4\epsilon^2c_2^{2(a+1)}}{2^{j_a + j_{a+1}}}+\frac{4\epsilon^2 c_2^{2a}}{2^{j_a + j_{a+1}}}+\cdots + \frac{4\epsilon^2}{2^{j_a + j_{a+1}}}\\
		&\leq &\frac{4\epsilon^2}{2^{j+\ell}}+\frac{4(a+1)c_2^{2(a+1)}\epsilon^2 }{2^{j_a + j_{a+1}}},
	\end{eqnarray*}
where the first inequality follows from Lemma~\ref{SeparatingSCSetForGammaBeta:Lemma} Item~\ref{SeparatingSCSetForGammaBeta:Lemma:sigiLengthsigip1}.
Rotate the surface so that $\beta$ is vertical.  Apply the Teichm\"uller flow $g_t$ for
$$e^t=\frac{2^{(j+\ell)/2}}{\epsilon}.$$ Let $|\cdot|_{t}$, $|\cdot|_{h,t}$ and $|\cdot|_{v,t}$ denote the lengths of a saddle connection (resp. its horizontal component and vertical component) after applying $g_t$.  
Then after applying $g_t$, the lengths of the horizontal and vertical components of $\sigma_{a+1}$ are respectively:
\begin{eqnarray*}
|\sigma_{a+1}|_{h,t}&\leq& 2^{j_{a+1}}\left(\frac{4\epsilon^2}{2^{j+\ell}}+\frac{4(a+1)c_2^{2(a+1)}\epsilon^2 }{2^{j_a + j_{a+1}}}\right) \frac{2^{(j+\ell)/2}}{\epsilon},\\
|\sigma_{a+1}|_{v,t}&\leq&  2^{j_{a+1}}\frac{\epsilon}{2^{(j+\ell)/2}}.
\end{eqnarray*}
Applying the inequalities in \eqref{eq:short} yields
\begin{eqnarray*}
  |\sigma_{a+1}|_t &\leq& |\sigma_{a+1}|_{h,t} + |\sigma_{a+1}|_{v,t}  \\
  &\leq& 8(a+1) c_2^{2(a+1)}\epsilon.
\end{eqnarray*} 
Denote $\epsilon'=8(a+1)c_2^{2(a+1)}\epsilon.$

We observe that the above calculation bounding $|\sigma_{a+1}(\gamma)|$ holds \emph{for any} $\gamma \in \Gamma_a(\kappa')$.   Let $Z$ be the connected subsurface of $(X, \omega)$ generated by all $\{\sigma_{a+1}(\gamma), \tau(\sigma_{a+1}(\gamma))\} \subset \Sigma_{a+1}(\kappa')$, i.e., 
$$\bigcup_{\gamma \in \Gamma_a(\kappa')} \left\{ \sigma_{a+1}(\gamma), \tau(\sigma_{a+1}(\gamma)) \right\} \subset Z$$ (cf. Lemma~\ref{EMEpsCplx:Lemma}).  
Additionally, for some choice of  $\gamma$, from Lemma~\ref{SeparatingSCSetForGammaBeta:Lemma} Item~\ref{SeparatingSCSetForGammaBeta:Lemma:subsurfs}, there exists a subsurface $Y_{a+1}$ containing $\beta$ with boundary saddle connection $\sigma_{a+1}(\gamma)$ such that $Y_{a+1}$ has area at most $c_1 (\epsilon')^2$. 

We claim that for $\epsilon$ sufficiently small, $Z$ has small area.   Since $Z$ is generated by saddle connections of length at most $\epsilon'$, the area of $Z$ is at most $c_1 (\epsilon')^2$ by Lemma~\ref{EMEpsCplx:Lemma}.

Since $Z$ is invariant under the hyperelliptic involution by construction, $(X, \omega) \setminus Z$ consists of a union of connected components.  First we assume that $\beta \not \subset Z$.  In this case, $\beta$ is clearly contained in exactly one connected component of $(X, \omega) \setminus Z$. 
Furthermore, since $Y_{a+1}$ contains $\beta$ and is bounded by $\sigma_{a+1}(\gamma)$ for some $\gamma$, and $\sigma_{a+1}(\gamma) \subset Z$, $Y_{a+1} \cup Z$ contains $\beta$ and all $\sigma_{a+1}(\gamma)$ for all $\gamma$ and it has area at most $2c_1 \epsilon'^2$.  In fact, we claim that for all $\gamma \in \Gamma_a(\kappa')$, $\gamma \subset Y_{a+1} \cup Z$.  This follows from the assumption that $\beta$ and $\gamma$ form two sides of a triangle.  Since no other connected component of $(X, \omega) \setminus Z$ can contain $\beta$, no other connected component can contain a triangle with side $\beta$.  On the other hand, if $\beta \subset Z$, then we can find $Y_{a+1} \subset Z$ using this argument and draw the same conclusion.  We continue to write $Y_{a+1} \cup Z$ even if $Y_{a+1} \cup Z = Z$.

Finally, we consider the largest area component in the complement of $Y_{a+1} \cup Z$ and denote it by $X^c_{\rho}$, where $\rho \cup \tau(\rho)$ are the boundary saddle connections between $X^c_{\rho}$ and its complement, which contains $Y_{a+1} \cup Z$.  We then take $X_{\rho}$ to be the complement of $X^c_{\rho}$.  
\end{proof}

We continue the proof of Theorem~\ref{CountTriangles} for Case~II.  For each $\Gamma_a(\kappa')$, consider the subsurface $X_\rho$ from Lemma~\ref{lem:onesubsurface}. Note that identifying $\rho$ and $\tau(\rho)$ for $X_\rho$ yields a slit surface in a stratum of strictly lower dimension. Here we view $\beta$ as the slit. The theorem now follows by the induction assumption that Theorem~\ref{CountTriangles} is true for strata of strictly lower dimension.  Finally, we multiply by the number of values $\kappa'$ and $a$ can take, which are both bounded in terms of genus, and this gives us the desired bound.

With respect to the collection of saddle connections $\Gamma$, we have bounded the isolated saddle connections as well as those that form large area triangles.  For those that are not $\epsilon$-isolated and form small area triangles, we associated sequences of saddle connections to each $\gamma$ and formed the partition of $\Gamma$ and its auxiliary saddle connections.  We split the collection into two collections depending on the size of the $\epsilon$-isolated saddle connections at the ends of the sequences of saddle connections associated to $\gamma$.  For the isolated saddle connections with large size in Case I, we used the induction hypothesis that  Theorem~\ref{CountAlphaSC:Theorem3} holds for strata of strictly lower dimension with Proposition~\ref{CountAlphaSCTorus:Theorem} forming the base case to achieve the desired upper bound.  For the isolated saddle connections with small size in Case~II, we applied the induction hypothesis that Theorem~\ref{CountTriangles} holds for strata of strictly lower dimension with Proposition~\ref{CrossProductBdCountTorus:Theorem} forming the base case to achieve the desired upper bound.  Thus, the proof of Theorem~\ref{CountTriangles} is complete. 
\end{proof}

Having shown that if Theorems~\ref{CountTriangles} and \ref{CountAlphaSC:Theorem3} hold for all strata of dimension strictly less than $d$, then Theorem~\ref{CountTriangles} holds for strata of dimension $d$, we now proceed to show that Theorem \ref{CountAlphaSC:Theorem3} also holds for strata of dimension $d$.  Before we begin, we highlight a change in perspective that is crucial to understanding the proof.  To count the non-isolated saddle connections in $\Gamma$ in Theorems~\ref{CountTriangles}, we used Lemma~\ref{SeparatingSCSetForGammaBeta:Lemma} to build sequences of saddle connections related to subsurfaces lying in strata of decreasing dimension until we obtained a collection of isolated saddle connections on which to base our count.  In what follows, we will be given a saddle connection $\alpha$ interiorly disjoint from $\beta$ with no information as to where it lies on the surface.  In order to relate it to $\beta$, we will repeatedly use Lemma~\ref{lem:triangulation} to find desirable saddle connections that ultimately form a ``triangulation between $\alpha$ and $\beta$.''  We will still call the intermediate saddle connections $\sigma_i$.  However, their properties will differ from the $\sigma_i$ appearing in Lemma~\ref{SeparatingSCSetForGammaBeta:Lemma}.  Indeed, we will \emph{not} use Lemma~\ref{SeparatingSCSetForGammaBeta:Lemma} in the proof of Theorem~\ref{CountAlphaSC:Theorem3}, and we emphasize this here to avoid confusion below with the similar setup and notation.

\begin{proof}[Proof of Theorems~\ref{CountAlphaSC:Theorem3} and \ref{CountAlphaSC:Theorem3cyl}]
Since the number of pairwise interiorly disjoint saddle connections is bounded from above by a constant depending only on the stratum $\cH(\kappa)$, we may assume that each saddle connection in $A((X, \omega) \setminus \beta,j,C)$ intersects at least one other saddle connection in $A((X, \omega) \setminus \beta,j,C)$.  Let $\alpha\in A((X, \omega) \setminus \beta,j,C)$ be a saddle connection that intersects $\alpha'\in A((X, \omega) \setminus \beta,j,C)$.  In the remark following Lemma~\ref{EMEpsCplx:Lemma}, the smallest locally convex subsurface $Y$ filled by $\alpha$ and $\alpha'$ has diameter at most $2^{j+1}$.  If $\beta$ is on the boundary of $Y$, we apply Lemma~\ref{lem:triangulation} to $\alpha$ and $\beta$ to find a non-invariant saddle connection $\sigma_1$ that either bounds a triangle with $\alpha$ and $\beta$, or together with $\tau(\sigma_1)$ separates $\alpha$ from $\beta$ and bounds a triangle with $\alpha$,   and has length at most $c(2^{j+1}+|\alpha|+|\beta|)\leq 4c2^{j+1}$.

If $\beta$ is not on the boundary of $Y$, then there is some $\beta'$ which together with $\tau(\beta')$ is on the boundary of $Y$ and separates $\alpha$ from $\beta$.  We identify $\beta'$ and $\tau(\beta')$ so that $\beta'$ becomes an invariant slit on $Y$.  Then either $\alpha$ forms a triangle with $\beta'$ or if not, we again apply Lemma~\ref{lem:triangulation} to find $\sigma_1$ that separates $\alpha$ from $\beta'$; hence, also separates $\alpha$ from $\beta$ and bounds a triangle with $\alpha$. Its length is bounded as above.   In summary:
    \begin{itemize}
  	\item Either $\{\alpha,\sigma_1,\beta\}$ bounds a triangle, or $\sigma_1\cup\tau(\sigma_1)$ topologically separates $\alpha$ from $\beta$ and $\{\alpha,\sigma_1\}$ are two sides of a triangle;
  	\item $|\sigma_1|\leq c 2^{j+1}$, where $c$ is redefined from above.
  \end{itemize} 

We partition $A((X, \omega) \setminus \beta,j,C)$ into two subsets: $A_0^{tri}(\beta,j,C)$ consisting of $\alpha$ such that $\{\alpha,\sigma_1,\beta\}$ bounds a triangle and the complement $$A_0^{sep}(\beta,j,C):=A((X, \omega) \setminus \beta,j,C) - A_0^{tri}(\beta,j,C).$$    
  We will apply Theorem \ref{CountTriangles} to count $A_0^{tri}(\beta,j,C)$  so we only need to consider $A_0^{sep}(\beta,j,C)$.

Consequently, let $A_1(\beta,j_1,C)$ be the set of $\sigma_1$'s of size $j_1\leq j+1 + \log c$ defined for $\alpha\in A_0^{sep}(\beta,j,C)$. Note that $\alpha$ is not $C$-separated from $\beta$,  so for each $\sigma_1$, we have $|\sigma_1|\geq |\beta|/C$.

We apply this operation inductively for $i \geq 1$.  Consider the two subsurfaces of $X$ cut out by $\sigma_i\cup\tau(\sigma_i)$.  Identifying $\sigma_i$ and $\tau(\sigma_i)$ for each of them  gives two slit surfaces $X_i\in\cH(\kappa_i)$ that contains $\alpha$ and $X_i'\in\cH(\kappa_i')$ that contains $\beta$ with  $$\dim \cH(\kappa_i) < \dim \cH(\kappa) \quad \text{ and} \quad \dim\cH(\kappa_i')<\dim \cH(\kappa).$$
 
If $X_i'$ is not a simple cylinder, then for each $\sigma_i\in A_i(\beta,j,C)$,  repeating the above operation for $X_i'$ and $\sigma_i$ yields a non-invariant saddle connection $\sigma_{i+1}$ such that 
      \begin{itemize}
  	\item Either $\{\sigma_i,\sigma_{i+1},\beta\}$ bounds a triangle, or $\sigma_{i+1}\cup\tau(\sigma_{i+1})$ topologically separates $\sigma_i$ from $\beta$ and $\{\sigma_i,\sigma_{i+1}\}$ are two sides of a triangle;
  	\item $|\sigma_{i+1}|\leq c2^{j_i+1}$.
  \end{itemize}
  As before  divide $A_i(\beta,j_i,C)$  into $A_{i}^{tri}(\beta,j_i,C)$ and $A_{i}^{sep}(\beta,j_i,C)$, where we include the possibility that $X_i'$ is a cylinder in the set  $A_{i}^{tri}(\beta,j_i,C)$.   As before we only continue with $\sigma_{i+1}$ in the case $\sigma_i\in A_{i}^{sep}(\beta,j_i,C)$.  Then let $X_{i+1}$ (resp. $X_{i+1}'$)  be the slit surface obtained by identifying $\sigma_{i+1}$ and $\tau(\sigma_{i+1})$ on the subsurface of $X$ that is cut out by $\sigma_{i+1}\cup\tau(\sigma_{i+1})$ and contains $\alpha$ (resp. $\beta)$. So $X_{i+1}\in\cH(\kappa_{i+1})$ and $X_{i+1}'\in  \cH(\kappa_{i+1}') $ with $$\dim \cH(\kappa_{i+1})>\dim\cH(\kappa_i)\quad \text{ and} \quad \dim \cH(\kappa_{i+1}')<\dim\cH(\kappa_i').$$
  
Since the dimension of the resulting stratum $\dim \cH(\kappa_i')$ decreases by at least one at each step, the algorithm will terminate after $m\leq \dim\cH(\kappa)-2=d-2$ steps where either $X_m'$ is a simple cylinder or $A_{m}^{sep}(\beta,j_m,C)=\emptyset$.  In the case where we consider cylinders or non-invariant saddle connections, the algorithm will terminate in at most $d-3$ steps because the subsurface $X_1$ cut out by $\sigma_1\cup\tau(\sigma_1)$ above cannot be a simple cylinder. This means that the dimension drop $\dim\cH(\kappa)-\dim\cH(\kappa_1')$ at the first step is at least two.  In both cases, the algorithm proceeds identically by losing at least one dimension at each iteration.  In this way, we have a sequence of decompositions of saddle connections:
  \begin{eqnarray*}
  	A((X, \omega) \setminus \rho,j,C) &=& A_{0}^{tri}(\beta,j,C) \sqcup A_{0}^{sep}(\beta,j,C),\\
  	 A_1(\beta,j_1 ,C) &=& A_{1}^{tri}(\beta,j_1,C) \sqcup A_{1}^{sep}(\beta,j_1,C),\\
  	 &\cdots& \\
  A_m(\beta,j_m ,C) &=& A_{m}^{tri}(\beta,j_m,C)\sqcup A_{m}^{sep}(\beta,j_m,C) =A_{m}^{tri}(\beta,j_m,C),
  \end{eqnarray*}
  and a family of $m$-tuples $ (\sigma_0=\alpha,\sigma_1,\cdots,\sigma_m)$ of sizes $(j_0=j,j_1,\cdots, j_m)$ satisfying
        \begin{itemize}
  	\item $\sigma_m\in A_m(\beta,j_m,C)$, and either $X_m'$ is a simple cylinder or $\{\sigma_{m-1},\sigma_{m},\beta\}$ bounds a triangle,
  	\item  For $i\leq m-1$, $\sigma_i\in A_{i}^{sep}(\beta,j_i,C)$:  $\sigma_{i}\cup\tau(\sigma_{i})$ topologically separates $\sigma_{i-1}$ from $\beta$ and $\{\sigma_{i+1},\sigma_i\}$ are two sides of a triangle;
  	\item $j_{i+1}\leq j_i+1 + \log c$ and $j_i\geq \ell-\log C$ for $i\leq m-1$, where $\ell$ is the size of $\beta$.
  \end{itemize}
  
Note that counting saddle connections in $A((X, \omega) \setminus \beta,j,C)$ is equivalent to counting the total number of $m$-tuples $(\sigma_0=\alpha,\sigma_1,\cdots,\sigma_m)$.
For each $\sigma_{i+1}$ of size $j_{i+1}$,  consider the slit surface $X_{i+1}$ with $\sigma_{i+1}$ being  the slit. The pair $\{\sigma_i,\sigma_{i+1}\}$ are two sides of a triangle on $X_{i+1}$.  Hence, by Theorem \ref{CountTriangles}, the number of $\sigma_i\in A_{i}(\beta,j_i,C)$ is at most
$$c\cdot \frac{2^{j_i}}{2^{j_{i+1}}}.$$
Let $\bj = (j_0, \ldots, j_m)$ be a tuple of sizes.  We note that for each $i$, the inequality $$\ell-\log C\leq j_{i+1}\leq j_i+1 + \log c$$ is satisfied.  So, summing over all partitions, we see that the number of $(m+1)$-tuples is at most
\begin{eqnarray*}
 && \sum_{\bj} c^m\cdot \frac{2^{j_0}}{2^{j_{1}}}\cdot \frac{2^{j_1}}{2^{j_{2}}}\cdots \frac{2^{j_{r-1}}}{2^{j_{m}}} \\
 &=&\sum_{\bj} c^m\cdot \frac{2^{j_0}}{2^{j_{m}}}\\
  &	\leq& c^m (j-\ell+\log (cC))^m 2^{j-\ell+\log (cC)} \\&=&c^{m+1} C(j-\ell+\log (cC))^m 2^{j-\ell}.
\end{eqnarray*}
Since $m \leq d-2$,
\begin{eqnarray*}
&& \# A((X, \omega) \setminus \rho,j,C) \\
 &\leq & c^{m+1} C(j-\ell+\log (cC))^m 2^{j-\ell} \\
  	&\leq &c^{d-1} C(j-\ell+\log (cC))^{d-2} 2^{j-\ell}\\
  	&\leq & c'(j-\ell)^{d-2} 2^{j-\ell}.
  	\end{eqnarray*}
Thus, the proof of Theorem~\ref{CountAlphaSC:Theorem3} is completed.

In the case of simple cylinders and non-invariant saddle connections, the inequality $m \leq d-3$ above implies that the last two lines read 
$$c^{d-2} C(j-\ell+\log (cC))^{d-3} 2^{j-\ell} \leq c'(j-\ell)^{d-3} 2^{j-\ell},$$
where the constants still suffice.  This finishes the proof of Theorem~\ref{CountAlphaSC:Theorem3cyl}.
\end{proof}

\subsection{The Upper Bound in Theorem~\ref{MainSummaryThm}}

We now show Theorem~\ref{CountAlphaSC:Theorem} gives the upper bound in Theorem~\ref{MainSummaryThm}.  

\begin{proof}[Proof of the Upper Bound in Theorem~\ref{MainSummaryThm}]
Fix some sufficiently large $C$.  First, if there are no saddle connections in $A((X, \omega) \setminus \beta,L)$ that are $C$-separated from $\beta$, then we are done.
Thus, assume that there are saddle connections that are $C$-separated from $\beta$.

Let $\sigma_1$ be a non-invariant systole of $(X, \omega)\setminus \beta$, i.e.,
 a shortest (not necessarily unique) saddle connection on  $(X, \omega)\setminus \beta$ that is not invariant under $\tau$.   Then $\sigma_1 \cup \tau(\sigma_1)$ separates $(X, \omega)\setminus \beta$ into two components $X_1$ and $ X_1'$ by Lemma~\ref{TwoBdSC:Lemma}, where $X_1$ is the component that contains $\beta$.  Since $\sigma_1$ is a non-invariant systole, there is no saddle connection $\alpha\in A((X, \omega) \setminus \beta,L)$ that is contained in $X_1'$ and $C$-separated from $\sigma_1$. After gluing $X_1'$ along $\sigma_1$ and $\tau(\sigma_1)$ to form a closed surface, which we abuse notation and denote by $X'_1$ as well and belongs to some hyperelliptic
component of lower dimension $d'<d$ by Lemma~\ref{TwoBdSC:Lemma}, $\sigma_1$ becomes a marked saddle connection on
$X'_1$ that is invariant under the hyperelliptic involution.
Because $\sigma_1$ is longer than any non-invariant saddle connection on $X'_1$, nothing on $X'_1$ can be $C$-separated from it, so we apply Theorem~\ref{CountAlphaSC:Theorem} to $X'_1$ to conclude that the number of saddle connections on $X'_1$ is bounded by $$c\frac{L}{|\sigma_1|}\left(\log \frac{L}{|\sigma_1|}\right)^{d'-2}.$$   Since $d'-2<d-2$, there exists $L_0$ depending on $(X,\omega)$ such that for $L\geq L_0$, we have
$$c\frac{L}{|\sigma_1|}\left(\log \frac{L}{|\sigma_1|}\right)^{d'-2}\leq c\frac{L}{|\beta|}\left(\log \frac{L}{|\beta|}\right)^{d-2}.$$

If there are remaining saddle connections not contained in $X'_1$ that are $C$-separated from $\beta$, we will recursively construct a sequence of (non-invariant) saddle connections bounding subsurfaces such that each remaining saddle connection $\alpha$ is contained in one such subsurface and not $C$-separated from the boundary of that subsurface so that Theorem~\ref{CountAlphaSC:Theorem} can be applied to count it.  For $i \geq 2$, we find a non-invariant systole $\sigma_i \subset X_{i-1}$ and use Lemma~\ref{TwoBdSC:Lemma} to decompose $(X, \omega)$ into $X_i$ and $X'_i$ as above with boundaries $\sigma_i \cup \tau(\sigma_i)$, containing $\beta$ and disjoint from $\beta$, respectively.  Note that for each $i$, $X_i \subset X_{i-1}$ and $X'_i \supset X'_{i-1}$.  We will prove that if $i$ is the smallest value such that $\alpha \subset X'_i$, then $\alpha$ is not $C$-separated from the boundary $\sigma_i$ by taking $C$ sufficiently large.

Note that any $\alpha\in A((X, \omega) \setminus \beta,L)$ in $X_i'\setminus X_{i-1}'$ is not $C$-separated from $\sigma_i$ because $X_i'\setminus X_{i-1}' \subset X_{i-1}$ and $\sigma_i$ is a systole of $X_{i-1}$.

We claim that there are no saddle connections contained in $X_i'$ that are $C$-separated from $\sigma_i$ and cut across $\sigma_{i-1} \cup \tau(\sigma_{i-1})$.  We proceed by contradiction and assume that there is such a saddle connection $\alpha$ with a saddle connection $\rho_0$ that $C$-separates $\alpha$ from $\sigma_i$.  In this case, $\rho_0$ must be transverse to $\sigma_{i-1} \cup \tau(\sigma_{i-1})$ because otherwise, $\rho_0$ would be the systole on $X_{i-1}$.  Recall that for each $j < i$, $\sigma_j \cup \tau(\sigma_j)$ separates the surface.  Therefore, there is a minimal such $j < i$ such that $\rho_0$ intersects $\sigma_j \cup \tau(\sigma_j)$ and is contained in $X'_i \setminus X'_{j-1} \subset X_{j-1}$, where we define $X'_0 = \emptyset$.  We observe that
$$\frac{|\sigma_i|}{C} \geq |\rho_0| \geq |\sigma_j|,$$
where the first inequality follows from the $C$-separated assumption and the second inequality follows from the assumption that $\sigma_j$ is a non-invariant systole on $X_{j-1}$.

For this minimal $j$, we combine $\rho_0$ with $\sigma_j$ using \cite[Prop.~6.2]{EskinMasurAsymptForms} to obtain a saddle connection $\rho_1 \subset X'_i \setminus X'_j \subset X_j$.  We claim that $\rho_1$ is non-invariant because $\rho_1$ is a boundary saddle connection of the subsurface generated by $\rho_0$ and $\sigma_j$ and we choose $\rho_1 \cup \tau(\rho_1)$ so that it topologically separates $\rho_j$ from $\rho_{j+1}$.  Since the subsurface can be taken to be invariant under the hyperelliptic involution, the boundary saddle connections are necessarily non-invariant. Therefore, the saddle connection $\rho_1$ satisfies
$$|\rho_1| \leq |\rho_0| + |\sigma_j| \leq \frac{2|\sigma_i|}{C}.$$
We continue this process so that for each $k \leq i-j$, we combine $\rho_k \subset X'_i \setminus X'_{j+k-1} \subset X_{j+k-1}$ with $\sigma_{j+k}$ to get a non-invariant saddle connection $\rho_{k+1}$ satisfying
$$|\rho_{k+1}| \leq |\rho_k| + |\sigma_{j+k}| \leq \frac{k|\sigma_i|}{C} + \frac{|\sigma_i|}{C} = \frac{k+1}{C}|\sigma_i|.$$
Continuing this process until $k = i-j$, we obtain a non-invariant saddle connection $\rho_{i-j+1} \subset X_{i-1}$ such that when $C > i \geq i-j$, $\rho_{i-j+1}$ is strictly shorter than a non-invariant systole $\sigma_i$ of $X_{i-1}$, which is a contradiction.  It remains to be seen that $i$ is bounded in terms of genus to conclude that our choice of $C$ depends only on genus.

Therefore, for every $\alpha\in A((X, \omega) \setminus \beta,L)$, there exists an $i$ such that $\alpha$ is not $C$-separated from $\sigma_i\cup\tau(\sigma_i)$.  Applying Theorem~\ref{CountAlphaSC:Theorem} again gives a bound for the number of saddle connections $\alpha$ contained in $X_i'$ that are not contained in $X_j'$ in terms of $|\sigma_i|$, and as above, taking $L_0$ sufficiently large, we have the desired bound for counting $\alpha\subset X_i'$.

We now show that this process terminates.  Indeed, for each $i$, $X_{i+1}$ lies in a stratum of dimension strictly less than the dimension of the stratum containing $X_i$.  Therefore, the process must terminate after at most $i \leq d-2$ steps.  In particular, our choice of $C$ in the contradiction proof above can be chosen in terms of $d$.  This finishes the proof of the upper bound in  Theorem~\ref{MainSummaryThm}.
\end{proof}

\section{Lower Bound}
\label{LowerBound:Section}

We now prove the second part of Theorem \ref{MainSummaryThm}, which we restate as follows.

\begin{theorem}
\label{thm:lower-general}
Let $\cH^{hyp}(\kappa)$ be a stratum with $d = dim_\bC\,\cH^{hyp}(\kappa)$.  Then there exists $c > 0$ such that for $\mu$-almost every surface $(X,\omega)\in \mathcal{H}_1^{hyp}(\kappa)$ and for any invariant saddle connection $\beta$ on $(X,\omega)$, there is a constant $L_0$ depending on $(X,\omega)$ such that for  any $L > L_0$, the  number of saddle connections of length at most $L$ interiorly disjoint from $\beta$ is at least
$$c\frac{L}{|\beta|}\left(\log \frac{L}{|\beta|}\right)^{d-2}.$$
\end{theorem}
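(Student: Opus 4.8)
\medskip

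\noindent\emph{Proof strategy.} I would argue by induction on $d=\dim_\bC\cH^{hyp}(\kappa)$, producing the required saddle connections by a quantitative, horocycle‑renormalized incarnation of the cylinder‑finding algorithm of Nguyen--Pan--Su. When $d=2$ the stratum is the slit torus $\cH(0,0)$: a torus with two marked points is a Veech surface, and one checks --- running the elementary considerations of Section~\ref{SlitTorusUB:subsect} in reverse, so as to \emph{produce} rather than bound cylinders, with Propositions~\ref{CrossProductBdCountTorus:Theorem} and \ref{CountAlphaSCTorus:Theorem} supplying the matching upper bounds --- that for \emph{every} slit torus with invariant slit $\beta$ there are at least $c\,L/|\beta|$ saddle connections of length $\le L$ interiorly disjoint from $\beta$; the exponent $d-2$ of the logarithm is $0$ here.

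\smallskip

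\noindent\emph{The renormalization input.} The engine of the inductive step is an almost‑everywhere lower bound, matching Corollary~\ref{coro:cylinder-upper}, on the number of simple cylinders through an invariant slit; this I would isolate as Theorem~\ref{thm:uniformtostart}. To prove it I would rotate $\beta$ to be horizontal, so that the horocycle flow $u_s$ fixes $\beta$ and its length. A fixed combinatorial type of simple cylinder through $\beta$ is cut out by a saddle connection whose holonomy evolves under $u_s$ as $(a,b)\mapsto(a+sb,b)$, so its circumference grows essentially linearly in $s$; counting the cylinders of circumference at most $T$ thus reduces to counting the parameters $s\in[0,S]$, with $S\asymp T/|\beta|$, at which the renormalized surface $g_{\log s}\,u_s\,(X,\omega)$ carries a simple cylinder through $\beta$ of bounded circumference. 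Since an invariant slit always lies in a simple cylinder \cite[Prop.~6.1]{NguyenPanSu2020}, the relevant Siegel--Veech‑type indicator is positive on an open set of positive $\mu$‑measure, and the count is extracted from equidistribution of the renormalized horocycle together with Birkhoff's theorem. The delicate point --- bounding the measure of the parameters $s$ at which the renormalized surface lies outside this good region, and controlling the possible limits of the normalized horocycle push‑forward measures --- is precisely the substance of Section~\ref{LowerBoundSetting:Section}, Lemma~\ref{lem:badtimes}, and Section~\ref{sec:bad-measure}; the interplay between the time $s$ and the length scale keeps these estimates elementary, and for Veech surfaces the renormalized orbit is automatically equidistributed, which is why the almost‑everywhere hypothesis is then unnecessary.

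\smallskip

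\noindent\emph{The inductive step.} Assume the theorem for all hyperelliptic strata of dimension $<d$, and fix $(X,\omega)$ in the full‑measure set on which Theorem~\ref{thm:uniformtostart} holds and which avoids the null set of surfaces for which some excision--regluing produces a non‑generic surface in a lower‑dimensional stratum; this last set is null because on each of the countably many combinatorial types the excision--regluing of Lemma~\ref{TwoBdSC:Lemma} is a local isomorphism in period coordinates, hence sends null sets to null sets. For each dyadic $R_1$ with $|\beta|\lesssim R_1\lesssim L$, Theorem~\ref{thm:uniformtostart} supplies $\gtrsim R_1/|\beta|$ simple cylinders $C_1$ through $\beta$ of circumference in $[R_1/2,R_1]$; excising $C_1$ and regluing its boundary yields, by Lemma~\ref{TwoBdSC:Lemma} and the proof of Lemma~\ref{Parallelogramulation:Lemma}, a hyperelliptic surface $(X_1,\omega_1)$ of dimension $d-1$ with a new invariant slit $\beta_1$ of length $\asymp R_1$, and by the remark following Lemma~\ref{TwoBdSC:Lemma} the saddle connections on $(X_1,\omega_1)$ interiorly disjoint from $\beta_1$ are, with unchanged length, saddle connections on $(X,\omega)$ interiorly disjoint from $\beta$ (they lie outside the excised $C_1\ni\beta$). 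Iterating $d-2$ times produces a chain $C_1,\dots,C_{d-2}$ terminating in a slit torus $(X_{d-2},\omega_{d-2})$ with slit $\beta_{d-2}$ of length $\asymp R_{d-2}$, to which the base case applies; multiplying the successive lower bounds $\gtrsim R_1/|\beta|,\ R_2/R_1,\ \dots,\ R_{d-2}/R_{d-3}$ and the final $\gtrsim L/R_{d-2}$ telescopes to $\gtrsim L/|\beta|$ saddle connections for \emph{each} admissible tuple of dyadic windows $|\beta|\lesssim R_1\le\cdots\le R_{d-2}\lesssim L$. There are $\asymp(\log(L/|\beta|))^{d-2}$ such tuples, and --- organizing the count by the actual lengths of the produced slits, which by Lemma~\ref{lem:shortL} are essentially increasing --- distinct tuples contribute essentially disjoint families with bounded multiplicity, so summing over tuples gives $\gtrsim\frac{L}{|\beta|}\big(\log\frac{L}{|\beta|}\big)^{d-2}$.

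\smallskip

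\noindent\emph{Main obstacle.} The crux is Theorem~\ref{thm:uniformtostart}, and within it the almost‑everywhere lower bound on the ``birth rate'' of simple cylinders through the slit along the renormalized horocycle: one must rule out that the renormalized orbit spends an anomalously small fraction of time in the open positive‑measure region where such a short cylinder exists. Since Ratner‑type rigidity is not directly available for the horocycle flow on strata, this must be carried out through the explicit bad‑time estimates of Lemma~\ref{lem:badtimes} and the analysis in Section~\ref{sec:bad-measure} of exceptional limit measures; a secondary but genuine point of care is the measure‑theoretic compatibility of the excision--regluing operation with the notion of ``$\mu$‑a.e.\ good surface'' that must propagate through the induction.
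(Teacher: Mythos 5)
Your proposal is structurally close to the paper's argument — iterate the cylinder count $d-2$ times, then wind saddle connections around the innermost cylinder, sum over dyadic windows to obtain the $(\log L/|\beta|)^{d-2}$ factor — but there are three genuine gaps.

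\textbf{The base case is false.} You claim that ``a torus with two marked points is a Veech surface,'' and hence that the linear lower bound holds for \emph{every} slit torus. This is not true: the Veech group of $\cH(0,0)$ with marked points $0$ and $p$ is a lattice only for rational $p$, and the paper explicitly remarks that there are non-Veech surfaces in $\cH(0,0)$. Consequently there is no elementary, unconditional lower bound for the slit torus; the base case already requires the full $\mu$-a.e.\ machinery (bad saddle connections, Borel--Cantelli), and the set of slit tori for which the conclusion fails is, as far as the paper shows, possibly nonempty but measure zero.

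\textbf{The measure transport between strata is not justified.} Your inductive step appeals to the induction hypothesis on the excised-and-reglued surface $(X_1,\omega_1)$, which lives in a \emph{lower-dimensional} stratum $\cH^{hyp}(\kappa')$. You dismiss the resulting ``$\mu'$-a.e.'' issue by asserting that the excision map is a local isomorphism in period coordinates, so it sends null sets to null sets. But the excision map drops complex dimension by one, so it is a projection with one-complex-dimensional fibers, not a local isomorphism; making the null-set pullback rigorous requires a Fubini-type disintegration argument in adapted period coordinates, iterated $d-2$ times and intersected over countably many cylinder combinatorial types and over all choices of $\beta$. The paper explicitly flags this as nontrivial (see the remark in Section~\ref{sect:CylLowerBd} just before Proposition~\ref{prop:smallsize:MeasureZero}) and deliberately avoids it: Theorem~\ref{thm:uniformtostart:genstatement} is stated for subsurfaces $Y$ \emph{inside} a fixed $(X,\omega)\in\cH^{hyp}_1(\kappa)$, with \emph{all} measure estimates (Propositions~\ref{prop:smallsize:MeasureZero} and \ref{lem:badtime}, via $\Omega_1$ and $\Omega_2$) carried out on the ambient unit-area stratum $\cH_1^{hyp}(\kappa)$, so that one single full-measure set works for the entire iteration. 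Your route is not obviously unworkable, but as written it hand-waves past exactly the step the authors single out as delicate, and is genuinely a different (and harder to justify) mechanism than what the paper uses.

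\textbf{Distinctness of the produced saddle connections is not established.} To avoid over-counting across dyadic windows, you cite Lemma~\ref{lem:shortL}, which only says that at most one cylinder through $\beta$ has boundary of length $<|\beta|/2$; it says nothing about whether two different chains $\beta,\gamma_1,\dots,\gamma_{d-2}$ and $\beta,\gamma_1',\dots,\gamma_{d-2}'$ can produce the same terminal saddle connection. The paper handles this with a bespoke angle-separation argument (Lemma~\ref{GoodCylinderAngle:Lemma} and its inductive propagation in Lemma~\ref{lem:isolate-2}): good cylinder boundaries are $\sqrt{m_0}$-isolated, and this isolation, weighted by the areas of the successive subsurfaces, survives each excision step, which is why distinct chains yield distinct $\gamma_{d-2}$ and, after a final small-angle estimate, distinct winding saddle connections. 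Your ``bounded multiplicity'' claim needs a substitute for this; lengths alone are not enough.

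A secondary framing remark: you invoke equidistribution of the renormalized horocycle and Birkhoff's theorem, and note that Ratner-type rigidity is unavailable. The paper's proof of Theorem~\ref{thm:uniformtostart:genstatement} does not use equidistribution at all: it replaces it with explicit geometric estimates on the Lebesgue measure of the set of ``bad times'' (Lemma~\ref{lem:badtimes}, Proposition~\ref{lem:bad-isolated}) combined with Borel--Cantelli (Corollary~\ref{coro:bad-smallmeasure}). You do eventually defer to these, but the equidistribution framing is a red herring that would, if pursued, run into exactly the obstruction you acknowledge.
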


The first step of the proof of Theorem~\ref{thm:lower-general} is to count cylinders on $(X,\omega)$ that contain $\beta$.  This is accomplished in Theorem~\ref{thm:uniformtostart:genstatement}.  We state a special case of Theorem~\ref{thm:uniformtostart:genstatement}, which serves as the base case of the induction.

\begin{theorem}
\label{thm:uniformtostart}
Let $\cH^{hyp}(\kappa)$ be a stratum with $d = dim_\bC\,\cH^{hyp}(\kappa)$.  Then there exists $c > 0$ such that for $\mu$-almost every surface $(X,\omega)\in \mathcal{H}_1^{hyp}(\kappa)$ and for an invariant slit $\beta$ on $(X,\omega)$, there exists $L_0$ depending on $(X,\omega)$ such that for every $L\geq L_0$, the number of simple cylinders $\gamma$ of length less than $L$ containing $\beta$ is at least $c \frac{L}{|\beta|}$.  As in Theorem~\ref{thm:lower-general}, the coefficient $c>0$ only depends on the stratum.
\end{theorem}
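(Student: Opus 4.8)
The plan is to establish the linear lower bound by horocycle renormalization, reducing the count to a recurrence statement for the horocycle flow. First I would normalize the data: after rotating so that $\beta$ is horizontal and applying $g_t$ with $e^t = 1/|\beta|$, one obtains a surface $(X_\circ,\omega_\circ) \in \cH_1^{hyp}(\kappa)$ on which $\beta$ is a horizontal invariant slit of length $1$, and the horocycle flow $u_s$ fixes this slit. The geometric observation driving everything is that a simple cylinder containing $\beta$ is exactly a parallelogram $P_\gamma$ --- the union of the triangles formed by $\{\beta,\gamma\}$ and $\{\beta,\tau(\gamma)\}$ --- for some saddle connection $\gamma$ forming a triangle with $\beta$; since $P_\gamma$ is embedded in a unit-area surface, $\area(P_\gamma) = |\gamma \times \beta| \le 1$, so the vertical component $y_\gamma$ of $\gamma$ satisfies $0 < |y_\gamma| = |\gamma \times \beta| = \area(P_\gamma) \le 1$ (it is nonzero because a simple cylinder containing a horizontal slit cannot have horizontal core). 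Hence, if $C$ is a simple cylinder on $u_s(X_\circ,\omega_\circ)$ containing $\beta$ whose core $v$ has $|v|$ bounded above and $|v_y|$ bounded below, then $u_{-s}C$ is a simple cylinder on $(X_\circ,\omega_\circ)$ containing $\beta$ whose core $(v_x - s v_y,\, v_y)$ has length growing linearly in $s$. Thus the long simple cylinders on $(X_\circ,\omega_\circ)$ through $\beta$ are precisely the shears of bounded ones.

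\textbf{The uniform starting configuration.} Next I would produce bounded cylinders through the slit on a large set of surfaces. By \cite[Prop.~6.1]{NguyenPanSu2020}, every invariant slit lies in \emph{some} simple cylinder, so on the space of unit-slit configurations the function assigning to a configuration the infimum of $\max\{|v|,\, 1/|v_y|\}$ over the cores $v$ of simple cylinders containing the slit is finite everywhere, and by the usual semicontinuity argument in period coordinates it is measurable. Consequently there exist $M_0$ and a compact set $\mathcal{K}$ of positive measure in the space of unit-slit configurations on which this function is $\le M_0$; that is, every configuration in $\mathcal{K}$ carries a simple cylinder containing its slit whose core $v$ satisfies $|v| \le M_0$ and $\area = |v_y| \ge 1/M_0 =: \eta_0$. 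The lower bound $|v_y| \ge \eta_0$ --- equivalently, the cylinder is not too thin --- is exactly what will force distinct times to produce distinct cylinders in the count.

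\textbf{Recurrence and the count.} By ergodicity of the Teichm\"uller flow together with the standard comparison between $g_t$-orbits and $u_s$-orbits (cf.\ Lemma~\ref{lem:badtimes}), for $\mu$-a.e.\ $(X,\omega)$, hence for the normalized configuration attached to any of its invariant slits, there is $T_0$ such that $|\{\, s \in [0,T] : u_s(X_\circ,\omega_\circ) \in \mathcal{K} \,\}| \ge c' T$ for all $T \ge T_0$, where $c' > 0$ depends only on the measure of $\mathcal{K}$. For each such $s$, choose a cylinder $C_s$ as above and set $P_s = u_{-s} C_s$, a simple cylinder on $(X_\circ,\omega_\circ)$ containing $\beta$ of length at most $|v_{s,x}| + s|v_{s,y}| \le M_0 + s \le 2T$ once $T \ge M_0$. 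A fixed simple cylinder $P$ on $(X_\circ,\omega_\circ)$ with core $(x,y)$, $|y| \ge \eta_0$, coincides with $P_s$ only when $|x + sy| \le M_0$, i.e.\ for $s$ in an interval of length $2M_0/|y| \le 2M_0/\eta_0$; so the number of distinct $P_s$ with $s \in [0,T]$ and $u_s(X_\circ,\omega_\circ) \in \mathcal{K}$ is at least $c' \eta_0 T/(2M_0)$, each of length at most $2T$. Undoing the normalization scales horizontal lengths by $|\beta|$ and vertical lengths by $1/|\beta|$, so these cylinders --- which have vertical component at most $1$ and length at most $2T$ --- have length at most a fixed multiple of $|\beta| T$ on $(X,\omega)$ once $T \ge 1/|\beta|^2$. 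Calling this bound $L$, one obtains at least $c\, L/|\beta|$ simple cylinders on $(X,\omega)$ containing $\beta$ of length $< L$, with $c > 0$ depending only on the stratum, valid for all $L \ge L_0 = L_0(X,\omega,\beta)$.

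\textbf{Main obstacle, and what comes after.} The hard part is the uniform starting configuration: upgrading the purely qualitative existence result \cite[Prop.~6.1]{NguyenPanSu2020} to a positive-measure set of configurations on which the cylinder through the slit has \emph{both} bounded circumference and area bounded away from zero, and then arranging the bookkeeping so that the density of good times, the number of distinct pulled-back cylinders, and their length range all match through the renormalization. Once Theorem~\ref{thm:uniformtostart} is in hand, the general statement Theorem~\ref{thm:uniformtostart:genstatement}, and then Theorem~\ref{thm:lower-general}, should follow by iterating the construction: by Lemma~\ref{TwoBdSC:Lemma}, each simple cylinder containing $\beta$ can be excised and its boundary identified to produce a hyperelliptic surface carrying a new invariant slit in a stratum of dimension one smaller, to which the same argument applies; running this $d-2$ times and summing over dyadic length ranges for the intermediate cylinders produces the factor $(\log(L/|\beta|))^{d-2}$.
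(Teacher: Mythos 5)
Your outline shares the high-level idea with the paper — horocycle renormalization pulls bounded cylinders through the slit back to long cylinders, and distinct times give distinct cylinders because the vertical component is bounded below — and your final counting step (a fixed cylinder with core $(x,y)$, $|y|\ge \eta_0$, shears into the $M_0$-window only for an interval of $s$-time of length $2M_0/|y|$) is essentially the paper's argument in Lemma~\ref{lem:covering}. However, there is a genuine gap at the central step: the assertion that for $\mu$-a.e.\ $(X,\omega)$, and for the normalized configuration attached to \emph{any} invariant slit $\beta$, the horocycle orbit $\{u_s(X_\circ,\omega_\circ)\}_{s\in[0,T]}$ spends a positive proportion of time in a compact set $\mathcal{K}$. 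You attribute this to ``ergodicity of the Teichm\"uller flow together with the standard comparison between $g_t$-orbits and $u_s$-orbits,'' but no such comparison yields this statement. Birkhoff genericity for $g_t$ gives time-averages along $g_t$-orbits, not along $u_s$-orbits; positive-frequency recurrence of $u_s$-orbits to a fixed compact set for a.e.\ point is precisely the kind of quantitative horocycle statement that is not available off the shelf in strata. There is the additional layer that $\mathcal{K}$ lives in a ``space of unit-slit configurations,'' which is not $\mathcal{H}_1^{hyp}(\kappa)$; you would need to construct an $u_s$-quasi-invariant measure on this space compatible with $\mu$, and then argue recurrence holds simultaneously for the countably many normalized slit configurations attached to a single surface — none of this is routine. (Your reference to Lemma~\ref{lem:badtimes} does not help: that lemma is an elementary computation about vertical and horizontal components of saddle connections, not a recurrence statement.)

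The paper avoids this obstacle entirely and this is where the real work lies. Rather than appealing to recurrence, it shows \emph{directly} that the set of horocycle times in $\bigl[\tfrac{T|\beta|}{2A},\tfrac{T|\beta|}{A}\bigr]$ at which a ``bad'' (small-height) available saddle connection exists has small Lebesgue measure (Propositions~\ref{lem:bad-isolated}, \ref{lem:badtime}, \ref{BadSaddleConnBd:Prop}), and then runs the Nguyen--Pan--Su algorithm through the good times. The $\mu$-a.e.\ hypothesis enters not through ergodicity but through a Borel--Cantelli argument (Corollary~\ref{coro:bad-smallmeasure}) applied to explicit sets $\Omega_1$, $\Omega_2$ of surfaces with pairs of short/nearly-parallel saddle connections, whose measures are estimated via the Masur--Smillie bound $\mu(\Omega(\epsilon_1,\epsilon_2))\le C\epsilon_1^2\epsilon_2^2$ after a suitable Teichm\"uller shear. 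Crucially, all measure computations take place in $\mathcal{H}_1^{hyp}(\kappa)$ with respect to $\mu$; no auxiliary measure on slit configurations is ever introduced. If you want to repair your proposal, the recurrence statement would have to be replaced by exactly this kind of direct Lebesgue estimate on bad times, which is the content of Sections~\ref{sec:bad-measure}--\ref{sect:CylLowerBd}.
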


\begin{remark}
We do not know if the result holds for every surface, cf. the discussion following Theorem~\ref{CBRank:hyp}.  Nevertheless, as noted in the introduction, the result does hold for \emph{every} Veech surface as explained in the remark at the beginning of Section~\ref{sec:bad-measure}.
\end{remark}

In fact, we will need a more general statement than Theorem~\ref{thm:uniformtostart} in order to perform induction.  For this reason we will prove Theorem~\ref{thm:uniformtostart:genstatement} instead of Theorem~\ref{thm:uniformtostart}.  Theorem~\ref{thm:uniformtostart:genstatement} is proven by quantifying the algorithm of \cite{NguyenPanSu2020} recalled below.  The key obstacle is that as we apply the horocycle flow, we may have to wait too long to see the next cylinder.  Bounding this ``wait time'' is the key obstacle to overcome in the proof of Theorem~\ref{thm:uniformtostart:genstatement}.  Indeed, the source of the measure zero assumption results from the use of the Borel-Cantelli lemma to argue that if we do have to wait too long, then this can only happen repeatedly on a set of measure zero.  After proving Theorem~\ref{thm:uniformtostart:genstatement}, we prove Theorem~\ref{thm:lower-general} by induction in Section~\ref{Sect:LowerBdProof}.

Finally, we obtain the growth rates of non-invariant saddle connections and simple cylinders.  Recall that $A^{cyl}((X, \omega) \setminus \beta,L) $ and $A^{non-inv}((X, \omega) \setminus \beta, L)$ are the subsets of $A((X, \omega) \setminus \beta,L)$ consisting of cylinders and non-invariant saddle connections that are interiorly disjoint from $\beta$, respectively.

\begin{theorem}
\label{thm:lower-general-cyls-noninv}
For  $\mu$ almost every  surface $(X,\omega)\in \mathcal{H}_1^{hyp}(\kappa)$ and for any invariant saddle connection $\beta$ on $(X,\omega)$, there is a constant  $L_0$ depending on $(X,\omega)$ such that for  any $L > L_0$, the  number of saddle connections of length at most $L$ interiorly disjoint from $\beta$ is at least
$$c\frac{L}{|\beta|}\left(\log \frac{L}{|\beta|}\right)^{d-3} \leq \# A^{cyl}((X, \omega) \setminus \beta,L)$$
and
$$c\frac{L}{|\beta|}\left(\log \frac{L}{|\beta|}\right)^{d-3} \leq \# A^{non-inv}((X, \omega) \setminus \beta,L),$$
where $d = dim_\bC\,\cH^{hyp}(\kappa)$ and the coefficient $c>0$ is from Theorem~\ref{thm:lower-general}.
\end{theorem}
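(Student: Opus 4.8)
The plan is to re-run the inductive construction that proves Theorem~\ref{thm:lower-general} and to halt it one stage before the end. Recall that construction: starting from the invariant slit $\beta=\beta_0$ on $(X,\omega)\in\cH^{hyp}(\kappa)$, horocycle renormalization produces -- for each dyadic range of lengths, for $\mu$-almost every $(X,\omega)$, once $L\geq L_0(X,\omega)$ -- linearly many (in $L_1/|\beta|$) simple $\tau$-invariant cylinders $C_1$ containing $\beta$ of circumference in a range $[L_1/2,L_1]$ (this is Theorem~\ref{thm:uniformtostart:genstatement}, whose base case is Theorem~\ref{thm:uniformtostart}); cutting along the non-invariant boundary $\beta_1\cup\tau(\beta_1)$ of such a $C_1$ and regluing via Lemma~\ref{TwoBdSC:Lemma} yields a hyperelliptic surface $(X_1,\omega_1)$ of complex dimension $d-1$ carrying $\beta_1$ as an invariant slit. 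Performing $d-2$ rounds of this in total produces a chain of slits $\beta_0,\beta_1,\dots,\beta_{d-2}$ and a filtration of nested subsurfaces, where $C_i$ (for $2\leq i\leq d-2$) is a simple $\tau$-invariant cylinder containing $\beta_{i-1}$ with non-invariant boundary $\beta_i\cup\tau(\beta_i)$; the first $d-3$ slits $\beta_1,\dots,\beta_{d-3}$ each range over $\sim\log\tfrac{L}{|\beta|}$ admissible dyadic ranges (with linearly many choices per range), while the last one is arranged to satisfy $|\beta_{d-2}|\in[L/2,L]$ with linearly many choices per chain. Following the bookkeeping in the proof of Theorem~\ref{thm:lower-general} -- the successive linear counts telescope to a single factor $\tfrac{L}{|\beta|}$ -- one obtains at least $c\tfrac{L}{|\beta|}\bigl(\log\tfrac{L}{|\beta|}\bigr)^{d-3}$ cylinders $C_{d-2}$, equivalently that many non-invariant saddle connections $\beta_{d-2}$. (In the proof of Theorem~\ref{thm:lower-general} one proceeds one step further, counting saddle connections joining the two boundary components of each $C_{d-2}$; that step supplies the extra factor $\log\tfrac{L}{|\beta|}$ and is omitted here.)

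It then remains to verify that the family $\{\beta_{d-2}\}$ already produced lands in the sets being counted. Each $\beta_{d-2}$ is a non-invariant saddle connection of length $\leq L$, and it bounds the simple cylinder $C_{d-2}$ of circumference $|\beta_{d-2}|\leq L$, so it suffices to check that $\beta_{d-2}$ is interiorly disjoint from $\beta$; granted this, $\beta_{d-2}\in A^{non-inv}((X, \omega) \setminus \beta, L)$, and being the boundary of a simple cylinder, $\beta_{d-2}\in A^{cyl}((X, \omega) \setminus \beta, L)$ -- so both lower bounds come from the same family. Interior disjointness follows from the separating structure: by Lemma~\ref{TwoBdSC:Lemma} each $\beta_i\cup\tau(\beta_i)$ separates $(X,\omega)$, and by construction $\beta$ lies in the interior of $C_1$ while every subsequent piece -- in particular the one carrying $C_{d-2}$, hence $\beta_{d-2}$ -- lies in the component of the complement of $\beta_1\cup\tau(\beta_1)$ disjoint from the interior of $C_1$; thus $\beta_{d-2}$ does not meet the interior of $\beta$. (For $d=3$ the chain is trivial: $C_{d-2}=C_1$ contains $\beta$ as a slit, $\beta_{d-2}=\beta_1=\partial C_1$ is disjoint from $\beta$, and the bound is exactly Theorem~\ref{thm:uniformtostart}.) Finally, a chain together with its dyadic ranges is recorded by the filtration of subsurfaces, and a given $\beta_{d-2}$ can arise from at most boundedly many (in the genus) such data, so passing from data to distinct saddle connections costs only a multiplicative constant, absorbed into $c$; this yields both displayed inequalities.

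The substantive difficulty here is entirely inherited: bounding the horocycle ``wait time'' -- and the resulting restriction to a full-measure set, forced by a Borel--Cantelli argument -- is the content of the proof of Theorem~\ref{thm:uniformtostart:genstatement}, and nothing new is needed on that front. The main obstacle I expect is purely organizational: showing that, at the penultimate stage of the construction behind Theorem~\ref{thm:lower-general}, the objects under consideration form a family of non-invariant saddle connections, each bounding a simple cylinder, that are simultaneously interiorly disjoint from $\beta$, of length at most $L$, and pairwise distinct, with no loss in the $\bigl(\log\tfrac{L}{|\beta|}\bigr)^{d-3}$ count. Given Theorem~\ref{thm:lower-general}, this is essentially a matter of reading off its proof one step early, together with the separation observations above.
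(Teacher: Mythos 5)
Your proposal is correct and matches the paper's approach: Theorems~\ref{thm:lower-general} and \ref{thm:lower-general-cyls-noninv} are proved simultaneously via the succession of cylinders $\beta=\gamma_0,\gamma_1,\dots,\gamma_{d-2}$, and Step~V of the paper's proof observes that the $c\frac{L}{|\beta|}\left(\log\frac{L}{|\beta|}\right)^{d-3}$ non-invariant saddle connections $\gamma_{d-2}$ produced by the induction, each bounding a simple cylinder and lying on the side of $\gamma_1\cup\tau(\gamma_1)$ away from $\beta$, give both displayed lower bounds, with the final winding count supplying the extra $\log$ factor for Theorem~\ref{thm:lower-general}. The one minor imprecision is your ``boundedly-many-chains-per-$\beta_{d-2}$'' remark: the paper establishes distinctness of the $\gamma_{d-2}$'s directly through the angle-separation Lemma~\ref{lem:isolate-2}, which is already built into the inductive count, so no such bounded-multiplicity claim needs to be made -- but since you explicitly defer to the paper's bookkeeping, this does not constitute a gap.
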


We postpone the proofs of these theorems until after we establish several preliminary results.  
Section~\ref{LowerBoundSetting:Section} recalls the algorithm of Nguyen-Pan-Su for finding simple cylinders containing $\beta$, which forms the basis of the proof of Theorem~\ref{thm:lower-general}.  Section~\ref{LowerBoundSetting:Section} proceeds to establish the setup and notation for the results that follow.  Fundamental elementary computations are performed on which the proofs below will rely.  Section~\ref{sec:bad-measure} focuses on constraining the key obstacle to our lower bound, which we term ``bad saddle connections.''  In addition to the length parameter, a key to the argument below is the introduction of a time parameter.  In Section~\ref{sect:CylLowerBd}, we control the times that are bad and obstruct our desired growth rate.  Finally, with both the length and time obstacles constrained, Section~\ref{Sect:LowerBdProof} proves Theorems~\ref{thm:lower-general} and \ref{thm:uniformtostart}, and as in the case of the upper bound, we simultaneously prove Theorem~\ref{thm:lower-general-cyls-noninv}.

\subsection{Setting}
\label{LowerBoundSetting:Section}

Here we provide the necessary figures and setup for the arguments below.  As above, let $\beta$ be an invariant saddle connection on $(X, \omega)$.  Rotate $(X, \omega)$ so that $\beta$ is horizontal, and orient $\beta$ from left to right.

We begin by recalling the argument of \cite[Prop.~6.1]{NguyenPanSu2020}, which produces infinitely many cylinders containing $\beta$.  The argument is fundamental to everything that follows so we summarize it here for the reader's convenience.

Recall from Section~\ref{ModSpTransSurfs:Section} that the horocycle flow $u_s$ is the upper triangular unipotent flow.  Define $(X_s, \omega_s) = u_s \cdot (X, \omega)$.  Clearly, $u_s$ preserves $\beta$.

\subsubsection*{The Nguyen-Pan-Su Algorithm for Producing Cylinders}

\begin{itemize}
\item Consider the vertical straight-line flow transverse to $\beta$ and flow all of the points in $\beta$ along the vertical direction.  Either an interior point of $\beta$ hits a singularity or the vertical flow is periodic.  In the latter case, we have produced a cylinder containing $\beta$. If the resulting cylinder is simple then we are done; otherwise we \emph{slightly} apply the horocycle flow $u_s$ on $(X,\omega)$ so that the vertical direction on the new surface is not periodic and we are back in the former case. (Since the set of periodic directions is of measure zero, we may always assume that we are in the former case.)\item In the former case, consider the nearest zero $p_0$ of $(X,\omega)$ above $\beta$. Consider the saddle connections from the endpoints of $\beta$ to $p_0$ and denote them by $\rho$ and $\sigma$.  
    Then $\beta$ together with $\rho$ and $\sigma$ bounds an embedded triangle. 
    Let $P$ be the parallelogram bounded by $\{\rho, \sigma, \tau(\rho), \tau(\sigma)\}$ with diagonal $\beta$.  See Figure~\ref{fig:parallelogramFindpnaught}. 
    
\begin{figure}[htbp]
\centering
 \begin{tikzpicture}[scale=1.7]
 \coordinate (O) at (0,0) node[below] {$\beta$};
 \coordinate (A) at (-2,0);
 \coordinate (B) at (2,0);
 \coordinate (P) at (1,1);
  \coordinate (Q) at (-1,-1);
\fill[color=gray!50] (A) -- (B) -- (P);
  \draw[thick] (-2,0) -- (2,0);
  \draw[red, thick] (A) -- (P);
  \draw[blue, thick] (B) -- (P);
  \draw[red] (B) -- (Q);
  \draw[blue] (A) -- (Q);
  \filldraw (A) circle (2pt) node[left] {A};
 \filldraw  (B) circle (2pt) node[right] {B};
  \filldraw  (P) circle (2pt) node[above] {$p_0$};
   \filldraw  (Q) circle (2pt) node[below] {$\tau(p_0)$};
\node at (-1,0.5) {$\rho$};
\node at (1.7,0.5) {$\sigma$};
 \end{tikzpicture}
 \caption{The saddle connection $\beta$, connecting $A$ to $B$, is invariant under $\tau$.
 There is a zero $p_0$ lying above $\beta$ and nearest to $\beta$.}
\label{fig:parallelogramFindpnaught}
\end{figure}
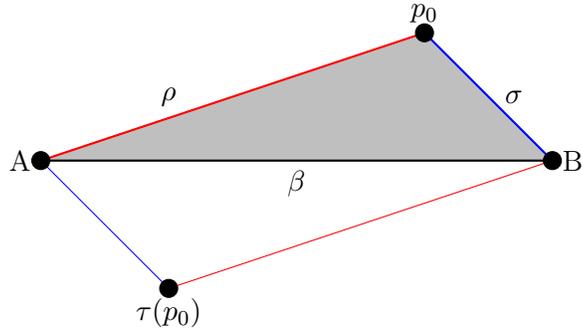

\item If either $\rho$ or $\sigma$ are invariant under $\tau$, then $P$ is a cylinder containing $\beta$.  Otherwise, assume that $\sigma$ is on the right and we apply the horocycle flow $u_s$ to act on $(X,\omega)$ until $\sigma$ becomes vertical.  Excise the subsurface bounded by $\sigma \cup \tau(\sigma)$ that does not contain $\beta$ (see Lemma~\ref{TwoBdSC:Lemma}).  Repeat the argument above for this proper subsurface to produce a new parallelogram $P'$.  See Figure~\ref{fig:parallelogramNewpnaught}.
    
\begin{figure}[htbp]
\centering
 \begin{tikzpicture}[scale=1.5]
 \coordinate (O) at (0,0) node[below] {$\beta$};
 \coordinate (A) at (-2,0);
 \coordinate (B) at (2,0);
 \coordinate (P) at (2,1);
  \coordinate (Q) at (-2,-1);
    \coordinate (R) at (-1,1.5);
        \coordinate (S) at (1,-1.5);
    \draw[fill=gray!30!white]  (A) -- (R) -- (B);
  \draw[red, thick] (A) -- (P);
  \draw[blue, thick] (B) -- (P);
  \draw[red] (B) -- (Q);
  \draw[blue] (A) -- (Q);
          \draw[thick] (A) -- (R) -- (B);
    \draw[thick] (A) -- (S) -- (B);
  \draw[thick] (-2,0) -- (2,0);
  \filldraw (A) circle (2pt) node[left] {A};
 \filldraw  (B) circle (2pt) node[right] {B};
  \filldraw  (P) circle (2pt) node[above] {$p_0$};
    \filldraw (R) circle (2pt) node[above] {$C$};
      \filldraw (S) circle (2pt);
   \filldraw  (Q) circle (2pt) node[below] {$\tau(p_0)$};
\node at (2.2,0.5) {$\sigma$};
\node at (-0.5,0.5) {$\rho$};
 \end{tikzpicture}
 \caption{Under the horocyle flow, $\sigma$ becomes vertical.  A new parallelogram $P'$ is produced with a vertex $C$ on the top of $\beta$.}
\label{fig:parallelogramNewpnaught}
\end{figure}
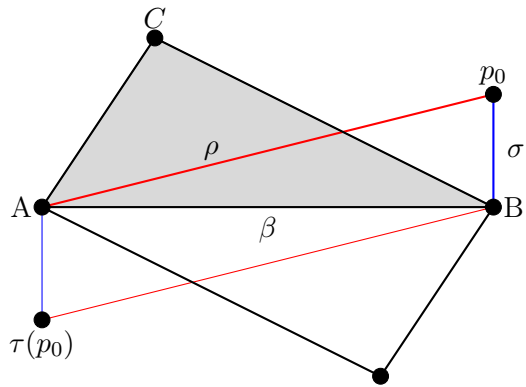

\item If $P'$ is a simple cylinder, we are done.  Otherwise, we repeat the above process. 
Note that the resulting subsurface at each step lies in a stratum of dimension strictly less than that of the subsurface at the previous step.  Thus, the algorithm terminates after at most $(d-2)$ steps, where $d$ is the dimension
of $\mathcal{H}^{hyp}(\kappa)$.
\end{itemize}

The above algorithm shows that, when applying  the horocycle flow $u_s$ to $(X,\omega)$
for some $s$, we can find a cylinder on $(X_s, \omega_s)$ that contains $\beta$.
By pulling back 
 to $(X,\omega)$, we find a cylinder containing $\beta$.
 
The key challenge for us is to quantify this algorithm and show that there are
linearly many simple cylinders of length at most $2^j$ if we apply the horocycle flow $u_s$ to $(X,\omega)$ in an appropriate interval of time. 
The complication is that there may be parallelograms $P$ with very small areas.  For these, the time under the horocycle flow that the parallelogram $P$ remains will be large.  The goal will be to show that for almost every surface these small area parallelograms are rare enough that they do not interfere with linear growth.  

\paragraph{Definitions and Fixing Constants}  As always $\epsilon>0$ will be a sufficiently small fixed constant.  Let $\cH^{hyp}(\kappa)$ be a stratum with $d = dim_\bC\,\cH^{hyp}(\kappa)$.  Define
$$m_0= \frac{\epsilon^2}{2^{7}d} \min \left( \frac{1}{2^{5}}, \frac{1}{c c'} \right),$$
where $c$ is the constant in the statement of Lemma \ref{Lemma:Triangles:BigSigmam} and $c'$ is a constant depending on $c_2$ from the statement of Lemma~\ref{SeparatingSCSetForGammaBeta:Lemma} - both of which depend only on the stratum.  This inequality will be used  in the proof of Proposition~\ref{lem:badtime} below.

For later use of Theorem \ref{thm:uniformtostart} to 
prove the lower bound theorem by induction, 
we will also need to consider subsurfaces of $(X,\omega)$
or surfaces of area $A<1$ in lower dimensional strata and the horocycle flow for these. Thus, we will consider the Nguyen-Pan-Su algorithm (and write NPS-algorithm for brevity)  for surfaces in $\cH_A^{hyp}(\kappa)$.  However, we will not need to consider measures on lower dimensional strata. Instead our  computations of measure will always take place in the stratum containing $(X,\omega)$.

We also note that there will be two measures on two different spaces going forward.  There will be the Masur-Smillie-Veech measure $\mu$ on strata $\cH_1(\kappa)$, but we will also discuss Lebesgue measure on the real line in the context of estimating times.  To emphasize to the reader in which space we are working, we denote Lebesgue measure on the real line by $\lambda$.

\bigskip

Let $(X,\omega) \in \cH_A^{hyp}(\kappa)$.
Given a saddle connection $\rho$, we shall denote its horizontal and vertical components on $(X,\omega)$ by $h_\rho$ and $v_\rho$, respectively.
Without loss of generality, we may assume that $\rho$ lies above $\beta$ so that $v_\rho>0$.    Note that $v_\rho$ is invariant under the horocycle flow.  On $(X_s, \omega_s)$ the horizontal component  of $\rho$ becomes $h_\rho+sv_\rho$.

\begin{definition}
A saddle connection $\rho$ is \emph{available for $\beta$ at time $s$} if on $(X_s, \omega_s)$ it is a side of a parallelogram $P$ whose interior is embedded such that 
\begin{itemize}
	\item $\beta$ is a diagonal of $P$; 
	\item the angles that all the sides make with $\beta$ are  at most $\pi/2$.
\end{itemize}
If $I \subset \bR$ is a maximal interval such that for all $s \in I$, $\rho$ is available at time $s$, then we say that \emph{$\rho$ spends time $|I|$ being available}.
\end{definition}
In particular, any available $\rho$ together with $\beta$ determine an embedded triangle.
If the surface has area $A$, then $v_\rho \leq \frac{A}{|\beta|}$.  
Note that the horocycle flow $u_s$ moves points above $\beta$ from left to right.  Therefore, if a large $s$ is required to make $\rho$ available, then on the surface $(X, \omega)$ \emph{before} applying $u_s$, $\rho$ and $\beta$ form an angle close to $\pi$, and we have $h_\rho < 0$.

In the following, we shall apply the horocyle flow with $s\in \left[\frac{T|\beta|}{2A}, \frac{T|\beta|}{A} \right]$.

\begin{lemma}
\label{rhoLengthTime:Lemma}
Let $\rho$ be available at a time $s\in \left[\frac{T|\beta|}{2A}, \frac{T|\beta|}{A}\right]$.  Then
$$|\rho| \leq T + \frac{A}{|\beta|}.$$
\end{lemma}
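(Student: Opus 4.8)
The plan is to argue in period coordinates adapted to $\beta$. Write $h_\rho$ and $v_\rho$ for the horizontal and vertical components of $\rho$ on the base surface $(X,\omega)$, so that $|\rho|^2 = h_\rho^2 + v_\rho^2$; under $u_s$ the vertical component is unchanged while the horizontal one becomes $h_\rho + s v_\rho$. The strategy is to bound $v_\rho$ and $|h_\rho|$ separately and then combine them.

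First I would record the vertical bound (already asserted in the text): since $\rho$ is available at time $s$, on $(X_s,\omega_s)$ it is a side of an embedded parallelogram $P$ having $\beta$ as a diagonal, with the two vertices off $\beta$ at vertical distance $v_\rho$ from the horizontal segment $\beta$. Splitting $P$ along $\beta$ into two triangles of base $|\beta|$ and height $v_\rho$ gives $\area(P) = |\beta|\,v_\rho$; since $u_s \in \mathrm{SL}_2(\bR)$ preserves area, $P$ lies in a surface of area $A$, so $|\beta|\,v_\rho \le A$, i.e.\ $v_\rho \le A/|\beta|$.

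Next I would extract a constraint on $h_\rho$ from the angle conditions in the definition of ``available''. On $(X_s,\omega_s)$ every side of $P$ makes angle at most $\pi/2$ with the horizontal diagonal $\beta$; since the two sides of $P$ meeting at the upper vertex are the $\tau$-images of the lower two, this is exactly the condition that the third vertex of the triangle spanned by $\rho$ and $\beta$ projects vertically into the segment $\beta$. In coordinates this reads $0 \le h_\rho + s v_\rho \le |\beta|$ for the horizontal component of $\rho$ on $(X_s,\omega_s)$. Hence on the base surface
$$-s v_\rho \;\le\; h_\rho \;\le\; |\beta| - s v_\rho \;\le\; |\beta|,$$
and using $s \le T|\beta|/A$ together with $v_\rho \le A/|\beta|$ we get $s v_\rho \le T$, so $-T \le h_\rho \le |\beta|$, that is $|h_\rho| \le \max\{T,|\beta|\}$. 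Therefore $|\rho| = \sqrt{h_\rho^2 + v_\rho^2} \le \max\{T,|\beta|\} + A/|\beta|$, which is the stated bound $|\rho| \le T + A/|\beta|$ in the regime where the lemma is used, namely $T \ge |\beta|$ (in the applications $T$ is the large length scale while $|\beta|$ is fixed; the same estimate also bounds the length of $\rho$ on $(X_s,\omega_s)$, since there its holonomy is $(h_\rho+sv_\rho,v_\rho)$ with both entries controlled by $|\beta|$ and $A/|\beta|$).

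The step I expect to be the only real obstacle is the translation in the previous paragraph of the ``angle at most $\pi/2$'' conditions into the double inequality $0 \le h_\rho + s v_\rho \le |\beta|$: one must keep track of the orientation of $\beta$ (oriented left to right), of which endpoint of $\beta$ the saddle connection $\rho$ issues from, and of the fact that availability is precisely what forces the apex to be a singularity sitting vertically above the interior of $\beta$ on $(X_s,\omega_s)$. Once that inequality is established, the remainder is a one-line substitution of the given bounds on $s$ and $v_\rho$.
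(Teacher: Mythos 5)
Your argument is essentially the paper's: bound $v_\rho \le A/|\beta|$ by area, read off $0 \le h_\rho + s v_\rho \le |\beta|$ from the definition of availability, and combine. The only difference is that the paper invokes its stated sign convention $h_\rho < 0$ (recorded just above the lemma, since $u_s$ moves points to the right and a large $s$ is needed) to get $|h_\rho| \le s v_\rho \le T$ directly, removing your $T \ge |\beta|$ caveat — a cosmetic point since in every application $T$ is the large parameter.
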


\begin{proof}
If a saddle connection $\rho$ is available at some $s\in \left[\frac{T|\beta|}{2A}, \frac{T|\beta|}{A}\right]$, then
we have  $$0 \leq h_\rho+sv_\rho \leq |\beta|.$$ 
From the left-hand side, since $h_\rho<0$, we have $|h_\rho| \leq s v_\rho$, which implies
\begin{eqnarray*}
  |\rho| &\leq & |h_\rho| + |v_\rho| \\
   &\leq& (s+1) v_\rho \\
&\leq& T + \frac{A}{|\beta|},
\end{eqnarray*}
where in the last inequality we use the fact $v_\rho |\beta|\leq A$.
\end{proof}

This implies that all available $\rho$ have length less than $T$  (up to an additive constant depending on $|\beta|$ and the area).

\begin{definition}
Let $(X, \omega)$ have area $A$.  We say an available $\rho$ is  \emph{$m_0$-bad for $\beta$ on $(X, \omega)$} (or simply \emph{bad} when $m_0$, $\beta$, and $(X, \omega)$ are understood) if 
$$0< v_{\rho} < \frac{m_0 A}{|\beta|}.$$
We say that $\rho$ is \emph{good} if it is available and not bad.
\end{definition}

\begin{lemma}
\label{TimeAvailable:Lemma}
	If $\rho$ is available, then the time that $\rho$ spends being available under $u_s$ is $\frac{|\beta|}{v_\rho}$. Likewise,  if 
 $\rho$ is good, then it is good for a total time  at most $\frac{|\beta|^2}{m_0 A}$.
\end{lemma}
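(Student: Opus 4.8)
The plan is to reduce both statements to a single explicit computation of the interval of times at which $\rho$ is available. Recall the picture from Section~\ref{LowerBoundSetting:Section}: $\beta$ is horizontal, running from its left endpoint $A$ to its right endpoint $B$, and $\rho$ lies above $\beta$, so $v_\rho>0$, the quantity $v_\rho$ is unchanged by $u_s$, and the horizontal component of $\rho$ on $(X_s,\omega_s)$ is $h_\rho+sv_\rho$. Place $A$ at the origin, so $B=(|\beta|,0)$ and $\tau$ is rotation by $\pi$ about the midpoint of $\beta$. If $\rho$ is available at time $s$, it is a side of a parallelogram $P_s$ with diagonal $\beta$; the two vertices of $P_s$ not lying on $\beta$ are $p_0:=(h_\rho+sv_\rho,\,v_\rho)$ and $\tau(p_0)=(|\beta|-h_\rho-sv_\rho,\,-v_\rho)$, and $P_s$ is the union of the embedded triangle with vertices $A,B,p_0$ and its image under $\tau$.

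First I would compute the angle condition. The side of $P_s$ issuing from $A$ toward $p_0$ makes an angle at most $\pi/2$ with $\beta$ (directed from $A$ to $B$) exactly when $h_\rho+sv_\rho\ge 0$, and the side issuing from $B$ toward $p_0$ makes an angle at most $\pi/2$ with $\beta$ (directed from $B$ to $A$) exactly when $h_\rho+sv_\rho\le|\beta|$; the remaining two sides give these same two inequalities by $\tau$-symmetry. Hence the angle condition in the definition of availability holds precisely for $s$ with $0\le h_\rho+sv_\rho\le|\beta|$, i.e. for $s$ in the closed interval $\bigl[-h_\rho/v_\rho,\ (|\beta|-h_\rho)/v_\rho\bigr]$, whose length is $|\beta|/v_\rho$.

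Next I would check that on this interval the embeddedness requirement is also met as soon as it is met at one point, so that the maximal interval of availability has length $|\beta|/v_\rho$; this is the step I expect to be the only real point that needs an argument. The key observations are that $\mathrm{Area}(P_s)=|\beta|\,v_\rho$ for every $s$ (the diagonals of $P_s$ bisect each other, and the triangle $ABp_0$ has base $|\beta|$ and height $v_\rho$) and that $p_0,\tau(p_0)$ stay on fixed sides of $\beta$. Consequently, as $s$ moves through the interval, a singularity can enter the interior of $P_s$ only by crossing one of the four boundary saddle connections, and that can happen only at the isolated values of $s$ where the straight-line representative of one of those four saddle connections passes through a singularity; away from these finitely many values of $s$, the parallelogram remains embedded with no singularity in its interior, so $\rho$ is available. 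This establishes the first assertion of the lemma.

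Finally, the second assertion is immediate from the first together with the definition of \emph{good}. If $v_\rho\ge m_0A/|\beta|$, then by the first assertion $\rho$ is available for total time at most $|\beta|/v_\rho\le|\beta|^2/(m_0A)$, and since being good implies being available, $\rho$ is good for at most that much time. If instead $v_\rho<m_0A/|\beta|$, then $\rho$ is \emph{bad} at every time at which it is available, hence never good, and the bound $|\beta|^2/(m_0A)$ holds trivially.
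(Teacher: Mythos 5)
Your computation of the angle-condition interval is correct and is exactly the paper's one-line proof: the $x$-coordinate of $p_0$ equals $h_\rho+sv_\rho$, moves at rate $v_\rho$, and must sweep out an interval of length $|\beta|$, giving time $|\beta|/v_\rho$; the deduction of the second assertion from the first is also right. However, your embeddedness step has a genuine gap.

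You argue that a singularity can enter the interior of $P_s$ only at ``isolated values of $s$ where the straight-line representative of one of those four saddle connections passes through a singularity,'' and conclude that $\rho$ is available ``away from these finitely many values.'' This does not prove the lemma: the statement (via the definition of ``spends time $|I|$'') asserts a single maximal interval of length $|\beta|/v_\rho$, and any interior bad value $s_1$ would split that interval into two strictly shorter pieces. What is actually true, and what your argument never establishes, is that there are \emph{no} such bad values. The reason is that $u_{s-s_0}$ is an affine homeomorphism of the surface carrying $(X_{s_0},\omega_{s_0})$ to $(X_s,\omega_s)$, preserving singularities and mapping straight geodesics to straight geodesics; hence it carries the embedded parallelogram $P_{s_0}$ to $P_s$, and carries its four boundary saddle connections to saddle connections (in particular, arcs with no interior singularities) for \emph{every} $s$. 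The only condition in the definition of availability that depends on $s$ at all is the angle condition, and that is precisely your interval of length $|\beta|/v_\rho$. Replacing your ``isolated bad values'' paragraph with this observation closes the gap and recovers the clean statement.
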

\begin{proof}
	This follows from the fact that the zero goes from lying vertically above one endpoint of $\beta$ to vertically above the other changing at rate $v_\rho$.
\end{proof}

Thus, if $\rho$ is bad,  Lemma~\ref{TimeAvailable:Lemma} implies that it will remain bad under the horocycle flow in an interval of length $\frac{|\beta|}{v_\rho} \geq \frac{|\beta|^2}{m_0 A}$.  Furthermore, when applying the NPS-Algorithm to find simple cylinders,
we need to spend about $|\beta|/v_\rho$ time to move the vertex of $\rho$ (the one on the top of $\beta$) from left to right. And we may need $(d-2)$ steps to find a simple cylinder. So if in the process some $\rho$ has vertical length that is much less than $\frac{m_0 A}{|\beta|}$, then the time needed to move  $\rho$ away from the top of $\beta$ will be much larger than $\frac{|\beta|^2}{m_0A}$ and this might prevent the desired linear growth.  The majority of the proof of the linear growth is dedicated to controlling this possibility.

\begin{definition}
A time $s$ is \emph{bad} if there is at least one bad saddle connection $\rho$ at  time $s$.
Let $\mathbf{Bad}\subset \left[\frac{T|\beta|}{2A},\frac{T|\beta|}{A}\right]$  be the set of bad times $s$ and  $\mathbf{Good}\subset \left[\frac{T|\beta|}{2A},\frac{T|\beta|}{A}\right]\setminus \mathbf{Bad}$.
\end{definition}

Given the interval  $\left[\frac{T|\beta|}{2A},\frac{T|\beta|}{A}\right]$, one of the primary objectives below is to estimate the $\lambda$-measure of $\mathbf{Bad}$.

We introduce a concept closely related to size.

\begin{definition}
A saddle connection $\rho$ has \emph{horizontal size $\ell$} if $2^\ell\leq  |h_\rho| \leq 2^{\ell+1}$.
\end{definition}

\begin{remark}
For large $\ell$,  the horizontal size of an available $\rho$ is approximately the size of $\rho$. 
\end{remark}

\begin{convention}
Let $\rho$ be an available saddle connection that forms a parallelogram containing $\beta$ with sides $\{\rho, \tau(\rho), \rho', \tau(\rho')\}$.  
Note that $\rho$ is available (bad) if and only if $\rho'$ is available (bad). 
Then either $|\rho|\geq |\beta|/2$ or $|\rho'| \geq |\beta|/2$.  
By interchanging the roles of $\rho$ and $\rho'$, we shall always assume that 
$|\rho| \geq |\beta|/2$. 
\end{convention}

\begin{convention}
We will often want to discuss the angle between a saddle connection $\rho$ and a horizontal slit $\beta$.  Naturally, this angle will change dramatically under the horocycle flow.  We adopt the following convention that the \emph{angle of $\rho$ relative to $\beta$} is the \emph{supplementary} angle to the angle formed between $\rho$ and $\beta$ on the base surface $(X, \omega)$ (prior to the application of the horocycle flow).  Consequentially, the angle of $\rho$ relative to $\beta$ is closely approximated by its tangent, which is given by $\left|v_{\rho}/h_{\rho}\right|$ because the angle between $\rho$ and $\beta$ is approximately $\pi$.
\end{convention}

We now prove a lemma that contains many of the elementary estimates from which all of the more sophisticated estimates below follow.

We highlight Item~\ref{lem:badtimes:vrhoLowerBound} because it resolves an important issue.  We defined bad saddle connections to have small vertical height.  Clearly, there is no positive minimum vertical height for an arbitrary saddle connection.  This presents a significant obstacle because a priori, there could be a bad saddle connection for the full length of an interval that we consider.  However, Lemma~\ref{lem:badtimes} proves that the vertical height can indeed be bounded from below once we know both the horizontal size \emph{and} the time when it is available.

\begin{lemma}
\label{lem:badtimes}
Let $(X, \omega) \in \cH_A^{hyp}(\kappa)$ with horizontal slit $\beta$.  
Let $\rho$ have horizontal size $\ell$ on $(X,\omega)$ which is available for some
$s\in \left[\frac{T|\beta|}{2A},\frac{T|\beta|}{A}\right]$.  Then 
\begin{enumerate}[label=(b-\arabic*)]
\item \label{lem:badtimes:vrhoLowerBound} $$\frac{2^\ell A}{T|\beta|}\leq v_\rho.$$
\item \label{lem:badtimes:AngleUpperBound} 
If $|\rho| \geq |\beta|/2$, then
$$\left|\frac{v_\rho}{h_\rho} \right| \leq \frac{10 A}{T|\beta|}.$$
\item \label{lem:badtimes:RhoAvailable} The time that $\rho$ is available is at most
$$\frac{|\beta|^2T}{2^\ell A}.$$
\item \label{lem:badtimes:nobad} If $2^{\ell} \geq  m_0 T$, then $\rho$ is not bad.  Furthermore, if $\rho$ is bad, then $|h_\rho| < 2m_0 T$. 
\end{enumerate}
\end{lemma}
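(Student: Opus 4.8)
The whole lemma follows from unpacking the definition of "available at time $s$" together with the two standing conventions: we assume $\rho$ lies above $\beta$ (so $v_\rho > 0$), and we assume $|\rho| \geq |\beta|/2$. Recall that when $\rho$ is available at time $s$ the vertex lies somewhere over $\beta$, which translates exactly into the inequality
$$0 \leq h_\rho + s v_\rho \leq |\beta|,$$
and moreover $v_\rho |\beta| \leq A$ since $\beta$ is a diagonal of an embedded parallelogram of area at most $A$. Since a large $s$ is needed to make $\rho$ available, we also have $h_\rho < 0$, i.e. $|h_\rho| \leq s v_\rho$.

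For \ref{lem:badtimes:vrhoLowerBound}: from $0 \leq h_\rho + s v_\rho$ we get $|h_\rho| \leq s v_\rho$, and since $\rho$ has horizontal size $\ell$ we have $2^\ell \leq |h_\rho|$. Hence $2^\ell \leq s v_\rho \leq \frac{T|\beta|}{A} v_\rho$, which rearranges to $\frac{2^\ell A}{T|\beta|} \leq v_\rho$.

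For \ref{lem:badtimes:AngleUpperBound}: this is where I need $|\rho| \geq |\beta|/2$. From $h_\rho + s v_\rho \leq |\beta|$ and $s \geq \frac{T|\beta|}{2A}$ we get $|h_\rho| = -h_\rho \geq s v_\rho - |\beta| \geq \frac{T|\beta|}{2A} v_\rho - |\beta|$. I also need an upper bound on $v_\rho$ relative to $|h_\rho|$; the idea is that either $v_\rho$ is genuinely small compared to $|h_\rho|$, or $v_\rho$ is so small in absolute terms (via $v_\rho|\beta| \le A$) that the ratio is still controlled. Writing $\left|\frac{v_\rho}{h_\rho}\right| = \frac{v_\rho}{|h_\rho|}$ and using $|h_\rho| \geq \frac{T|\beta|}{2A}v_\rho - |\beta|$: if $\frac{T|\beta|}{2A} v_\rho \geq 2|\beta|$ then $|h_\rho| \geq \frac{T|\beta|}{4A} v_\rho$, giving $\frac{v_\rho}{|h_\rho|} \leq \frac{4A}{T|\beta|}$; otherwise $v_\rho < \frac{4A}{T|\beta|}$, and since $|\rho| \geq |\beta|/2$ forces $|h_\rho| \geq |\beta|/2 - v_\rho$ which (again using $v_\rho \le A/|\beta|$, so $v_\rho$ small) is at least, say, $|\beta|/4$, whence $\frac{v_\rho}{|h_\rho|} \leq \frac{4A}{T|\beta|} \cdot \frac{4}{|\beta|}\cdot\ldots$ — I will need to be slightly careful to land on the constant $10$, but the two-case split is the mechanism, and $10$ is just generous bookkeeping.

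For \ref{lem:badtimes:RhoAvailable}: by Lemma~\ref{TimeAvailable:Lemma} the time $\rho$ spends available is $\frac{|\beta|}{v_\rho}$, and substituting the lower bound $v_\rho \geq \frac{2^\ell A}{T|\beta|}$ from \ref{lem:badtimes:vrhoLowerBound} gives $\frac{|\beta|}{v_\rho} \leq \frac{|\beta| \cdot T|\beta|}{2^\ell A} = \frac{|\beta|^2 T}{2^\ell A}$. For \ref{lem:badtimes:nobad}: $\rho$ is bad means $v_\rho < \frac{m_0 A}{|\beta|}$; combined with \ref{lem:badtimes:vrhoLowerBound}, $\frac{2^\ell A}{T|\beta|} \leq v_\rho < \frac{m_0 A}{|\beta|}$, which forces $2^\ell < m_0 T$, i.e. the contrapositive is exactly "$2^\ell \geq m_0 T \Rightarrow \rho$ not bad." For the "furthermore", if $\rho$ is bad then $|h_\rho| \le s v_\rho < \frac{T|\beta|}{A}\cdot\frac{m_0 A}{|\beta|} = m_0 T \cdot \frac{|\beta|}{|\beta|}$... wait — more carefully, $|h_\rho| \le s v_\rho$ and $s \le \frac{T|\beta|}{A}$, $v_\rho < \frac{m_0 A}{|\beta|}$ gives $|h_\rho| < \frac{T|\beta|}{A}\cdot \frac{m_0 A}{|\beta|} = m_0 T$; the factor $2$ in the claimed bound $2 m_0 T$ is slack (or absorbs the size-vs-$|h_\rho|$ rounding $|h_\rho| < 2^{\ell+1}$).

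The main obstacle is \ref{lem:badtimes:AngleUpperBound}: unlike the others, which are one-line manipulations of $0 \le h_\rho + s v_\rho \le |\beta|$ and $v_\rho|\beta|\le A$, this one genuinely uses the hypothesis $|\rho|\ge|\beta|/2$ and requires the case analysis above to avoid the degeneracy where $h_\rho$ is tiny. Everything else is essentially definitional bookkeeping once the displayed inequalities are in hand.
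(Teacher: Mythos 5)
Your proof of items (b-1), (b-3), and (b-4) matches the paper's proof essentially verbatim: all three are obtained by the same manipulations of $0\le h_\rho+sv_\rho\le|\beta|$, $h_\rho<0$, $v_\rho|\beta|\le A$, $2^\ell\le|h_\rho|<2^{\ell+1}$, together with Lemma~\ref{TimeAvailable:Lemma}. (Your direct bound $|h_\rho|\le sv_\rho<m_0T$ for the ``furthermore'' in (b-4) is in fact slightly sharper than the paper's route through $2^\ell<m_0 T$, $|h_\rho|<2^{\ell+1}$, but both are fine.)

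For (b-2) the mechanism you describe is correct, but the details as written do not close. First, the bookkeeping is off: the ``otherwise'' threshold $\frac{T|\beta|}{2A}v_\rho<2|\beta|$ gives $v_\rho<4A/T$, not $4A/(T|\beta|)$, and your trailing expression $\frac{4A}{T|\beta|}\cdot\frac{4}{|\beta|}\cdots$ is dimensionally inconsistent. Second, even with the arithmetic fixed, your case split with the threshold $2|\beta|$ lands on $16A/(T|\beta|)$ in the worse branch (from $v_\rho<4A/T$ and $|h_\rho|\ge|\beta|/4$), not $10A/(T|\beta|)$, so $10$ is \emph{not} achieved by the split as set up; you would have to tune the threshold. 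The paper avoids any case split: from $h_\rho+sv_\rho\le|\beta|$, $h_\rho<0$, $s\ge T|\beta|/(2A)$ one gets
\[
 v_\rho \le \frac{|\beta|+|h_\rho|}{s} \le \frac{2A}{T|\beta|}\bigl(|\beta|+|h_\rho|\bigr),
 \qquad\text{so}\qquad
 \frac{v_\rho}{|h_\rho|}\le \frac{2A}{T|\beta|}\Bigl(\frac{|\beta|}{|h_\rho|}+1\Bigr),
\]
and then uses $|\rho|\ge|\beta|/2 \Rightarrow 2^\ell\ge|\beta|/4 \Rightarrow |\beta|/|h_\rho|\le 4$ to read off $\frac{2A}{T|\beta|}(4+1)=\frac{10A}{T|\beta|}$ in one step. (Both you and the paper invoke $|\rho|\ge|\beta|/2\Rightarrow|h_\rho|\ge|\beta|/4$ with only a sketch of why; that is a shared informality, not a gap peculiar to your write-up.)
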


\begin{proof}[Proof of Item~\ref{lem:badtimes:vrhoLowerBound}] Since $\rho$ is available for  $s \in \left[\frac{T|\beta|}{2A},\frac{T|\beta|}{A}\right]$, then there exists $s\leq \frac{T|\beta|}{A}$ such that
$$h_\rho + sv_\rho =0.$$
Therefore, 
\begin{eqnarray*}
 v_\rho = \frac{|h_\rho|}{s} \geq \frac{2^{\ell}A}{T|\beta|}.
\end{eqnarray*}
\end{proof}

\begin{proof}[Proof of Item~\ref{lem:badtimes:AngleUpperBound}]
For some $s\in \left[\frac{T|\beta|}{2A},\frac{T|\beta|}{A}\right]$, $0\leq h_\rho + s v_\rho \leq |\beta|$.  By our conventions for the signs of $h_\rho$ and $v_\rho$ (and without assuming $|\rho| \geq |\beta|/2$), we have
$$v_\rho \leq \frac{|\beta| + |h_\rho|}{s} \leq \frac{2A}{T|\beta|} \left(|\beta| + |h_\rho|\right) =  \frac{2A}{T}\left( 1+ \frac{|h_\rho|}{|\beta|} \right).$$
The horizontal size assumption gives $|h_\rho| \geq 2^{\ell}$.

If in addition, $|\rho| \geq |\beta|/2$ , then $2^{\ell} \geq  |\beta|/4$, which implies that 
$$\left|\frac{v_\rho}{h_\rho} \right| \leq \frac{10 A }{T |\beta|}.$$
\end{proof}

\begin{proof}[Proof of Item~\ref{lem:badtimes:RhoAvailable}]
For Item~\ref{lem:badtimes:RhoAvailable}, an upper bound for the time that $\rho$ is available is quickly deduced from Item~\ref{lem:badtimes:vrhoLowerBound} and
$$\frac{|\beta|}{v_\rho} \leq \frac{|\beta|^2T}{2^\ell A}.$$
\end{proof}

\begin{proof}[Proof of Item~\ref{lem:badtimes:nobad}]
By Item~\ref{lem:badtimes:vrhoLowerBound} and the assumption, 
$$v_{\rho} \geq \frac{2^{\ell}A}{T|\beta|} \geq \frac{m_0 A}{|\beta|}.$$
The last claim follows from the definition of $h_\rho$ has horizontal size $\ell$.
\end{proof}

We end this subsection with the following lemma.
Recall that a saddle connection $\gamma$ corresponding to the boundary of a simple cylinder is good if $ v_\gamma \geq \frac{m_0 A}{|\beta|} $ and thus the cylinder
has area at least $m_0 A$.

\begin{lemma}
\label{GoodCylinderAngle:Lemma}
The cross product between any two good cylinder saddle connections is at least $m_0 A$.
\end{lemma}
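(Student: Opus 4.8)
The plan is to show that if $\gamma_1$ and $\gamma_2$ are boundary saddle connections of two distinct good simple cylinders containing $\beta$, then $|\gamma_1 \times \gamma_2| \geq m_0 A$. The key observation is that if the cross product were smaller than $m_0 A$, then $\gamma_1$ and $\gamma_2$ would bound (together with $\beta$, or a subsegment of $\beta$) a thin embedded triangle, and shearing this configuration will force one of the cylinders to have area less than $m_0 A$, contradicting goodness.

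First I would recall that a good cylinder saddle connection $\gamma$ satisfies $v_\gamma \geq \frac{m_0 A}{|\beta|}$, so by the definition of a cylinder containing $\beta$ as a diagonal of the associated parallelogram $P_\gamma = T_\gamma \cup \tau(T_\gamma)$, the area of $P_\gamma$ is $|\gamma \times \beta| = |\beta| \cdot v_\gamma \geq m_0 A$ (here I use that $\beta$ is horizontal and $\gamma$ lies above it, so the area of the parallelogram with diagonal $\beta$ equals $|\beta|$ times the vertical component of $\gamma$). Next I would argue that the two parallelograms $P_{\gamma_1}$ and $P_{\gamma_2}$ must intersect: both contain $\beta$, so in particular both contain the interior of $\beta$, hence their intersection is nonempty and contains a neighborhood of a point on $\beta$. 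Since $\gamma_1 \neq \gamma_2$ (and they are not parallel — otherwise the two cylinders would coincide by the hyperelliptic structure, or one would be forced to have too small area), the saddle connections $\gamma_1$ and $\gamma_2$ are transverse, and by an argument parallel to the one in Lemma~\ref{lem:shortL} there is an embedded triangle $\Delta \subset P_{\gamma_1} \cap P_{\gamma_2}$ with one side on $\beta$, one side a subsegment of $\gamma_1$, and one side a subsegment of $\gamma_2$.

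The heart of the argument is then a cross-product estimate: I would shear $\gamma_1$ to be vertical via an appropriate element of $\mathrm{SL}_2(\mathbb{R})$, which does not change areas or cross products. After this shear, $\beta$ becomes the altitude of the triangle $T_{\gamma_1}$ with base $\gamma_1$, so $\mathrm{Area}(T_{\gamma_1}) = \tfrac12 |\gamma_1| \cdot |\beta|$; meanwhile the vertex of $\gamma_2$ on top of $\beta$ lies within horizontal distance $|\gamma_1 \times \gamma_2| / |\gamma_1|$ of $\gamma_1$, which is exactly the statement that the cross product controls the signed horizontal displacement. I would invoke Lemma~\ref{lem:smallangle} (with $\gamma_1, \gamma_2$ playing the roles of $\gamma, \gamma'$, after checking the length comparison $|\gamma_1|/2 \le |\gamma_2| \le 2|\gamma_1|$, which follows since both are $\ge |\beta|/2$ and the available bound from Lemma~\ref{rhoLengthTime:Lemma}, or alternatively a direct argument), to get $|\gamma_1 \times \beta| \leq 2|\gamma_1 \times \gamma_2|$. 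Combining with $|\gamma_1 \times \beta| = |\beta| v_{\gamma_1} \geq m_0 A$ yields $|\gamma_1 \times \gamma_2| \geq \tfrac12 m_0 A$, which up to adjusting the constant $m_0$ (or running the same argument symmetrically with $\gamma_2$) gives the claim.

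The main obstacle I anticipate is verifying the hypotheses of Lemma~\ref{lem:smallangle} cleanly — in particular the requirement that $\gamma_1$ and $\gamma_2$ are the \emph{longer} sides of their respective triangles and the two-sided length comparison, and that both triangles sit above $\beta$ in the right configuration. It may be cleaner to bypass Lemma~\ref{lem:smallangle} entirely and argue directly: given the thin triangle $\Delta$ in $P_{\gamma_1} \cap P_{\gamma_2}$, the side of $\Delta$ on $\beta$ has length at most $|\beta|$, and a direct area computation shows $\mathrm{Area}(\Delta) \le |\gamma_1 \times \gamma_2|$ (since $\Delta$ is cut out by segments of $\gamma_1$ and $\gamma_2$), while on the other hand $\mathrm{Area}(\Delta)$ is comparable to $\mathrm{Area}(P_{\gamma_1} \cap P_{\gamma_2})$, which must be bounded below in terms of the overlap forced by both parallelograms containing $\beta$. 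The delicate point in this alternative is quantifying the lower bound on the overlap area in terms of $m_0 A$; I would handle it by noting that the overlap contains a triangle with one full side equal to $\beta$ and apex determined by whichever of $\gamma_1, \gamma_2$ has the steeper slope, whose area is then at least $\tfrac12 |\beta| \min(v_{\gamma_1}, v_{\gamma_2}) \ge \tfrac12 m_0 A$ — though some care is needed to confirm this triangle is embedded and lies inside both parallelograms.
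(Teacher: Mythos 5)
Your proposal takes a genuinely different route from the paper's, and it has real gaps. The paper's proof is a short renormalization argument: assuming $|\gamma_1\times\gamma_2|<m_0 A$ with $|\gamma_1|\geq|\gamma_2|$, it rotates $\gamma_1$ vertical and applies $g_t$ with $e^t=2|\gamma_1|/\sqrt{m_0A}$, after which $\gamma_1$ has length $\sqrt{m_0A}/2$ and the cross-product bound forces $\gamma_2$ to have horizontal component $<2\sqrt{m_0A}$. Since $\gamma_2$ crosses the cylinder bounded by $\gamma_1$ (circumference $|\gamma_1|$), the cylinder's height is at most the horizontal component of $\gamma_2$, so its area is $<\tfrac{\sqrt{m_0A}}{2}\cdot 2\sqrt{m_0A}=m_0A$, contradicting goodness directly with no slack. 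Your route through Lemma~\ref{lem:smallangle} or an overlap-triangle estimate is an entirely different mechanism.

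The gaps you flag are fatal as written. First, Lemma~\ref{lem:smallangle} requires the two-sided comparison $|\gamma_2|/2\leq|\gamma_1|\leq 2|\gamma_2|$, and this simply is not available: the statement concerns \emph{any} two good cylinder saddle connections, whose lengths may be wildly incommensurable (your suggested justification, that both are $\geq|\beta|/2$ and bounded above via Lemma~\ref{rhoLengthTime:Lemma}, only places both in a long interval $[|\beta|/2, T+A/|\beta|]$ and does not give a ratio bound). The inequality you would extract is genuinely false without that hypothesis, since the Case~(a) step of Lemma~\ref{lem:smallangle} uses it to pass from $|\gamma'\times\beta|$ to $|\gamma\times\beta|$. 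Second, both of your routes produce $|\gamma_1\times\gamma_2|\geq\tfrac12 m_0A$, not $m_0A$. You cannot casually ``adjust $m_0$'': $m_0$ is a fixed constant appearing in the definition of goodness, in Propositions~\ref{lem:bad-isolated}, \ref{lem:badtime}, and Lemma~\ref{lem:covering}, and the statement is invoked with exactly this $m_0$ in the induction in Section~\ref{Sect:LowerBdProof}. The paper's contraction argument is worth internalizing precisely because it produces the needed constant with no loss and requires no comparability of $|\gamma_1|$ and $|\gamma_2|$.
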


\begin{proof}
Suppose a pair of cylinders $\gamma_1$ and $\gamma_2$ satisfy $|\gamma_1\times \gamma_2| < m_0 A$.  Assuming $|\gamma_1| \geq |\gamma_2|$, we can rotate so that $\gamma_1$ is vertical and apply the Teichm\"uller flow with $e^t=2|\gamma_1|/ \sqrt{m_0 A}$.   After flowing, $\gamma_1$ has length $ \frac{\sqrt{m_0 A}}{2} $ and $\gamma_2$ has horizontal length less than $ 2 \sqrt{m_0 A} $, and at least one of these inequalities is strict by the cross product assumption.  Since $|\gamma_1|$ is the circumference of the cylinder it bounds and the horizontal length of $\gamma_2$ is greater than that cylinder's height, the area is of the cylinder is 
less than  $ \frac{\sqrt{m_0 A}}{2} \cdot 2 \sqrt{m_0 A} = m_0A$.  This contradicts the assumption that $\gamma_1$ defines a cylinder of area greater than or equal to $m_0 A$.  
\end{proof}

\subsection{Bounding Bad Saddle Connections}\label{sec:bad-measure}

\paragraph{No Bad Saddle Connections:} The first conclusion one draws from the elementary estimates above is that if there were no bad saddle connections, i.e., $v_\rho \geq \frac{m_0 A}{|\beta|}$ for all available $\rho$, we would be essentially done.  

In this case, the speed of the horocycle flow is at least $\frac{m_0 A}{|\beta|}$.
In the NPS-Algorithm, to pass from one available $\rho$ to the next, it suffices to take an amount of time
at most $\frac{|\beta|^2}{m_0 A}$ by Lemma~\ref{TimeAvailable:Lemma}. As a result, from the discussion following Lemma~\ref{TimeAvailable:Lemma}, in every sub-interval of length $(d-2) \frac{|\beta|^2}{m_0 A}$, we can find at least one cylinder. Since the interval $\left[\frac{T|\beta|}{2A}, \frac{T|\beta|}{A}\right]$ contains approximately 
$$\frac{\frac{T|\beta|}{2A} }{\frac{(d-2)|\beta|^2}{m_0 A}}= \frac{m_0 T}{2(d-2)|\beta|}$$
many sub-intervals,  we obtain at least $c \frac{T}{|\beta|}$ many cylinders of length less than $T$,
 by taking $c= \frac{m_0}{2(d-2)}$.

\begin{remark}
For Veech surfaces, the no small triangle property from \cite[Thm.~1.1]{SmillieWeissCharLattice} implies that there is a value of $m_0$ for which there cannot be any triangles with height less than $m_0$.  In other words, for this value of $m_0$, there are no bad saddle connections when $(X,\omega)\in \cH^{hyp}_1(\kappa)$ is a Veech surface. In this case, we do not need the measure estimates below, and we have Theorem \ref{thm:uniformtostart} and Theorem \ref{thm:lower-general} for every Veech surface in $\cH^{hyp}_1(\kappa)$. Note that there are non-Veech surfaces in $\mathcal{H}(0,0)$.
\end{remark}

\paragraph{Bad Saddle Connections:} With the above motivation, we must bound the number of bad saddle connections and bound the measure of bad times.  We present this bound here in Proposition~\ref{BadSaddleConnBd:Prop} and give a complete proof in terms of the results that follow below.

The strategy is as follows.  First we will consider bad saddle connections that are $\epsilon$-isolated and have size that is not too large.  In this case, Proposition~\ref{lem:bad-isolated} will establish that they do not take up too large a measure of the 
interval of times.   Next, we consider bad saddle connections that have small size.  Regardless of whether or not these bad saddle connections are $\epsilon$-isolated, we will show in Proposition~\ref{prop:smallsize:MeasureZero}
 that the subset of translation surfaces where there is a  bad saddle connection with small size can be avoided for sufficiently large $L$ on a full measure subset of the stratum via the Borel-Cantelli Lemma (Corollary~\ref{coro:bad-smallmeasure}).  Finally, in Proposition~\ref{lem:badtime}, we will address the final case of surfaces with bad saddle connections with large size that are not $\epsilon$-isolated, and show that the set of such surfaces can also be avoided on a full measure subset of the stratum.  To use the Borel-Cantelli Lemma, we will need measure estimates that will be established in Section~\ref{MeasureEstimates:Subsection}.

\subsubsection{Measure Estimate: Bounding Bad Saddle Connections of Small Size}
\label{MeasureEstimates:Subsection}

Recall that the measure $\mu$
is a finite $\operatorname{SL}_2(\bR)$-invariant measure on
$\cH^{hyp}_1(\kappa)$. The measure is induced by the volume element on
$\cH^{hyp}(\kappa)$ by restricting to the subset of surfaces with area at most one.
Let us recall the following fact from \cite[Claim~7 Pg.~518]{MasurSmillieHausdDim}.

\begin{proposition}[Masur-Smillie]\label{thm:Masur-Smillie}
Let $\epsilon_1$ and $\epsilon_2$ be sufficiently small constants. 
Let $\Omega(\epsilon_1, \epsilon_2)$ be the subset of translation surfaces in 
$\cH^{hyp}_1(\kappa)$ such that there are two non-homologous saddle connections
$\gamma_1$ and $\gamma_2$ with lengths $|\gamma_1|< \epsilon_1, |\gamma_2|< \epsilon_2$.
Then 
$$\mu\left(\Omega(\epsilon_1, \epsilon_2) \right)\leq C \epsilon_1^2 \epsilon_2^2 ,$$
where the constant $C$ depends on the stratum. 
\end{proposition}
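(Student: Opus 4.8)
The plan is to bound $\mu\bigl(\Omega(\epsilon_1,\epsilon_2)\bigr)$ by working in (the thin-part version of) period coordinates, where the factor $\epsilon_1^2\epsilon_2^2$ appears because a saddle connection of length $<\epsilon_i$ confines one complex period coordinate to a disk of area $\pi\epsilon_i^2$, and the non-homology of $\gamma_1,\gamma_2$ guarantees that these are two \emph{independent} coordinates. First I would reduce to the thin part: assuming without loss of generality $\epsilon_1\le\epsilon_2$, any surface in $\Omega(\epsilon_1,\epsilon_2)$ has a saddle connection of length $<\epsilon_1$, hence lies in the $\epsilon_1$-thin part $\{\,\sys<\epsilon_1\,\}$, so the whole estimate takes place there and the complementary thick part contributes nothing.

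Next I would invoke the structure of the thin part of a stratum (going back to Masur--Smillie, later refined by Eskin--Masur--Rafi): $\{\,\sys<\epsilon_1\,\}$ is a finite union, over the combinatorial types $T$ of the subsurface that is pinched --- finitely many because the genus is fixed --- of thin regions $\mathcal T_T$, each of which carries coordinates exhibiting it as fibered over a product of strata of strictly smaller complex dimension (the unpinched pieces), with fiber a polydisk recording the holonomies of the short saddle connections; moreover $\mu$ restricted to $\mathcal T_T\cap\{\,\area=1\,\}$ is, up to a density bounded above and below, the product of Lebesgue measure on the polydisk with the natural cone measures on the base strata. For a surface in $\Omega(\epsilon_1,\epsilon_2)\cap\mathcal T_T$ the short saddle connection $\gamma_1$ has length $<\epsilon_1$, so its holonomy lies in a disk of radius $\epsilon_1$ in the polydisk; and $\gamma_2$, being non-homologous to $\gamma_1$, is either itself one of the pinched saddle connections of $T$ --- in which case its holonomy is a second polydisk coordinate confined to a disk of radius $\epsilon_2$ --- or it persists on one of the base pieces as a saddle connection of length $<\epsilon_2$. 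In the first case the relevant locus has measure at most $\pi^2\epsilon_1^2\epsilon_2^2$ times the bounded measure of the base; in the second case it has measure at most $\pi\epsilon_1^2$ times $\mu'\bigl(\{\,\text{base surface has a saddle connection of length }<\epsilon_2\,\}\bigr)$, which is $\lesssim\epsilon_2^2$ by the standard estimate that the $\eta$-thin part of any stratum has measure $O(\eta^2)$ --- itself immediate from the Siegel--Veech formula, since $\mu(\{\,\sys<\eta\,\})\le\int\#\{\gamma:|\gamma|<\eta\}\,d\mu=c\,\eta^2$. Summing over the finitely many types $T$ gives $\mu\bigl(\Omega(\epsilon_1,\epsilon_2)\bigr)\le C\epsilon_1^2\epsilon_2^2$.

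The main obstacle is the second step: correctly setting up the thin-part coordinates on $\mathcal T_T$, checking that only finitely many combinatorial types $T$ occur, and verifying that the density of $\mu$ against the product measure is uniformly bounded --- this is exactly the technical content of the Masur--Smillie estimate. An equivalent packaging avoids naming the thin regions: one proves a pairs Siegel--Veech inequality for non-homologous saddle connections, $\int_{\cH^{hyp}_1(\kappa)}\sum_{\gamma_1\not\sim\gamma_2}f(\mathrm{hol}\,\gamma_1)\,g(\mathrm{hol}\,\gamma_2)\,d\mu\le C\bigl(\int_{\bR^2}f\bigr)\bigl(\int_{\bR^2}g\bigr)$ for non-negative $f,g$ --- the non-homology condition being essential, since homologous saddle connections such as the two boundary components of a thin simple cylinder would otherwise force a term of size $\epsilon^2$ rather than $\epsilon^4$ --- and then applies it to $f=\mathbf 1_{B(0,\epsilon_1)}$, $g=\mathbf 1_{B(0,\epsilon_2)}$, using $\mathbf 1_{\Omega(\epsilon_1,\epsilon_2)}\le\sum_{\gamma_1\not\sim\gamma_2}\mathbf 1_{|\gamma_1|<\epsilon_1}\mathbf 1_{|\gamma_2|<\epsilon_2}$; the difficulty is then hidden inside the proof of that inequality, which again runs through the thin-part analysis.
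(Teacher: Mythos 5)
The paper does not prove this proposition: it is quoted from \cite[Claim~7, p.~518]{MasurSmillieHausdDim} and used as a black box (in Lemmas~\ref{lem:smallsize} and \ref{lemma:measure-bad-case2}). So there is no in-paper proof to compare against; your sketch must be judged against the cited Masur--Smillie argument, with which it is in fact aligned in strategy --- thin-part decomposition, period coordinates, two independent holonomy coordinates each confined to a disk of area $\sim\epsilon_i^2$.

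There is, however, one step you pass over too quickly, and it is not cosmetic. You write that ``the non-homology of $\gamma_1,\gamma_2$ guarantees that these are two \emph{independent} coordinates,'' but the condition $[\gamma_1]\ne[\gamma_2]$ in $H_1(X,\Sigma;\Z)$ does \emph{not} imply that their holonomy functionals are linearly independent in period coordinates. If $[\gamma_2]=-[\gamma_1]$ (more generally if $[\gamma_1],[\gamma_2]$ are $\Q$-linearly dependent) then $\mathrm{hol}(\gamma_2)=-\mathrm{hol}(\gamma_1)$ imposes exactly the same constraint, and the estimate degrades to $\epsilon_1^2$ rather than $\epsilon_1^2\epsilon_2^2$. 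This is not a hypothetical worry in the present paper's hyperelliptic setting: a non-invariant $\gamma$ and $\tau(\gamma)$ are distinct, generically non-homologous, have negated holonomies and identical lengths, so a surface with $|\gamma|<\epsilon_1$ automatically has the non-homologous $\tau(\gamma)$ of the same length --- no extra $\epsilon_2^2$ factor appears. The hypothesis that actually makes the period-coordinate argument (and hence the quoted Claim~7) go through is that $[\gamma_1],[\gamma_2]$ span a $2$-dimensional $\Q$-subspace of $H_1(X,\Sigma;\Q)$, and you should state that explicitly rather than deriving it from ``non-homologous.'' (The paper's two applications are safe: there $\gamma_1,\gamma_2$ are forced to have wildly different sizes after applying $g_t$, so they cannot be $\pm$ each other or $\tau$-images, and are genuinely independent.) Beyond this point your sketch is honest about deferring the actual thin-part machinery --- combinatorial finiteness, uniform comparability of $\mu$ with product Lebesgue measure on the polydisk fibers --- to the original source, and the alternative ``pairs Siegel--Veech'' packaging you mention also needs the non-parallel/linear-independence condition for the same reason, since homologous (hence parallel) boundary saddle connections of a thin cylinder would contribute an $\epsilon^2$ term, exactly as you note.
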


Let $\delta>0$.   Recall the $\delta$-thick part of a stratum is the subset with saddle connection systole bounded below by $\delta$.  Denote the $\delta$-thick part of $\cH^{hyp}_1(\kappa)$ by $\cH_{1}^{hyp}(\kappa;\delta)$.

\paragraph{The Set $\Omega_1$:} Fix $\delta>0$.  Define $\Omega_1(j, \delta)$ to be the set of  $(X, \omega) \in \cH_{1}^{hyp}(\kappa;\delta)$ such that:
\begin{enumerate}[label=(1-\alph*)]
\item[(1-a)] \label{Omega1:BetaExists}
There exists a saddle connection $\beta$ of size at most $j/4$.

\item[(1-b)]  There exists a saddle connection $\rho$ of size  
at most $j/3$ and  makes an angle at most  $\frac{20}{2^{j}|\beta|}$
 with $\beta$.   
\end{enumerate}

\begin{lemma}
\label{lem:smallsize}
Given $\delta > 0$ and $\cH^{hyp}(\kappa)$, there is a constant $C_1$ depending on $\delta$ and $\kappa$ such that for $j$ sufficiently large 
$$\mu\left(\Omega_1(j, \delta)\right) \leq C_1 \frac{j}{2^{j/12}}.$$
\end{lemma}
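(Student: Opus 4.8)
The idea is to renormalize by the Teichm\"uller flow so that the pair $(\beta,\rho)$ witnessing membership in $\Omega_1(j,\delta)$ becomes a pair of short saddle connections, and then to appeal to the Masur--Smillie estimate (Proposition~\ref{thm:Masur-Smillie}). The factor $j$ in the bound will come from the fact that, although a good renormalization time exists, it is only determined up to an interval of length $O(j)$, which must be discretized.

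First I would set up the geometry. If $(X,\omega)\in\Omega_1(j,\delta)$, fix $\beta$ of size $\le j/4$ (so $|\beta|<2^{j/4+1}$) and $\rho$ of size $\le j/3$ (so $|\rho|<2^{j/3+1}$) making angle at most $\tfrac{20}{2^{j}|\beta|}$ with $\beta$; since $(X,\omega)$ is $\delta$-thick we also have $|\beta|,|\rho|\ge\delta$, and we may take $\rho\neq\beta$ (otherwise the condition is vacuous). Rotate so that $\beta$ is vertical. Then $\beta=(0,b)$ with $\delta\le b<2^{j/4+1}$, and writing $\rho=(h,v)$ the angle bound gives $|h|=|\rho|\sin(\mathrm{angle})\le |\rho|\cdot\tfrac{20}{2^{j}b}<\tfrac{40}{2^{2j/3}b}$, so $\rho$ is nearly vertical with a very small horizontal component, while $|v|\le|\rho|<2^{j/3+1}$.

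Next, the renormalization. Apply $g_t$ with $t>0$: $g_t\beta=(0,e^{-t}b)$ and $g_t\rho=(e^t h,e^{-t}v)$. Setting $\epsilon:=\sqrt{320/\delta}\,\cdot 2^{-j/6}$, one checks directly that both $|g_t\beta|<\epsilon$ and $|g_t\rho|<\epsilon$ hold as soon as $e^t$ lies in the interval $\bigl(\,\tfrac{2\sqrt2\,2^{j/3}}{\epsilon},\ \tfrac{\epsilon\,2^{2j/3}b}{40\sqrt2}\,\bigr)$; the inequality $\epsilon^2>\tfrac{160}{2^{j/3}b}$ (which holds because $b\ge\delta$) makes this interval nonempty, and its logarithmic length is $t_+-t_-=\log\tfrac{\epsilon^2 2^{j/3}b}{160}=\log\tfrac{2b}{\delta}<\tfrac{j}{4}+O_\delta(1)$. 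Thus for each $(X,\omega)\in\Omega_1(j,\delta)$ there is $t=t(X,\omega)$ in a window $[t_-,t_+]$ of length $O_\delta(j)$ with $g_t(X,\omega)$ carrying two distinct saddle connections of length less than $\epsilon$; to be sure these are non-homologous we feed to Proposition~\ref{thm:Masur-Smillie} the pair $\{\rho,\tau(\rho)\}$ (equal length, and non-homologous since $[\tau(\rho)]=-[\rho]\neq[\rho]$) when $\rho$ is non-invariant, and treat the invariant case by the same argument applied to $\{\beta,\rho\}$ directly. Hence $g_t(X,\omega)\in\Omega(\epsilon,\epsilon)$.

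Finally, the measure count. Since lengths are rotation invariant, $\Omega(\epsilon,\epsilon)$ is $\mathrm{SO}_2(\bR)$-invariant, so the rotation used above is harmless and $\Omega_1(j,\delta)\subseteq\bigcup_{t\in[t_-,t_+]}g_{-t}\,\Omega(\epsilon,\epsilon)$. Discretizing the window into $N+1=O_\delta(j)$ unit subintervals and absorbing the flow slack over a unit interval into a factor $e^2$ on the lengths gives $\Omega_1(j,\delta)\subseteq\bigcup_{s=0}^{N}g_{-(t_-+s)}\,\Omega(e^2\epsilon,e^2\epsilon)$. Using $\mathrm{SL}_2(\bR)$-invariance of $\mu$ and Proposition~\ref{thm:Masur-Smillie} (applicable once $j$ is large enough that $e^2\epsilon$ is sufficiently small),
$$\mu\bigl(\Omega_1(j,\delta)\bigr)\le (N+1)\,\mu\bigl(\Omega(e^2\epsilon,e^2\epsilon)\bigr)\le (N+1)\,C\,e^{8}\epsilon^{4}\le C_\delta'\, j\, 2^{-2j/3}\le C_1\,\frac{j}{2^{j/12}},$$
since $2^{-2j/3}\le 2^{-j/12}$. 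The main obstacle, and the only step needing genuine care, is the renormalization: one must simultaneously shrink $\beta$ and $\rho$, which forces a balance of $\epsilon$, $\delta$, and $j$, and the crucial point is that the window of admissible times has length only $O(j)$ (logarithmic in $|\beta|$), which is precisely what produces the extra factor $j$ in the statement; the secondary technical point is arranging a genuinely non-homologous pair for Masur--Smillie, handled by passing to $\{\rho,\tau(\rho)\}$.
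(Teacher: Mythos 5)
The proposal has a genuine gap, and the approach does not fix in a straightforward way.

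\textbf{The gap: dropping the rotation.} You claim that since $\Omega(\epsilon,\epsilon)$ is $\mathrm{SO}_2(\bR)$-invariant, the rotation used to make $\beta$ vertical is ``harmless,'' leading to the containment $\Omega_1(j,\delta)\subseteq\bigcup_{t}g_{-t}\Omega(\epsilon,\epsilon)$. This is false. For $(X,\omega)\in\Omega_1(j,\delta)$ with $\beta$ in direction $\theta_0$, you actually show $(X,\omega)\in r_{\theta_0}g_{-t}\Omega(\epsilon,\epsilon)$, and $r_{\theta_0}g_{-t}\Omega(\epsilon,\epsilon)\neq g_{-t}\Omega(\epsilon,\epsilon)$: the $\mathrm{SO}_2$-invariance of $\Omega$ gives $r_{\theta_0}g_{-t}\Omega=r_{\theta_0}g_{-t}r_\psi\Omega$ for any $\psi$, but $r_{\theta_0}g_{-t}r_\psi$ is not of the form $g_{-t'}$. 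Concretely, a surface in $\Omega_1(j,\delta)$ with $\beta$ \emph{horizontal} is not sent into $\Omega(\epsilon,\epsilon)$ by any $g_t$, since $g_t$ expands the horizontal $\beta$; yet $\Omega_1(j,\delta)$ is rotation-invariant, so it contains such surfaces. You must therefore also discretize the circle of directions, exactly as the paper does.

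\textbf{Why adding the angle discretization does not rescue the argument.} Suppose you discretize directions into bins of width $\Delta$ (and use, say, the minimal time $t_-$, which is the same for all surfaces, making the time window superfluous). For $\rho$ to stay of length $\lesssim\epsilon$ after $g_{t_-}$, the angular error must satisfy $e^{t_-}\,|\rho|\,\Delta\lesssim\epsilon$. With $e^{t_-}\approx 2^{j/3}/\epsilon$ and $|\rho|$ up to $2^{j/3}$, this forces $\Delta\lesssim\epsilon^2/2^{2j/3}\approx 2^{-j}/\delta$, hence $\gtrsim 2^j$ angle bins. But you chose the \emph{symmetric} pair $\epsilon_1=\epsilon_2=\epsilon\approx 2^{-j/6}$, so Masur--Smillie gives only $\mu(\Omega(\epsilon,\epsilon))\lesssim\epsilon^4\approx 2^{-2j/3}$. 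The union bound then produces $\gtrsim 2^{j}\cdot 2^{-2j/3}=2^{j/3}$, which diverges. The exponent $-j/12$ in the statement leaves no slack at all; the symmetric choice of $\epsilon$ loses too much.

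\textbf{How the paper avoids both issues.} The paper partitions by the size $\ell$ of $\beta$, then partitions directions into $2^{j+\ell}$ intervals and applies $g_t$ with the \emph{single} time $e^t=2^{(j+\ell)/2}$ per bin (no window at all). Crucially, after the flow $\beta$ has length $\epsilon_1\approx 2^{(\ell-j)/2}$, which is \emph{much} smaller than $\rho$'s length $\epsilon_2\approx 2^{-j/6-\ell/2}$. This asymmetric pair yields $\mu(\Omega(\epsilon_1,\epsilon_2))\lesssim\epsilon_1^2\epsilon_2^2\approx 2^{-4j/3}$, which beats the $2^{j+\ell}$ angle bins to give $2^{\ell-j/3}\le 2^{-j/12}$ per $\ell$. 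The factor $j$ in the Lemma then comes from summing over the $O(j)$ admissible sizes $\ell$, \emph{not} from a window of renormalization times. In short: your instinct to renormalize and apply Masur--Smillie is right, but the directional discretization cannot be elided, and once it is included, only the asymmetric $\epsilon_1\ll\epsilon_2$ (exploiting the full contraction of $\beta$) makes the geometric series close.
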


\begin{proof}
Given $\ell\leq j/4$, define $\Omega_1(j,\ell)$ to be the set of $(X,\omega)$ in $\Omega_1(j, \delta)$
such that the size of $\beta$ is $\ell$. 

Take a partition of the circle $S^1$ by intervals $I$ of length $\frac{1}{2^{j+\ell}}$, and for each $I$, let $\Omega_1(j,\ell,I)\subset \Omega_1(j,\ell)$ be the set of $(X,\omega)$ such that the direction of $\beta$ is in $I$.
Rotate so the center  of $I$ is vertical  and  apply the Teichm\"uller flow $g_t$, where $$e^t=\sqrt{2^{j+\ell}}.$$  After flowing, $\beta$ has vertical length and horizontal length bounded by
$2 \frac{2^{\ell/2}}{2^{j/2}}$. So the square of its length  is at most
 $$8 \frac{2^{\ell}}{2^j}.$$
 
After flowing, the  vertical component of $\rho$ is  bounded by  $$2 \frac{2^{j/3}}{2^{j/2+\ell/2}} = \frac{2}{2^{j/6+\ell/2}},$$ and since $\rho$ makes angle at most $\frac{21}{2^{j+\ell}}$ with the vertical direction, the horizontal length of $\rho$ is at most $\frac{42}{2^{j/6+\ell/2}}$.
 Thus the square of its length is at most 
 $$\frac{2^{11}}{2^{j/3+\ell}}.$$
 By Proposition~\ref{thm:Masur-Smillie}, for $j$ sufficiently large, the measure $$\mu \left( \Omega_1(j,\ell,I)\right) = \mu \left(g_t \cdot  \Omega_1(j,\ell,I)\right)\leq 2^{15}\frac{C }{2^{4j/3}}.$$
Summing up over the $2^{j+\ell}$  number of intervals and using $\ell\leq j/4$, we find 
$$\mu\left(\Omega_1(j,\ell)\right)  \leq 2^{15} \frac{C}{2^{j/12}}.$$  We then sum over $\ell$, where $\log \delta \leq \ell\leq j/4$ to get
$$\mu\left(\Omega_1(j, \delta)\right)  \leq 2^{15} C(j-\log \delta) \frac{1}{2^{j/12}},$$ 
which is less than $C_1 \frac{j}{2^{j/12}}$ when $j \geq |\log \delta|$,
by taking $C_1= 2^{16}C$.
\end{proof}

\subsubsection{Measure Estimate: Bounding Bad Saddle Connections of Large Size}

Let $M>2$ and $c_3 >0$ be constants.
Define 
$$\Omega_2(\ell, c_3, M, \delta) = \bigcup_{n,p} \Omega_2(\ell, n,p),$$
where $\Omega_2(\ell, n,p)$ is the set of all $(X, \omega) \in \cH_{1}^{hyp}(\kappa;\delta)$ such that:
\begin{enumerate}[label=(2-\alph*)]
\item \label{Omega2:GammaSize} There is a saddle connection $\rho$ on $(X,\omega)$ with size  $\ell$.
\item \label{Omega2:SigmaSize} There is a saddle connection $\sigma$ on $(X,\omega)$ with size 
   $n< \ell/4$ and $\sigma$ makes an angle with $\rho$ at most
   $c_3/2^{m+n}$, where $m=n+ \frac{\ell}{2M}$. 
\item \label{Omega2:RhoSize} There is a saddle connection $\beta$ on $(X,\omega)$ with size $p\leq \ell/8M$ and makes an angle with $\rho$ at most $1/2^{\ell+p}$. 
\end{enumerate}

\begin{lemma}\label{lemma:measure-bad-case2}
Given $\delta > 0$, $c_3 > 0$, $M > 2$, and $\cH^{hyp}(\kappa)$, there is a constant $C_2$ such that for $\ell$ sufficiently large
$$\mu\left(\Omega_2(\ell, c_3, M, \delta)\right) \leq C_2 \frac{\ell^2}{2^{\ell/8M}}.$$
\end{lemma}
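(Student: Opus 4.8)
The plan is to imitate the proof of Lemma~\ref{lem:smallsize}: I would decompose $\Omega_2(\ell,c_3,M,\delta)$ according to the sizes $n,p$ of $\sigma$ and $\beta$, and for each piece apply the Masur--Smillie volume estimate (Proposition~\ref{thm:Masur-Smillie}) after renormalizing by an element of $\operatorname{SL}_2(\bR)$. The key point is that, although $\rho$ has the large size $\ell$ and so can never be made short, the saddle connections $\sigma$ and $\beta$ have small sizes $n<\ell/4$ and $p\le \ell/(8M)$ and both make a tiny angle with $\rho$, hence with one another; so a single Teichm\"uller deformation can be chosen to shrink both of them simultaneously, after which Proposition~\ref{thm:Masur-Smillie} is applied to the pair $\{\sigma,\beta\}$ while $\rho$ is used only to force $\sigma$ and $\beta$ to be nearly parallel, via $\operatorname{angle}(\sigma,\beta)\le \operatorname{angle}(\sigma,\rho)+\operatorname{angle}(\rho,\beta)$.

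Concretely, write $\Omega_2(\ell,c_3,M,\delta)=\bigsqcup_{n,p}\Omega_2(\ell,n,p)$, where, because we are in the $\delta$-thick part, $n$ and $p$ run over the integers with $\lfloor\log\delta\rfloor\le n<\ell/4$ and $\lfloor\log\delta\rfloor\le p\le \ell/(8M)$; there are $O(\ell^2)$ such pairs, which produces the factor $\ell^2$, so it suffices to bound each $\mu(\Omega_2(\ell,n,p))$ by $C\,2^{-\ell/(4M)}$ once $\ell$ is large. Fixing $n,p$, I would set $m=n+\ell/(2M)$ and $\theta_0=c_3\,2^{-(m+n)}$, partition $S^1$ into $\asymp 1/\theta_0$ arcs $I$ of length $\theta_0$, and let $\Omega_2(\ell,n,p,I)\subset\Omega_2(\ell,n,p)$ be the subset on which the direction of $\rho$ lies in $I$. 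For each $I$, let $R_I$ be the rotation carrying the midpoint of $I$ to the vertical and apply $g_t$ with $e^t=\theta_0^{-1/2}$. Since $M>2$ gives $m+n<3\ell/4<\ell+p$, the angle $\le 2^{-(\ell+p)}$ between $\beta$ and $\rho$ is $\lesssim\theta_0$ for $\ell$ large, so after applying $g_tR_I$ both $\sigma$ and $\beta$ lie within $O(\theta_0)$ of the vertical, and estimating horizontal and vertical components yields, on $g_tR_I\cdot\Omega_2(\ell,n,p,I)$,
$$|\sigma|\lesssim \sqrt{c_3}\,2^{n}\theta_0^{1/2}=\sqrt{c_3}\,2^{(n-m)/2}=\sqrt{c_3}\,2^{-\ell/(4M)},\qquad |\beta|\lesssim \sqrt{c_3}\,2^{p}\theta_0^{1/2}\le \sqrt{c_3}\,2^{-\ell/(8M)}.$$

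For $\ell$ large both fall below the thresholds in Proposition~\ref{thm:Masur-Smillie}, so that proposition (applied to $\sigma$ and $\beta$, which we may take to be non-homologous) together with the $\operatorname{SL}_2(\bR)$-invariance of $\mu$ gives $\mu(\Omega_2(\ell,n,p,I))=\mu\big(g_tR_I\cdot\Omega_2(\ell,n,p,I)\big)\lesssim c_3^2\,2^{2n+2p}\theta_0^2$. Summing over the $\asymp 1/\theta_0$ arcs cancels one power of $\theta_0$, and then using $2^{2n}\theta_0=c_3\,2^{n-m}=c_3\,2^{-\ell/(2M)}$ and $p\le\ell/(8M)$ I obtain
$$\mu\big(\Omega_2(\ell,n,p)\big)\lesssim c_3^2\,2^{2n+2p}\theta_0 = c_3^3\,2^{2p-\ell/(2M)}\le c_3^3\,2^{-\ell/(4M)}.$$
Summing over the $O(\ell^2)$ pairs $(n,p)$ and absorbing all constants into $C_2$ yields $\mu\big(\Omega_2(\ell,c_3,M,\delta)\big)\le C_2\,\ell^2\,2^{-\ell/(4M)}\le C_2\,\ell^2\,2^{-\ell/(8M)}$ for $\ell$ sufficiently large.

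The only genuinely delicate step, as opposed to bookkeeping, will be the choice of the angular scale $\theta_0$: it has to be small enough ($\theta_0\ll 2^{-2n}$) that one application of $g_t$ really shrinks $\sigma$, yet large enough that the number $\asymp 1/\theta_0$ of arcs remains affordable once multiplied by the further gain $2^{-\ell/(2M)}$ coming from $|\sigma|^2$; the value $\theta_0=c_3\,2^{-(m+n)}$ — the angle scale between $\sigma$ and $\rho$ — is exactly the one that balances these requirements and simultaneously makes $\beta$ short. A minor point to keep in mind is that $\rho$, being long, cannot itself be fed into Proposition~\ref{thm:Masur-Smillie}; its only role is the angle inequality above. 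One should also note that the bound actually obtained, $\ell^2\,2^{-\ell/(4M)}$, is stronger than the claimed $\ell^2\,2^{-\ell/(8M)}$, so the latter holds with room to spare.
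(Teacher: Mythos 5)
Your proposal is correct and follows essentially the same route as the paper: decompose by the sizes $n,p$ of $\sigma$ and $\beta$, partition directions into arcs of angular scale $\asymp 2^{-(m+n)}$, renormalize by $g_t$ with $e^t \asymp 2^{(m+n)/2}$ so that $\sigma$ and $\beta$ are simultaneously short, apply Masur--Smillie, and sum over arcs and over $(n,p)$. The only differences are cosmetic (you carry the constant $c_3$ inside $\theta_0$ and collapse the paper's intermediate parameter $k=(m-n)/2$ into the choice $e^t=\theta_0^{-1/2}$), and you correctly observe, as the paper's intermediate computation also shows, that the argument actually yields the stronger exponent $-\ell/(4M)$.
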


\begin{proof}
Fix $n$ and $p$.  Let $\Omega_2(\ell,n,p) \subset \Omega_2(\ell, c_3, M, \delta)$ be as above.  By our assumptions, 
\begin{equation*}
  m+n = 2n + \frac{\ell}{2M} \leq \frac{3\ell}{4}.
\end{equation*}

We partition the circle of directions into intervals of length $1/ 2^{m+n}$ and denote by $I$ the interval containing the direction of $\rho$. Rotate the circle of directions so that $I$ contains the vertical direction and apply the Teichm\"uller flow $g_t$ with $$e^t=\frac{2^m}{2^k},$$
where $k$ will be specified below. 

Assume that $(X,\omega)\in\Omega_2(\ell,n,p)$.
Since $(X,\omega)$ lies in the $\delta$-thick part, $n$ and $p$ are bounded below by $\log \delta$. 
  We shall assume that $\ell$ is sufficiently large, in particular, 
   $\ell\geq 4M\left( |\log \delta| +1\right)$. 

Since the angle between $\rho$ and $\beta$ is at most
$\frac{1}{2^{p+\ell}}, $
the angle between $\beta$  and the vertical direction is at most
$$ \frac{1}{2^{p+\ell}}+ \frac{1}{2^{m+n}}\leq \frac{2}{2^{m+n}}.$$
So after flowing by $g_t$, we have 
$$ |v_\beta| \leq\frac{2^{k+p+1}}{2^{m}} , \qquad  |h_\beta| \leq \frac{2^{p+2}}{2^{n+k}}.$$

On the other hand, the angle between $\rho$ and $\sigma$ is  less than $\frac{c_3}{2^{m+n}}$, so the angle between $\sigma$  and the vertical direction is less than  $\frac{1+c_3}{2^{m+n}} $.  After flowing by $g_t$, we have
$$ |v_{\sigma}| \leq \frac{2^{k+n+1}}{2^{m}}, \qquad  |h_{\sigma}| \leq  \frac{c_4}{2^{k}}.$$
Here we take $c_4=2(c_3+1)$. 

We would like to choose $k$ such that $|\rho|$ and $|\sigma|$ are simultaneously small. 
So we take $k$ such that 
$2k+n=m$.  After flowing,  we have
$$|\sigma| \leq  \frac{2+c_4}{2^{k}} .$$
Note that
$$k = \frac{m-n}{2} = \frac{\ell}{4M}.$$ 
So $|\sigma|$ is small.  Similarly, we have
$$|\beta| \leq 5 \frac{2^{p}}{2^{n+k}}.$$
Since $p \leq \frac{\ell}{8M}=k/2$, $|\beta|$ is also small. 
Thus, both $\beta$ and $\sigma$ are short after the flow. 
Consequently, we have 
$$g_t \cdot \Omega_2(\ell,n,p) \subset \Omega\left( 5\frac{2^p}{2^{n+k}}, \frac{2+c_4}{2^k}\right),$$
where $\Omega(\cdot, \cdot)$ is the set from Proposition~\ref{thm:Masur-Smillie}.  Thus, by Proposition~\ref{thm:Masur-Smillie},
$$\mu\left( g_t \cdot \Omega_2(\ell,n,p) \right)  \leq c_5  \frac{2^{2p}}{2^{2m}} ,$$
where $c_5= 25(c_4+2)^2C$. By the invariance of the measure under the Teichm\"uller flow, we obtain
$$\mu\left( \Omega_2(\ell,n,p) \right)  \leq c_5  \frac{2^{2p}}{2^{2m}}.$$
Since there are $2^{m+n}$ many intervals, we have 
$$\mu\left(\Omega_2(\ell,n,p)\right) \leq c_5 \frac{2^{n+2p}}{2^m}  \leq  c_5 \frac{1}{2^{\ell/8M}}.$$
Taking $C_2 =4c_5$ and summing over all possible $p$ and $n$, we obtain (for $\ell\geq |\log \delta|$)
$$\mu\left(\Omega_2(\ell, c_3, M, \delta)\right) \leq C_2 \frac{\ell^2}{2^{\ell/8M}}.$$
\end{proof}

\subsubsection{Applying the Borel-Cantelli Lemma}
\begin{corollary}\label{coro:bad-smallmeasure}
For $\mu$-almost every $(X,\omega)\in \cH^{hyp}_1(\kappa)$, 
there is a constant $L_0$ depending on $(X,\omega)$ such that if $2^j \geq L_0$,
then $(X,\omega)$ does not belong to $\Omega_1(j, \delta)$ (without restricting to on the thick part). The same conclusion holds for $\Omega_2(\ell, c_3, M, \delta)$.
\end{corollary}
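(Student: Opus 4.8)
The statement to prove is Corollary~\ref{coro:bad-smallmeasure}, which is a standard application of the Borel--Cantelli lemma built on the measure estimates in Lemmas~\ref{lem:smallsize} and~\ref{lemma:measure-bad-case2}. Here is the plan.

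\textbf{Summability over a discrete sequence of scales.} First I would fix $\delta > 0$ and work entirely inside the $\delta$-thick part $\cH_1^{hyp}(\kappa;\delta)$, since that is where the measure bounds are stated. By Lemma~\ref{lem:smallsize}, $\mu(\Omega_1(j,\delta)) \leq C_1 j/2^{j/12}$ for $j$ large, and the series $\sum_j C_1 j/2^{j/12}$ converges. Hence by the Borel--Cantelli lemma, $\mu$-almost every $(X,\omega) \in \cH_1^{hyp}(\kappa;\delta)$ lies in only finitely many of the sets $\Omega_1(j,\delta)$; equivalently, for such a surface there is $j_0 = j_0(X,\omega)$ with $(X,\omega) \notin \Omega_1(j,\delta)$ for all $j \geq j_0$. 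Setting $L_0 = 2^{j_0}$ gives the first claim for almost every surface \emph{in the thick part}. The same argument applies verbatim to $\Omega_2(\ell,c_3,M,\delta)$ using Lemma~\ref{lemma:measure-bad-case2}: $\sum_\ell C_2 \ell^2 / 2^{\ell/8M}$ converges, so Borel--Cantelli again yields a full-measure subset of $\cH_1^{hyp}(\kappa;\delta)$ avoiding $\Omega_2(\ell,c_3,M,\delta)$ for all large $\ell$.

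\textbf{Removing the thick-part restriction.} The corollary asserts the conclusion ``without restricting to the thick part,'' so I would exhaust $\cH_1^{hyp}(\kappa)$ by thick parts: write $\cH_1^{hyp}(\kappa) = \bigcup_{n \geq 1} \cH_1^{hyp}(\kappa; 1/n)$ up to a $\mu$-null set (the set of surfaces whose systole is exactly $0$ has measure zero, and more precisely $\mu(\cH_1^{hyp}(\kappa) \setminus \cH_1^{hyp}(\kappa;\delta)) \to 0$ as $\delta \to 0$ by the Masur--Smillie estimate, so the intersection over all $\delta$ is $\mu$-null). For each fixed $n$, the previous paragraph gives a full-measure subset $E_n \subset \cH_1^{hyp}(\kappa;1/n)$ on which the conclusion holds with some $L_0$ depending on the surface (and $n$, but that is absorbed into the surface-dependence). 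Then $\bigcup_n E_n$ has full $\mu$-measure in $\cH_1^{hyp}(\kappa)$, and any $(X,\omega)$ in this union lies in some $\cH_1^{hyp}(\kappa;1/n)$, hence has the desired $L_0$. The key point making this work is that $\Omega_1(j,\delta)$ and $\Omega_2(\ell,c_3,M,\delta)$ are, for a \emph{fixed} surface, ultimately determined by intrinsic data of that surface (existence of short saddle connections at nearly-parallel angles), so membership does not actually depend on which $\delta$-thick part we placed the surface in — it only matters that the surface has \emph{some} positive systole, which almost every surface does.

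\textbf{Main obstacle.} There is no serious obstacle here; the content is entirely in the two measure lemmas already proved. The only points requiring a little care are: (i) verifying the series genuinely converge — immediate since polynomial times $2^{-cj}$ is summable for any $c>0$; and (ii) the passage from ``thick part'' to ``all of $\cH_1^{hyp}(\kappa)$'', which requires noting that the complement of every thick part is null in the limit and that the surface-dependent threshold $L_0$ can depend on $\delta$ since it already depends on the surface. One should also record that $\delta$, $c_3$, and $M$ are treated as fixed throughout (they will be chosen later in the arguments invoking this corollary), so the corollary is really a family of statements, one for each admissible choice of these parameters, each holding on a full-measure set; intersecting countably many full-measure sets (over a countable dense set of parameter values, or over the finitely many values actually used downstream) keeps full measure.
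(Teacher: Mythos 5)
Your proof is correct and follows essentially the same route as the paper: Borel--Cantelli applied to the summable estimates of Lemmas~\ref{lem:smallsize} and~\ref{lemma:measure-bad-case2} on each $\delta$-thick part, followed by an exhaustion of $\cH_1^{hyp}(\kappa)$ by countably many thick parts. Your additional remarks (summability of $j\,2^{-j/12}$ and $\ell^2\, 2^{-\ell/8M}$, and that membership in $\Omega_1(j,\delta)$ is determined by intrinsic saddle-connection data once $\delta$ is below the systole) are accurate elaborations of steps the paper leaves implicit.
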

\begin{proof}
Restricting to the $\delta$-thick part, it follows from Lemma~\ref{lem:smallsize} that 
$$\sum_{j} \mu\left( \Omega_1(j, \delta)\right) < \infty.$$
Then the Borel-Cantelli Lemma implies that almost every $(X,\omega)\in \cH_{1}(\kappa;\delta)$ does not lie in $\Omega_1(j, \delta)$ when $j$ is greater than some $j_0=j_0(X,\omega)$.  Since the space $\cH^{hyp}_1(\kappa)$ can be exhausted by countably many thick parts, we are done. 

The same proof carries over to $\Omega_2(\ell, c_3, M, \delta)$ using Lemma~\ref{lemma:measure-bad-case2} in place of Lemma~\ref{lem:smallsize}.
\end{proof}

\subsection{Control of the Bad Times}
\label{sect:CylLowerBd}

We first show that isolated saddle connections that are not too short  take up a small proportion of the time interval. 
The next lemma does not require  an almost everywhere statement on points in the stratum.  It also holds for surfaces with area $A$ so that we will  be able to apply this lemma to handle the induction step.

\begin{proposition}
\label{lem:bad-isolated}
Let $(X, \omega) \in \cH_A^{hyp}(\kappa)$ have a horizontal slit $\beta$.  Consider times $T$ and the interval $\left[\frac {T|\beta|}{2A}, \frac{T|\beta|}{A}\right]$.
	Let $\cR$ be a collection of available saddle connections of horizontal size $\ell$ that are $\epsilon$-isolated, which satisfy 
	$$T^{1/8}\leq 2^\ell\leq 2 m_0 T.$$
	Then the $\lambda$-measure of the union of the intervals of the set of times for which a saddle connection in $\cR$ is available and over  all admissible sizes $\ell$ is at most 
	$$\frac{1}{2^5 d} \frac{T|\beta|}{A}$$ for $T$ sufficiently large depending on $|\beta|$.
\end{proposition}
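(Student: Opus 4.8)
The plan is to count, for each admissible horizontal size $\ell$, how many $\epsilon$-isolated available saddle connections of that size there can be, multiply by the maximum time each can remain available, and then sum over $\ell$. The geometric input is that $\epsilon$-isolated saddle connections of a fixed size are pairwise separated in angle by at least $\epsilon^2 A/2^{2j}$ (here $2^j \approx |\rho| \approx |h_\rho| \approx 2^\ell$ since $\rho$ is available and long by the standing convention $|\rho|\geq|\beta|/2$, so the size and horizontal size agree up to an additive constant), while by Lemma~\ref{lem:badtimes}\ref{lem:badtimes:AngleUpperBound} every available $\rho$ with $|\rho|\geq|\beta|/2$ makes an angle at most $10A/(T|\beta|)$ relative to $\beta$. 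Hence all of them lie in a single angular sector of width $10A/(T|\beta|)$, and the number of them is at most
$$
\frac{10A/(T|\beta|)}{\epsilon^2 A/2^{2\ell}} = \frac{10\cdot 2^{2\ell}}{\epsilon^2 T|\beta|}
$$
up to a bounded multiplicative constant absorbing the size-vs-horizontal-size discrepancy. (One should also invoke our convention that parallel saddle connections are counted once, so each angular direction contributes a bounded number.)

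Next, by Lemma~\ref{lem:badtimes}\ref{lem:badtimes:RhoAvailable}, each such $\rho$ is available for a time interval of length at most $|\beta|^2 T/(2^\ell A)$. Therefore the $\lambda$-measure of the union of availability intervals of all $\epsilon$-isolated saddle connections of horizontal size $\ell$ is at most
$$
\frac{c\cdot 2^{2\ell}}{\epsilon^2 T|\beta|}\cdot \frac{|\beta|^2 T}{2^\ell A} = \frac{c}{\epsilon^2}\cdot \frac{2^\ell |\beta|}{A},
$$
for a constant $c$ depending only on the stratum. Now sum over the admissible range $T^{1/8}\leq 2^\ell \leq 2m_0 T$. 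Since the bound is geometric in $2^\ell$, the sum is dominated (up to a factor of $2$) by its largest term, at $2^\ell \approx 2m_0 T$, giving a total bound of at most
$$
\frac{c'}{\epsilon^2}\cdot \frac{m_0 T|\beta|}{A}
$$
for a constant $c'$ depending only on the stratum.

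Finally I would plug in the definition of $m_0$, namely $m_0 \leq \dfrac{\epsilon^2}{2^7 d}\cdot\dfrac{1}{2^5}$, so that $\dfrac{c'}{\epsilon^2}m_0 \leq \dfrac{c'}{2^{12}d}$; choosing $\epsilon$ small enough (equivalently, tracking that $c'$ is an absolute-ish constant and that the definition of $m_0$ was rigged precisely to beat it) yields the claimed bound $\dfrac{1}{2^5 d}\cdot\dfrac{T|\beta|}{A}$. The ``$T$ sufficiently large depending on $|\beta|$'' hypothesis is used only to ensure the additive constants relating size and horizontal size (and the $+A/|\beta|$ slack from Lemma~\ref{rhoLengthTime:Lemma}) are negligible against the geometric sum, and to ensure the summation range $[T^{1/8}, 2m_0 T]$ is nonempty and the endpoint term genuinely dominates. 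The only slightly delicate point — the main obstacle — is bookkeeping the constants: one must check that the constant $c'$ emerging from the angular-sector count and the geometric sum is indeed absorbed by the factor $2^{-12}d^{-1}$ built into $m_0$, rather than, say, depending on $d$ in a way that cancels the $d$ in the denominator of the target; this is why the $\min$ in the definition of $m_0$ includes the term $1/2^5$ with the explicit $1/d$ out front, and tracing that through is the one place care is required.
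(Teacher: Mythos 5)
Your count of $\epsilon$-isolated saddle connections of horizontal size $\ell$, namely $\tfrac{10\cdot 2^{2\ell}}{\epsilon^2 T|\beta|}$, is obtained by dividing the width of the available angular sector by the minimum angular separation. But this only gives a valid upper bound when the quotient is at least $1$; when $\tfrac{10\cdot 2^{2\ell}}{\epsilon^2 T|\beta|}<1$ (which happens precisely when $2^{\ell}<\epsilon\sqrt{T|\beta|/10}$, a nonempty range above $T^{1/8}$ for large $T$) there can still be \emph{one} saddle connection of that size, even though the angular argument says there cannot be two. The correct count is $\max\bigl\{\tfrac{10\cdot 2^{2\ell}}{\epsilon^2 T|\beta|},\,1\bigr\}$, and you silently drop the second branch of the maximum. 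That branch matters: a single such $\rho$ at the lower end $2^\ell\approx T^{1/8}$ is available for time up to $\tfrac{T|\beta|^2}{2^{\ell}A}\approx\tfrac{T^{7/8}|\beta|^2}{A}$, which for large $T$ dwarfs your per-size bound $\tfrac{10\cdot 2^{\ell}|\beta|}{\epsilon^2 A}$ by a factor of order $T^{3/4}$. Your geometric sum being dominated by the upper endpoint is an artifact of using the wrong bound on that range; the true contribution has two competing peaks, one at each end of $[T^{1/8},\,2m_0 T]$.

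The paper handles this by explicitly splitting into the two cases of the $\max$. In the second case (at most one $\rho$ per size) the total time is $\sum_{\ell\ge (\log_2 T)/8}\tfrac{T|\beta|^2}{2^{\ell}A}\le\tfrac{2T|\beta|^2}{T^{1/8}A}$, and this is what the hypothesis ``$T$ sufficiently large depending on $|\beta|$'' is really for: one needs $T^{1/8}\ge 2^{7}d|\beta|$ to make this term at most $\tfrac{1}{2^6 d}\tfrac{T|\beta|}{A}$. You instead attribute that hypothesis only to soaking up additive constants, which is a misreading of its role; in your version the smallness of $m_0$ would have to do all the work, and it cannot, because $m_0$ has no leverage on the small-$\ell$ end of the range. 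The rest of your argument (the angle bounds from Lemma~\ref{lem:badtimes}, the per-saddle-connection availability time, the geometric sum over large $\ell$, and plugging in the definition of $m_0$) matches the paper's first case and is fine.
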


\begin{remark}
Recall the motivation for the assumption in the lemma.  By Item~\ref{lem:badtimes:nobad} of  Lemma~\ref{lem:badtimes}, if $\rho$ is bad, then $2^\ell \leq 2m_0 T$.
\end{remark}

\begin{proof}
We begin  by fixing a size $\ell$.  For two intersecting saddle connections $\rho$ and $\rho'$ in $\cR$, the $\epsilon$-isolated assumption says that the angle between them is at least $\frac{A\epsilon^2}{2^{2\ell}}$. 
The angle that each available $\rho \in \cR$ makes with $\beta$ is at most $\frac{10A}{T|\beta|}$ by Lemma~\ref{lem:badtimes} Item~\ref{lem:badtimes:AngleUpperBound}.
Therefore, the number of available saddle connections in $\cS$ is bounded above by taking the quotient to get
$$\max\left\{ 10\frac{2^{2\ell}}{\epsilon^2 T |\beta|}, 1\right\}.$$

The maximum naturally splits the proof into two cases.  The first case assumes the number of available saddle connections in $\cS$ is bounded above by
$$10 \frac{2^{2\ell}}{\epsilon^2 T |\beta|}\geq 1.$$
Then the $\lambda$-measure of the union of  the corresponding collection of available intervals is bounded above by multiplying the maximum time that $\rho$ is available as given by Lemma~\ref{lem:badtimes} Item~\ref{lem:badtimes:RhoAvailable} by the number of available saddle connections in $\cR$ to get
$$\left(\frac{|\beta|^2 T}{2^\ell A} \right) \cdot \left(10 \frac{2^{2\ell}}{\epsilon^2 T|\beta|} \right) = 10 \frac{2^{\ell}|\beta|}{\epsilon^2 A}.$$
Summing over the values of $\ell$, using the assumption $2^\ell\leq 2 m_0 T$, 
we obtain the upper bound 
$$\sum_{0\leq \ell \leq \log_2 (2 m_0 T)} 10 \frac{2^{\ell}|\beta|}{\epsilon^2 A} \leq 2^6 \frac{m_0 T|\beta|}{\epsilon^2 A} \leq \frac{1}{2^6 d}\frac{T |\beta|}{A}$$
for the total $\lambda$-measure of these intervals.

The second possibility is that $$10 \frac{2^{2\ell}}{\epsilon^2 T|\beta|}< 1,$$
which we note implies $2^{\ell} < \epsilon \frac{\sqrt{T|\beta|}}{\sqrt{10}}$.
For each such $\ell$, there is at most a single $\rho$ of size $\ell$.
Again by Lemma~\ref{lem:badtimes} Item~\ref{lem:badtimes:RhoAvailable}, each $\rho$ determines an interval of size $\frac{T|\beta|^2}{2^\ell A}$. Summing up over $\ell$ such that 
$T^{1/8}\leq 2^\ell < \epsilon \frac{\sqrt{T|\beta|}}{\sqrt{10}}$,
we get that the sum of the  lengths of these intervals is at most $$\sum_{(\log_2 T)/8 \leq \ell \leq \log_2 (T|\beta|)}\frac{T|\beta|^2}{2^{\ell} A}\leq \frac{2T|\beta|^2}{T^{1/8} A},$$ 
which is less than $\frac{1}{2^6 d}\frac{T |\beta|}{A}$ provided  
$T^{1/8} \geq 2^7 d|\beta|.$ 
\end{proof}

The remaining part of this subsection shows that 
for almost every $(X,\omega)\in \cH^{hyp}_1(\kappa)$ and
for any given invariant saddle connection $\beta$, 
if $T$ is sufficiently large, then
the bad times in $\left[ \frac{T|\beta|}{2}, T|\beta|\right]$, contributed by those 
saddle connections that are either short or non-isolated, has small measure. 
This together with Proposition~\ref{lem:bad-isolated} are sufficient to prove 
Theorem~\ref{thm:uniformtostart}.

In order to obtain Theorem~\ref{thm:lower-general} in the following propositions, we shall consider $(X,\omega)\in \cH^{hyp}_1(\kappa)$
and a saddle connection $\gamma$, which is either $\beta$, or non-invariant and interiorly disjoint from $\beta$.
In the latter case, 
by cutting and gluing along $\gamma\cup \tau(\gamma)$, we obtain a surface 
$Y$ not containing  $\beta$ and lying in a hyperelliptic stratum $\cH^{hyp}(\kappa')$ of lower dimension by Lemma~\ref{TwoBdSC:Lemma}.
Then $\gamma$ becomes an invariant saddle connection on $Y$. 

We will apply the horocycle flow with $\gamma$ horizontal, consider bad times, and so forth. 
The key fact that we use which is  given in the remark following Lemma~\ref{TwoBdSC:Lemma} is that lengths of saddle connections on $Y$ that are disjoint from $\gamma$ and the angle between such saddle connections are the same as on $(X,\omega)$  before the cutting and gluing. This will allow 
us to apply Lemma~\ref{lem:badtimes} Item~\ref{lem:badtimes:AngleUpperBound} for example and calculate measures in $\cH^{hyp}_{1}(\kappa)$.

\begin{remark}
We note  that there are  two natural finite Lebesgue class measures one could consider on strata.  The first is the measure on the stratum of unit area surfaces,  $\cH_1^{hyp}(\kappa)$, and for various  $A$ the measure on lower dimensional strata of area $A$ translation surfaces. Though we will need to consider such subsurfaces with area $A \leq 1$, the \emph{only} measure on strata that will occur in this work is the standard one on $\cH_1^{hyp}(\kappa)$ as that is the one that occurs in the statement of the lower bound. It is also the case that it is nontrivial to go from measures on lower dimensional strata to measures on higher dimensional strata. 
\end{remark}

\paragraph{Setting for Propositions~\ref{prop:smallsize:MeasureZero}, \ref{lem:badtime} and \ref{BadSaddleConnBd:Prop}:}

\

\noindent Given $(X,\omega)\in \cH^{hyp}_1(\kappa)$ with an invariant slit $\beta \subset (X, \omega)$, let $\gamma$ be a saddle connection on $(X, \omega)$.  If $\gamma = \beta$, define $Y = (X, \omega)$.  If $\gamma$ is not invariant and interiorly disjoint from $\beta$, cut and glue along $\gamma \cup \tau(\gamma)$ and let $Y \in \cH^{hyp}(\kappa')$ be the resulting subsurface \emph{not} containing $\beta$ with area $A \leq 1$ (and $A > 0$).  

\begin{proposition}
\label{prop:smallsize:MeasureZero}
Given the setup above, for $\mu$-almost every $(X,\omega)\in \cH^{hyp}_1(\kappa)$, there exists $T_0$ depending on $(X,\omega)$ and $|\gamma|$ such that for any $j$ satisfying $2^{j/4} \geq |\gamma|$ and $T\geq T_0$ satisfying $2^{j-1}\leq T<2^j$, each saddle connection that is bad for $\gamma$ on $Y$ has size at least $\frac{j}{3}$.
\end{proposition}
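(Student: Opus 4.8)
The plan is to argue by contradiction, feeding the geometric data of a hypothetical short bad saddle connection into the set $\Omega_1(j,\delta)$ and then invoking the Borel--Cantelli consequence of the measure estimate for $\Omega_1$. Fix a surface $(X,\omega)$ in the full-measure set to which Corollary~\ref{coro:bad-smallmeasure} applies, say $(X,\omega)\in\cH^{hyp}_1(\kappa;\delta)$, and let $L_0=L_0(X,\omega)$ be the resulting constant, so $(X,\omega)\notin\Omega_1(j,\delta)$ whenever $2^j\ge L_0$. I would set $T_0=\max\bigl(L_0,|\gamma|^4\bigr)$ (the second term only serves to force $2^{j/4}\ge|\gamma|$ automatically once $T\ge T_0$, matching the $|\gamma|$-dependence asserted in the statement). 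Suppose $T\ge T_0$, $2^{j-1}\le T<2^j$, $2^{j/4}\ge|\gamma|$, and, toward a contradiction, that some $\rho$ is bad for $\gamma$ on $Y$ at a time $s\in\bigl[\tfrac{T|\gamma|}{2A},\tfrac{T|\gamma|}{A}\bigr]$ with size strictly less than $j/3$. By the convention in Section~\ref{LowerBoundSetting:Section} we take $\rho$ to be the side of the bad parallelogram with $|\rho|\ge|\gamma|/2$.

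First I would extract two estimates on $Y$. Since $\rho$ is available and $|\rho|\ge|\gamma|/2$, Lemma~\ref{lem:badtimes}, Item~\ref{lem:badtimes:AngleUpperBound}, gives $|v_\rho/h_\rho|\le\tfrac{10A}{T|\gamma|}$, which, using $A\le1$ and $T\ge2^{j-1}$, is at most $\tfrac{20}{2^j|\gamma|}$; hence the angle between $\rho$ and $\gamma$ (in the sine/supplementary-angle convention of the paper) is at most $\tfrac{20}{2^j|\gamma|}$. Moreover $\rho$ has size $<j/3$ by the contradiction hypothesis, while $\gamma$ has size $\le j/4$ since $|\gamma|\le2^{j/4}$.

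Next I would transfer this picture to $(X,\omega)$. If $\gamma=\beta$, then $Y=(X,\omega)$ and nothing is needed. If $\gamma$ is non-invariant, $Y$ is obtained from $(X,\omega)$ by cutting along $\gamma\cup\tau(\gamma)$ and regluing; since $\rho$ is a side of a parallelogram with diagonal $\gamma$, it is interiorly disjoint from the slit $\gamma$ on $Y$, so by the Remark following Lemma~\ref{TwoBdSC:Lemma} it corresponds to a saddle connection on $(X,\omega)$ with the same length as $\rho$ and the same angle with $\gamma$; lengths on $Y$ are the intrinsic $\omega$-lengths, with no rescaling, so sizes are unchanged even though $\area(Y)=A\le1$. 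Therefore $(X,\omega)$ carries a saddle connection of size $\le j/4$ (namely $\gamma$) and a saddle connection of size $\le j/3$ making angle $\le\tfrac{20}{2^j|\gamma|}$ with it; that is, $(X,\omega)$ satisfies conditions (1-a) and (1-b) in the definition of $\Omega_1(j,\delta)$, so $(X,\omega)\in\Omega_1(j,\delta)$. But $2^j>T\ge T_0\ge L_0$, so Corollary~\ref{coro:bad-smallmeasure} gives $(X,\omega)\notin\Omega_1(j,\delta)$ --- a contradiction. Hence no bad saddle connection of size $<j/3$ exists, which is the assertion; the exceptional set is the $\mu$-null set from Corollary~\ref{coro:bad-smallmeasure}, resting in turn on the summability $\sum_j\mu(\Omega_1(j,\delta))<\infty$ provided by Lemma~\ref{lem:smallsize}.

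The step I expect to be most delicate is the transfer from $Y$ to $(X,\omega)$: one must check that ``available for $\gamma$'' really forces $\rho$ to be interiorly disjoint from $\gamma\cup\tau(\gamma)$, so that the length- and angle-preserving Remark after Lemma~\ref{TwoBdSC:Lemma} applies, and that the hypotheses $2^{j/4}\ge|\gamma|$ and $2^{j-1}\le T<2^j$ get used exactly where needed so that the numerology lands inside the thresholds defining $\Omega_1(j,\delta)$. Everything else is a direct substitution into Lemma~\ref{lem:badtimes} and the definition of $\Omega_1$, matched against the measure bound already in hand.
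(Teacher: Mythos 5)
Your proof is correct and follows essentially the same route as the paper: reduce a hypothetical short bad saddle connection to membership in $\Omega_1(j,\delta)$ via the angle bound of Lemma~\ref{lem:badtimes}~\ref{lem:badtimes:AngleUpperBound}, then invoke Corollary~\ref{coro:bad-smallmeasure}. You are somewhat more explicit than the paper about verifying the convention $|\rho|\ge|\gamma|/2$ (needed to apply \ref{lem:badtimes:AngleUpperBound}) and about the transfer of lengths and angles from $Y$ to $(X,\omega)$ via the remark after Lemma~\ref{TwoBdSC:Lemma}, but these are exactly the points the paper appeals to implicitly in the paragraph preceding the proposition.
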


\begin{proof}
Note $Y \in \cH^{hyp}(\kappa')$ and $\cH^{hyp}(\kappa')$ has lower dimension by Lemma~\ref{TwoBdSC:Lemma}.  Moreover, $\gamma$ is invariant on $Y$ by construction.
We claim that if there exists a saddle connection $\rho$ that is bad for $\gamma$ on $Y$,  whose size is less than $j/3$, then $(X,\omega)\in \Omega_1(j, \delta)$ for some $\delta>0$.  
Indeed, $\gamma$ has size at most $j/4$ by assumption, so if $\gamma$ is the $\beta$ from the definition of $\Omega_1(j, \delta)$, then Item (1-a) in the definition is satisfied.  By Lemma~\ref{lem:badtimes} Item~\ref{lem:badtimes:AngleUpperBound}, the angle between $\gamma$ and $\rho$ is bounded above by 
$$\frac{10 A}{T|\gamma|} \leq \frac{10}{2^{j-1}|\gamma|} \leq \frac{20}{2^j |\gamma|}.$$
So, Item (2-a) in the definition of $\Omega_1(j, \delta)$ is satisfied as well.

Therefore, we can apply the Borel-Cantelli Lemma (Corollary \ref{coro:bad-smallmeasure}) to conclude the proposition.
\end{proof}

Proposition~\ref{prop:smallsize:MeasureZero} implies that we can restrict to the case of bad $\rho$ with size at least $\frac{j}{3}$.

\begin{proposition}\label{lem:badtime}  
Given the setup above, then $\mu$-almost every $(X,\omega) \in \cH^{hyp}_{1}(\kappa)$ has the following property.  For $j$ big enough, again setting $2^{j-1} \leq T < 2^j$, if a collection of saddle connections $\cR$ that are bad for $\gamma$ on $Y$ have size 
$$\ell\geq j/3$$
and are not $\epsilon$-isolated, then the times at which the saddle connections in $\cR$ are available in the interval $\left[\frac{T|\gamma|}{2A},\frac{T|\gamma|}{A}\right]$ have $\lambda$-measure at most $\frac{T|\gamma|}{2^4 dA}$ after summing all possible values of $\ell$.
\end{proposition}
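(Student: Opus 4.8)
The plan is to reduce the non-isolated case to the already-proven measure estimates via Lemma~\ref{lem:badtimes} and then apply Corollary~\ref{coro:bad-smallmeasure}. First I would observe that a non-isolated bad saddle connection $\rho \in \cR$ (with our convention $|\rho| \geq |\gamma|/2$, so $\rho$ is effectively the ``long'' side of its parallelogram) interiorly intersects some other saddle connection $\rho'$ on $Y$. Since $\rho$ and $\rho'$ are not $\epsilon$-isolated, the angle between them is less than $\epsilon^2 A / 2^{2\ell}$. Using Lemma~\ref{EMEpsCplx:Lemma} (or rather the $\epsilon$-complex / locally convex surface generated by $\rho$ and $\rho'$, after flowing $\rho$ to be short), this angle bound forces $\rho$ into a small-area subsurface $Z$ of $Y$ whose boundary consists of saddle connections $\sigma \cup \tau(\sigma)$ with $|\sigma| \leq c_2 \cdot (\text{small})$. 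Concretely, we apply the Teichm\"uller flow to contract $\rho$ to length $\sqrt{A}\epsilon$; the angle condition then makes $\rho'$ small as well, so both lie in an $\epsilon$-complex $Z$ with area $\leq c_1 A \epsilon^2$ and boundary saddle connections of length $\leq c_2 \sqrt{A}\epsilon$ on the flowed surface. Pulled back, the boundary saddle connection $\sigma$ has size roughly $n$ with $n < \ell/4$ (for $\ell$ large, since $\sigma$ is genuinely shorter than $\rho$ by a definite multiplicative factor) and makes a small angle with $\rho$, controlled by the contraction factor.

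The key step is to recognize that this configuration — $\rho$ of size $\ell \geq j/3$, a short $\sigma$ of size $n < \ell/4$ nearly parallel to $\rho$, together with $\beta$ (or $\gamma$) which by Proposition~\ref{prop:smallsize:MeasureZero} we may assume has small size and by Lemma~\ref{lem:badtimes} Item~\ref{lem:badtimes:AngleUpperBound} makes angle $\leq 10A/(T|\gamma|) \leq 1/2^{\ell+p}$ with $\rho$ — is precisely the defining configuration of $\Omega_2(\ell, c_3, M, \delta)$ for appropriate constants $c_3$, $M$ (with $M$ chosen so that $\ell/4 > \ell/(2M)$ gives room for the sizes, and $p \leq \ell/8M$ coming from the assumption $2^{j/4} \geq |\gamma|$, hence $\ell/3 \leq j$ forces the slit's size to be $\leq j/4 \leq 3\ell/4 \cdot (1/4M)$ for $M$ large enough). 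Thus if $\cR$ contains a non-isolated bad saddle connection of size $\ell$, then $(X,\omega) \in \Omega_2(\ell, c_3, M, \delta)$ for some $\delta > 0$ (working in a thick part, which is harmless since the stratum is exhausted by thick parts). By Corollary~\ref{coro:bad-smallmeasure}, for $\mu$-almost every $(X,\omega)$ there is $\ell_0$ such that $(X,\omega) \notin \Omega_2(\ell, c_3, M, \delta)$ for all $\ell \geq \ell_0$. Since $\ell \geq j/3$ and $T < 2^j$, taking $j$ (equivalently $T$) large enough forces all relevant $\ell \geq \ell_0$, so there are simply \emph{no} non-isolated bad saddle connections of size $\geq j/3$ on $Y$ — and the measure of the corresponding bad times is literally zero, which is certainly at most $\frac{T|\gamma|}{2^4 dA}$.

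The main obstacle I expect is the bookkeeping of constants: one must verify that the size inequalities defining $\Omega_2$ (namely $n < \ell/4$, $m = n + \ell/2M$, $p \leq \ell/8M$, and the angle bounds $c_3/2^{m+n}$ between $\sigma$ and $\rho$ and $1/2^{\ell+p}$ between $\beta$ and $\rho$) are genuinely implied by the geometry, and in particular that the constant $c_3$ can be taken to depend only on the stratum (via $c_1, c_2$ from Lemma~\ref{EMEpsCplx:Lemma} and the diameter bounds), and that $M$ can be fixed once and for all. A secondary subtlety is handling the case where $\rho$ is available but the ``other'' side $\rho'$ of its parallelogram is the long one — but our standing convention already normalizes $|\rho| \geq |\gamma|/2$, and one checks $\rho$ is bad iff $\rho'$ is bad, so the argument applies to whichever side we call $\rho$. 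Finally, one should note that we do not even need the sharper ``small proportion of the time interval'' conclusion here: because of the almost-everywhere avoidance, the bad-time contribution from this case vanishes entirely for large $T$, which trivially gives the stated bound.
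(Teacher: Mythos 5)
Your proposal has a fatal gap: you assume that a non-isolated $\rho$ of size $\ell$ automatically yields a boundary saddle connection $\sigma$ of the $\epsilon$-complex with size $n < \ell/4$. This is false. The $\epsilon$-complex argument (Lemma~\ref{EMEpsCplx:Lemma}) produces a boundary saddle connection of length at most $c_2\sqrt{A}\epsilon$ \emph{on the flowed surface}; pulling back, one gets $|\sigma| \leq c_2|\rho|$ with $c_2 > 1$, so $\sigma$ has size \emph{comparable} to $\rho$, not a quarter of it (recall "size" here is $\log_2$ of length, so size $< \ell/4$ would require $|\sigma| \lesssim |\rho|^{1/4}$). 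Your parenthetical "since $\sigma$ is genuinely shorter than $\rho$ by a definite multiplicative factor" does not bridge this gap. Consequently the configuration you identify does \emph{not} in general land in $\Omega_2$, and Borel--Cantelli gives you nothing.

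Because of this error, your conclusion — that for large $j$ there are simply \emph{no} non-isolated bad saddle connections of size $\geq j/3$, so the bad-time measure is literally zero — is incorrect. The paper's actual proof applies Lemma~\ref{SeparatingSCSetForGammaBeta:Lemma} to iterate the complex construction, producing a full sequence $(\rho = \sigma_0, \sigma_1, \ldots, \sigma_m)$ terminating in an $\epsilon$-isolated $\sigma_m$, and then splits into cases by the size $\ell_m$. When $\ell_m$ is large (Cases~I and~IIa), non-isolated bad $\rho$ genuinely exist, and the measure bound $\frac{T|\gamma|}{2^4 dA}$ is proved by explicit counting (Lemma~\ref{lem:linear-bad}, the induction via Theorem~\ref{CountAlphaSC:Theorem3}, and summing over sizes), producing a small but nonzero measure. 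Only the residual case of very small $\ell_m$ (Case~IIb) is reduced to $\Omega_2$ and killed by Borel--Cantelli; and even there the reduction requires finding an intermediate index $i$ with a large size drop $\ell_i - \ell_{i+1}$, which in turn forces $\ell_{i+1} < \ell/4$ — a pigeonhole argument over the $m \leq d-2$ steps, not a one-step inference. You would need to supply the counting estimates of Cases~I and~IIa as the main content of the proof; the $\Omega_2$ reduction alone cannot carry it.
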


\begin{proof}
Fix $\ell \geq j/3$.  We apply Lemma~\ref{SeparatingSCSetForGammaBeta:Lemma} to $Y$ with the invariant saddle connection $\gamma$ and the (bad) saddle connections in $\cR$ as the collection of saddle connections $\Gamma$.  Indeed, the first two assumptions in Lemma~\ref{SeparatingSCSetForGammaBeta:Lemma} are easily satisfied, and the assumption that the saddle connections in $\cR$ are bad implies that the triangles have small area in Assumption~\ref{SeparatingSCSetForGammaBeta:Lemma:Assump:gammabetaTri}.  Therefore, for each $\rho \in \cR$, we can form the sequence of saddle connections associated to $\rho$
$$(\rho=\sigma_0,\sigma_1, \cdots, \sigma_m )$$
and the partition of $\cR$ and its auxiliary saddle connections.   Let $\ell_i$ be the size of $\sigma_i$.

We split the proof into two cases (exactly as we did in the proof of Theorem~\ref{CountTriangles}): $2^{2\ell_m}\geq \epsilon^2 2^j|\gamma|$ and $2^{2\ell_m} < \epsilon^2 2^j|\gamma|$.  The second case will be split further into two subcases.  We remark that both the first case and the first subcase of the second case hold for all $(X, \omega) \in \cH^{hyp}_1(\kappa)$.  Only the second subcase of the second case is an almost everywhere result requiring the Borel-Cantelli Lemma (Corollary~\ref{coro:bad-smallmeasure}).

\paragraph{Case I:} $2^{2\ell_m}\geq \epsilon^2  2^j|\gamma|$.

By Lemma~\ref{lem:badtimes} Item~\ref{lem:badtimes:AngleUpperBound}, the angle every $\rho$ makes with $\gamma$ is at most $\frac{20}{2^j|\gamma|}$.
 From Lemma~\ref{SeparatingSCSetForGammaBeta:Lemma} Item~\ref{SeparatingSCSetForGammaBeta:Lemma:sigiAngsigip1}, the angle between $\sigma_i$ and $\sigma_{i+1}$ is at most $\frac{c_2\epsilon^2 A}{2^{\ell_i+\ell_{i+1}}}$, and from Item~\ref{SeparatingSCSetForGammaBeta:Lemma:sigiLengthsigip1} $|\sigma_{i+1}| \leq  c_2|\sigma_i|$.  So if $2^{2\ell_m}\geq \epsilon^2  2^j|\gamma|$, then there is a constant $c'$ such that the angle between $\sigma_m$ and $\gamma$ is at most 
$$\frac{c' A}{2^j|\gamma|}\epsilon^2 \leq \frac{c'}{2^j|\gamma|}\epsilon^2.$$
Since the saddle connections $\sigma_m$ are $\epsilon$-isolated by Lemma~\ref{SeparatingSCSetForGammaBeta:Lemma} Item~\ref{SeparatingSCSetForGammaBeta:Lemma:sigmIso2}, which justifies that Item~\ref{SeparatingSCSetForGammaBeta:Lemma:sigmIso} in the definition of the set of auxiliary saddle connections holds, the number of saddle connections $\sigma_m$ is at most 
\begin{eqnarray}\label{eq:number-sigma-m}
c' \frac{2^{2\ell_m}}{2^j|\gamma|}.
\end{eqnarray}

We modify the proof  of Lemma~\ref{Lemma:Triangles:BigSigmam} to show that
\begin{lemma}\label{lem:linear-bad}
There is a constant $c''$ such that 
the number of saddle connections $\rho$ in $\cR$ that determine $\sigma_m$ satisfying Case I, i.e., $2^{2\ell_m}\geq \epsilon^2  2^j|\gamma|$,  is bounded above by
$$c'' \frac{ 2^{2\ell}}{ 2^j |\gamma|}.$$
\end{lemma}
\begin{proof}

As we did in the proof of Lemma~\ref{Lemma:Triangles:BigSigmam},
we look at all possible sequences of saddle connections associated to $\rho$
$$\left(\rho=\sigma_0, \sigma_1, \cdots, \sigma_m\right)$$
such that the sizes of the saddle connections are specified by $\bl =\left(\ell_0, \ell_1, \cdots, \ell_m\right)$, where $\ell_0=\ell$ and $\ell_m$ satisfies $2^{2\ell_m}\geq \epsilon^2  2^j|\gamma|$.
Instead of \eqref{eq:combine-linear}, now for each $\bl$, the number of $\rho$ is bounded above by
\begin{equation}\label{eq:combine-linear2}
\frac{c}{|\gamma|} 2^{\ell-\ell_1}(\ell-\ell_1+1)^{n_0} \cdots 2^{\ell_{m-1}-\ell_m}(\ell_{m-1}-\ell_m +1)^{n_m}\frac{2^{2\ell_m}}{2^{j}}.
\end{equation}
This is because the size of $\rho=\sigma_0$ is $\ell$ and the number of  $\sigma_m$ is bounded above by
$c' \frac{2^{2\ell_m}}{2^j|\gamma|}$, as we have shown in \eqref{eq:number-sigma-m}.

As before, we have to sum over all possible partitions $\bl$.  We write a modified version of the sum in \eqref{ineq:BoundSum} emphasizing that the $1/2^j$ in \eqref{eq:combine-linear2} (which appeared as $1/2^{j_0}$ in \eqref{ineq:BoundSum}) is factored out and plays no role in the sum.  By repeating the argument in 
Lemma~\ref{Lemma:Triangles:BigSigmam} (again using Inequality~\eqref{GeomBoundSum:Ineq}), one can show that the number of $\rho$ over all possible $\bl$ is bounded by
\begin{eqnarray*}
\frac{c}{2^{j}|\gamma|}\sum_{\substack{0\leq i\leq m-1\\ \ell_{i+1}\leq \ell_i}} 2^{\ell-\ell_1}(\ell-\ell_1+1)^{n_0} \cdots 2^{\ell_{m-1}-\ell_m}(\ell_{m-1}-\ell_m+1)^{n_m}2^{2\ell_m} \leq c''\frac{2^{2\ell}}{2^j|\gamma|}
\end{eqnarray*} 
for some constant $c''$ depending on the stratum.
\end{proof}

Now each $\rho \in \cR$ determines an interval of $\lambda$-measure $\frac{2^j|\gamma|^2}{2^\ell A}$ by Lemma~\ref{lem:badtimes} Item~\ref{lem:badtimes:RhoAvailable},
so the sum of the lengths of these intervals taken up by all bad $\rho$ of all sizes $\ell$ is 
$\frac{c'' 2^\ell|\gamma|}{A}$.
Together with the  property of being bad, i.e., $2^\ell\leq 2 m_0 2^j$ by Lemma~\ref{lem:badtimes} Item~\ref{lem:badtimes:nobad}, this gives
  $\frac{4 c'' m_0 2^j |\gamma|}{A}$ for the total $\lambda$-measure.  
By our choice of $m_0$, $m_0 < \frac{1}{2^7 d c''} = \frac{\epsilon^2}{2^7 d c c'}$. This  proves  Proposition~\ref{lem:badtime}  in the first case.

\paragraph{Case IIa:} Set  $M = 2 (d+1)$. Assume that
\begin{equation*}
2^{2\ell_m}\leq \epsilon^2 2^j|\gamma| \ \text{and}\  2^{\ell_m}\geq  2^{j/24M}|\gamma|.
\end{equation*}

The first inequality says there is at most a single  $\sigma_m$ of size $\ell_m$. 
By cutting $Y$ along $\sigma_m \cup \tau(\sigma_m)$, 
we can reduce to a stratum of lower dimension and conclude that for some $N$, by Theorem~\ref{CountAlphaSC:Theorem3} the number of $\rho$
is at most 
$$ C_0\frac{2^{\ell}}{2^{\ell_m}} \left( \log  \frac{2^{\ell}}{2^{\ell_m}}\right)^{N}.$$

Each $\rho$ determines an interval of $\lambda$-measure $\frac{2^j |\gamma|^2}{2^\ell A}$ by Lemma~\ref{lem:badtimes} Item~\ref{lem:badtimes:RhoAvailable}.
The second assumption in this case implies the sum of lengths taken up by all bad $\rho$  of horizontal size $\ell$ is bounded by
$$\frac{2^j |\gamma|^2}{2^\ell A}  C_0\frac{2^{\ell}}{2^{j/24M}|\gamma|} \left( \log \frac{2^{\ell}}{2^{j/24M}|\gamma|}\right)^{N} = C_0 \frac{2^{j }}{2^{j/24M} A}|\gamma|\left(\log \frac{2^{\ell}}{2^{j/24M}|\gamma|}\right)^N.$$
Summing over values of all $\ell$ with  $2^{\ell}\leq 2m_0 2^j$ and  $2^\ell \geq c_2^{-m} 2^m\geq c_2^{-m}2^{j/24M}|\gamma|$ (that is, the negative values of $\ell$ are bounded in absolute value), we see that the sum of lengths taken up by all bad $\rho$ is bounded by 
$$ C_0' j \frac{2^{j }}{2^{j/24M} A}|\gamma|\left(\log \frac{2^{j}}{2^{j/24M}|\gamma|}\right)^N.$$
Since $|\gamma|$ is bounded below by the systole of $(X, \omega)$ and $j$ is allowed to depend on $(X, \omega)$, this quantity is less than $\frac{2^j|\gamma|}{2^5 d A}$, if $j$ is sufficiently large.

\paragraph{Case IIb:} On $Y$, we have $\sigma_m$ with $
2^{\ell_m} < 2^{j/24M}|\gamma|$.
Assume that $j$ is sufficiently large such that $|\gamma|\leq 2^{j/24M}$.
Then in this case we have $2^{\ell_m} \leq 2^{j/3M}$.
That is, 
\begin{equation*}
\label{eq:remainingcase}
\ell_m \leq \frac{j}{3M}.
\end{equation*}

We have $\ell_m \leq \frac{\ell}{M}$ because $\ell\geq j/3$.
Since $\ell -\ell_m \geq \ell-\frac{\ell}{M}$ and $m\leq d-2$, 
there exists some largest index $i\leq m-1$ with the property 
\begin{equation}\label{equ:jm-small-1}
 \ell_i-\ell_{i+1}\geq \frac{\ell}{2M}. 
\end{equation}
Therefore, after the index $i$, Inequality~\eqref{equ:jm-small-1} reverses to yield
\begin{eqnarray*}
	\ell_{i+1}&=&(\ell_{i+1}-\ell_{i+2})+(\ell_{i+2}-\ell_{i+3})+\dots +(\ell_{m-1}-\ell_{m})+\ell_m \\
&<& \frac{m\ell}{2M}+\frac{\ell}{M}=\frac{m+2}{4(d+1)}\ell \\
	&\leq&  \frac{d+1}{4(d+1)}\ell = \frac{\ell}{4}.
\end{eqnarray*}
That is
\begin{equation}\label{equ:jm-small-2}
\ell_{i+1} < \frac{\ell}{4}.
\end{equation}

To conclude, we claim that any translation surface $(X, \omega)$ satisfying Case IIb is contained in $\Omega_2(\ell, c_3, M, \delta)$ for appropriate constants.  Let $\delta/2$ be the systole of $(X, \omega)$ so that it lies in the $\delta$-thick part of its stratum.  From Property~\ref{Omega2:GammaSize} the saddle connections $\rho \in \cR$ have size $\ell$.  In Property~\ref{Omega2:SigmaSize}, $\sigma$ is $\sigma_{i+1}$ above, which has size $\ell/4$ by Inequality~\eqref{equ:jm-small-2}.

Furthermore, we claim that the angle condition in Property~\ref{Omega2:SigmaSize} holds, and we show that the angle between $\sigma_{i+1}$ and $\rho = \sigma_0$ is at most $c_3/2^{m+\ell_{i+1}}$, where $m=\ell_{i+1}+ \frac{\ell}{2M}$ (cf. Inequality~\eqref{eq:smallangle}).  Indeed, by Lemma~\ref{SeparatingSCSetForGammaBeta:Lemma} Item~\ref{SeparatingSCSetForGammaBeta:Lemma:sigiAngsigip1}, for each $k \leq i$, the angle between $\sigma_k$ and $\sigma_{k-1}$ is at most $\frac{c_2\epsilon^2}{2^{\ell_k+\ell_{k-1}}} \leq \frac{c_3\epsilon^2}{2^{\ell_{i+1} + \ell_i}}$ for a constant $c_3$.    Therefore, taken together the angle between $\rho$ and $\sigma_{i+1}$ is at most $\frac{c}{2^{\ell_{i+1} + \ell_i}}$, for some $c > 0$.  On the other hand, $\ell_i-\ell_{i+1}\geq \ell/2M$ implies $\ell_i+\ell_{i+1}\geq 2\ell_{i+1}+\ell/2M$ and yields the desired bound.

Finally, $\beta$ in Property~\ref{Omega2:RhoSize} is $\gamma$ in the current setting.  Thus, if $\gamma$ has size $p$, then the assumptions $|\gamma|\leq 2^{j/24M}$ and $j/3 \leq \ell$ above implies $p\leq \ell/8M$, and $\gamma$ makes an angle with $\rho$ at most $1/2^{\ell+p}$. This completes the proof that  $(X, \omega) \in \Omega_2(\ell, c_3, M, \delta)$.

Since $(X, \omega) \in \Omega_2(\ell, c_3, M, \delta)$, the Borel-Cantelli Lemma (Corollary \ref{coro:bad-smallmeasure})
shows that the final case would not happen for $\mu$-almost every $(X,\omega)$
if we take $j$ to be sufficiently large. 
\end{proof}

\begin{proposition}
\label{BadSaddleConnBd:Prop}
Given the setup above, we have that $\mu$-almost every $(X,\omega) \in \cH^{hyp}_{1}(\kappa)$ has the following property.  There exists a constant $T_0$ depending on $(X,\omega)$ and $|\gamma|$ such that if $T\geq T_0$, then the set of times in the interval $I = \left[\frac{T|\gamma|}{2A},\frac{T|\gamma|}{A}\right]$ for which the available saddle connections that are bad for $\gamma$ on $Y$ take up a subset of $\lambda$-measure at most
$$ \frac{T|\gamma|}{8 dA}.$$
\end{proposition}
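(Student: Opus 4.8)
The plan is to assemble Proposition~\ref{BadSaddleConnBd:Prop} from the three propositions just proved by a simple case analysis on the available bad saddle connections, sorting them by size and by whether they are $\epsilon$-isolated, and then adding up the (small) $\lambda$-measures of the corresponding sets of bad times. Fix $T$ large (with $2^{j-1}\le T<2^j$ as in the preceding propositions) and work inside the interval $I=\left[\frac{T|\gamma|}{2A},\frac{T|\gamma|}{A}\right]$, which has $\lambda$-measure $\frac{T|\gamma|}{2A}$. A time $s\in I$ is bad precisely when some available $\rho$ that is bad for $\gamma$ on $Y$ is available at $s$; so $\mathbf{Bad}\cap I$ is contained in the union, over all bad available $\rho$, of the intervals of times at which $\rho$ is available. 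I will bound the measure of this union by partitioning the bad $\rho$'s into a bounded number of families and invoking the earlier estimates on each.

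First I would recall, via Lemma~\ref{lem:badtimes} Item~\ref{lem:badtimes:nobad}, that every bad available $\rho$ has horizontal size $\ell$ with $2^\ell\le 2m_0T$, so only finitely many sizes occur; also, without loss of generality (by the convention on $\rho$ versus $\rho'$) each has $|\rho|\ge|\gamma|/2$, hence $\ell$ is bounded below by a constant. By Proposition~\ref{prop:smallsize:MeasureZero}, for $\mu$-almost every $(X,\omega)$ and $T\ge T_0$, every $\rho$ bad for $\gamma$ on $Y$ has size at least $j/3$; this disposes of all small-size bad saddle connections at once, and in particular the hypothesis $2^{j/4}\ge|\gamma|$ (equivalently $T_0$ large enough that $2^{(j-1)/4}\ge|\gamma|$) is met for $T\ge T_0$. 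So from now on every bad available $\rho$ has size $\ell\ge j/3$, and since the horizontal size of an available saddle connection of size $\ge j/3$ differs from its size by a bounded additive constant (see the remark after the definition of horizontal size), I may equally well take the horizontal size to be $\ell\ge j/3$ up to harmless constants.

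Now split the remaining bad $\rho$'s into the $\epsilon$-isolated ones and the non-$\epsilon$-isolated ones. For the $\epsilon$-isolated ones: these have horizontal size $\ell$ satisfying $j/3\le \ell$ and $2^\ell\le 2m_0T$, and since $T<2^j$ we have $2^\ell\ge 2^{j/3}\ge T^{1/8}$ for $T$ (hence $j$) large; thus they fall under the hypotheses of Proposition~\ref{lem:bad-isolated}, applied on $Y$ with its area $A\le 1$ and the invariant slit $\gamma$ in place of $\beta$. That proposition gives that the $\lambda$-measure of the union of the available-times intervals of all such $\rho$, summed over all admissible sizes, is at most $\frac{1}{2^5 d}\cdot\frac{T|\gamma|}{A}$ (again valid once $T$ is large depending on $|\gamma|$). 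For the non-$\epsilon$-isolated bad $\rho$'s of size $\ell\ge j/3$: these are exactly the collections $\cR$ treated in Proposition~\ref{lem:badtime}, which — for $\mu$-almost every $(X,\omega)$ and $j$ large — bounds the $\lambda$-measure of their available times in $I$, summed over all $\ell$, by $\frac{T|\gamma|}{2^4 dA}$.

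Finally I would add the three contributions: a $\mu$-null exceptional set (the union of the exceptional sets in Propositions~\ref{prop:smallsize:MeasureZero} and~\ref{lem:badtime}), after which the small-size bad saddle connections contribute nothing, the $\epsilon$-isolated ones contribute at most $\frac{1}{2^5d}\cdot\frac{T|\gamma|}{A}$, and the non-isolated large-size ones at most $\frac{1}{2^4d}\cdot\frac{T|\gamma|}{A}$. Taking $T_0$ to be the maximum of the thresholds from the three propositions (all depending only on $(X,\omega)$ and $|\gamma|$), we get for $T\ge T_0$
\begin{equation*}
\lambda(\mathbf{Bad}\cap I)\ \le\ \left(\frac{1}{2^5 d}+\frac{1}{2^4 d}\right)\frac{T|\gamma|}{A}\ \le\ \frac{T|\gamma|}{8dA},
\end{equation*}
since $\tfrac{1}{2^5}+\tfrac{1}{2^4}=\tfrac{3}{32}\le\tfrac18$, which is the claim. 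I do not expect any genuine obstacle here: the proposition is essentially a bookkeeping corollary of the three preceding propositions. The only points requiring a little care are (i) checking that the size-versus-horizontal-size discrepancy for bad saddle connections of size $\ge j/3$ is absorbed into the constants, so that the hypothesis $T^{1/8}\le 2^\ell$ of Proposition~\ref{lem:bad-isolated} and the size hypothesis $\ell\ge j/3$ of Proposition~\ref{lem:badtime} are literally met; (ii) confirming that Proposition~\ref{lem:bad-isolated}, which is stated for area-$A$ surfaces with no almost-everywhere hypothesis, legitimately applies to $Y$ with the invariant slit $\gamma$; and (iii) combining the exceptional $\mu$-null sets and the thresholds $T_0$ correctly. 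None of these is more than routine.
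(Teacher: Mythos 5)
Your proof follows the paper's argument essentially verbatim: sort bad available saddle connections into $\epsilon$-isolated and non-$\epsilon$-isolated families, dispose of small-size ones via Proposition~\ref{prop:smallsize:MeasureZero}, bound the isolated ones via Proposition~\ref{lem:bad-isolated} and the non-isolated ones via Proposition~\ref{lem:badtime}, and add $\tfrac{1}{2^5 d}+\tfrac{1}{2^4 d}\le\tfrac{1}{8d}$. If anything you are slightly more careful than the published proof in explicitly checking the lower-bound hypothesis $T^{1/8}\le 2^\ell$ of Proposition~\ref{lem:bad-isolated} (via the size bound $\ell\ge j/3$ from Proposition~\ref{prop:smallsize:MeasureZero}) and in noting the size-versus-horizontal-size discrepancy is absorbed into constants — both sound points that the paper leaves implicit.
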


\begin{proof}
Since we are free to choose any interval of times on which we see the desired result, we choose the interval $I$ in the statement of the Proposition.  We take $T$ sufficiently large so that the assumptions of Propositions~\ref{prop:smallsize:MeasureZero} and \ref{lem:badtime} are satisfied.

By Lemma~\ref{lem:badtimes} Item~\ref{lem:badtimes:nobad}, $2^\ell  <2m_0 T$, so Proposition~\ref{lem:bad-isolated} can be applied to yield the desired result for the subset of bad $\epsilon$-isolated saddle connections.

If there are bad saddle connections that are not $\epsilon$-isolated, then they necessarily have size at least $j/3$ by Proposition~\ref{prop:smallsize:MeasureZero} after restricting to a full measure subset of the stratum.  On the other hand, if the bad saddle connections have size at least $j/3$ and are not $\epsilon$-isolated, then we have the desired result on a full measure subset of the stratum by Proposition~\ref{lem:badtime}.  
\end{proof}

\subsection{Proofs of the Lower Bound Theorems}
\label{Sect:LowerBdProof}

Theorem~\ref{thm:lower-general} will be proven by induction on the number of nested cylinders contained in the given surface (see the definition of succession of cylinders in Section~\ref{PfLowerBdThm:sect} below).  To perform the induction, we need to be able to excise subsurfaces bounded by a pair of non-invariant saddle connections, identify the non-invariant saddle connections to an invariant one, and have linear growth on the subsurface relative to the new invariant saddle connection.  For this Theorem~\ref{thm:uniformtostart} is insufficient and we need the following strengthening of it.

\begin{theorem}
\label{thm:uniformtostart:genstatement}
Let $\cH^{hyp}(\kappa)$ be a stratum with $d = dim_\bC\,\cH^{hyp}(\kappa)$.  For $(X,\omega)\in \mathcal{H}_1^{hyp}(\kappa)$ assume
\begin{itemize}
\item Either $(X, \omega)$ has an invariant slit $\beta$ and we let $\gamma = \beta$ and $Y = (X, \omega)$ below, or
\item $(X, \omega)$ has a non-invariant saddle connection $\gamma$ in which case we cut along $\gamma \cup \tau(\gamma)$, remove a resulting component, and glue along the resulting boundaries to obtain a subsurface $Y \in \cH^{hyp}(\kappa')$ of area $A < 1$ with a slit $\gamma$ in a smaller dimensional stratum.
\end{itemize}
Then there exists $c > 0$ depending only on the stratum such that for $\mu$-almost every surface $(X,\omega)\in \mathcal{H}_1^{hyp}(\kappa)$, there exists $L_0$ depending on $(X,\omega)$ such that for every $L\geq L_0$, the number of simple cylinders of length less than $L$ containing $\gamma$ on $Y$ is at least $c \frac{L}{|\gamma|}$.
\end{theorem}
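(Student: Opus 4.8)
\emph{Plan.} First I would collapse the two cases of the statement into one: replace $(X,\omega)$ by the subsurface $Y$ carrying its now-invariant horizontal slit $\gamma$, a translation surface of some area $A\in(0,1]$, and perform all measure estimates in the stratum $\cH_1^{hyp}(\kappa)$ of $(X,\omega)$, exactly as in Section~\ref{sec:bad-measure}. Keep $\epsilon$ and $m_0$ fixed as in Section~\ref{LowerBoundSetting:Section}, and write $I_T:=\left[\tfrac{T|\gamma|}{2A},\tfrac{T|\gamma|}{A}\right]$ and $\ell_0:=(d-2)\tfrac{|\gamma|^2}{m_0A}$. The one substantial input is Proposition~\ref{BadSaddleConnBd:Prop}: for $\mu$-almost every $(X,\omega)$ there is $T_0=T_0(X,\omega,|\gamma|)$ so that for every $T\ge T_0$ the set $\mathbf{Bad}\subset I_T$ of bad times satisfies $\lambda(\mathbf{Bad})\le\tfrac{T|\gamma|}{8dA}$, a small fraction of $\lambda(I_T)=\tfrac{T|\gamma|}{2A}$. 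Given $L$, I would set $T:=L/2$ and require $L\ge L_0$ with $L_0$ large enough (depending on $(X,\omega)$ through $T_0$ and through the systole of $(X,\omega)$, hence through $|\gamma|$) that $T\ge T_0$ and $T+\tfrac{A}{|\gamma|}\le L$; then Lemma~\ref{rhoLengthTime:Lemma} ensures every saddle connection available at a time in $I_T$ has length at most $L$, so every simple cylinder produced below automatically has length less than $L$.

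\emph{A cylinder on each clean window.} The core step is: if $J\subset I_T$ is an interval of length $\ell_0$ with $J\cap\mathbf{Bad}=\emptyset$, then running the NPS-Algorithm from a generic (non-periodic) start time at the left end of $J$ produces, in total time at most $\ell_0$, a simple cylinder on $Y$ containing $\gamma$. Indeed, every saddle connection available at any time of $J$ is good: it is available at some time of $I_T$ outside $\mathbf{Bad}$, hence not $m_0$-bad for $\gamma$ on $Y$, so $v_\rho\ge\tfrac{m_0A}{|\gamma|}$, and this bound persists on every subsurface met in the algorithm since those have area at most $A$. By Lemma~\ref{TimeAvailable:Lemma} each of the at most $d-2$ steps then occupies time at most $\tfrac{|\gamma|}{v_\rho}\le\tfrac{|\gamma|^2}{m_0A}$, so the algorithm terminates within $J$ (up to a harmless shift of the partition and a bounded-factor enlargement of the windows, to absorb the choice of generic start times). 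Finally, unwinding the horocycle flow, the core direction on $Y$ of the cylinder produced from start time $s$ lies within $O(\ell_0)$, as a point of the circle of directions, of $u_s^{-1}\!\cdot(\text{vertical})$; since $s\mapsto u_s^{-1}\!\cdot(\text{vertical})$ is a monotone parametrization of an arc, cylinders coming from start times more than a bounded multiple of $\ell_0$ apart are pairwise non-parallel.

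\emph{Counting clean windows.} I would partition $I_T$ into $N:=\lfloor\lambda(I_T)/\ell_0\rfloor=\left\lfloor\tfrac{m_0T}{2(d-2)|\gamma|}\right\rfloor$ consecutive closed subintervals of length $\ell_0$, and call a subinterval clean if it misses $\mathbf{Bad}$. The set $\mathbf{Bad}$ is open, and each of its components contains an interval on which a single $m_0$-bad $\rho$ is available; by Lemma~\ref{TimeAvailable:Lemma} such an interval has length $\tfrac{|\gamma|}{v_\rho}>\tfrac{|\gamma|^2}{m_0A}=\tfrac{\ell_0}{d-2}$, so $\mathbf{Bad}$ has at most $(d-2)\lambda(\mathbf{Bad})/\ell_0$ components and at most $(2d-3)\lambda(\mathbf{Bad})/\ell_0$ subintervals meet it. Plugging in $\lambda(\mathbf{Bad})\le\tfrac{T|\gamma|}{8dA}$ makes this at most $\tfrac{2d-3}{4d}N<N$, so a fixed positive fraction of the subintervals are clean. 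Passing to every fourth clean subinterval and applying the previous step yields that many pairwise non-parallel simple cylinders on $Y$, each containing $\gamma$ and each of length less than $L$; their number is at least $c\,\tfrac{T}{|\gamma|}=\tfrac{c}{2}\cdot\tfrac{L}{|\gamma|}$ for a constant $c$ depending only on $m_0$ and $d$, i.e. only on the stratum, and renaming $c$ gives the theorem.

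\emph{Main obstacle.} The real work is upstream: Proposition~\ref{BadSaddleConnBd:Prop}, resting on the measure estimates of Section~\ref{sec:bad-measure} and on Propositions~\ref{lem:bad-isolated}, \ref{prop:smallsize:MeasureZero} and \ref{lem:badtime}, is where bounding the ``wait time'' and the length of bad saddle connections is delicate and where the almost-everywhere hypothesis (via Borel--Cantelli) is forced. Inside the assembly above, only two points need care: that goodness for $Y$ survives passage to the subsurfaces of the algorithm (immediate, since the area only shrinks and the threshold $\tfrac{m_0A}{|\gamma|}$ only drops), and that cylinders found in different windows are genuinely distinct, which I would deduce from the monotone dependence of the relevant direction on the horocycle time together with the $O(\ell_0)$ direction estimate above.
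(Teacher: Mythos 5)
Your proposal is correct and follows essentially the same route as the paper: Proposition~\ref{BadSaddleConnBd:Prop} supplies the $\lambda$-measure bound on $\mathbf{Bad}$, the NPS-algorithm produces a simple cylinder in any $\ell_0$-window of good times (with $\ell_0=(d-2)|\gamma|^2/(m_0A)$, each step costing at most $|\gamma|^2/(m_0A)$ by Lemma~\ref{TimeAvailable:Lemma}), and Lemma~\ref{rhoLengthTime:Lemma} converts the time parameter into a length bound. The one point where you diverge is purely combinatorial: you partition $I_T$ into $N$ fixed subintervals of length $\ell_0$ and count the clean ones via a bound on the number of components of $\mathbf{Bad}$, whereas the paper's Lemma~\ref{lem:covering} inflates each bad interval by a factor $d$ and shows every point of the remaining set $I_0$ is centered in a clean $\ell_0$-window; both yield a positive fraction of $N$ good windows. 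Your handling of distinctness of the cylinders — thinning to every fourth clean window and using the monotone dependence of direction on horocycle time — is actually slightly more explicit than the paper's, which leaves the non-repetition of cylinders implicit in the choice of constant $c$; this is a welcome clarification, not a deviation.
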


\subsubsection{Producing Cylinders}\label{sec:cylinder-lowerbound}

Using the results proven above, we are finally able to prove Theorem~\ref{thm:uniformtostart:genstatement}.  

\begin{lemma}
\label{lem:covering}
Suppose that  $(X, \omega) \in \cH_A^{hyp}(\kappa)$ has a horizontal slit $\beta$ and that the set of times for which the bad saddle connections are available take up a subset of $\lambda$-measure at most
$\frac{1}{8d}\frac{T|\beta|}{A}$
in the interval $I=\left[\frac{T|\beta|}{2A},\frac{T|\beta|}{A}\right]$.
Then there exists a constant $c$ depending on $\cH^{hyp}(\kappa)$ such that there are at least $c \frac{|T|}{|\beta|}$ many disjoint intervals of length $\frac{(d-2)|\beta|^2}{m_0A}$ contained in $\mathbf{Good}$ each of which contains a good saddle connection corresponding to a simple cylinder containing $\beta$.
\end{lemma}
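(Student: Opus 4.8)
\textbf{Proof plan for Lemma~\ref{lem:covering}.}

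The plan is to combine the two complementary length bounds that have already been established: Proposition~\ref{lem:bad-isolated} controls the $\lambda$-measure of bad times coming from $\epsilon$-isolated saddle connections whose horizontal size is in the range $T^{1/8}\le 2^\ell \le 2m_0T$, and the hypothesis of the present lemma (supplied in applications by Proposition~\ref{BadSaddleConnBd:Prop}) controls the remaining bad times, so that the total measure of $\mathbf{Bad}\subset I$ is at most a small fixed fraction of $|I| = \frac{T|\beta|}{2A}$. First I would observe that the measure of $\mathbf{Good} = I\setminus\mathbf{Bad}$ is therefore at least, say, $\tfrac{1}{2}|I| = \tfrac{T|\beta|}{4A}$ (the precise constant does not matter; one just needs a definite positive fraction of $I$). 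This is the ``No Bad Saddle Connections'' heuristic localized to the good times.

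Next I would run the Nguyen--Pan--Su algorithm inside $\mathbf{Good}$. The key point, already recorded in the discussion following Lemma~\ref{TimeAvailable:Lemma} and in the ``No Bad Saddle Connections'' paragraph, is that if at a given time $s$ every available saddle connection $\rho$ is good, then $v_\rho \ge \frac{m_0A}{|\beta|}$, so by Lemma~\ref{TimeAvailable:Lemma} each available $\rho$ stays available for time at most $\frac{|\beta|^2}{m_0A}$, and since the NPS algorithm terminates in at most $d-2$ steps, within any interval of length $(d-2)\frac{|\beta|^2}{m_0A}$ that is entirely contained in $\mathbf{Good}$, the algorithm completes and produces a simple cylinder containing $\beta$, whose boundary saddle connection $\gamma$ is good (hence the cylinder has area at least $m_0A$, and by Lemma~\ref{rhoLengthTime:Lemma} has length at most $T$ up to an additive constant depending only on $|\beta|$ and $A$).

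Then I would do the covering/packing count. Partition $I$ into consecutive subintervals of length $(d-2)\frac{|\beta|^2}{m_0A}$; there are roughly $\frac{|I|}{(d-2)|\beta|^2/(m_0A)} = \frac{m_0T}{2(d-2)|\beta|}$ of them. A subinterval that meets $\mathbf{Bad}$ in a set of positive measure is ``spoiled,'' but since $\lambda(\mathbf{Bad})$ is at most a small fixed fraction of $|I|$, at most that same fraction of the subintervals can be spoiled; I would choose the constant so that at least half of the subintervals are entirely contained in $\mathbf{Good}$. (One has to be slightly careful because $\mathbf{Good}$ need not be an interval, but a subinterval disjoint from $\mathbf{Bad}$ is by definition contained in $\mathbf{Good}$, and the number of subintervals that intersect $\mathbf{Bad}$ is at most $\lambda(\mathbf{Bad})/[(d-2)|\beta|^2/(m_0A)] + 1$.) Each such good subinterval yields, as above, a disjoint interval of the stated length contained in $\mathbf{Good}$ containing a good cylinder saddle connection. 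To guarantee that distinct subintervals give \emph{distinct} cylinders, note that a fixed good cylinder saddle connection is available for total time at most $\frac{|\beta|^2}{m_0A}$ by Lemma~\ref{TimeAvailable:Lemma}, which is strictly less than the subinterval length $(d-2)\frac{|\beta|^2}{m_0A}$ when $d\ge 3$; for $d=2$ one instead appeals directly to the torus statements (Propositions~\ref{CrossProductBdCountTorus:Theorem} and \ref{CountAlphaSCTorus:Theorem}, or Lemma~\ref{lem:H00:half}) which already give the base case. Counting, we obtain at least $c\frac{T}{|\beta|}$ such cylinders with $c = \frac{m_0}{4(d-2)}$ (absorbing factors). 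Taking $T$ large enough depending on $|\beta|$ and $A$ so that the ``$+1$'' and the additive length constants are negligible completes the argument.

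The main obstacle is the bookkeeping around \emph{distinctness and disjointness}: one must ensure that the good subintervals are genuinely disjoint (immediate from the partition), that each carries a genuinely new simple cylinder (handled by the strict inequality between the availability time $\frac{|\beta|^2}{m_0A}$ of a single good saddle connection and the subinterval length $(d-2)\frac{|\beta|^2}{m_0A}$, valid for $d\ge 3$), and that the spoiled-subinterval estimate really follows from an $L^1$ bound on $\mathbf{Bad}$ rather than requiring control of its topology. None of this is deep, but it is the place where the constant $c$ and the threshold $T_0 = T_0(|\beta|,A)$ are actually pinned down, so it must be carried out with some care.
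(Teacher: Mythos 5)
Your overall strategy --- partition $I$ into consecutive subintervals of length $(d-2)|\beta|^2/(m_0A)$, discard those meeting $\mathbf{Bad}$, and run the NPS algorithm on the survivors --- is sound and close in spirit to the paper's argument, but it has one genuine gap, and it is exactly at the step you flagged and then dismissed.

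You assert that ``the number of subintervals that intersect $\mathbf{Bad}$ is at most $\lambda(\mathbf{Bad})/[(d-2)|\beta|^2/(m_0A)] + 1$,'' and you conclude that ``the spoiled-subinterval estimate really follows from an $L^1$ bound on $\mathbf{Bad}$ rather than requiring control of its topology.'' That is false: a set of small measure scattered into many tiny pieces can meet every subinterval in the partition, so an $L^1$ bound alone cannot control the number of spoiled subintervals. The missing ingredient is structural. By Lemma~\ref{TimeAvailable:Lemma} and the discussion immediately following it, every bad $\rho$ remains available --- hence bad --- for a time interval of length $|\beta|/v_\rho \geq |\beta|^2/(m_0A)$; consequently every connected component of $\mathbf{Bad}$ has length at least $|\beta|^2/(m_0A)$, so $\mathbf{Bad}$ has at most $\lambda(\mathbf{Bad})\,m_0A/|\beta|^2$ components. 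It is this component count, combined with the measure bound, that controls the number of spoiled subintervals: a component of length $\ell_k$ meets at most $\ell_k/L + 2$ subintervals, so the total spoiled count is at most $\lambda(\mathbf{Bad})/L + 2\,\lambda(\mathbf{Bad})\,m_0A/|\beta|^2$, which with the hypothesis $\lambda(\mathbf{Bad})\le \tfrac{1}{8d}\tfrac{T|\beta|}{A}$ is a proportion strictly less than one of the roughly $m_0T/(2(d-2)|\beta|)$ subintervals. Without invoking the minimum-length fact, your bound simply fails.

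For comparison, the paper avoids the partition entirely. It enlarges each bad interval $I_k$ about its center to $\tilde I_k$ with $|\tilde I_k| = d|I_k|$, removes $\bigcup \tilde I_k$ from $I$ to get a set $I_0$ of measure $\geq T|\beta|/(4A)$, and then shows that the interval of length $(d-2)|\beta|^2/(m_0A)$ centered at any $x\in I_0$ is entirely good: the distance from $x$ to $I_k$ is at least $(d-1)|I_k|/2 \geq (d-1)|\beta|^2/(2m_0A)$, which strictly exceeds the half-length $(d-2)|\beta|^2/(2m_0A)$. This again uses the minimum length $|I_k| \geq |\beta|^2/(m_0A)$ in an essential way, and it sidesteps the spoiled-subinterval bookkeeping you are doing. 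Both routes work, but both require the minimum-length observation; once you add it, your version goes through.

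A smaller issue: your distinctness argument handles non-adjacent good subintervals cleanly, but for two \emph{adjacent} good subintervals $J_1$ and $J_2$, an availability interval of length up to $|\beta|^2/(m_0A)$ straddling their common endpoint could intersect both, so they could a priori report the same cylinder. The easy fix is to use only every other subinterval, costing a harmless factor of two in $c$.
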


\begin{proof}
Let  $I_1, I_2, \cdots, I_n$ be the bad intervals such that their union is $\mathbf{Bad}$. 
We define for each $j$ the interval $\tilde{I}_j$ with the same center as $I_j$ and $|\tilde{I}_j|=d|I_j|$.  Define
$$I_{0} = \left[\frac{T|\beta|}{2A},\frac{T|\beta|}{A}\right] \setminus \bigcup_j \tilde{I}_j.$$
Since $\sum\limits_j |\tilde{I}_j| \leq \frac{T|\beta|}{4A}$ by assumption, we have   
$$\left|I_0\right| \geq \frac{T|\beta|}{4A}.$$  Therefore, we need at least $\frac{T|\beta|/4A}{  \frac{(d-2)|\beta|^2}{m_0A}}$ good intervals to cover $I_0$, each of which has length 
at most
 $ \frac{(d-2)|\beta|^2}{m_0A}$ by Lemma~\ref{TimeAvailable:Lemma} and the discussion immediately following it.

Now we claim that each point $x\in I_0$ is contained in a good interval of length $\frac{(d-2)|\beta|^2}{m_0A}$.  Otherwise, let $x$ be the center of an interval $J_x$ with length $\frac{(d-2)|\beta|^2}{m_0A}$, and assume by contradiction that $J_x$ intersects a bad interval $I_k$ for some $k$.  By Lemma~\ref{TimeAvailable:Lemma}, each bad interval $I_k$ has length at least $\frac{|\beta|^2}{m_0 A}$, so 
$$\frac{|J_x|}{2} = \frac{(d-2)|\beta|^2}{2m_0 A} \leq \frac{(d-2)|I_k|}{2}.$$
The assumption that $x \in I_0$ implies the distance from $x$ to $I_k$ is at least $\frac{|\tilde{I}_k|  - |I_k|}{2} = \frac{(d-1)|I_k|}{2}$.  However, the contradiction assumption implies $\frac{|J_x|}{2} \geq \frac{(d-1)|I_k|}{2}$, otherwise, $J_x$ would be disjoint from $I_k$.  This contradiction proves the claim.

Recall that from the NPS-algorithm, every available saddle connection does not necessarily represent a cylinder, but for every set of $(d-2)$ consecutive available saddle connections, there does exist a saddle connection corresponding to a simple cylinder.  
Thus, there are at least 
$$\frac{\frac{T|\beta|}{4 A}}{\frac{(d-2)|\beta|^2}{m_0 A}} = \frac{m_0}{4(d-2)} \frac{T}{|\beta|}$$
such intervals of times for which a good saddle connection is available corresponding to a simple cylinder containing $\beta$.  Let $c = \frac{m_0}{4(d-2)}$ to conclude that there are at least $c \frac{T}{|\beta|}$ cylinders in the interval of times $I$.
\end{proof}

\begin{remark}
The lemma  applies to any non-invariant saddle connection 
$\gamma$, considered as an invariant saddle connection 
on the surface $Y$ that is obtained by cutting and gluing along $\gamma\cup \tau(\gamma)$. See the discussion before Proposition
\ref{prop:smallsize:MeasureZero}.
\end{remark}

\paragraph{Converting Time to Length:}

Proposition \ref{BadSaddleConnBd:Prop} and Lemma \ref{lem:covering}
together prove that we can find the desired number of intervals of time with available saddle connections corresponding to simple cylinders containing the slit $\beta$. 
A priori, we do not know that the circumferences of those cylinders lie in the desired interval of \emph{lengths} because the above discussion only concerned the \emph{times} when those cylinders appeared under the horocycle flow.  Nevertheless, an elementary computation resolves this discrepancy between time and length.

\begin{proof}[Proof of Theorem \ref{thm:uniformtostart:genstatement}]
Proposition~\ref{BadSaddleConnBd:Prop} justifies the assumption on the $\lambda$-measure in Lemma~\ref{lem:covering} and proves that there are $cT/|\gamma|$ many simple cylinders containing the slit $\gamma$ with boundary saddle connection $\rho$ such that $v_\rho \geq \frac{m_0 A}{|\gamma|}$. 

By definition, each such $\rho$ satisfies $sv_\rho+ h_\rho=0$ for some $s \in \left[\frac{T|\gamma|}{2A}, \frac{T|\gamma|}{A}\right]$.
So by Lemma~\ref{rhoLengthTime:Lemma}, we have 
$$|\rho|\leq T + \frac{A}{|\gamma|}.$$
On the other hand, we have
  $$|\rho| \geq  |h_\rho| = s v_\rho \geq \frac{m_0 A}{|\gamma|} \cdot \frac{T|\gamma|}{2A} = \frac{m_0 T}{2}.$$
Given sufficiently large $L$, we take $T= L - \frac{1}{|\gamma|}\leq L - \frac{A}{|\gamma|}$.  
We conclude  that there are at least $c\left( \frac{L}{|\gamma|} - \frac{1}{|\gamma|^2}\right)$ cylinders of length at most  $L$.
By making the coefficient $c$ a little bit smaller,
we conclude the proof of the theorem. 
\end{proof}

\begin{remark}
The proof above gives more, namely
the cylinders that we have constructed have length at least $\frac{m_0T}{2}>\frac{m_0}{2}\left(L-\frac{1}{|\gamma|}\right)$,
which is approximately $\frac{m_0 L}{2}$. 
\end{remark}

\subsubsection{Proof of Theorems~\ref{thm:lower-general} and \ref{thm:lower-general-cyls-noninv}}
\label{PfLowerBdThm:sect}

Now we are able to complete the proof of the lower bound. We use Theorem~\ref{thm:uniformtostart:genstatement} as well as the technical results established above to prove Theorems~\ref{thm:lower-general} and \ref{thm:lower-general-cyls-noninv}.  
Exactly as was the case for the upper bound, a small modification to the proof of Theorem~\ref{thm:lower-general} will prove Theorem~\ref{thm:lower-general-cyls-noninv}.  The difference will be explicitly explained in the proof (in Step V below), so that both proofs are given simultaneously.  The following concept is the basis of the proof below.

\begin{definition}
We define \emph{a succession of cylinders} to be a finite sequence of non-invariant saddle connections 
$\beta=\gamma_{0}, \gamma_1, \gamma_2, \cdots, \gamma_{n}$ such that
\begin{itemize}
  \item $\gamma_1$ and $\tau(\gamma_1)$ bound  a cylinder containing the invariant slit $\beta$.
  \item For each $i \geq 1$, cut along $\gamma_i \cup \tau(\gamma_i)$ and glue the resulting boundaries to obtain a closed surface $X_{\gamma_i}$ with an invariant slit $\gamma_i$.  Then $\gamma_{i+1} \cup \tau(\gamma_{i+1})$ bound a cylinder on $X_{\gamma_i}$ containing $\gamma_i$.
\end{itemize}
Furthermore, given a succession of cylinders, for each $i$,  we say that \emph{$\gamma_{i+1}$ is determined by $\gamma_i$}. 
\end{definition}

\begin{lemma}
\label{SuccOfCyls:Lem}
Given $(X, \omega) \in \cH^{hyp}(\kappa)$ and a succession of cylinders $\beta=\gamma_{0}, \gamma_1, \gamma_2, \cdots, \gamma_{n}$, for each $i$, the surface $X_{\gamma_i}$ is a hyperelliptic surface in a lower dimensional stratum than that of $(X, \omega)$ with an invariant saddle connection $\gamma_i$ and $\{\gamma_i,\tau(\gamma_i),\gamma_{i-1},\tau(\gamma_{i-1})\}$ bound a parallelogram on $(X, \omega)$.
\end{lemma}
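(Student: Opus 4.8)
The statement is essentially an unwinding of the definition of a succession of cylinders together with repeated application of Lemma~\ref{TwoBdSC:Lemma}. The plan is to prove all three claims — that $X_{\gamma_i}$ is hyperelliptic, that it lies in a strictly smaller stratum, that $\gamma_i$ is invariant, and that the four saddle connections $\{\gamma_i,\tau(\gamma_i),\gamma_{i-1},\tau(\gamma_{i-1})\}$ bound a parallelogram on $(X,\omega)$ — simultaneously by induction on $i$.

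For the base case $i=1$: by definition of a succession of cylinders, $\gamma_1\cup\tau(\gamma_1)$ bounds a cylinder $C_1$ on $(X,\omega)$ containing the invariant slit $\beta=\gamma_0$. Since $\gamma_1$ is non-invariant, Lemma~\ref{TwoBdSC:Lemma} applies: cutting along $\gamma_1\cup\tau(\gamma_1)$ disconnects $(X,\omega)$ into two pieces, and gluing each piece along its boundary yields closed hyperelliptic surfaces in strictly smaller-dimensional hyperelliptic strata, with $\gamma_1$ and $\tau(\gamma_1)$ identified to a single invariant saddle connection. The piece containing $\beta$ is exactly $X_{\gamma_1}$, so $X_{\gamma_1}\in\cH^{hyp}(\kappa')$ with $\dim_\bC\cH^{hyp}(\kappa')<\dim_\bC\cH^{hyp}(\kappa)$ and $\gamma_1$ invariant on it. For the parallelogram claim: $C_1$ is a cylinder containing $\beta$ as a saddle connection in its interior. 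Because $(X,\omega)$ is hyperelliptic, $C_1$ is invariant under $\tau$ (by \cite[Lem.~2.1]{LindseyInvCompsHyp}, as used in the proof of Lemma~\ref{Parallelogramulation:Lemma}), so $C_1$ is exactly the union of an embedded triangle with sides among $\{\gamma_0,\gamma_1\}$ and its image under $\tau$, giving a parallelogram bounded by $\{\gamma_1,\tau(\gamma_1),\gamma_0,\tau(\gamma_0)\}$ with diagonal $\beta$. (Here one uses the convention from Section~\ref{HypTransSurf:Section} that applying the hyperelliptic involution to an embedded triangle with an invariant side produces a parallelogram with that side as a diagonal.)

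For the inductive step, suppose the conclusion holds for $i$, so $X_{\gamma_i}\in\cH^{hyp}(\kappa_i)$ is hyperelliptic with invariant slit $\gamma_i$ and $\dim_\bC\cH^{hyp}(\kappa_i)<\dim_\bC\cH^{hyp}(\kappa)$. By definition of a succession of cylinders, $\gamma_{i+1}\cup\tau(\gamma_{i+1})$ bounds a cylinder on $X_{\gamma_i}$ containing the invariant slit $\gamma_i$. Now repeat the base-case argument verbatim \emph{inside} $X_{\gamma_i}$: since $\gamma_{i+1}$ is non-invariant on $X_{\gamma_i}$, Lemma~\ref{TwoBdSC:Lemma} disconnects $X_{\gamma_i}$, and the component containing $\gamma_i$, glued along its boundary, is $X_{\gamma_{i+1}}$, which is hyperelliptic, lies in a stratum of dimension strictly smaller than that of $\cH^{hyp}(\kappa_i)$ (hence strictly smaller than that of $\cH^{hyp}(\kappa)$), and carries $\gamma_{i+1}$ as an invariant saddle connection. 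The cylinder on $X_{\gamma_i}$ bounded by $\gamma_{i+1}\cup\tau(\gamma_{i+1})$ is $\tau$-invariant, so it is a parallelogram with diagonal $\gamma_i$ and sides $\{\gamma_{i+1},\tau(\gamma_{i+1}),\gamma_i,\tau(\gamma_i)\}$ on $X_{\gamma_i}$. The final point is that this parallelogram, which a priori sits in $X_{\gamma_i}$, actually sits in the original surface $(X,\omega)$: the saddle connections $\gamma_i,\tau(\gamma_i),\gamma_{i+1},\tau(\gamma_{i+1})$ are all disjoint from the pairs $\gamma_1\cup\tau(\gamma_1),\dots,\gamma_i\cup\tau(\gamma_i)$ along which we cut (the first because a cylinder containing $\gamma_i$ lies in the component of $X\setminus\{\gamma_i\cup\tau(\gamma_i)\}$ on one side, and inductively these cuts do not affect lengths or the local flat structure away from them, by the remark following Lemma~\ref{TwoBdSC:Lemma}), so the cutting-and-gluing is a local isometry near this parallelogram and it descends to an embedded parallelogram in $(X,\omega)$ bounded by $\{\gamma_{i+1},\tau(\gamma_{i+1}),\gamma_i,\tau(\gamma_i)\}$.

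The main obstacle I anticipate is not any single deep step but rather carefully tracking the identification between $X_{\gamma_i}$ and the corresponding subsurface of $(X,\omega)$, so as to be sure that the parallelogram produced abstractly on $X_{\gamma_i}$ really is the image of an embedded parallelogram on $(X,\omega)$ and that no earlier cut passes through it. This is exactly the content of the remark after Lemma~\ref{TwoBdSC:Lemma} (lengths and angles of saddle connections interiorly disjoint from the cutting locus are unchanged), iterated; making this bookkeeping precise is the only place requiring care, and it is routine once one observes that at each stage the relevant cylinder lies entirely on the $\gamma_i$-side of the previous cut, hence is untouched by all subsequent cuts.
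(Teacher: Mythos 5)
Your proof is correct and takes essentially the same approach as the paper's: invoke Lemma~\ref{TwoBdSC:Lemma} for the first three claims, then observe that cutting the cylinder open along the invariant slit $\gamma_{i-1}$ interior to it leaves it connected and produces a parallelogram with sides $\{\gamma_i,\tau(\gamma_i),\gamma_{i-1},\tau(\gamma_{i-1})\}$ that is embedded in $(X,\omega)$. Yours is more fully spelled out (explicit induction, careful bookkeeping of how $X_{\gamma_i}$ identifies with a subsurface of $(X,\omega)$), whereas the paper compresses this into one sentence, but the underlying idea is identical.
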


\begin{proof}
The first part is Lemma~\ref{TwoBdSC:Lemma}.  Since $\gamma_{i+1} \cup \tau (\gamma_{i+1})$ bound a cylinder on $X_{\gamma_i}$, cutting along a saddle connection, namely $\gamma_i$, interior to the cylinder, leaves the cylinder connected.  Therefore, $\{\gamma_i,\tau(\gamma_i),\gamma_{i-1},\tau(\gamma_{i-1})\}$ bound a parallelogram on $(X, \omega)$ as claimed.
\end{proof}

\begin{proof}[Proof of Theorems~\ref{thm:lower-general} and \ref{thm:lower-general-cyls-noninv}]
The proof is by induction and it is separated into five steps.  We consider a succession of cylinders up to $\gamma_n$ and prove Theorem~\ref{thm:lower-general} by induction on $n$.  Step I will cover the base case of the induction, which follows from Theorem~\ref{thm:uniformtostart}.  Step~II states the induction hypothesis and again uses Theorem~\ref{thm:uniformtostart} to obtain the desired count at each step.  Crucial to the induction is to argue that the next collection of saddle connections in the succession of cylinders are distinct from the previous ones, and this is addressed in Lemma~\ref{lem:isolate-2} in Step~III by showing that their angles are separated and are therefore distinct.  (This argument also relies on induction with Step~I addressing the base case.)  Finally, Step~IV concludes the induction.  Step~V proves the two theorems by arguing that the succession of cylinders can contain at most $d-2$ terms (after $\beta = \gamma_0$).  

\paragraph{Step I - The Base Case and an Angle Bound:}

By Theorem~\ref{thm:uniformtostart:genstatement}, we have shown that for $\mu$-almost every (unit area) $(X,\omega)$ there exists $L_0$ (depending on the surface) such that for any $2^k \geq L_0$, there exists $c > 0$ such that the number of simple cylinders $\gamma_1$ containing $\beta$ with length satisfying
$$m_1 2^k \leq |\gamma_1| \leq 2^k,$$
where $m_1=\frac{m_0}{2}$, is at least
$$c \frac{2^k}{|\beta|}.$$
As we have shown in Lemma \ref{GoodCylinderAngle:Lemma}, any two distinct $\gamma_1$
and $\gamma_1'$ are $\sqrt{m_0}$-isolated, i.e., they form an angle at least 
$\frac{m_0}{|\gamma_1||\gamma_1'|}$.  

\paragraph{Step II - The Induction Hypothesis:}

We proceed by
induction on $n$.
The base case of the induction was covered by Step~I.
Given $1 \leq n\leq d-2$,
suppose inductively for $\mu$ almost every $(X,\omega)$,
there  exists $L_0$ such that for any $2^k \geq L_0$, there are at least 
\begin{equation}\label{equ:induction}
c \frac{2^k}{|\beta|} \left( \log \frac{2^k}{|\beta|}\right)^{n-1}
\end{equation} many $\gamma_n$ that satisfy
\begin{itemize}
  \item $ m_1 2^k \leq |\gamma_n| \leq 2^k.$
  \item For each $\gamma_n$, we have a succession of cylinders $\beta, \gamma_1, \gamma_2, \cdots, \gamma_n$.
  \item Any two distinct $\gamma_n$ and $\gamma_n'$ are isolated in the following sense: if $X_{\gamma_{n-1}}$ has area $A_{n-1}$
      and $X_{\gamma_{n-1}'}$ has area $A_{n-1}'$,  then the angle between $\gamma_n$ and $\gamma_n'$ is at least 
    \begin{equation}\label{equ:isolated}
    m_0 \frac{\max\{A_{n-1}, A_{n-1}'\}}{|\gamma_n||\gamma_n'|}.
     \end{equation}
     \end{itemize}
Denote such a collection of $\gamma_n$ by $\Cyl_n(k)$. 

\paragraph{Step III - Linearly Many Distinct Cylinders at Step $n$:}
By definition, each $\gamma_n$ in $\Cyl_n(k)$ determines a subsurface $X_{\gamma_n}$ of area $A < 1$ in a lower dimensional stratum $\mathcal{H}_A^{hyp}(\kappa')$ with an invariant slit $\gamma_n$.  By Theorem~\ref{thm:uniformtostart:genstatement}, for $\mu$-almost every $(X,\omega)$, there exists $L_0$ such that for $L\geq L_0$ the number of simple cylinders containing $\gamma_n$ on $X_{\gamma_n}$ is at least 
\begin{equation}\label{equ:linear-induction}
  c \frac{L}{|\gamma_{n}|}.
\end{equation}
We may take $2^k \geq L_0$ and $L \geq 2^{8Mk}$, and have for each $\gamma_n\in \Cyl_n(k)$ that there are at least $c\frac{L}{|\gamma_n|}$
many cylinders $\gamma_{n+1}$ containing $\gamma_n$, with lengths in $[m_1L, L]$.

Now we claim that
for any two distinct $\gamma_n\in \Cyl_n(k_1)$ and  $\gamma_n' \in  \Cyl_n(k_2)$,
none of the corresponding $\gamma_{n+1}$ and $\gamma_{n+1}'$ are the same.  
To see this, we prove the following lemma, which will also be used in Step IV. 

\begin{lemma}\label{lem:isolate-2}
For $i \in \{1, 2\}$, let $k_i$ satisfy $2^{k_i} \geq L_0$ and $L \geq \max(2^{8Mk_1},2^{8Mk_2})$.  Assume that 
$\gamma_n \in \Cyl_n(k_1)$ and $\gamma_n'\in \Cyl_n(k_2)$. Then for any $\gamma_{n+1}$ and $\gamma_{n+1}'$ determined by $\gamma_n$ and $\gamma_n'$, respectively, the angle 
between $\gamma_{n+1}$ and $\gamma_{n+1}'$ is at least $$m_0 \frac{\max\{A_n, A_n'\}}{|\gamma_{n+1}||\gamma_{n+1}'|}.$$
\end{lemma}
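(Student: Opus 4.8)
# Proof Proposal for Lemma~\ref{lem:isolate-2}

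The plan is to distinguish two cases according to whether $\gamma_{n+1}$ and $\gamma_{n+1}'$ are determined by the \emph{same} $\gamma_n$ or by \emph{distinct} ones, and to reduce both to the good-cylinder cross-product bound of Lemma~\ref{GoodCylinderAngle:Lemma}.

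\textbf{Case 1: $\gamma_n = \gamma_n'$.} In this case $\gamma_{n+1}$ and $\gamma_{n+1}'$ are both boundary saddle connections of simple cylinders containing the invariant slit $\gamma_n$ on the \emph{same} surface $X_{\gamma_n}$, which has some area $A_n = A_n' \leq 1$. By the construction in the proof of Theorem~\ref{thm:uniformtostart:genstatement} (and the remark following it), the cylinders we produce are \emph{good}, i.e. their boundary saddle connections $\rho$ satisfy $v_\rho \geq m_0 A_n / |\gamma_n|$, so each such cylinder has area at least $m_0 A_n$. Then Lemma~\ref{GoodCylinderAngle:Lemma}, applied on $X_{\gamma_n}$ (which has area $A_n$, so the statement reads ``cross product at least $m_0 A_n$''), gives $|\gamma_{n+1} \times \gamma_{n+1}'| \geq m_0 A_n$. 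Dividing by $|\gamma_{n+1}||\gamma_{n+1}'|$ converts this to the angle bound $m_0 A_n / (|\gamma_{n+1}||\gamma_{n+1}'|) = m_0 \max\{A_n, A_n'\}/(|\gamma_{n+1}||\gamma_{n+1}'|)$, as desired.

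\textbf{Case 2: $\gamma_n \neq \gamma_n'$.} Here I would use the induction hypothesis \eqref{equ:isolated}, which says the angle between $\gamma_n$ and $\gamma_n'$ is at least $m_0 \max\{A_{n-1}, A_{n-1}'\}/(|\gamma_n||\gamma_n'|)$. The key geometric observation is that $\gamma_{n+1}$ lies (up to an additive constant absorbed by adjusting $m_1$) ``close in direction'' to $\gamma_n$ on the base surface: since $\gamma_{n+1} \cup \tau(\gamma_{n+1})$ bounds a cylinder containing $\gamma_n$ of length in $[m_1 L, L]$ while $|\gamma_n| \leq 2^k$ with $L \geq 2^{8Mk}$, the slit $\gamma_n$ is short relative to the cylinder circumference, so the angle between $\gamma_n$ and $\gamma_{n+1}$ is at most something like $|\gamma_n|/|\gamma_{n+1}| \leq 2^k/(m_1 L)$, which is extremely small compared with the separation angle between $\gamma_n$ and $\gamma_n'$ guaranteed by \eqref{equ:isolated} (that separation is at least $m_0 \max\{A_{n-1},A_{n-1}'\}/(|\gamma_n||\gamma_n'|) \geq m_0 \delta^2 / 2^{2k}$ on the $\delta$-thick part). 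Since the perturbation of direction in passing from $\gamma_n$ to $\gamma_{n+1}$ (and from $\gamma_n'$ to $\gamma_{n+1}'$) is far smaller than the angular gap between $\gamma_n$ and $\gamma_n'$, the directions of $\gamma_{n+1}$ and $\gamma_{n+1}'$ remain distinct; quantitatively, the angle between $\gamma_{n+1}$ and $\gamma_{n+1}'$ is at least half the angle between $\gamma_n$ and $\gamma_n'$, hence at least
$$\frac{m_0}{2} \cdot \frac{\max\{A_{n-1}, A_{n-1}'\}}{|\gamma_n||\gamma_n'|}.$$
Finally I would convert this into the claimed form: since $A_n \leq A_{n-1}$ and $|\gamma_{n+1}| \geq m_1 |\gamma_n| \cdot (\text{something})$—more precisely using that $|\gamma_{n+1}|$ and $|\gamma_{n+1}'|$ are comparable (both in $[m_1 L, L]$) while $A_n, A_n'$ are comparable to the cylinder areas—one checks $\max\{A_{n-1},A_{n-1}'\}/(|\gamma_n||\gamma_n'|) \geq \max\{A_n, A_n'\}/(|\gamma_{n+1}||\gamma_{n+1}'|)$ up to the factor already absorbed, yielding the stated inequality (possibly after shrinking $m_0$, which is harmless as the constant is fixed once at the outset).

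\textbf{Main obstacle.} The delicate point is Case 2: one must make precise the claim that passing from $\gamma_n$ to $\gamma_{n+1}$ changes the \emph{direction} by an amount negligible compared with the angular separation \eqref{equ:isolated}, while simultaneously tracking how the areas $A_{n-1} \mapsto A_n$ and lengths $|\gamma_n| \mapsto |\gamma_{n+1}|$ change, so that the final inequality comes out in exactly the normalized form $m_0 \max\{A_n,A_n'\}/(|\gamma_{n+1}||\gamma_{n+1}'|)$. The hierarchy of scales $|\gamma_n| \leq 2^k \ll 2^{8Mk} \leq L \leq |\gamma_{n+1}|/m_1$ is what makes this work, together with the $\delta$-thick part restriction bounding all areas below; the bookkeeping is where care is needed, but no new idea beyond Lemma~\ref{GoodCylinderAngle:Lemma}, the induction hypothesis, and Lemma~\ref{rhoLengthTime:Lemma}-style length/angle estimates is required.
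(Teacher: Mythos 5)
Your overall structure (case split on $\gamma_n = \gamma_n'$ versus $\gamma_n \neq \gamma_n'$, reducing Case 1 to Lemma~\ref{GoodCylinderAngle:Lemma}, and using the triangle inequality on angles plus the length hierarchy for Case 2) matches the paper's proof. Case 1 is correct and identical to the paper's argument.

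In Case 2 there are two concrete gaps. First, the perturbation angle: you bound the angle between $\gamma_n$ and $\gamma_{n+1}$ by ``something like $|\gamma_n|/|\gamma_{n+1}|$,'' but the correct and needed estimate is the cross-product bound $A_n/(|\gamma_n||\gamma_{n+1}|)$, which comes from the fact that the parallelogram bounded by $\gamma_{n+1} \cup \tau(\gamma_{n+1})$ with diagonal $\gamma_n$ is a cylinder inside $X_{\gamma_n}$, hence has area at most $A_n$. Your bound has the wrong normalization (it omits $A_n$ and introduces an extra factor of $|\gamma_n|^2$), and because the final inequality must come out exactly in the form $m_0 \max\{A_n, A_n'\}/(|\gamma_{n+1}||\gamma_{n+1}'|)$, the areas and lengths must be tracked precisely; plugging your bound into the triangle inequality and trying to normalize does not obviously produce the stated form. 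The paper's algebra makes this explicit: after the WLOG $A_n \geq A_n'$ (and noting $A_{n-1} \geq A_n$ since $X_{\gamma_n}$ is a subsurface of $X_{\gamma_{n-1}}$), one lower-bounds
\[
\frac{\max\{A_{n-1},A_{n-1}'\}}{|\gamma_n||\gamma_n'|} - \frac{A_n}{|\gamma_n||\gamma_{n+1}|} - \frac{A_n'}{|\gamma_n'||\gamma_{n+1}'|}
\geq
\left(\frac{|\gamma_{n+1}||\gamma_{n+1}'|}{|\gamma_n||\gamma_n'|} - \frac{|\gamma_{n+1}'|}{|\gamma_n|} - \frac{|\gamma_{n+1}|}{|\gamma_n'|}\right)\frac{A_n}{|\gamma_{n+1}||\gamma_{n+1}'|},
\]
and then the hypothesis $L \geq \max(2^{8Mk_1},2^{8Mk_2})$ makes the parenthetical factor at least $m_0$. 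This factoring is the step you leave to ``one checks,'' and it is where the work is.

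Second, and more seriously, your escape hatch ``possibly after shrinking $m_0$, which is harmless'' is not harmless. The constant $m_0$ is fixed at the outset of Section~\ref{LowerBoundSetting:Section} and feeds into the definition of bad saddle connections, the measure estimates in Section~\ref{sec:bad-measure}, and the constant $c$ in Theorem~\ref{thm:uniformtostart:genstatement}; shrinking it inside the inductive step would force re-running the entire lower-bound machinery with a changed constant, breaking the induction. The paper's precise algebra is arranged exactly so that the output constant is $m_0$ itself, with no loss. Likewise, the invocation of the $\delta$-thick part to lower-bound the separation angle is not needed and does not appear in the paper's proof of this lemma; the inductive hypothesis \eqref{equ:isolated} already carries the normalization you need.
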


\begin{proof}
The proof of the lemma is based on our inductive assumption in Step~II.
For the base case when $n=1$, $A_{0}=A_0'=1$ so the result holds by Lemma~\ref{GoodCylinderAngle:Lemma}.

Without loss of generality we may assume that the area of $X_{\gamma_n}$ is larger than $X_{\gamma_n'}$,
that is, $A_n \geq A_n'$. 
First note that the angle between $\gamma_{n+1}$ and $\gamma_n$ is at most $$\frac{A_n}{|\gamma_n| |\gamma_{n+1}|}.$$
Similarly, the angle between $\gamma_{n+1}'$ and $\gamma_n'$ is at most $$\frac{A'_n}{|\gamma_n'| |\gamma_{n+1}'|}.$$

If $\gamma_n=\gamma_n'$, then by Lemma~\ref{GoodCylinderAngle:Lemma}
the angle between $\gamma_{n+1}$ and $\gamma_{n+1}'$ must be at least 
$$m_0 \frac{A_n}{|\gamma_{n+1}||\gamma_{n+1}'|}.$$

Suppose $\gamma_n\neq \gamma_n'$. Then by our inductive construction of $\Cyl_n(k)$, the angle between $\gamma_{n}$ and $\gamma_{n}'$ is at least
$$\frac{\max\{A_{n-1}, A_{n-1}'\}}{|\gamma_n||\gamma_n'|}.$$
The bound in \eqref{equ:isolated} holds whether $k_1$ and $k_2$ are equal or not. 
Combining the above two inequalities, we know that the angle 
between $\gamma_{n+1}$ and $\gamma_{n+1}'$ must be at least
\begin{eqnarray*}
&& \frac{\max\{A_{n-1}, A_{n-1}'\}}{|\gamma_n||\gamma_n'|}- \frac{A_n}{|\gamma_{n}||\gamma_{n+1}|}- \frac{A_n'}{|\gamma_{n}'| |\gamma_{n+1}'|}  \\
& \geq &  \left( \frac{|\gamma_{n+1}||\gamma_{n+1}'|}{|\gamma_{n}||\gamma_{n}'|}  - \frac{|\gamma_{n+1}'|}{|\gamma_{n}|} - 
\frac{|\gamma_{n+1}|}{|\gamma_{n}'|}\right) \frac{A_n}{|\gamma_{n+1}||\gamma_{n+1}'|} \\
&\geq & m_0\frac{A_n}{|\gamma_{n+1}||\gamma_{n+1}'|} .\\
 \end{eqnarray*}
The last inequality holds because the ratios $\frac{|\gamma_{n+1}|}{|\gamma_{n}'|} $ and $\frac{|\gamma_{n+1}'|}{|\gamma_{n}|}$
are large by the assumption that $L \geq \max(2^{8Mk_1},2^{8Mk_2})$ (in particular, larger than $3$).
\end{proof}

According to the above discussion, we conclude from \eqref{equ:induction}
and \eqref{equ:linear-induction} that on $(X,\omega)$, there are at least 
$$c^2 \frac{L}{|\beta|} \left( \log\frac{2^k}{|\beta|} \right)^{n-1}$$
many $\gamma_{n+1} \in \Cyl_{n+1}(k)$, with lengths in $[m_1 L, L]$.

\paragraph{Step IV - Concluding the Induction:}
We have to construct, for some constant $c'>0$,
$$c' \frac{L}{|\beta|} \left( \log\frac{L}{|\beta|} \right)^{n}$$
many $\gamma_{n+1}$ with lengths in $[m_1 L, L]$. 
For this purpose, we should allow $k$ to change. 
Consider the length interval
$I=[L_0, L^{1/18M}]$, where $L_0$ is given at stage $n$. The number of disjoint sub-intervals  $[m_1 2^k, 2^k]\subset I$  is approximately $$\frac{\log L}{18M |\log m_1|}.$$

Now we apply Step~III to each $\Cyl_n(k)$. 
For each $\Cyl_n(k)$, we are able to construct 
$$c^2  \frac{L}{|\beta|} \left( \log\frac{2^{k}}{|\beta|} \right)^{n-1}$$
many $\gamma_{n+1}$, with lengths in $[m_1L, L]$.

Applying  Lemma~\ref{lem:isolate-2}, we 
see that for any two distinct $k_1, k_2$, if $\gamma_{n+1}$ is determined by $\gamma_n \in \Cyl_n(k_1)$ and $\gamma_{n+1}'$ is determined by $\gamma'_n \in \Cyl_n(k_2)$, then the angle between $\gamma_{n+1}$ and $\gamma_{n+1}'$ is at least 
 \begin{equation*}
   m_0 \frac{\max\{A_n, A_n'\}}{|\gamma_{n+1}||\gamma_{n+1}'|}.
 \end{equation*}
In particular, $\gamma_{n+1}$ and $\gamma_{n+1}'$ are distinct. In other words there is no over counting.
Therefore summing over all possible $k$, we have the desired lower bound for the number of $\gamma_{n+1}$, given by 
$$\sum_{k=\log \frac{L_0}{m_1}}^{\frac{\log L}{18M |\log m_1|}} c^2  \frac{L}{|\beta|} \left( \log\frac{2^{k}}{|\beta|} \right)^{n-1} = c^2  \frac{L}{|\beta|} \sum_{k=\log \frac{L_0}{m_1}}^{\frac{\log L}{18M |\log m_1|}} \left( k - \log |\beta| \right)^{n-1}  \geq c' \frac{L}{|\beta|} \left( \log\frac{L}{|\beta|} \right)^{n},$$
where the coefficient $c'$ depends on $c, m_1$ and $M$, and $L_0$ is already taken sufficiently large so that $\frac{L_0}{m_1}  > |\beta|$.

\paragraph{Step V - Total Count:}

By Lemma~\ref{Parallelogramulation:Lemma}, $(X, \omega)$ can be decomposed into at most $d-1$ parallelograms.  By Lemma~\ref{SuccOfCyls:Lem}, every quadruple of saddle connections $\{\gamma_i,\tau(\gamma_i),\gamma_{i-1},\tau(\gamma_{i-1})\}$ bound a parallelogram on $(X, \omega)$.  Therefore, $i \in \{1, \ldots, d-2\}$ because the succession of saddle connections necessarily terminates when the complement of the parallelograms produced is itself a parallelogram.  In other words, for each $\gamma_{d-2}$, $\{\gamma_{d-2} \cup \tau(\gamma_{d-2})\}$ splits the surface into two components: $X_{\gamma_{d-2}}$, which contains $\beta$ and decomposes into $d-2$ parallelograms, and its complement, which is necessarily a simple cylinder.

By the induction concluded in Step IV, for $\mu$-almost every surface $(X,\omega)$ and for any given invariant $\beta$ on $(X,\omega)$, there exists  $L_0$ depending on $(X,\omega)$ such that for any $L > L_0$, there are at least
\begin{equation*}
  c \frac{L}{|\beta|} \left( \log \frac{L}{|\beta|}\right)^{d-3}
\end{equation*}
saddle connections $\gamma_{d-2}$ contained in a succession of cylinders, for some $c$ depending on the stratum.  Furthermore, since the boundary of every one of these cylinders is a pair of non-invariant saddle connections, we have produced both lower bounds in Theorem~\ref{thm:lower-general-cyls-noninv}.

\

To prove Theorem~\ref{thm:lower-general}, we note that every cylinder with area at most one and circumference $|\gamma_{d-2}|$ contains at least $L/|\gamma_{d-2}|$ saddle connections of length at most $L$ winding around it (which can be seen by taking positive and negative Dehn twists about the cylinder).  Again, by taking all $k$ satisfying $L_0 \leq 2^k \leq L^{\frac{1}{18M}}$
and then summing over $k$, we obtain that there are 
  $$c' \frac{L}{|\beta|} \left( \log \frac{L}{|\beta|}\right)^{d-2}$$
many saddle connections contained in each simple cylinder bounded by $\gamma_{d-2}$, which are necessarily interiorly disjoint from $\beta$ and invariant under the hyperelliptic involution $\tau$ because they cross a simple cylinder, which is invariant by \cite[Lem.~2.1]{LindseyInvCompsHyp}.  Furthermore, we claim that the angle bound in Lemma~\ref{lem:isolate-2} guarantees that these saddle connections are distinct.  Consider
the saddle connection $\gamma_{d-1}$ (resp. $\gamma'_{d-1}$) and the associated boundary saddle connection $\gamma_{d-2}$ (resp. $\gamma'_{d-2}$) of the cylinder containing it.  For $L_0$ sufficiently large, the angle between $\gamma_{d-1}$ and $\gamma_{d-2}$ and the angle between $\gamma'_{d-1}$ and $\gamma'_{d-2}$ are sufficiently small, while the angle between $\gamma_{d-2}$ and $\gamma'_{d-2}$ is sufficiently large.  This completes the proof of Theorem~\ref{thm:lower-general}.
\end{proof}

\appendix

\section{Cantor-Bendixson Rank for Hyperelliptic Translation Surfaces}
\label{HypCBRank:Appendix}

We recall the setup from \cite[$\S$2]{AulicinoCBRank}.  Let $(X, \omega)$ be a translation surface.  Let
$$\Theta(X, \omega) = \{ \theta \in [0, 2\pi) | (X, e^{i \theta} \omega) \text{ admits a vertical saddle connection}\}.$$
When $\omega$ is holomorphic $\Theta(X, \omega)$ is a countable dense set (\cite[Cor.~1.2]{AulicinoCBRank}), and by \cite[Thm.~1.4]{AulicinoCBRank}, if $\omega$ is meromorphic with at least one pole of order one, then $\Theta(X, \omega)$ is closed.

The \emph{Cantor-Bendixson derivative} of a set $S$ is the set $S^* \subset S$ defined by removing all isolated points from $S$.  
The \emph{Cantor-Bendixson rank (CB-rank)} of a closed set is smallest $n$ such that $S^{*n} = S^{*(n+1)}$, where $S^{*n}$ denotes the $n$'th Cantor-Bendixson derivative.  For example, $\{0\} \cup \left\{\frac{1}{n} | n \in \bN\right\}$ has CB-rank two.

In \cite[$\S$6]{AulicinoCBRank}, a sequence of square-tiled hyperelliptic translation surfaces in $\cH^{hyp}(\kappa)$ were produced with CB-rank equal to $d = \dim_\bC \cH^{hyp}(\kappa)$.  Furthermore, \cite[Thm.~7.5]{AulicinoCBRank} shows that for all translation surfaces in $\cH^{hyp}(\kappa)$ the CB-rank is at most $d$.  Here we use the NPS-algorithm (Section~\ref{LowerBoundSetting:Section} above, \cite{NguyenPanSu2020}) to complete the picture of CB-rank for hyperelliptic translation surfaces.

\begin{theorem}
\label{CBRank:hyp}
Let $(X, \omega) \in \cH^{hyp}(\kappa)$ have a slit $\beta$ invariant under the hyperelliptic involution.  Let $d = \dim_\bC \cH^{hyp}(\kappa)$.  Then $\Theta((X, \omega) \setminus \beta)$ has CB-rank $d$.
\end{theorem}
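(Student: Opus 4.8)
The plan is to prove separately that the CB-rank of $\Theta:=\Theta((X,\omega)\setminus\beta)$ is at most $d$ and at least $d$; the upper bound is essentially a citation and the lower bound carries the new content, into which the Nguyen--Pan--Su (NPS) algorithm enters. First I would record two structural facts. The set $\Theta$ is a \emph{closed} subset of $S^1$ (this is \cite[Thm.~1.4]{AulicinoCBRank} applied to the meromorphic differential attached to the slit surface) and it is countable; hence every nonempty Cantor--Bendixson derivative $\Theta^{*k}$ is a nonempty countable closed, hence scattered, subset of $S^1$, so it has an isolated point and $\Theta^{*(k+1)}\subsetneq\Theta^{*k}$. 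Thus the chain $\Theta=\Theta^{*0}\supseteq\Theta^{*1}\supseteq\cdots$ strictly decreases until it becomes empty, and the CB-rank equals $\min\{n:\Theta^{*n}=\varnothing\}$. The upper bound $\Theta^{*d}=\varnothing$ is \cite[Thm.~7.5]{AulicinoCBRank}, whose bound on the CB-rank of a slit surface is the complex dimension of the stratum obtained by forgetting the slit, which here is $d$. Therefore the theorem reduces to proving $\Theta^{*(d-1)}\neq\varnothing$, and I claim more precisely that the direction $\theta_\beta$ of $\beta$ belongs to $\Theta^{*(d-1)}$.

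I would prove the following statement by induction on the complex dimension $d'$ of the ambient hyperelliptic stratum: \emph{for every hyperelliptic translation surface $(Y,\eta)$ of any area carrying an invariant slit $\delta$, the direction $\theta_\delta$ lies in $\Theta((Y,\eta)\setminus\delta)^{*(d'-1)}$.} For the base case $\cH(0,0)$ (so $d'=3$): by the NPS algorithm there are infinitely many simple cylinders $C$ having $\delta$ interior to them, and running the algorithm under the horocycle flow $u_s$ with $s\to\infty$ makes their core directions converge to $\theta_\delta$ while staying distinct from $\theta_\delta$ (and, after passing to a subsequence, from one another); for each such $C$ the complementary cylinder in the slit torus is disjoint from $\delta$ and carries infinitely many saddle connections, obtained by Dehn twisting, whose directions converge to the direction of $C$, so that direction lies in $\Theta^{*1}$, whence $\theta_\delta\in\Theta^{*2}=\Theta((Y,\eta)\setminus\delta)^{*(d'-1)}$. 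For the inductive step with $d'>3$, the NPS algorithm again produces infinitely many simple cylinders $C$ having $\delta$ interior to them, with boundary $\gamma\cup\tau(\gamma)$ (so $\gamma$ is interiorly disjoint from $\delta$) and core directions $\to\theta_\delta$; by Lemma~\ref{Parallelogramulation:Lemma} each such $C$ is the first parallelogram of a $\tau$-invariant parallelogram decomposition, so by Lemmas~\ref{TwoBdSC:Lemma} and \ref{SuccOfCyls:Lem} cutting along $\gamma\cup\tau(\gamma)$ and regluing the component \emph{not} containing $C$ yields a hyperelliptic slit surface $(X_\gamma,\eta_\gamma)$ with invariant slit $\gamma$ whose ambient stratum has complex dimension exactly $d'-1$. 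Every saddle connection on $X_\gamma$ interiorly disjoint from $\gamma$ corresponds, via the regluing, to a saddle connection of $(Y,\eta)$ lying in the component disjoint from $C$, hence interiorly disjoint from $\delta$, and with the same direction by the Remark after Lemma~\ref{TwoBdSC:Lemma}. Therefore $\Theta(X_\gamma\setminus\gamma)\subseteq\Theta((Y,\eta)\setminus\delta)$ as subsets of $S^1$, and since $S\subseteq T$ forces $S^{*k}\subseteq T^{*k}$ (a limit point of $S$ is a limit point of $T$), the inductive hypothesis gives $\theta_\gamma\in\Theta(X_\gamma\setminus\gamma)^{*(d'-2)}\subseteq\Theta((Y,\eta)\setminus\delta)^{*(d'-2)}$. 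As the $\theta_\gamma$ are distinct and accumulate on $\theta_\delta$, this shows $\theta_\delta\in\Theta((Y,\eta)\setminus\delta)^{*(d'-1)}$. Applying the statement to $(X,\omega)\setminus\beta$ with $d'=d$ gives $\theta_\beta\in\Theta^{*(d-1)}$, completing the lower bound.

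The step I expect to be the main obstacle is the dimension bookkeeping inside the inductive step: for infinitely many cylinders through the slit with directions accumulating to $\theta_\delta$, one must realize each as the first parallelogram of a $\tau$-invariant decomposition so that its removal lowers the complex dimension by \emph{exactly one}, which is what makes the recursion run for the full number of steps down to $\cH(0,0)$ and produces $d$ genuinely distinct accumulation levels. This is precisely the content of Lemmas~\ref{Parallelogramulation:Lemma} and \ref{SuccOfCyls:Lem} and the succession-of-cylinders formalism of Section~\ref{Sect:LowerBdProof}, so in the write-up I would quote those results rather than reprove them. A secondary point needing care is that the statement must hold for \emph{every} surface, whereas the counting theorems such as Theorem~\ref{thm:uniformtostart} are only almost everywhere; however, nothing quantitative is used here --- only the existence, valid on every surface, of infinitely many simple cylinders through the slit with core directions converging to the slit direction, which follows directly from iterating the NPS algorithm under $u_s$ as $s\to\infty$.
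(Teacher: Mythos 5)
Your proof is correct and follows essentially the same approach as the paper's: cite \cite[Thm.~7.5]{AulicinoCBRank} for the upper bound, then induct on the complex dimension, using the NPS algorithm to produce simple cylinders through the slit whose core directions accumulate at $\theta_\beta$, excising a cylinder to drop to a stratum of one lower dimension, and transporting the inductive hypothesis via the inclusion of direction sets. The only differences are cosmetic --- you place the base case at $\cH(0,0)$ and spell out the scattered-set characterization of CB-rank and the fact that $S\subseteq T$ implies $S^{*k}\subseteq T^{*k}$, whereas the paper starts from ``a simple cylinder has CB-rank two'' and leaves those bookkeeping steps implicit.
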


\begin{proof}
That the CB-rank is at most $d$ is given by \cite[Thm.~7.5]{AulicinoCBRank}.  Therefore, we consider the following inductive argument to show that the CB-rank is at least $d$.

For the base case, it is clear that a simple cylinder has CB-rank two, cf. \cite[Prop.~4.5]{AulicinoCBRank}, where the direction of the boundary of the cylinder corresponds to an accumulation point of the set of directions of saddle connections contained in the cylinder (whose directions are isolated among the set of all saddle connections contained in the cylinder).

Next we consider a hyperelliptic translation surface with an invariant saddle connection $\alpha$.  Our inductive hypothesis is that if $(Y, \eta) \in \cH^{hyp}(\kappa')$ with $\dim_\bC \cH^{hyp}(\kappa') = d - 1$ and a boundary invariant saddle connection $\rho$ with direction $\theta_\rho \in [0, 2\pi)$, then $\theta_\rho \in \Theta((Y, \eta) \setminus \rho)^{*(d-2)}$, and $\Theta((Y, \eta) \setminus \rho)$ has CB-rank at least $d-1$.  Therefore, it suffices to show that the direction of $\alpha$, $\theta_\alpha$, on $(X, \omega)$ is an accumulation point of directions of saddle connections $\rho$, which will imply that the set of directions of saddle connections on $(X, \omega)$ is at least one more than $d-1$, so it is at least $d$.

However, this is simply a repeated application of Lemma~\ref{TwoBdSC:Lemma} to each of the cylinders produced by the NPS-algorithm.  By the NPS-algorithm, there exists an infinite sequence of simple cylinders $C_\rho \subset (X, \omega)$ containing $\alpha$.  Since the circumferences of the cylinders $\{C_\rho\}$ necessarily tend to infinity, their angle with $\alpha$ necessarily tends to zero.  Hence, $\theta_\alpha$ is an accumulation point of the directions $\theta_\rho$ of the core curves of the cylinders $C_\rho$.

Consider one such cylinder $C_\rho$, excise the cylinder, and join the two resulting boundary saddle connections.  By Lemma~\ref{TwoBdSC:Lemma}, this results in a hyperelliptic translation surface $Y_\rho$ with an invariant saddle connection $\rho$.  Furthermore, the dimension of this hyperelliptic translation surface is exactly $d-1$ because a cylinder is parameterized by two complex parameters and since $\rho$ is a saddle connection on the resulting translation surface, only one complex parameter was forgotten.

Then
$$\bigcup_\rho \Theta((Y, \eta) \setminus \rho) \subseteq \Theta((X, \omega) \setminus \alpha).$$
Since each set has CB-rank $d-1$ by the inductive hypothesis and $\theta_\rho$ corresponds to the accumulation point realizing the CB-rank, the argument above shows that $\theta_\alpha$ is the accumulation point of the set $\{\theta_\rho | \rho\}$, thus the set of saddle connection directions on $(X, \omega)$ is at least $d$.
\end{proof}

As noted in the introduction to Section~\ref{LowerBound:Section}, we do not know if the almost everywhere assumption on Theorem~\ref{thm:uniformtostart} is necessary.  Theorem~\ref{CBRank:hyp} indicates that the CB-rank itself is not an obstruction to a version of Theorem~\ref{thm:uniformtostart} without the almost everywhere assumption.  Nevertheless, the CB-rank is purely qualitative and if a surface could be constructed where the lengths of the simple cylinders containing $\beta$ grow too quickly, then that would produce an obstruction to the lower bound on the growth rate of saddle connections established in Theorem~\ref{thm:uniformtostart}.  Thus, we pose the following

\begin{question}
Does there exist a hyperelliptic translation surface $(X, \omega)$ with an invariant slit $\beta$ such that the number of saddle connections interiorly disjoint from $\beta$ is $o\hspace{-2pt}\left(L (\log L)^{d-2}\right)$?
\end{question}

\bibliography{fullbibliotex}{}

\end{document}